\numberwithin{equation}{section}
\newtheorem{theo}{Theorem}[section]
\newtheorem{lem}[theo]{Lemma}
\newtheorem{rem}[theo]{Remark}
\newtheorem{co}[theo]{Corollary}
\newtheorem{prop}[theo]{Proposition}
\newtheorem{fact}[theo]{Fact}
\newtheorem{defn}[theo]{Definition}
\newtheorem{example}[theo]{Example}
\newcommand{\cc}{{\mathscr C}}
\newcommand{\caa}{{\mathscr A}}
\newcommand{\cd}{{\mathscr D}}
\newcommand{\crr}{{\mathscr R}}
\newcommand{\cxx}{{\mathscr X}}
\newcommand{\cpp}{{\mathscr P}}
\newcommand{\cgg}{{\mathscr G}}
\newcommand{\pp}{{\operatorname{p}}} 
\newcommand{\bb}{{\operatorname{b}}} 
\newcommand{\chain}{{\operatorname{ch}}} 
\newcommand{\rr}{\mathbb{R}}
\newcommand{\nn}{\mathbb{N}}
\newcommand{\charfun}{\ensuremath{\mathbbm 1}} 
\newcommand{\sm}[1]{\ensuremath{#1'}}  
\newcommand{\la}[1]{\ensuremath{#1''}} 
\newcommand{\dif}{\,\mathrm{d}}
\DeclareMathOperator{\wI}{w1}
\DeclareMathOperator{\wII}{w2}
\DeclareMathOperator{\wIIs}{w2^*}
\DeclareMathOperator{\Span}{span}
\DeclareMathOperator{\card}{card}
\newcommand{\app}{{\mathcal A}}
\newcommand{\spp}{{\mathcal X}}
\newcommand{\dic}{{\mathcal D}}
\newcommand{\spy}{{\mathcal Y}}
\newcommand{\vc}[1]{{\underline{#1}}}
\title[Properties of local orthonormal systems: 
 Bernstein inequalities]{Properties of local orthonormal systems, \\ Part II: geometric
characterization of Bernstein inequalities}
\author[J. Gulgowski]{Jacek Gulgowski}
\address{Faculty of Mathematics, Physics and Informatics, University of Gda\'nsk,
ul. Wita Stwosza 57, 80-308 Gda\'nsk , Poland}
\email{ jacek.gulgowski@ug.edu.pl}
\author[A.Kamont]{Anna Kamont }
\address{Institute of Mathematics, Polish Academy of Sciences, Branch in Gda\'nsk, ul. Abrahama 18, 81-825 Sopot, Poland}
\email{Anna.Kamont@impan.pl}
\author[M. Passenbrunner]{Markus Passenbrunner}
\address{Institute of Analysis, Johannes Kepler University Linz, Austria, 4040 Linz, Altenberger Strasse 69}
\email{markus.passenbrunner@jku.at, markus.passenbrunner@gmail.com}
\keywords{approximation spaces, local orthonormal system, greedy basis, Bernstein inequality}
\subjclass{41A17, 41A46, 42C05, 42C40, 46E30}
\date{\today}
\begin{document}

\begin{abstract}
Let $(\Omega,\mathscr F,\mathbb P) $ be a probability space and let $(\mathscr F_n)_{n=0}^\infty$ be 
a binary  filtration.
i.e. exactly one atom of $\mathscr F_{n-1}$ is divided into 
\emph{two} atoms of $\mathscr F_n$ without any restriction on their respective 
measures. Additionally, denote the collection of atoms corresponding to this 
filtration by $\mathscr A$.
Let 
$S \subset L^\infty(\Omega)$ be a finite-dimensional linear subspace, having an additional stability property on atoms $\caa$.
For these data, we consider two dictionaries:
\begin{itemize}
\item $ \cc = \{ f \cdot \charfun_A: f \in S, A \in \caa\}$, 
\item $\Phi$ -- a local orthonormal system generated by $S$ and the filtration $(\mathscr F_n)_{n=0}^\infty$. 
\end{itemize}
Let $L^p(S) = \overline{{\rm span}} _{L^p(\Omega)}
 \cc =  \overline{{\rm span}}_{L^p(\Omega)} \Phi$,  with $1< p < \infty$.
We are interested in approximation spaces $\app_q^\alpha (L^p(S), \cc)$ and 
$\app_q^\alpha (L^p(S), \Phi)$,  corresponding to the best $n$-term approximation in $L^p(S) $ by elements of $\cc$ and $\Phi$, respectively, where $\alpha >0$ and $0< q \leq \infty$.  
It is known that in the classical Haar case, i.e. when  $S = {\rm span} ( \charfun_{[0,1]})$
 and the binary filtration  
$(\mathscr F_n)_{n=0}^\infty$ is dyadic (that is, an atom $A \in \caa$ is divided into two new atoms of equal measure), we have 
$\app_q^\alpha (L^p(S), \Phi) =
 \app_q^\alpha (L^p(S), \cc)$,
cf. P.~Petrushev \cite{pp.2003.a}.
This motivates us to ask the question whether this equality is true in the general setting described above.
The answer to this question is governed by the validity of a specific 
  Bernstein 
type inequality $ \operatorname{BI}(\caa, S, p, \tau )$,  with parameters $1<p<\infty$, $0<\tau < p$.

The main result of this paper is a geometric characterization of this type of Bernstein inequality $\operatorname{BI}(\caa, S, p, \tau) $,
i.e. a characterization in terms  of the  behaviour of functions from the space $S$ on atoms $\caa$ and rings $\crr = \{ A \setminus B: A, B \in \caa, B \subset A \}\setminus \mathscr A$. 
We specialize this general result to some examples of interest, 
including general Haar systems and spaces $S$ consisting of (multivariate) polynomials.
\end{abstract}

\maketitle

\section{Introduction}
\label{sec:intro}
In the past years, various methods of nonlinear approximation of functions of several variables have attracted much attention. Let us recall some results in this direction

R.A. DeVore, V. Popov \cite{dvp.1987} studied the spaces corresponding to best $n$-term 
approximation of functions from $L^p[0,1]^d$, $0<p<\infty$ by piecewise constant functions, where {\em piecewise constant} means a linear combination of 
characteristic functions of dyadic cubes in $[0,1]^d$.  For suitable range of parameters, they have obtained a description of these spaces:   for some special choice of parameters, best approximation spaces in question  coincide with Besov spaces, while in general, they are identified as real interpolation spaces between $L^p$ and Besov spaces.
 Then, R.A. DeVore, B. Jawerth, V. Popov \cite{dvjp.1992} extended the results of \cite{dvp.1987} to best $n$-term approximation  by wavelet expansions in $L^p(\rr^d)$.  It can be noted that the direct result in \cite{dvjp.1992} is essentially obtained by   greedy approximation, i.e. by   taking the $n$ terms of the  wavelet expansion of the function under consideration with biggest $L^p$ norm. These results can be found also in an early review article R.A. DeVore \cite{dv.1998}.

 A. Cohen, R.A. DeVore, P. Petrushev, H. Xu \cite {cdpx.1999} studied  piecewise constant and Haar thresholding approximation for functions from  $BV(\rr^2)$. An important step in their study was  a general adaptive algorithm  for approximation of subadditive set functions defined on dyadic cubes and rings.
These results were extended to the case of general $d \geq 2$ by
P. Wojtaszczyk \cite{pw.2003}.

Y. Hu, K. Kopotun, X. Yu  \cite{hky.2000} applied the algorithm from \cite{cdpx.1999} to  study spaces of best $n$-term approximation by piecewise polynomials on dyadic cubes in $L^p[0,1]^d$, $0<p<\infty$, and obtained a characterization of these spaces  as real interpolation spaces between $L^p[0,1]^d$ and some variant of functions of bounded variation.
Next, P. Petrushev \cite{pp.2003.a}  studied piecewise polynomial approximation in a more general setting, i.e.  on dyadic rectangular partitions (dyadic in the sense that each rectangle is divided into two rectangles of equal measure); his results contain characterization of the spaces in question  as Besov-type spaces (for a special choice of parameters), or real interpolation spaces between $L^p$ and Besov-type spaces. Moreover, he proved that for $1<p<\infty$, the space corresponding to best $n$-term piecewise constant approximation and best $n$-term Haar approximation are the same.

For more results in this direction, including some results for wavelet expansions of functions from $BV$,  see e.g. 
A. Cohen, Y. Meyer, F. Oru \cite{cmo.1998}, A. Cohen, R.A. DeVore, R. Hochmuth \cite{restr.2000}, A. Cohen, W. Dahmen, I. Daubechies, R. DeVore
\cite{bv.2003}, P. Bechler, R. DeVore, A. Kamont, G. Petrova, P. Wojtaszczyk
\cite{bv.2007}, Yu. Brudnyi \cite{yb.2017}.

Further development  resulted in introducing the concept of a greedy basis in S.V. Konyagin, V.N. Temlyakov \cite{kon.tem}.
More in this direction can be found in the monograph V.N. Temlyakov \cite{greedy}, or in the lecture notes V.N. Temlyakov \cite{ vt.2015}. The recent paper F. Albiac, J.L. Ansorena, P. Bern{\'a}, P. Wojtaszczyk \cite{greedy.2021} extends these notions to the setting of quasi-Banach spaces.

In particular, the results by P.Petrushev \cite{pp.2003.a} show that in case of the Haar system and $1<p<\infty$, the spaces 
corresponding to best $n$-term approximation by characteristic functions of dyadic cubes and by the corresponding Haar functions are the same. 
We ask the question whether the same is true in the setting of  arbitrary partitions and corresponding Haar systems.
The answer turns out to be negative. The aim of this paper is to discuss this problem in an even more general setting, which covers also the case of  
piecewise polynomial approximation on rectangular 
partitions, and generalizes the dyadic setting from \cite{pp.2003.a}.

\subsection{Setting of the problem and formulation of the result.}
Let $(\Omega,\mathscr F,\mathbb P) = (\Omega,\mathscr F,|\cdot|)$ be a probability space and let $(\mathscr F_n)_{n=0}^\infty$ be 
a \emph{binary} filtration, meaning that
\begin{enumerate}
	\item $\mathscr F_0 = \{\emptyset,\Omega\}$,
	\item for each $n\geq 1$, $\mathscr F_n$ is generated by $\mathscr F_{n-1}$ and 
			the subdivision of exactly one atom $A_n$ of $\mathscr F_{n-1}$ into
			two atoms $A_{n}', A_n''$ of $\mathscr F_n$ satisfying $|A_n''|\geq |A_n'|>0$.
\end{enumerate}
For a binary filtration $(\mathscr F_n)$, let $\mathscr A_n$ be the collection of all atoms of
$\mathscr F_n$ and set $\mathscr A = \cup_{n} \mathscr A_n$.

Let 
$S$ be a finite-dimensional linear space of $\mathscr F$-measurable, scalar-valued functions on $\Omega$ 
so that
there exist two constants $c_1,c_2\in (0,1]$ so that for each
atom $A\in\mathscr A$ we have the following stability inequality:
		\begin{equation}\label{eq:L1Linfty}
			|\{ \omega\in A : |f(\omega)| \geq c_1\|f\|_{A}
		\}| \geq c_2|A|,\qquad f\in S,
	\end{equation}
	where by $\|f\|_A$ we denote the $L^\infty$-norm of $f$ on the set $A$.
	Observe that this inequality implies in particular that $S \subset
	L^\infty(\Omega)$.
	For a list of  explicit examples of measure spaces $(\Omega,\mathscr F,\mathbb P)$
and vector spaces $S$ satisfying \eqref{eq:L1Linfty},
 we refer to \cite{part1}. We just remark here that if $\Omega = [0,1]^d$ for some positive integer 
$d$, if $\mathbb P$ is the Lebesgue measure and if the atoms contained in $\mathscr A$ are all rectangles (or---more generally---convex sets),
spaces $S$ consisting of polynomials up to a certain degree in general satisfy 
inequality \eqref{eq:L1Linfty}.

Given any subspace $V\subset L^1$ and any measurable set $A\in\mathscr F$, we let 
\[
	V(A) = \{ f\cdot \charfun_A : f\in V\},
\]
where by $\charfun_A$ we denote the characteristic function of the set $A$ defined by 
$\charfun_A(x) = 1$ if $x\in A$ and $\charfun_A(x)=0$ otherwise.
Moreover, for any $\sigma$-algebra $\mathscr G\subset \mathscr F$, we let 
\[
	V(\mathscr G)= \{ f : \Omega\to\mathbb R\;|\;  \text{for each atom $A$ of $\mathscr G$ there exists 
$g\in V$ so that $f\charfun_A = g\charfun_A$} \}.
\]
We also use the abbreviation $V_n := V(\mathscr F_n)$.
Let $P_n$ be the orthoprojector onto $S_n=S(\mathscr F_n)$ for $n\geq 0$ and set $P_{-1}\equiv 0$. Since $S\subset L^\infty(\Omega)$,
the operator $P_n$ can be extended to $L^1(\Omega)$. 
For each integer $n\geq 0$, we define the projector
	$Q_n = P_{n} - P_{n-1}$
	and we choose an arbitrary orthonormal basis $\Phi_n$
	of the range of $Q_n$ and define 
	the orthonormal system $\Phi = \cup_{n=0}^\infty \Phi_n$.
	The collection $\Phi$ is called a \emph{local orthonormal system}, since 
	it is easy to see that functions in the range of $Q_n$ are supported 
	in the set $A_n$.

We now consider the two dictionaries
$ \cc = \{ f \cdot \charfun_A: f \in S, A \in \caa\}$ and 
$\Phi$.
Let $L^p(S) = \overline{{\rm span}}_{L^p(\Omega)}
 \cc =  \overline{{\rm span}}_{L^p(\Omega)} \Phi$, with $1< p < \infty$.
In this article, we investigate the relation 
between the approximation spaces $\app_q^\alpha (L^p(S), \cc)$ and 
$\app_q^\alpha (L^p(S), \Phi)$,  corresponding to the best $n$-term approximation in $L^p(S) $ by elements of $\cc$ and $\Phi$, respectively, where $\alpha >0$ and $0< q \leq \infty$.  
For the definition and further general properties of those approximation spaces, we 
refer to Section~\ref{sec:appr}.
$\mathscr F_n$ is a binary filtration so each element of $\Phi$ can be represented as a sum of two elements of $\cc$, so consequently we have the continuous embedding $\app_q^\alpha (L^p(S), \Phi)
\hookrightarrow \app_q^\alpha (L^p(S), \cc)$.  
It is known that in the classical Haar case, i.e. when  $S = {\rm span} ( \charfun_{\Omega})$ with $\Omega$ being 
a $d$-dimensional rectangle in $\mathbb R^d$ equipped with $d$-dimensional
Lebesgue measure, and the atoms $\mathscr A$ 
consist of  dyadic rectangles (that is, an atom $A \in \caa$ is divided into two new atoms of equal measure, that are both 
again rectangles), we have 
$\app_q^\alpha (L^p(S), \Phi) =
 \app_q^\alpha (L^p(S), \cc)$, 
cf. P.~Petrushev \cite[Theorems 3.3 and 5.3]{pp.2003.a}.
This motivates us to ask the question whether this equality is true in the general setting
as described above.
 We showed in 
\cite{part1} that (the $L^p$-renormalization of) $\Phi$ is a greedy basis in
$L^p(S)$, which implies that the  continuous embedding $\app_q^\alpha (L^p(S), \cc)
\hookrightarrow \app_q^\alpha (L^p(S), \Phi)$ depends on a specific  Bernstein 
type inequality $ \operatorname{BI}(\caa, S, p, \tau )$,  with parameters $1<p<\infty$, $0<\tau < p$ and $\beta = 1/p - 1/\tau$,  in the following way: 
\begin{itemize}
\item  The Bernstein inequality $ \operatorname{BI}(\caa, S, p, \tau )$ is simultaneously a necessary condition 
for embeddings $\app^\alpha_q (L^p(S), \cc) \hookrightarrow \app^\alpha_q  (L^p(S), \Phi)   $  for all $\alpha  > \beta$, $0 < q \leq \infty$, and  a sufficient condition for embeddings  $\app^\alpha_q(L^p(S), \cc) \hookrightarrow \app^\alpha_q (L^p(S), \Phi) $  for all $0< \alpha  < \beta$, $0 < q \leq \infty$.
\end{itemize}

We give the exact definition of the Bernstein inequality $\operatorname{BI}(\mathscr A,S,p,\tau)$
and its relation to approximation spaces $\app_q^\alpha(L^p(S), \mathscr C),\app_q^\alpha(L^p(S), \Phi)$ in Section~\ref{sec:locorth}. 
The main result of this paper is a geometric characterization of the Bernstein inequality $\operatorname{BI}(\caa, S, p, \tau) $,
i.e. a characterization in terms  of the  behaviour of functions from the space $S$ on atoms $\caa$ and rings $\crr = \{ A \setminus B: A, B \in \caa, B \subset A \}\setminus \mathscr A$. (This definition of a ring is a natural extension of the notion of a dyadic ring, as introduced in \cite{cdpx.1999} and applied  e.g. in \cite{hky.2000, pw.2003}.) 

To this end, we  show the following theorem: 

\begin{theo}\label{thm:main}
For every choice of parameters $(\mathscr A,S,p,\tau)$,  
 the validity of the Bernstein inequality 
$\operatorname{BI}(\mathscr A,S,p,\tau)$
 is equivalent to the following condition: 

There exist a number $\rho\in (0,1)$ and a constant $M$ such that 
for each finite sequence $X_0 \supset X_1 \supset \cdots \supset X_n$ of atoms in $\mathscr A$
with $|X_n|\geq \rho |X_0|$ and $X_j\in \{X_{j-1}',X_{j-1}''\}$ for each $j$, we have the inequality
\begin{equation}\label{eq:introw2s}
\Big( \sum_{i=1}^n \| f \charfun_{X_{i-1}\setminus X_i} ) \|_p^\tau \Big)^{1/\tau} 
\leq M \| f \charfun_{X_0\setminus X_n} \|_p,\qquad f\in S.
\end{equation}
\end{theo}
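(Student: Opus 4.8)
The plan is to establish the two implications separately. Throughout I would rely on three ingredients: the reformulation of the Bernstein inequality $\operatorname{BI}(\mathscr A,S,p,\tau)$ recalled in Section~\ref{sec:locorth}; the fact, proved in \cite{part1}, that the $L^p$-renormalization of $\Phi$ is a democratic unconditional basis of $L^p(S)$; and the following elementary consequence of \eqref{eq:L1Linfty}, which makes it possible to pass freely between $L^p$-norms of local pieces and the coefficient functionals entering $\operatorname{BI}$: for every $f\in S$, every atom $A\in\mathscr A$, and all $0<r,s\le\infty$,
\[
\|f\charfun_A\|_r\asymp|A|^{1/r-1/s}\,\|f\charfun_A\|_s,
\]
with constants depending only on $r,s,c_1,c_2$; and, in the same vein, $\|f\charfun_A\|_r\le C(\rho,r)\,\|f\charfun_E\|_r$ whenever $E\subset A$ and $|E|\ge\rho|A|$.

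First I would record a reduction used in both directions: condition \eqref{eq:introw2s} is monotone in $\rho$, so that if it holds for one $\rho_0\in(0,1)$ it holds for every $\rho\in(0,1)$. Indeed, a chain $X_0\supset\cdots\supset X_n$ with $|X_n|\ge\rho|X_0|$ can be split into consecutive subchains of measure ratio at least $\rho_0$ — cutting whenever the running measure has fallen by a factor $\rho_0$ since the start of the current subchain — with the number of subchains bounded in terms of $\rho$ and $\rho_0$ only; applying \eqref{eq:introw2s} for $\rho_0$ to each subchain and then bounding the sum of the $\tau$-th powers of the right-hand sides, which are $L^p$-norms of $f$ over pairwise disjoint sets whose union is $X_0\setminus X_n$, by means of H\"older's inequality (legitimate since $\tau<p$), recovers \eqref{eq:introw2s} for $\rho$. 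Hence in each argument I am free to choose $\rho$ conveniently.

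For the direction $\operatorname{BI}(\mathscr A,S,p,\tau)\Rightarrow\eqref{eq:introw2s}$ I would \emph{localize} the Bernstein inequality on test functions attached to a single chain. Given a chain $X_0\supset\cdots\supset X_n$ with $|X_n|\ge\rho|X_0|$ and $f\in S$, write $R_j:=X_{j-1}\setminus X_j\in\mathscr A$ for the discarded siblings and $m_1<\cdots<m_n$ for the levels at which $X_{j-1}$ is subdivided. The structural observation is that $f$ agrees with an element of $S$ both on each $R_j$ and on $X_n$; hence the function $g$ obtained from $f\charfun_{X_0\setminus X_n}$ by subtracting its $S_{m_1-1}$-component has $Q_kg=0$ for all $k\notin\{m_1,\dots,m_n\}$, so $g$ lies in the span of at most $n\dim S$ elements of $\Phi$, and a telescoping computation with the projectors $P_m$ expresses the surviving blocks $Q_{m_j}g$ in terms of $f$ and of the best $L^2(X_j)$-approximants of $f\charfun_{X_n}$ from $S$. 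Feeding $g$ into $\operatorname{BI}$, using the harmless bound $\|g\|_p\lesssim\|f\charfun_{X_0\setminus X_n}\|_p$ and — this is the crucial point — the fact that $|X_n|\ge\rho|X_j|$ for every $j$, so that by \eqref{eq:L1Linfty} and the norm equivalences those best-approximants differ from $f$ only by controlled ``tail'' errors, should isolate exactly the left-hand side of \eqref{eq:introw2s}. The delicate step, and the reason $\rho$ may need to be taken small, is the absorption of these best-approximation errors; I expect this, together with pinning down the right test function, to be the main obstacle in this direction. (Alternatively one can argue by contraposition, using a failure of \eqref{eq:introw2s} along chains of unbounded length to contradict the uniformity asserted by $\operatorname{BI}$.)

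For the direction $\eqref{eq:introw2s}\Rightarrow\operatorname{BI}(\mathscr A,S,p,\tau)$ I would \emph{globalize}. Given an arbitrary finite linear combination $g$ of elements of $\Phi$, the nonzero blocks $Q_kg$ are indexed by the atoms on which they are supported, and these atoms span a finite subtree of the tree of atoms of $(\mathscr F_n)$. I would decompose this subtree into its branching nodes — no more numerous than the blocks of $g$ — and the maximal non-branching chains joining them, and then refine each such chain (invoking the monotonicity above, with one fixed small $\rho$) into subchains of measure ratio at least $\rho$. Inequality \eqref{eq:introw2s} then bounds, subchain by subchain, the contribution of the ``sibling'' parts of the blocks to the Bernstein functional by the $L^p$-norm of $g$ over the corresponding ring; the residual ``core'' parts of the blocks, together with the branching-node terms, would be handled using the disjointness of siblings and the democracy and unconditionality of $\Phi$, and the pieces reassembled by a discrete Hardy-type inequality. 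Here I expect the main obstacle to be the bookkeeping: keeping the number and the sizes of the subchains under control so that the admissible $N^{1/\tau-1/p}$-budget of a Bernstein inequality is not exceeded, and handling cleanly the interface terms where consecutive chains meet.
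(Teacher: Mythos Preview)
Your plan has a genuine gap in each direction, and in both cases the missing ingredient is precisely what the paper isolates as a separate step.

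\medskip
\textbf{Direction $\wIIs\Rightarrow\operatorname{BI}$.} You propose to take an arbitrary $g$, look at the finite tree of atoms carrying its nonzero blocks $Q_kg$, and apply \eqref{eq:introw2s} along maximal chains. But \eqref{eq:introw2s} is an inequality for \emph{functions $f\in S$}: its right-hand side is $\|f\charfun_{X_0\setminus X_n}\|_p$ with $f\in S$. On a maximal chain inside your tree, the restriction of a general $g\in\Sigma_n(S)$ to the ring $X_0\setminus X_n$ is \emph{not} of this form, so \eqref{eq:introw2s} does not apply. The paper circumvents this by the combinatorial reduction (Fact~\ref{f.1}, the DeVore--Popov lemma) $\Sigma_n(S)\subset \Sigma_{\nu n}^{\mathrm{ring}}(S)$: every $g\in\Sigma_n(S)$ can be rewritten as $\sum_{G\in\mathscr G} f_G\charfun_G$ with the $G$'s \emph{pairwise disjoint} atoms or rings, $f_G\in S$, and $\card\mathscr G\le \nu n$. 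After this reduction one applies $\operatorname{BI}_{\mathrm{atoms}}$ and $\operatorname{BI}_{\mathrm{rings}}$ (both consequences of $\wIIs$ via Proposition~\ref{p.2} and Theorem~\ref{cl.4}) to each piece; the case $1<\tau<p$ still needs a further splitting of $Q_{\pp(X)}g$ into three parts and a Jensen/geometric-series argument (Lemma~\ref{lem.2}(i)). Your ``bookkeeping'' worry is real, but it is the disjoint atoms/rings decomposition, not a chain decomposition, that makes it tractable.

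\medskip
\textbf{Direction $\operatorname{BI}\Rightarrow\wIIs$.} Your test-function approach with $g=f\charfun_{X_0\setminus X_n}$ (minus a low-level correction) does give, via $\operatorname{BI}$ for $n=2$ or $3$, the bound $\sum_j\|Q_{m_j}g\|_p^\tau\lesssim\|f\charfun_R\|_p^\tau$. But $Q_{m_j}g=Q_{m_j}(f\charfun_{\bb(X_j)})+Q_{m_j}(f\charfun_{X_j\setminus X_n})$, and the second term is only controlled by $u(X_j)\|f\charfun_R\|_p$ (Corollary~\ref{cor:upperQ}); absorbing these errors over the whole chain requires $\sum_j u(X_j)^\tau\le M$, which is exactly condition~$\wI$. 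You flag this absorption as ``the main obstacle'' but give no mechanism for it. The paper proves $\wI$ \emph{separately} from $\operatorname{BI}_{\mathrm{atoms}}$ via a compactness/covering argument on the unit sphere of $S$ (Lemma~\ref{lem:compact} and Theorem~\ref{thm:w1_nec}) that produces a single function $f_0\in S$ realizing the lower bound $\|Q_{\pp(A)}(f_0\charfun_{X_n})\|_p\gtrsim\varepsilon_{\bb(A)}|\bb(A)|^{1/p}$ simultaneously for enough $A$ in the chain. Only then does $\wI+\wII\Rightarrow\wIIs$ (Theorem~\ref{cl.4}, using Lemma~\ref{lem:Q_equiv}) complete the argument.
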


Section~\ref{sec:bernstein} is devoted to the development of the 
tools needed for this  
result (see p.~\pageref{proof:main} for its eventual proof).
There, the aforementioned condition \eqref{eq:introw2s} will be called $\wIIs(\mathscr A,S,p,\tau)$.
In particular, this condition is satisfied if the filtration $(\mathscr F_n)$ is 
regular in the sense that there exists a constant $c>0$ such that 
for all positive integers $n$ we have the inequality $|A_n'| \geq c|A_n|$.
Then, in Sections~\ref{sec:special} and~\ref{sec:poly}, we apply this characterization to specific examples 
including
\begin{itemize}
\item general Haar systems, i.e. the case of $S = {\rm span} ( \charfun_\Omega)$,
\item $\Omega = [0,1]^d$, and the filtration $(\mathscr F_n)_{n=0}^\infty$ is generated by a sequence of rectangular partitions of $\Omega$, and 
$S$ being the space of polynomials of fixed degree $r$ in $d$ variables.
\end{itemize}

\section{Definitions and Preliminaries} \label{sec:not}

\subsection{Best $n$-term approximation spaces and Bernstein inequalities.} \label{sec:appr}
In this section, we summarize classical facts 
about approximation spaces and Bernstein inequalities, as 
given for instance in  \cite[Chapter 7, Sections 5 and 9]{constr.approx}.
Let $(\spp, \| \cdot \|)$ be a Banach space, and let $\dic \subset \spp$ be a subset that is linearly dense in $\spp$; the set $\dic$ is called a \emph{dictionary}.
We are interested in approximation spaces corresponding to the  best approximation by $n$-term linear  combinations of elements of 
$\dic$. That is, let for $n \in \nn$
$$
\Sigma_n = \Sigma_n^\dic = \Big\{ \sum_{j=1}^n c_j x_j : \text{with }x_j \in \dic \text{ and } \text{scalars } c_j \Big\}.
$$
Then, the best $n$-term approximation by linear combinations of $\dic$ is defined as
$$
 E_n(x) = E_n(x,\dic) =  \inf\{\| x - y\|: y \in \Sigma_n\},\qquad x\in \spp.
$$
We are interested in approximation spaces $\app_q^\alpha(\spp, \dic)$ with $0 < q \leq \infty$, $\alpha >0$, defined by
$$
\app_q^\alpha(\spp, \dic) = \{ x \in \spp: \| x \|_{\app_q^\alpha} = \| x \| + \| \{ 2^{n \alpha} E_{2^n}(x), n \geq 0 \} \|_{\ell^q}  < \infty \}.
$$
In the sequel, we will refer to the following fact, 
which is a direct consequence of 
\cite[Chapter~7, Theorem~9.1]{constr.approx}:
\begin{fact}
\label{intro.f1}
Let $0< q, \rho \leq \infty$ and $0 < \alpha < \beta $. Then $\app_q^\alpha(\spp, \dic) = (\spp, \app_\rho^\beta(\spp, \dic) )_{\alpha/\beta,q}$.
\end{fact}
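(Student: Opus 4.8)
The plan is to recognize Fact~\ref{intro.f1} as a direct instance of the general equivalence between approximation spaces and real interpolation spaces, in which the ``smoothness space'' of the relevant Jackson/Bernstein pair is taken to be $\app_\rho^\beta(\spp,\dic)$ itself. Concretely, I would verify that the approximation family $(\Sigma_n)_{n\geq 0}$ (with $\Sigma_0=\{0\}$, the empty linear combination) satisfies, relative to the couple $\bigl(\spp,\app_\rho^\beta(\spp,\dic)\bigr)$, the Jackson and Bernstein inequalities of order $r=\beta$ required in \cite[Chapter~7, Theorem~9.1]{constr.approx}, and then read off its conclusion with interpolation parameter $\theta=\alpha/\beta\in(0,1)$.

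First, $(\Sigma_n)_n$ is admissible: $\Sigma_n\subseteq\Sigma_{n+1}$, $c\,\Sigma_n=\Sigma_n$ for every scalar $c$, $\Sigma_n+\Sigma_n\subseteq\Sigma_{2n}$ since the sum of two $n$-term linear combinations of elements of $\dic$ is a $2n$-term one, and $\overline{\bigcup_n\Sigma_n}=\spp$ because $\dic$ is linearly dense; moreover $\app_\rho^\beta(\spp,\dic)$ is a complete (quasi-)normed space continuously embedded in $\spp$, so $\bigl(\spp,\app_\rho^\beta\bigr)$ is a compatible couple. Next, the Bernstein inequality $\|g\|_{\app_\rho^\beta}\leq C\,N^\beta\|g\|$ for $g\in\Sigma_N$ holds automatically: since $g\in\Sigma_m$ for all $m\geq N$ we have $E_m(g)=0$ there, while $E_m(g)\leq\|g\|$ always, so in $\|g\|_{\app_\rho^\beta}^\rho=\|g\|^\rho+\sum_{j\geq 0}\bigl(2^{j\beta}E_{2^j}(g)\bigr)^\rho$ only the indices with $2^j<N$ contribute, and $\sum_{2^j<N}2^{j\beta\rho}\leq C\,N^{\beta\rho}$ (with the evident supremum version when $\rho=\infty$). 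Likewise the Jackson inequality $E_N(y)\leq C\,N^{-\beta}\|y\|_{\app_\rho^\beta}$ for $y\in\app_\rho^\beta$ is automatic: membership forces $2^{j\beta}E_{2^j}(y)\leq\|y\|_{\app_\rho^\beta}$ for all $j$, and monotonicity of $N\mapsto E_N$ extends this from dyadic $N$ to all $N$. With these two inequalities in hand, \cite[Chapter~7, Theorem~9.1]{constr.approx} applies and yields $\app_q^\alpha(\spp,\dic)=\bigl(\spp,\app_\rho^\beta(\spp,\dic)\bigr)_{\alpha/\beta,q}$ with equivalent (quasi-)norms for all $0<\alpha<\beta$ and $0<q\leq\infty$, which is the assertion.

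There is no genuine obstacle here: the whole substance is contained in the cited general theorem, and the Jackson/Bernstein inequalities above are forced by the very definition of the approximation spaces. The only mild technicalities are bookkeeping --- the passage between the dyadic error sequence $(E_{2^n}(x))_{n\geq 0}$ used in the definition of $\app_q^\alpha$ and a continuous $K$-functional $K(t,x;\spp,\app_\rho^\beta)$, via the standard discretization $\|x\|_{(\spp,\app_\rho^\beta)_{\theta,q}}\sim\bigl\|\bigl(2^{n\theta\beta}K(2^{-n\beta},x;\spp,\app_\rho^\beta)\bigr)_{n\geq 0}\bigr\|_{\ell^q}$, and the tracking of constants when $q<1$ or $\rho<1$, where a $p$-triangle inequality replaces the triangle inequality with no change to the argument.
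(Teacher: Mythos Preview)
Your proposal is correct and follows essentially the same route as the paper's own proof: verify that the pair $\bigl(\spp,\app_\rho^\beta(\spp,\dic)\bigr)$ satisfies the Jackson and Bernstein inequalities of order $\beta$ with respect to the family $(\Sigma_n)$, and then invoke \cite[Chapter~7, Theorem~9.1]{constr.approx}. The paper records this in two sentences; you have simply spelled out the straightforward verifications of the Jackson and Bernstein inequalities in detail.
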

\begin{proof}
It is enough to see that $\spp$ and $\app_\rho^\beta(\spp, \dic)$ satisfy both Jackson and Bernstein inequalities, as  in \cite[Chapter 7, Equations (5.4), (5.5)]{constr.approx}, with 
exponent $\beta$.
Then, it remains to apply  \cite[Chapter~7, Theorem~9.1]{constr.approx}.
\end{proof}

\begin{fact}
\label{intro.f2} Let $(\spy, \| \cdot \|_\spy) \subset (\spp, \| \cdot\|) $ be a subspace such that $\dic \subset \spy$ (here, $\| \cdot \|_\spy$ can be either a norm or a quasi-norm). Fix $0< q \leq\infty$ and $\gamma> 0$.   
If $\app_q^\gamma (\spp, \dic)  \hookrightarrow 
(\spy, \| \cdot \|_\spy)$, then the following Bernstein inequality is satisfied:
$$
\| y \|_\spy \leq C 2^{n \gamma } \| y \| \quad \hbox{ for } \quad y \in \Sigma_{2^n}. 
$$
Consequently, for all $0< \kappa < \gamma$ and $0 < \rho \leq \infty$, we have $\app_\rho^\kappa (\spp, \dic)  
\hookrightarrow (\spp, \spy)_{\kappa/\gamma, \rho}$.
\end{fact}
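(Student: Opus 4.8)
The plan is to prove the two displayed assertions in turn. The first is immediate from the hypothesis: every element $y$ of $\Sigma_{2^n}$ automatically has finite $\app_q^\gamma$-norm, controlled by a constant times $2^{n\gamma}\|y\|$, so the continuous embedding $\app_q^\gamma(\spp,\dic)\hookrightarrow(\spy,\|\cdot\|_\spy)$ forces the Bernstein inequality. Concretely, I would fix $n\geq0$ and $y\in\Sigma_{2^n}$; since $\Sigma_{2^n}\subseteq\Sigma_{2^m}$ for $m\geq n$ we have $E_{2^m}(y)=0$ for $m\geq n$, while for $m<n$ the crude bound $E_{2^m}(y)\leq\|y\|$ holds (use the zero element of $\Sigma_{2^m}$). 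Hence $\{2^{m\gamma}E_{2^m}(y)\}_{m\geq0}$ is supported on $\{0,\dots,n-1\}$ and there dominated by $\|y\|\{2^{m\gamma}\}_{m<n}$, whose $\ell^q$-quasi-norm is a finite geometric sum bounded by $C_{\gamma,q}2^{n\gamma}$ (for $q=\infty$ it equals $2^{(n-1)\gamma}$). Thus $\|y\|_{\app_q^\gamma}\leq(1+C_{\gamma,q})2^{n\gamma}\|y\|<\infty$, so $y\in\app_q^\gamma(\spp,\dic)$, and continuity of the embedding gives $\|y\|_\spy\leq C_0\|y\|_{\app_q^\gamma}\leq C2^{n\gamma}\|y\|$, which is the asserted Bernstein inequality.

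For the second assertion I would run the classical telescoping/inverse-theorem argument (the half of the proof of \cite[Chapter~7, Theorem~9.1]{constr.approx} which uses Bernstein but not Jackson); since $\spy$ is not assumed to satisfy any companion Jackson estimate, one can only hope for an embedding into an interpolation space, not an identity. Given $x\in\app_\rho^\kappa(\spp,\dic)$, choose near-best approximants $g_k\in\Sigma_{2^k}$ with $\|x-g_k\|\leq2E_{2^k}(x)$ --- these lie in $\spy$ because $\dic\subset\spy$ and $\spy$ is linear --- and put $g_{-1}=0$. Since $E_{2^k}(x)\to0$, the series $x=\sum_{k\geq0}(g_k-g_{k-1})$ converges in $\spp$, with $g_k-g_{k-1}\in\Sigma_{2^k}+\Sigma_{2^{k-1}}\subseteq\Sigma_{2^{k+1}}$ and, by the triangle inequality, $\|g_k-g_{k-1}\|\lesssim e_k$, where $e_0:=\|x\|$ and $e_k:=E_{2^{k-1}}(x)$ for $k\geq1$; one checks at once that $\|\{2^{k\kappa}e_k\}_{k\geq0}\|_{\ell^\rho}\lesssim\|x\|_{\app_\rho^\kappa}$. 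The Bernstein inequality just proved, applied to each difference, gives $\|g_k-g_{k-1}\|_\spy\lesssim2^{k\gamma}\|g_k-g_{k-1}\|\lesssim2^{k\gamma}e_k$.

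Now for each $m\geq0$, splitting $x=(x-g_m)+g_m$ and estimating the $K$-functional of the couple $(\spp,\spy)$ at the scale $t=2^{-m\gamma}$, one obtains
\[
2^{m\kappa}K\big(2^{-m\gamma},x;\spp,\spy\big)\ \leq\ 2^{m\kappa}\|x-g_m\|+2^{m(\kappa-\gamma)}\|g_m\|_\spy\ \lesssim\ 2^{m\kappa}E_{2^m}(x)+\sum_{k=0}^{m}2^{-(m-k)(\gamma-\kappa)}\,\big(2^{k\kappa}e_k\big).
\]
Taking the $\ell^\rho$-norm over $m\geq0$: the first summand contributes $\lesssim\|x\|_{\app_\rho^\kappa}$ directly from the definition of $\|\cdot\|_{\app_\rho^\kappa}$, and the second is a discrete convolution of $\{2^{k\kappa}e_k\}_k$ with the kernel $\{2^{-j(\gamma-\kappa)}\}_{j\geq0}$, which lies in $\ell^1$ precisely because $\gamma>\kappa$; hence that contribution is $\lesssim\|\{2^{k\kappa}e_k\}_k\|_{\ell^\rho}\lesssim\|x\|_{\app_\rho^\kappa}$ (by Young's inequality when $\rho\geq1$ and by $(\sum a_j)^\rho\leq\sum a_j^\rho$ when $\rho<1$). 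Since $\|x\|_{(\spp,\spy)_{\kappa/\gamma,\rho}}$ is equivalent to $\|\{2^{m\kappa}K(2^{-m\gamma},x;\spp,\spy)\}_{m\geq0}\|_{\ell^\rho}$ --- any geometric discretization of the interpolation $K$-norm being admissible --- this yields $\|x\|_{(\spp,\spy)_{\kappa/\gamma,\rho}}\lesssim\|x\|_{\app_\rho^\kappa}$, i.e.\ the embedding $\app_\rho^\kappa(\spp,\dic)\hookrightarrow(\spp,\spy)_{\kappa/\gamma,\rho}$. Alternatively, once the Bernstein inequality is in hand one may simply cite \cite[Chapter~7]{constr.approx} for this step.

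I do not expect a genuine obstacle: the argument is entirely classical bookkeeping. The one thing requiring attention is the quasi-Banach range $q,\rho<1$ (and quasi-normed $\spy$), where the triangle inequalities used above must be replaced by the corresponding $r$-power estimates for a suitable $r\in(0,1]$; but since $\gamma>\kappa$ strictly, all the resulting geometric/Hardy-type sums still converge, so nothing is lost. One should also keep track of the harmless edge effects --- the inclusion $\Sigma_{2^k}+\Sigma_{2^{k-1}}\subseteq\Sigma_{2^{k+1}}$, the $k=0$ term, and passing from an arbitrary $t>0$ to the dyadic value $2^{-m\gamma}$ in the $K$-functional --- each of which affects only constants.
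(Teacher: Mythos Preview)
Your proposal is correct and follows essentially the same approach as the paper: the first part is identical in substance (bound $\|y\|_{\app_q^\gamma}$ by a geometric sum for $y\in\Sigma_{2^n}$ and apply the embedding), and for the second part the paper simply cites \cite[Chapter~7, Theorem~5.1(ii)]{constr.approx} (or the argument inside Theorem~9.1 there), which is exactly the telescoping/$K$-functional/discrete Hardy argument you have written out in detail. Your version is a fleshed-out rendering of what the paper defers to the reference, including the quasi-norm caveats.
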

\begin{proof} By assumption, $\| y \|_\spy \leq C \| y \|_{\app_q^\gamma}$. 
Therefore, for  $y \in \Sigma_{2^n}$ we have
$$
\| y \|_\spy \leq C \| y \|_{\app_q^\gamma} = C \Big( \| y \|  + \Big(\sum_{j=0}^{n-1} \big( 2^{j \gamma} E_{2^j}(y) \big)^q \Big)^{1/q} \Big) \leq  C 2^{n \gamma} \| y \|.
$$
The last statement is a consequence of the Bernstein inequality, 
cf.   \cite[Chapter 7, Theorem 5.1 (ii)]{constr.approx}, 
or the corresponding argument in the proof of 
\cite[Chapter~7, Theorem~9.1]{constr.approx}.
\end{proof}

A particular case is when the dictionary is a greedy basis in $\spp$ (cf. \cite{kon.tem, greedy}). That is, let $\Psi = \{\psi_n, n \in \nn\}$ be a normalized basis in $\spp$,
with $\Psi^* =\{ \psi^*_n, n \in \nn \}$ being its biorthogonal system. For $x \in \spp$ and $n \in \nn$, let $\Lambda_n(x)$ be a set of indices such that the cardinality $\card \Lambda_n(x)$ of 
the set $\Lambda_n(x)$ equals $n$ and
$\min_{j \in \Lambda_n(x) } |\psi_j^*(x)| \geq \max_{j' \not \in \Lambda_n(x) } |\psi_{j'}^*(x)|$. Then  $G_n(x) = \sum_{j \in \Lambda_n(x) } \psi_j^*(x) \psi_j$ is called the $n$-th greedy approximation of $x$. The basis $\Psi$ is called \emph{greedy} if 
$E_n(x) \simeq \| x - G_n(x)\|$ for all $x \in \spp$ and $ n \in \nn$. It is known that a basis is greedy if and only if it is  unconditional and democratic
cf. \cite[Chapter 1.3]{greedy}.

Let $\Psi = (\psi_n)$ be  a greedy basis that additionally satisfies 
the \emph{$p$-Temlyakov property} (cf. \cite[Equation~(1.130)]{greedy}) which means that
for some constant $C$, the 
inequality 
\begin{equation}\label{eq:temlyakov}
C^{-1}(\card \Lambda)^{1/p}\leq \Big\| \sum_{n\in\Lambda} \psi_n \Big\| \leq C (\card \Lambda)^{1/p}	
\end{equation}
is true for every finite index set $\Lambda$. 

In this case, the space $\app_q^\alpha(\spp, \Psi)$ can be described in terms of 
coefficients $\{\psi_n^*(x), n \in \nn \}$ as in 
\cite[Theorem 1.80]{greedy}. We need this result in the following particular case:
\begin{fact}
\label{intro.f3}
Let $\Psi$ be a greedy basis in $(\spp, \| \cdot \|)$ with $p$-Temlyakov property, $0< \tau < p$ and $\beta = 1/\tau - 1/p$.

Then we have
$$\app_\tau^\beta(\spp, \Psi) = \Big\{ x\in \spp : \sum_{n \in \nn} | \psi^*_n(x)|^\tau < \infty \Big\},$$
with equivalence of (quasi-)norms $\| x \|_{\app_\tau^\beta} \simeq  (\sum_{n \in \nn} | \psi^*_n(x)|^\tau )^{1/\tau}$.
\end{fact}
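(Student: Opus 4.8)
The statement is the special case $q=\tau$, $\alpha=\beta$ of the coefficient description of greedy approximation spaces, so the plan is to reconstruct its proof (essentially \cite[Theorem~1.80]{greedy}) in this concrete form. First I would fix $x$, let $(a_k^*)_{k\ge1}$ be the non‑increasing rearrangement of the sequence $(|\psi_n^*(x)|)_{n\in\nn}$, and set $d_m:=a^*_{2^m}$ for $m\ge0$. Splitting $\sum_{n\in\nn}|\psi_n^*(x)|^\tau=\sum_{k\ge1}(a_k^*)^\tau$ over the dyadic blocks $\{2^m<k\le2^{m+1}\}$ and using monotonicity of $(a_k^*)$ gives
\[
\Big(\sum_{n\in\nn}|\psi_n^*(x)|^\tau\Big)^{1/\tau}\simeq\Big(\sum_{m\ge0}2^m d_m^\tau\Big)^{1/\tau}=:|x|_\tau,
\]
so it is enough to prove $\|x\|_{\app_\tau^\beta}\simeq|x|_\tau$. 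Throughout I would use that a greedy basis is unconditional, both via uniform boundedness of the coordinate projections $P_F(\sum_n c_n\psi_n)=\sum_{n\in F}c_n\psi_n$ and via the multiplier bound $\|\sum_n m_n c_n\psi_n\|\le K_u\|\sum_n c_n\psi_n\|$ for $\sup_n|m_n|\le1$, together with both halves of the Temlyakov estimate~\eqref{eq:temlyakov}.

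For the upper estimate $\|x\|_{\app_\tau^\beta}\lesssim|x|_\tau$ I would start from the greedy approximation $G_{2^\ell}(x)$, whose remainder $x-G_{2^\ell}(x)$ is the part of the basis expansion of $x$ over the indices carrying the coefficients $(a_k^*)_{k>2^\ell}$. Grouping those indices into dyadic blocks $\{2^m<k\le2^{m+1}\}$, $m\ge\ell$, on which all coefficient magnitudes are $\le d_m$, the multiplier bound and the upper half of~\eqref{eq:temlyakov} give
\[
E_{2^\ell}(x)\le\|x-G_{2^\ell}(x)\|\lesssim\sum_{m\ge\ell}d_m\,2^{m/p}.
\]
Since $1/p+\beta=1/\tau$, putting $e_m:=d_m\,2^{m/\tau}$ (so that $\sum_m e_m^\tau\simeq|x|_\tau^\tau$) turns this into $2^{\ell\beta}E_{2^\ell}(x)\lesssim\sum_{j\ge0}e_{\ell+j}\,2^{-j\beta}$, a discrete convolution of $(e_m)$ with the geometrically summable sequence $(2^{-j\beta})_{j\ge0}$. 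Young's inequality $\|a\ast b\|_{\ell^\tau}\le\|a\|_{\ell^\tau}\|b\|_{\ell^1}$ for $\tau\ge1$, respectively its $\tau$‑subadditive counterpart $\|a\ast b\|_{\ell^\tau}\le\|a\|_{\ell^\tau}\|b\|_{\ell^\tau}$ when $0<\tau<1$, then yields $\|(2^{\ell\beta}E_{2^\ell}(x))_{\ell\ge0}\|_{\ell^\tau}\lesssim|x|_\tau$; the same block estimate with $\ell=0$, followed by Hölder's inequality (or the embedding $\ell^\tau\hookrightarrow\ell^1$ when $\tau<1$), gives $\|x\|\lesssim|x|_\tau$, and adding the two proves the bound.

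For the lower estimate $|x|_\tau\lesssim\|x\|_{\app_\tau^\beta}$ I would fix $n\ge1$, set $N=2^{n-1}$, and take an arbitrary $y=\sum_{j\in\Lambda}b_j\psi_j\in\Sigma_N$ with $\card\Lambda\le N$. Choosing a set $\Gamma$ of $2^n$ indices realising the $2^n$ largest values of $|\psi_j^*(x)|$, the set $F:=\Gamma\setminus\Lambda$ satisfies $\card F\ge N$ and $|\psi_j^*(x-y)|=|\psi_j^*(x)|\ge a^*_{2^n}$ for $j\in F$; boundedness of $P_F$, the multiplier bound, and the lower half of~\eqref{eq:temlyakov} then give $\|x-y\|\gtrsim a^*_{2^n}\,N^{1/p}$, hence $E_{2^{n-1}}(x)\gtrsim a^*_{2^n}\,2^{n/p}$, i.e.\ $(a^*_{2^n})^\tau\lesssim2^{-n\tau/p}E_{2^{n-1}}(x)^\tau$. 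Plugging this into $|x|_\tau^\tau\simeq(a_1^*)^\tau+\sum_{n\ge1}2^n(a^*_{2^n})^\tau$, using $2^n2^{-n\tau/p}=2^{n\tau\beta}\simeq2^{(n-1)\tau\beta}$ and $a_1^*\lesssim\|x\|$ (uniform boundedness of the biorthogonal functionals), gives $|x|_\tau^\tau\lesssim\|x\|^\tau+\sum_{\ell\ge0}2^{\ell\tau\beta}E_{2^\ell}(x)^\tau\lesssim\|x\|_{\app_\tau^\beta}^\tau$.

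I expect no single hard step here: the real work is the dyadic‑block bookkeeping that makes the two halves of~\eqref{eq:temlyakov} and the multiplier form of unconditionality applicable blockwise, and the care needed in the quasi‑Banach range $0<\tau<1$, where the triangle inequality on the coefficient side must be replaced by the $\tau$‑subadditivity $\|a+b\|_{\ell^\tau}^\tau\le\|a\|_{\ell^\tau}^\tau+\|b\|_{\ell^\tau}^\tau$ (the space $\spp$ itself is Banach). All constants come out depending only on $p$, $\tau$, the unconditional constant of $\Psi$, and the constant $C$ in~\eqref{eq:temlyakov}.
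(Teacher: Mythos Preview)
The paper does not prove this fact at all; it simply records it as a special case of \cite[Theorem~1.80]{greedy} and moves on. Your argument is a correct and complete reconstruction of that standard proof: the dyadic blocking of the rearranged coefficients, the use of unconditionality plus the two halves of the $p$-Temlyakov estimate to pass between $E_{2^\ell}(x)$ and the blockwise quantities $d_m 2^{m/p}$, the discrete Hardy/Young step for the upper bound, and the projection/coefficient-comparison argument for the lower bound are all sound, including the handling of the range $0<\tau<1$.
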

Now, we would like to specialize Fact \ref{intro.f2} to the case when $\spy$  itself 
is an approximation space corresponding to a greedy basis with the  $p$-Temlyakov property.  

\begin{fact}\label{fact:important}
Let $\dic$ be a dictionary in $(\spp, \| \cdot \|)$, and let $\Psi$ be a greedy basis in $(\spp, \| \cdot \|)$ with the $p$-Temlyakov property.
Let $0< \tau < p$ and $\beta = 1/\tau - 1/p$. Consider the following Bernstein inequality:
\begin{equation}
\label{intro.e1}
\Big(\sum_{j \in \nn} | \psi^*_j(y)|^\tau \Big)^{1/\tau} \leq C 2^{n \beta } \| y \| \quad \hbox{ for } \quad y \in 
\Sigma_{2^n}^\dic. 
\end{equation}

Then:
\begin{itemize}
\item[(i)] If the Bernstein inequality \eqref{intro.e1} is satisfied, then  $\app_q^\alpha (\spp, \dic) \hookrightarrow \app_q^\alpha(\spp, \Psi)$
for all $0 < \alpha <\beta $ and $0<q\leq \infty$.
\item[(ii)] If the Bernstein inequality \eqref{intro.e1} is not satisfied, then $\app_q^\gamma (\spp, \dic) \not\hookrightarrow \app_q^\gamma(\spp, \Psi)$
for any $\gamma > \beta $ and $0<q\leq \infty$.
\end{itemize}
\end{fact}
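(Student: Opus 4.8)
The plan is to recognize the Bernstein inequality \eqref{intro.e1} as a Bernstein inequality in the classical sense of \cite[Chapter~7]{constr.approx} for the couple $(\spp,\spy)$ with the smoothness space $\spy:=\app_\tau^\beta(\spp,\Psi)$, and then to run the abstract machinery already assembled in Facts~\ref{intro.f1}--\ref{intro.f3}. By Fact~\ref{intro.f3} I will throughout equip $\spy$ with the (quasi-)norm $y\mapsto\big(\sum_j|\psi_j^*(y)|^\tau\big)^{1/\tau}$, which is equivalent to $\|\cdot\|_{\app_\tau^\beta}$; since $\|x\|\le\|x\|_{\app_\tau^\beta}$, we have $\spy\hookrightarrow\spp$.

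For (i) I would assume \eqref{intro.e1}. Taking $n=0$ there shows $\dic\subset\Sigma_1^\dic\subset\spy$, and \eqref{intro.e1} combined with Fact~\ref{intro.f3} says exactly that $\|y\|_\spy\le C2^{n\beta}\|y\|$ for $y\in\Sigma_{2^n}^\dic$, i.e.\ the Bernstein inequality with exponent $\beta$ for the couple $(\spp,\spy)$ and the approximation family $(\Sigma_n^\dic)$. Feeding this into \cite[Chapter~7, Theorem~5.1(ii)]{constr.approx} --- the implication ``Bernstein $\Rightarrow$ embedding of the approximation space into the interpolation space'', which is also the device used inside the proof of Fact~\ref{intro.f2} --- gives $\app_q^\alpha(\spp,\dic)\hookrightarrow(\spp,\spy)_{\alpha/\beta,q}$ for $0<\alpha<\beta$ and $0<q\le\infty$. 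It then only remains to apply Fact~\ref{intro.f1} to the dictionary $\Psi$ (with $\beta$ as the higher exponent and $\tau$ as the secondary index) to rewrite $(\spp,\spy)_{\alpha/\beta,q}=(\spp,\app_\tau^\beta(\spp,\Psi))_{\alpha/\beta,q}=\app_q^\alpha(\spp,\Psi)$, which is the claim.

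For (ii) I would argue by contraposition: assume $\app_q^\gamma(\spp,\dic)\hookrightarrow\app_q^\gamma(\spp,\Psi)$ for some $\gamma>\beta$ and $0<q\le\infty$, and produce \eqref{intro.e1}. The subtle point --- and the step I expect to be the main obstacle --- is that this embedding lives at the ``coarse'' smoothness level $\gamma$, whereas \eqref{intro.e1} is the Bernstein inequality at the \emph{sharp} level $\beta<\gamma$; estimating $\|y\|_\spy$ directly from the embedding only recovers the exponent $\gamma$. I would resolve this by propagating the embedding \emph{down} to every level $\kappa<\gamma$ via real interpolation: Fact~\ref{intro.f1}, used with its roles specialized to exponents $\kappa<\gamma$, identifies $\app_\rho^\kappa(\spp,\dic)=(\spp,\app_q^\gamma(\spp,\dic))_{\kappa/\gamma,\rho}$ and $\app_\rho^\kappa(\spp,\Psi)=(\spp,\app_q^\gamma(\spp,\Psi))_{\kappa/\gamma,\rho}$ for all $0<\kappa<\gamma$, $0<\rho\le\infty$; applying the interpolation functor to the identity map (bounded $\spp\to\spp$ and, by hypothesis, $\app_q^\gamma(\spp,\dic)\to\app_q^\gamma(\spp,\Psi)$) then yields $\app_\rho^\kappa(\spp,\dic)\hookrightarrow\app_\rho^\kappa(\spp,\Psi)$ for all these $\kappa,\rho$.

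Specializing to $\kappa=\beta$ (legitimate since $\beta<\gamma$) and $\rho=\tau$ gives $\app_\tau^\beta(\spp,\dic)\hookrightarrow\app_\tau^\beta(\spp,\Psi)=\spy$. Since $E_n(f,\dic)=0$ for $n\ge1$ whenever $f\in\dic$, we have $\dic\subset\app_\tau^\beta(\spp,\dic)\subset\spy$, so Fact~\ref{intro.f2} applies with this $\spy$ and, in its notation, $q=\tau$ and $\gamma=\beta$; its conclusion is exactly $\|y\|_\spy\le C2^{n\beta}\|y\|$ for $y\in\Sigma_{2^n}^\dic$, which by Fact~\ref{intro.f3} unwinds to \eqref{intro.e1}. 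This contradicts the assumed failure of \eqref{intro.e1}, proving (ii). Apart from the interpolation step just highlighted, every remaining step is routine bookkeeping with Facts~\ref{intro.f1}--\ref{intro.f3}: no property of $\dic$ beyond linear density is used, and the greedy and $p$-Temlyakov hypotheses on $\Psi$ enter only through Fact~\ref{intro.f3}.
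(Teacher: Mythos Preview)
Your proposal is correct and follows essentially the same approach as the paper. The only cosmetic difference is in part~(ii): to pass from the embedding at level~$\gamma$ down to level~$\kappa<\gamma$, the paper invokes the ``Consequently'' clause of Fact~\ref{intro.f2} (with $\spy=\app_q^\gamma(\spp,\Psi)$) followed by Fact~\ref{intro.f1}, whereas you apply Fact~\ref{intro.f1} on both the $\dic$- and $\Psi$-sides and then use functoriality of real interpolation directly; the remaining steps (specialize to $\kappa=\beta$, $\rho=\tau$, then apply Fact~\ref{intro.f2} and Fact~\ref{intro.f3}) are identical.
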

\begin{proof}
To check (i), denote $\spy = \app_\tau^\beta (\spp, \Psi)$. Combining Facts 
\ref{intro.f3} and \ref{intro.f1} with 
\cite[Chapter~7, Theorem~5.1 (ii)]{constr.approx}
 (or with the corresponding argument in the proof of 
  \cite[Chapter~7, Theorem~9.1]{constr.approx}) we find 
for $0<\alpha < \beta$ and $0<q \leq \infty$
$$
\app_q^\alpha (\spp, \dic) \hookrightarrow (\spp, \spy)_{\alpha/\beta, q} = (\spp, \app_\tau^\beta (\spp, \Psi) )_{\alpha/\beta,q} = \app_q^\alpha (\spp, \Psi).
$$

To see part (ii), suppose on the contrary that $\app_q^\gamma (\spp, \dic) \hookrightarrow \app_q^\gamma(\spp, \Psi)$ for some  $\gamma > \beta $ and $0<q\leq \infty$. We shall see that in such case, the Bernstein inequality \eqref{intro.e1} is satisfied. Let $0< \kappa < \gamma$ and $0< \rho \leq \infty$.
 Observe that  by Fact \ref{intro.f2} (with $\spy =  \app_q^\gamma(\spp, \Psi)$) and Fact \ref{intro.f1}, we have 
$$
 \app_\rho^\kappa (\spp, \dic) \hookrightarrow (\spp, \app_q^\gamma(\spp, \Psi) )_{\kappa/\gamma, \rho} = \app_\rho^\kappa (\spp, \Psi).
$$
Specializing this inclusion to $\kappa = \beta$ and $\rho = \tau$, we get $\app_\tau^\beta (\spp, \dic) \hookrightarrow \app_\tau^\beta (\spp, \Psi)$.
Applying again Fact \ref{intro.f2}, this time with $\spy = \app_\tau^\beta (\spp, \Psi)$, and combining it with Fact \ref{intro.f3} we get the Bernstein
inequality \eqref{intro.e1}.
\end{proof}

\subsection{Bernstein type inequalities for local orthonormal systems} \label{sec:locorth}

	We now continue with our general setting and use the notation 
	introduced in Section~\ref{sec:intro}.
	For any subspace $V\subset L^1$, we denote by
	\begin{equation}\label{eq:def_sigma}
		\Sigma_n(V) =  \Big\{ \sum_{i=1}^n v_i\charfun_{A_i} : v_i\in V, A_i\in\mathscr A \Big\}	
	\end{equation}
	the set of functions
	that can be written as the sum of at most $n$ functions that are contained in $V$ locally 
	on atoms.

If $A\in\mathscr A$ is such that it strictly contains another atom from $\mathscr A$, there is a unique index
$n_0 = n_0(A)\geq 1$ so that $A = A_{n_0}\in\mathscr A_{n_0 -1}$. 
Then we set $\sm{A} := \sm{A_{n_0}}$ and $\la{A} := \la{A_{n_0}}$. 
Moreover, we denote $\bb(\sm{A}) = \la{A}$ and $\bb(\la{A}) = \sm{A}$.
Note that
$\{ \sm{A}, \la{A} \} \subset \mathscr A_{n_0}$.
Moreover, set $\caa^* = \caa \setminus \{\Omega\}$, and define
for $A \in \caa^*$ its predecessor $\pp(A)$ as the smallest 
set (atom) contained in $\mathscr A$ that is still a strict superset of $A$. 
Put 
$\caa' = \{A \in \caa^*: A = \sm{\pp(A)}\}$,
 $\caa'' = \{A \in \caa^*: A = \la{\pp(A)}\}$.
For $\lambda\in (0,1)$, we set $\mathscr A(\lambda) = \{A\in\mathscr A^* : |A|\leq \lambda |\pp(A)|\}$.
For $X \in \caa$, denote by $\chain(X) = \{A \in \caa: A \supseteq X\}$ the finite chain of 
atoms larger than or equal to $X$.
	We now give the definition of the Bernstein inequality $\operatorname{BI}(\mathscr A,S,p,\tau)$:

	\begin{defn}[Bernstein inequality]\label{def:bernstein}
	Fix $1<p<\infty$ and $0<\tau<p$ and let $\beta := 1/\tau - 1/p>0$.
		We say that the \emph{Bernstein inequality} $\operatorname{BI}=\operatorname{BI}(\mathscr A, S,p,\tau)$ 	is satisfied if there 
		exists a constant $C$ such that for all positive integers $n$ and all $g\in \Sigma_n(S)$ we 
		have the inequality
		\begin{equation}\label{eq:BI}
			\Big( \sum_{j\geq 0} \| Q_j g\|_p^{\tau}\Big)^{1/\tau} \leq C n^\beta \|g\|_p.
		\end{equation}
	\end{defn}

	Let us, for $A\in \mathscr A$ and  $n := n_0(A)$, use the notation 
	$Q_A = Q_n$, with the convention that if $n_0(A)$ is not defined (i.e.
	the atom $A$ is never split) then 
	$Q_A \equiv 0$. Using this notation, we can write the left hand side in \eqref{eq:BI} as
	\[
		\Big( \sum_{j\geq 0} \| Q_j g\|_p^{\tau}\Big)^{1/\tau}  = 
			\Big( \|P_0 g\|_p^\tau +  \frac{1}{2}\sum_{A\in \mathscr A^*} \| Q_{\pp(A)} g\|_p^{\tau}\Big)^{1/\tau}.
	\]

We now discuss the relation between this definition of the Bernstein inequality and 
 inequality \eqref{intro.e1}, in order to apply the results from Section~\ref{sec:appr}
 to our setting of local orthonormal systems $\Phi = (\varphi_n)_{n=1}^\infty$, introduced 
 in Section~\ref{sec:intro}. For $1<p<\infty$, we consider the renormalized system $\Psi = (\psi_n)_{n=1}^\infty$,
 given by $\psi_n = \varphi_n / \|\varphi_n\|_p$.
In the following we use the symbol $A(t)\lesssim B(t)$ in order to denote the fact that there
exists a constant $C$ depending only on $c_1,c_2$ from 
\eqref{eq:L1Linfty} and the dimension of $S$  so that for all $t$
we have the inequality $A(t) \leq C B(t)$, where $t$ denotes all implicit or
explicit dependencies that the objects $A$ and $B$ might have. Similarly, we use
the notations $A(t)\gtrsim B(t)$ and $A(t)\simeq B(t)$.

We have shown in 
\cite[Equations (7.2) and (7.3)]{part1} that functions $f$ contained in the range of 
$Q_j$ satisfy 
\begin{equation}\label{eq:norm_equiv}
	\|f\|_p \simeq |T|^{1/p-1/2} \|f\|_2,
\end{equation}
where $T = A'$ with $A\in\mathscr A$ such that $Q_A = Q_j$.
Write $Q_j g = \sum_{\ell = 1}^k \langle g, h_\ell^*\rangle h_\ell$ for the functions 
$(h_\ell)_{\ell=1}^k$ from the collection $\Psi$ that are contained in the range of $Q_j$ 
and its biorthogonal system $(h_\ell^*)_{\ell=1}^k$.
Since the functions $(h_\ell)$ are orthogonal to each other,
each functions $h_\ell^*$ is a constant multiple of  $h_\ell$. 
Thus, we have $\|h_\ell\|_p = 1$ for all $\ell= 1,\ldots, k$.
The equivalence \eqref{eq:norm_equiv} allows us to show that for each integer $j\geq 0$, we have 
\[
\| Q_j g \|_p \simeq |T|^{1/p - 1/2} \| Q_j g\|_2 = |T|^{1/p - 1/2} \Big( \sum_{\ell=1}^k |\langle g,h_\ell^*\rangle|^2 \|h_\ell\|_2^2 \Big)^{1/2}
\simeq \Big( \sum_{\ell=1}^k | \langle g,h_\ell^*\rangle |^2\Big)^{1/2},
\]
and therefore, since $k$ is bounded above by the (finite) dimension of $S$, we obtain
\[
\|Q_j g\|_p^\tau \simeq \sum_{\ell=1}^k |\langle g, h_\ell^*\rangle|^\tau,
\]
showing the equivalence of the Bernstein inequality $\operatorname{BI}(\mathscr A,S,p,\tau)$ 
and inequality \eqref{intro.e1} in the setting of local orthonormal systems for $\spp=L^p(S)$.

We have shown in \cite[Corollary 7.3]{part1} that $\Psi$ is greedy in $L^p$ for $1<p<\infty$
by showing its unconditionality in $L^p$ and that it satisfies the $p$-Temlyakov property \eqref{eq:temlyakov}
in $L^p$.
Thus, we can apply the theory summarized in Section~\ref{sec:appr},
in particular Fact~\ref{fact:important}, to deduce that the equality of 
the approximation spaces $\app_q^\alpha(L^p(S),\mathscr C)$ and  $\app_q^\alpha(L^p(S),\Psi)$
is governed by the validity of the Bernstein inequality $\operatorname{BI}(\mathscr A,S,p,\tau)$, as follows:

\begin{theo}[Bernstein inequality]\label{bernst.equiv}
	Fix $1<p<\infty$ and $0<\tau<p$ and let $\beta := 1/\tau - 1/p>0$. Then:
	\begin{itemize}
\item[(A)]  
If the Bernstein inequality $\operatorname{BI}(\caa, S, p, \tau) $ is not satisfied, then  for all $\alpha  > \beta$, $0 < q \leq \infty$
there is $\app^\alpha_q (L^p(S), \Psi) \hookrightarrow \app^\alpha_q  (L^p(S), \cc)     $, but $\app^\alpha_q (L^p(S), \Psi) \neq \app^\alpha_q  (L^p(S), \cc)   $
\item[(B)] If the Bernstein inequality  $\operatorname{BI}(\caa, S, p, \tau) $ is satisfied, then for all $0< \alpha  < \beta$, $0 < q \leq \infty$
there is $ \app^\alpha_q(L^p(S), \cc) = \app^\alpha_q (L^p(S), \Psi) $. 
\end{itemize}
		\end{theo}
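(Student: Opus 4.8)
The plan is to obtain Theorem~\ref{bernst.equiv} as a direct consequence of the abstract machinery collected in Section~\ref{sec:appr}, once the Bernstein inequality $\operatorname{BI}(\mathscr A,S,p,\tau)$ of Definition~\ref{def:bernstein} has been recognized as the inequality \eqref{intro.e1} for the data $\spp = L^p(S)$, $\dic = \cc$, and $\Psi$ the $L^p$-renormalization of the local orthonormal system $\Phi$. By \cite{part1}, this $\Psi$ is a greedy basis of $L^p(S)$ with the $p$-Temlyakov property, so Fact~\ref{fact:important} is available. The identification of the two inequalities rests on the norm equivalence \eqref{eq:norm_equiv} on the range of each $Q_j$: since at most $\dim S$ elements of $\Psi$ are supported there, one gets
\[
\|Q_j g\|_p^\tau \simeq \sum_{\ell} |\langle g, h_\ell^*\rangle|^\tau, \qquad\text{hence}\qquad \sum_{j\geq 0}\|Q_j g\|_p^\tau \simeq \sum_{n} |\psi_n^*(g)|^\tau ,
\]
and since $S$ is a linear space one has $\Sigma_n(S) = \Sigma_n^{\cc}$, so both inequalities quantify over the same class of test functions. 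Finally, rescaling basis elements by nonzero constants does not affect $n$-term approximation errors, so that $\app_q^\alpha(L^p(S),\Phi) = \app_q^\alpha(L^p(S),\Psi)$, and I may freely pass between $\Phi$ and $\Psi$.

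Granting this, part~(A) is Fact~\ref{fact:important}(ii): if $\operatorname{BI}(\mathscr A,S,p,\tau)$ fails, then \eqref{intro.e1} fails, whence $\app_q^\gamma(L^p(S),\cc)\not\hookrightarrow\app_q^\gamma(L^p(S),\Psi)$ for every $\gamma>\beta$ and $0<q\leq\infty$; since the embedding $\app_q^\gamma(L^p(S),\Psi)=\app_q^\gamma(L^p(S),\Phi)\hookrightarrow\app_q^\gamma(L^p(S),\cc)$ always holds (each element of $\Phi$ is a sum of two elements of $\cc$, as recorded in Section~\ref{sec:intro}), this forced embedding is strict, which is the assertion of~(A). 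Part~(B) is Fact~\ref{fact:important}(i): if $\operatorname{BI}(\mathscr A,S,p,\tau)$ holds, then \eqref{intro.e1} holds, so $\app_q^\alpha(L^p(S),\cc)\hookrightarrow\app_q^\alpha(L^p(S),\Psi)$ for all $0<\alpha<\beta$ and $0<q\leq\infty$; combined with the always-valid reverse embedding this gives the equality $\app_q^\alpha(L^p(S),\cc)=\app_q^\alpha(L^p(S),\Psi)$ with equivalent (quasi-)norms.

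For this particular theorem there is no real obstacle: its content is carried entirely by the cited results---the greediness and $p$-Temlyakov property of $\Psi$ established in \cite{part1}, together with the abstract interpolation lemmas Fact~\ref{intro.f1}--Fact~\ref{fact:important}---so the proof amounts to chaining them through the identification of \eqref{eq:BI} with \eqref{intro.e1}. The only point requiring a word of care is extracting ``$\neq$'' in~(A) from the failure of a \emph{continuous} embedding: since $\app_q^\gamma(L^p(S),\cc)$ and $\app_q^\gamma(L^p(S),\Psi)$ are complete metrizable (quasi-)normed spaces and the inclusion $\app_q^\gamma(L^p(S),\Psi)\hookrightarrow\app_q^\gamma(L^p(S),\cc)$ is automatic, an equality of the underlying sets would, by the open mapping theorem for F-spaces, force the reverse continuous embedding as well, contradicting Fact~\ref{fact:important}(ii). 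The genuinely hard work of the paper---the geometric characterization in Theorem~\ref{thm:main} and its specializations---is independent of this statement.
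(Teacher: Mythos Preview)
Your proposal is correct and follows exactly the route the paper intends: the paper does not give a separate proof of Theorem~\ref{bernst.equiv} but presents it as the outcome of applying Fact~\ref{fact:important} after the identification of $\operatorname{BI}(\mathscr A,S,p,\tau)$ with \eqref{intro.e1} and the citation of the greediness/$p$-Temlyakov property from \cite{part1}. Your open mapping argument to upgrade ``no continuous embedding'' to ``$\neq$'' in part~(A) is a detail the paper leaves implicit, and it is correctly supplied.
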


Finally, it seems natural to ask if the space $ \app^\alpha_q(L^p(S), \cc) $ has some external description in case when Bernstein inequality $\operatorname{BI}(\caa, S, p, \tau) $ is not satisfied. For this, recall the paper Y. Hu, K. Kopotun, X. Yu \cite{hky.2000}. This paper treats best $n$-term approximation spaces for $L^p [0,1]^d$, $0<p<\infty$ and $\cc = \{f \cdot   \charfun_A: f \in \cpp_r, A \in \cd\}$, where $\cpp_r$ is the space 
of $d$-variate polynomials of degree $r$, and $\cd$ is the collection of dyadic cubes included in $[0,1]^d$. In particular,  \cite[Corollary 8]{hky.2000} describes 
the spaces $ \app^\alpha_q(L^p[0,1]^d, \cc) $ as interpolation spaces between $L^p[0,1]^d$ and some version of a space of functions of bounded variation
$V^r_{\sigma,p}[0,1]^d$. Let us note that this characterization can be extended to  the general setting of spaces $ \app^\alpha_q(L^p(S), \cc) $. 
We discuss this question in a separate note \cite{part3}.

\section{
Geometric conditions for Bernstein inequalities}
\label{sec:bernstein}
	The aim of this section is to give a condition that relies purely on the
	geometry of the filtration $(\mathscr F_n)$ and on the choice of the space $S$
	that is equivalent to the Bernstein inequality $\operatorname{BI}(\mathscr A,S,p,\tau)$ 
	in Definition~\ref{def:bernstein}

We now outline the plan for doing that. In Section~\ref{sec:pwQ} we collect 
some estimates for the operators $Q_{\pp(A)}$. Section~\ref{sec:w1} gives a 
geometric condition called $\wI$ that is equivalent to the Bernstein inequality 
for $n=1$. Section~\ref{sec:w2} gives more conditions $\wII$ and $\wIIs$ 
(Definitions~\ref{def.2} and \ref{def.3}) and 
investigates the relations among $\wI$ and $\wII$, $\wIIs$.
In Section~\ref{sec:rings} we show that  $\wII$ is 
equivalent to the Bernstein inequality for $n=2$.
In Section~\ref{sec:general}, we show our main result that the Bernstein inequality
is equivalent to the purely geometric condition $\wIIs$.
Finally, Section~\ref{sec:stability} treats the question under which assumptions
the conditions $\wI$ and $\wIIs$ are stable under forming linear spans of 
different choices for the spaces $S$. In particular, we need the results 
of this final subsection of Section~\ref{sec:bernstein} in Section~\ref{sec:poly},
where we treat specific examples of $S$, especially polynomial spaces
on $\mathbb R^d$. 

\subsection{Explicit bounds for the operators $Q_{\pp(A)}$}\label{sec:pwQ}
In this section we give estimates for the projector $Q_{\pp(A)}$ with $A\in\mathscr A^*$.

	\begin{lem}\label{lem:proj_subset}
		Let $1\leq p\leq \infty$, $A\in\mathscr A$ and denote by $P_A$ the orthoprojector onto $S(A)$.
		
		Then, for every $f\in L^p$ and measurable $\Gamma\subset A$,
		\[
			\| P_A (f \charfun_\Gamma)\|_p \lesssim 
				\Big( \frac{|\Gamma|}{|A|}\Big)^{1/p'} \|f\charfun_\Gamma\|_{p}
		\]	
		with $p' = p/(p-1)$.
	\end{lem}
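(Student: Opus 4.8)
The plan is to reduce the $L^p$-estimate to an $L^2$-estimate on the atom $A$, where orthogonality of $P_A$ can be exploited, and then to transfer back to $L^p$ using the stability hypothesis \eqref{eq:L1Linfty} together with the finite-dimensionality of $S$. First I would observe that, by rescaling, it suffices to treat the probability space $(A,|A|^{-1}|\cdot|)$ in place of $\Omega$ and the space $S(A)$ in place of $S$; the stability inequality \eqref{eq:L1Linfty} for the atom $A$ says precisely that every $g\in S(A)$ satisfies $|\{\omega\in A:|g(\omega)|\geq c_1\|g\|_A\}|\geq c_2|A|$. A standard consequence of this, valid on any finite-dimensional space where all norms are equivalent, is the norm equivalence
\[
	\|g\|_A \;\simeq\; \Big(\frac{1}{|A|}\int_A |g|^q\,\dif\mathbb P\Big)^{1/q},\qquad g\in S(A),
\]
for every fixed $0<q<\infty$, with implicit constants depending only on $c_1,c_2$ and $\dim S$. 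In particular the $L^p(A,|A|^{-1}\dif\mathbb P)$-norm, the $L^2(A,|A|^{-1}\dif\mathbb P)$-norm and the $L^\infty(A)$-norm of elements of $S(A)$ are all comparable.

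\textbf{Main steps.}
Set $g := P_A(f\charfun_\Gamma)\in S(A)$. Using the norm equivalence above with $q=p$ and then with $q=2$, and writing $\|\cdot\|_{L^r(A,\mu)}$ for the normalized-measure norm, I get
\[
	\|g\|_p = |A|^{1/p}\|g\|_{L^p(A,\mu)} \;\simeq\; |A|^{1/p}\|g\|_{L^2(A,\mu)} = |A|^{1/p-1/2}\|g\|_2.
\]
Now $P_A$ is the orthogonal projector onto $S(A)$ in $L^2(A,\dif\mathbb P)$, hence a contraction there, so $\|g\|_2 = \|P_A(f\charfun_\Gamma)\|_2 \leq \|f\charfun_\Gamma\|_2$. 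Finally I estimate $\|f\charfun_\Gamma\|_2$ by Hölder's inequality with exponents $p/2$ and $(p/2)' = p/(p-2)$ when $p\geq 2$; more cleanly, Hölder directly gives
\[
	\|f\charfun_\Gamma\|_2 \leq |\Gamma|^{1/2-1/p}\,\|f\charfun_\Gamma\|_p.
\]
Combining the three displays yields
\[
	\|P_A(f\charfun_\Gamma)\|_p \;\lesssim\; |A|^{1/p-1/2}\,|\Gamma|^{1/2-1/p}\,\|f\charfun_\Gamma\|_p = \Big(\frac{|\Gamma|}{|A|}\Big)^{1/p-1/2}\|f\charfun_\Gamma\|_p,
\]
and since $1/p-1/2 = -(1/2-1/p)$ and $1/2-1/p$... one must be careful with the sign of the exponent. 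For $p\geq 2$ one has $1/2-1/p\geq 0$ and $|\Gamma|/|A|\leq 1$, so the bound is consistent with the claimed exponent $1/p' = 1-1/p$ only after checking $1/2 - 1/p \geq 1/p'$ fails; so the correct reading is that the genuinely sharp power coming out of this argument is $(|\Gamma|/|A|)^{1/2-1/p}$, and since $0\leq 1/2-1/p$ while $|\Gamma|/|A|\le 1$, and $1/2-1/p \le 1/p'$ is false in general — here I would instead invoke that $P_A$ is also bounded on $L^1(A,\dif\mathbb P)$ (by \eqref{eq:L1Linfty} and finite dimension, exactly as $P_n$ was extended to $L^1$ in the text) and interpolate, or more simply bound $\|f\charfun_\Gamma\|_1\leq |\Gamma|^{1/p'}\|f\charfun_\Gamma\|_p$ and $\|P_A h\|_\infty \lesssim |A|^{-1}\|P_A h\|_1\leq |A|^{-1}\|h\|_1$ for $h$ supported in $A$, then pass from $\|P_A(f\charfun_\Gamma)\|_\infty$ to $\|P_A(f\charfun_\Gamma)\|_p\lesssim |A|^{1/p}\|P_A(f\charfun_\Gamma)\|_\infty$, giving exactly the exponent $1/p'$.

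\textbf{Cleanest route and the main obstacle.}
The cleanest argument, which I would actually present, is the $L^1$–$L^\infty$ one: for $h$ supported in $A$, the function $P_A h$ lies in $S(A)$, so by \eqref{eq:L1Linfty} and the equivalence of all norms on the finite-dimensional space $S(A)$ (with the normalized measure) one has $\|P_A h\|_\infty \simeq |A|^{-1}\|P_A h\|_{L^1(A,\dif\mathbb P)}$; since $P_A$ is self-adjoint and $\|P_A h\|_{L^1}$ is controlled by the boundedness of $P_A$ on $L^1(A)$ (again a consequence of \eqref{eq:L1Linfty}, as recorded for the $P_n$ in Section~\ref{sec:intro}), we get $\|P_A h\|_\infty \lesssim |A|^{-1}\|h\|_1$. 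Taking $h=f\charfun_\Gamma$, using $\|f\charfun_\Gamma\|_1\leq |\Gamma|^{1/p'}\|f\charfun_\Gamma\|_p$ by Hölder, and finally $\|P_A(f\charfun_\Gamma)\|_p \leq |A|^{1/p}\|P_A(f\charfun_\Gamma)\|_\infty$, the estimates chain together to
\[
	\|P_A(f\charfun_\Gamma)\|_p \lesssim |A|^{1/p}\cdot |A|^{-1}\cdot |\Gamma|^{1/p'}\|f\charfun_\Gamma\|_p = \Big(\frac{|\Gamma|}{|A|}\Big)^{1/p'}\|f\charfun_\Gamma\|_p,
\]
as desired; the endpoint cases $p=1$ ($p'=\infty$, bound becomes the $L^1$-contraction of $P_A$) and $p=\infty$ ($p'=1$) are handled directly. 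The main obstacle is purely bookkeeping: making precise that the implied constants in the norm-equivalence on $S(A)$ and in the $L^1(A)$-boundedness of $P_A$ depend only on $c_1,c_2$ and $\dim S$ and not on the atom $A$ — this is exactly the content already extracted from \eqref{eq:L1Linfty} in \cite{part1}, so I would cite that rather than reprove it.
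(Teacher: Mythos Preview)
Your final ``cleanest route'' is correct and gives the right exponent, but the proposal as written is cluttered by the false start through $L^2$-contractivity (which, as you noticed, only yields the exponent $1/2-1/p$ and is useless for $p<2$). That detour should be deleted entirely.

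The paper's argument is essentially a one-line version of yours. Pick an $L^2$-normalized $\psi\in S(A)$; by \eqref{eq:L1Linfty} one has $\|\psi\|_\infty\simeq|A|^{-1/2}$ and $\|\psi\|_p\simeq|A|^{1/p-1/2}$. Then H\"older with exponents $(p,p')$ applied directly to the inner product gives
\[
\|\langle f\charfun_\Gamma,\psi\rangle\psi\|_p
\leq \|f\charfun_\Gamma\|_p\,\|\psi\charfun_\Gamma\|_{p'}\,\|\psi\|_p
\lesssim \|f\charfun_\Gamma\|_p\,|A|^{-1/2}|\Gamma|^{1/p'}\,|A|^{1/p-1/2}
=\Big(\frac{|\Gamma|}{|A|}\Big)^{1/p'}\|f\charfun_\Gamma\|_p,
\]
and summing over an orthonormal basis (at most $\dim S$ terms) finishes. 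Your route factors this same estimate as $\|P_A\|_{L^1\to L^\infty}\lesssim|A|^{-1}$ together with the H\"older bounds $\|\cdot\|_1\leq|\Gamma|^{1/p'}\|\cdot\|_p$ and $\|\cdot\|_p\leq|A|^{1/p}\|\cdot\|_\infty$; this is equivalent but costs an extra step and the appeal to $L^1$-boundedness of $P_A$. The paper's version also makes the endpoint cases $p=1,\infty$ transparent without separate treatment.
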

	\begin{proof}
		Since the dimension of $S$ is finite, it suffices to give an estimate for 
		$\| \langle f\charfun_\Gamma, \psi\rangle \psi\|_p$ for every $L^2$-normalized 
		function $\psi\in\operatorname{ran} P_A$.
		We next observe that by inequality \eqref{eq:L1Linfty}, we have $\|\psi\|_p \simeq |A|^{1/p - 1/2}$.
		This and H\"older's inequality imply
		\[
			\| \langle f\charfun_\Gamma, \psi\rangle\psi\|_p \lesssim  
				\|f\charfun_\Gamma\|_p \|\psi\charfun_\Gamma\|_{p'} |A|^{1/p - 1/2} \lesssim 
				\Big(\frac{|\Gamma|}{|A|}\Big)^{1/p'} \|f\charfun_\Gamma\|_p,
		\]
		where in the last inequality, we also used $\|\psi\|_\infty\simeq |A|^{-1/2}$ and 
		$1/p-1 = -1/p'$.
	\end{proof}

Let $A\in \mathscr A^*$ and $\psi\in \operatorname{ran} Q_{\pp(A)}$ with $\|\psi\|_2 = 1$.
In \cite{part1} we showed the following bounds for all such functions $\psi$, 
denoting $L = \la{\pp(A)}$ and $T=\sm{\pp(A)}$:
\begin{equation}\label{eq:pw}
	\| \psi\|_{\pp(A)^c} = 0,\qquad \|\psi\|_{T}\lesssim |T|^{-1/2},\qquad
	\|\psi\|_{L} \lesssim \frac{|T|^{1/2}}{|L|}.
\end{equation}
If we assume that $|L| \geq (1-c_2/2)|\pp(A)|$ we have the improved bound
\begin{equation}\label{eq:improved}
	\|\psi\|_{L} \lesssim \varepsilon_T \frac{|T|^{1/2}}{|L|},
\end{equation}
where for atoms $A\in\mathscr A^*$, we set
\[
\varepsilon_A := \sup_{u\in S} \frac{\| u\|_A}{\|u\|_{\pp(A)}} \leq 1.
\]
Note that \eqref{eq:norm_equiv}, H\"older's inequality and the fact that the dimension of $\operatorname{ran} Q_{\pp(A)}$ 
is uniformly bounded implies for $1\leq p\leq \infty$
\begin{equation}\label{eq:Qbded}
	\| Q_{\pp(A)} : L^p \to L^p\| \lesssim 1.
\end{equation}

	\begin{lem}
		\label{lem.23}
		For each $1\leq p\leq \infty$, the following statements are true.
		\begin{itemize}
		\item[(i)] For each $A \in \caa'$, $\Gamma \subset A$ and $f\in L^p$,
		$$
		\| Q_{\pp(A)} (f \charfun_\Gamma) \|_p \lesssim  \Big( \frac{|\Gamma| }{|A|}\Big)^{1/p'} \| f \charfun_\Gamma \|_p.
		$$
		\item[(ii)] For each $A \in \caa''$, $\Gamma \subset A$ and $f\in L^p$,
		$$
		\| Q_{\pp(A)} (f \charfun_\Gamma) \|_p \lesssim   
		\Big( \frac{|\Gamma| }{|A|}\Big)^{1/p'}
		 \Big( \frac{|\bb(A)|}{|A|}\Big)^{1/p} \| f \charfun_\Gamma \|_p.
		$$
		If $|A|\geq (1-c_2/2)|\pp(A)|$, the latter estimate can be improved by 
		an additional factor of $\varepsilon_{\bb(A)}\leq 1$ on the right hand side.
		\end{itemize}
		\end{lem}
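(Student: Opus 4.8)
The plan is to reuse the argument of Lemma~\ref{lem:proj_subset} almost verbatim, replacing the crude global estimate $\|\psi\|_p\simeq|A|^{1/p-1/2}$ used there by the sharper \emph{pointwise} bounds \eqref{eq:pw} (and, for the improved version, \eqref{eq:improved}) available for functions $\psi$ in the range of $Q_{\pp(A)}$. Throughout, write $P:=\pp(A)$, $T:=\sm{P}$ and $L:=\la{P}$, so that $|L|\geq|T|$, functions in $\operatorname{ran} Q_P$ are supported in $P$, and every $L^2$-normalized $\psi\in\operatorname{ran} Q_P$ satisfies $\|\psi\|_T\lesssim|T|^{-1/2}$, $\|\psi\|_L\lesssim|T|^{1/2}/|L|$ by \eqref{eq:pw}, as well as $\|\psi\|_p\simeq|T|^{1/p-1/2}$ by \eqref{eq:norm_equiv} (for $p=\infty$ use instead $\|\psi\|_\infty\lesssim|T|^{-1/2}$, which follows from \eqref{eq:pw} together with $|L|\geq|T|$). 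Since $Q_P$ is an orthogonal projection onto a space of dimension at most $\dim S<\infty$, writing $Q_P(f\charfun_\Gamma)=\sum_{\ell}\langle f\charfun_\Gamma,\psi_\ell\rangle\psi_\ell$ for an $L^2$-orthonormal basis $(\psi_\ell)$ of $\operatorname{ran} Q_P$ reduces everything to estimating $\|\langle f\charfun_\Gamma,\psi\rangle\psi\|_p=|\langle f\charfun_\Gamma,\psi\rangle|\,\|\psi\|_p$ for a single such $\psi$ and summing the (boundedly many) terms.

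For part (i) one has $A=T$ and $\Gamma\subset T$, so H\"older's inequality and the bound on $\|\psi\|_T$ give
\[
|\langle f\charfun_\Gamma,\psi\rangle|\leq\|f\charfun_\Gamma\|_p\,\|\psi\charfun_\Gamma\|_{p'}\leq\|f\charfun_\Gamma\|_p\,|\Gamma|^{1/p'}\|\psi\|_T\lesssim\|f\charfun_\Gamma\|_p\,|\Gamma|^{1/p'}|A|^{-1/2};
\]
multiplying by $\|\psi\|_p\simeq|A|^{1/p-1/2}$ and using $1/p-1=-1/p'$ produces the factor $(|\Gamma|/|A|)^{1/p'}$, which is the assertion.

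For part (ii) one has $A=L$, $\bb(A)=T$ and $\Gamma\subset L$; the same H\"older step, now with the bound $\|\psi\|_L\lesssim|T|^{1/2}/|L|$ from \eqref{eq:pw}, yields $|\langle f\charfun_\Gamma,\psi\rangle|\lesssim\|f\charfun_\Gamma\|_p\,|\Gamma|^{1/p'}|T|^{1/2}/|L|$, and multiplying by $\|\psi\|_p\simeq|T|^{1/p-1/2}$ gives $\|f\charfun_\Gamma\|_p\,|\Gamma|^{1/p'}|T|^{1/p}|L|^{-1}$, which equals $\|f\charfun_\Gamma\|_p(|\Gamma|/|L|)^{1/p'}(|T|/|L|)^{1/p}$ since $1/p'+1/p=1$. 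If moreover $|A|=|L|\geq(1-c_2/2)|P|$, one uses \eqref{eq:improved} in place of the third bound in \eqref{eq:pw}; this carries the extra factor $\varepsilon_T=\varepsilon_{\bb(A)}\leq1$ and gives the improved estimate. The endpoint $p=1$ of either part is immediate from \eqref{eq:Qbded}. I do not expect a genuine obstacle in this argument; the only point that requires attention is bookkeeping of the exponents of $|\Gamma|$, $|T|$, $|L|$ and invoking the pointwise information of \eqref{eq:pw} on the correct child of $P$ — on $T$ when $A\in\caa'$ and on $L$ when $A\in\caa''$.
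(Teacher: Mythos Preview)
Your argument is correct and is essentially identical to the paper's own proof: reduce to a single $L^2$-normalized $\psi\in\operatorname{ran}Q_{\pp(A)}$, estimate $\|\psi\charfun_\Gamma\|_{p'}$ via the pointwise bounds \eqref{eq:pw} (respectively \eqref{eq:improved}) on $T$ or $L$ according to whether $A\in\caa'$ or $A\in\caa''$, and multiply by $\|\psi\|_p\simeq|T|^{1/p-1/2}$ from \eqref{eq:norm_equiv}. Your explicit treatment of the endpoints $p=1$ and $p=\infty$ is a small bonus the paper leaves implicit.
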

		\begin{proof}
		It suffices to give an estimate for $\| \langle f\charfun_{\Gamma}, \psi\rangle \psi\|_p$
		for every $L^2$-normalized function $\psi\in \operatorname{ran} Q_{\pp(A)}$, since the
		dimension of $\operatorname{ran} Q_{\pp(A)}$ is uniformly bounded. Using estimate \eqref{eq:norm_equiv}
		and H\"older's inequality gives us
		\begin{equation}\label{eq:scalar_first}
			\| \langle f\charfun_{\Gamma}, \psi\rangle \psi\|_p\ \lesssim \ 
			\|f\charfun_\Gamma\|_p \|\psi\charfun_{\Gamma}\|_{p'}    |T|^{1/p-1/2}.
		\end{equation}
		If $\Gamma\subset A=T$, using the pointwise estimate \eqref{eq:pw} on $T$, we have
		\[
			\| \psi \charfun_{\Gamma}\|_{p'} \lesssim |T|^{-1/2} |\Gamma|^{1/p'}.
		\]
		On the other hand, if $\Gamma\subset A=L$ and $T=\bb(A)$, using the same estimate on $L$ yields
		\[
			\| \psi \charfun_\Gamma\|_{p'}	\lesssim \frac{|T|^{1/2}}{|L|} |\Gamma|^{1/p'}.
		\]
		Inserting the last two estimates in \eqref{eq:scalar_first} yields (i) and (ii), respectively. 

		If $|A| \geq (1-c_2/2)|\pp(A)|$, we use estimate \eqref{eq:improved} instead
		of \eqref{eq:pw} to obtain the improvement.
		\end{proof}

Introducing the $p$-renormalization $\varepsilon_{A,p}$ of $\varepsilon_A$ for $A\in\mathscr A^*$ defined by 
\begin{equation}\label{eq:eps_equiv}
	\varepsilon_{A,p} := \sup_{u\in S} \frac{\| u\charfun_A\|_p}{\|u\charfun_{\pp(A)}\|_p}
	\simeq \varepsilon_A \Big(\frac{|A|}{|\pp(A)|}\Big)^{1/p},
\end{equation}
we summarize the results of Lemma~\ref{lem.23} as

\begin{co}\label{cor:upperQ}
Let $\lambda = 1-c_2/2 \geq 1/2$ and $A\in\mathscr A^*$. Put
 \begin{equation}\label{eq:def_u}
	u(A) = 	u(A,\mathscr A,S,p) = 
	\begin{cases}
			1,& \text{if }A\in\mathscr A(\lambda), \\	
			\varepsilon_{\bb(A),p},&\text{if } A\notin\mathscr A(\lambda).
		\end{cases}
 \end{equation}
Then, for each $\Gamma\subset A$, we have
\begin{equation}
	\label{eq:Qbound}
	\|Q_{\pp(A)}(f\charfun_\Gamma)\|_p \lesssim u(A) \Big(\frac{|\Gamma|}{|A|} \Big)^{1/p'} \|f\charfun_\Gamma\|_p.
\end{equation}
\end{co}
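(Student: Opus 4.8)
The plan is to derive the corollary from Lemma~\ref{lem.23} together with the equivalence \eqref{eq:eps_equiv}, by a case analysis according to whether $A$ is the smaller or the larger child of its predecessor and, in the latter case, according to whether $A\in\mathscr A(\lambda)$.

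First suppose $A\in\mathscr A'$, i.e. $A=\sm{\pp(A)}$. Since $\lambda=1-c_2/2\geq 1/2$ and $|A|\leq|\pp(A)|/2$, we have $A\in\mathscr A(\lambda)$, so $u(A)=1$ and \eqref{eq:Qbound} is precisely Lemma~\ref{lem.23}(i). Now suppose $A\in\mathscr A''$, i.e. $A=\la{\pp(A)}$, so that $\bb(A)=\sm{\pp(A)}$ and in particular $|\bb(A)|\leq|A|$. If $A\in\mathscr A(\lambda)$, then $u(A)=1$, and since the factor $(|\bb(A)|/|A|)^{1/p}$ in Lemma~\ref{lem.23}(ii) is at most $1$, that lemma already gives \eqref{eq:Qbound}. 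If $A\notin\mathscr A(\lambda)$, then $|A|>\lambda|\pp(A)|=(1-c_2/2)|\pp(A)|$, so the improved form of Lemma~\ref{lem.23}(ii) applies and yields
\[
	\|Q_{\pp(A)}(f\charfun_\Gamma)\|_p\lesssim \varepsilon_{\bb(A)}\Big(\frac{|\Gamma|}{|A|}\Big)^{1/p'}\Big(\frac{|\bb(A)|}{|A|}\Big)^{1/p}\|f\charfun_\Gamma\|_p .
\]

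It then remains to check that $\varepsilon_{\bb(A)}(|\bb(A)|/|A|)^{1/p}\simeq\varepsilon_{\bb(A),p}=u(A)$. Since $A$ and $\bb(A)$ are the two children of $\pp(A)$, we have $\pp(\bb(A))=\pp(A)$, so \eqref{eq:eps_equiv} gives $\varepsilon_{\bb(A),p}\simeq\varepsilon_{\bb(A)}(|\bb(A)|/|\pp(A)|)^{1/p}$; and $|A|\simeq|\pp(A)|$ because $(1-c_2/2)|\pp(A)|<|A|\leq|\pp(A)|$. Combining these estimates finishes the proof. The argument is entirely routine; the only step needing a little attention is this last identification of $\varepsilon_{\bb(A)}(|\bb(A)|/|A|)^{1/p}$ with the $p$-renormalized quantity $\varepsilon_{\bb(A),p}$, which is exactly where the hypothesis $A\notin\mathscr A(\lambda)$ (equivalently, $|A|\simeq|\pp(A)|$) enters.
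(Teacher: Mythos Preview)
Your proof is correct and is exactly the case analysis the paper intends; the paper simply states the corollary as a summary of Lemma~\ref{lem.23} without writing out the details, and you have filled them in accurately. In particular, your handling of the case $A\notin\mathscr A(\lambda)$---identifying $\varepsilon_{\bb(A)}(|\bb(A)|/|A|)^{1/p}$ with $\varepsilon_{\bb(A),p}$ via $|A|\simeq|\pp(A)|$---is precisely the point behind the introduction of the $p$-renormalization \eqref{eq:eps_equiv}.
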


\subsection{A first geometric condition  related to the Bernstein inequality}\label{sec:w1}

\begin{defn}
		We say that the \emph{Bernstein inequality $\operatorname{BI}_{\text{atoms}} = 
		\operatorname{BI}_{\text{atoms}}(\mathscr A,S,p,\tau)$ for atoms} is 
		satisfied, if \eqref{eq:BI} holds for $n=1$ and all $g\in \Sigma_1(S)$.
\end{defn}
	\begin{defn}
		Let $\mathscr X=(X_i)_{i=1}^n$ be a decreasing sequence of atoms in $\mathscr A$.
		We say that $\mathscr X$ is a \emph{full chain}, if we have 
		\[
			X_{i+1} \in \{ X_i', X_i''\}\text{ for all $i=1,\ldots,n-1$}.
		\]

		If we additionally have the condition $|X_n| \geq \rho|X_1|$ for 
		some $\rho\in (0,1)$, we say that 
		$\mathscr X$ is a \emph{$\rho$-fat full chain}.
	\end{defn}

Given the numbers $u(A)$ from equation \eqref{eq:def_u} 
we formulate the following condition.

\begin{defn}
\label{def.1}
Let  $1<p<\infty $ and $0 < \tau < p$.
We say that condition $\wI=\wI(\caa,S,p,\tau)$ is satisfied if there are 
numbers $\rho\in(0,1)$ 
and $M>0$ such that for each $\rho$-fat full chain 
$\cxx$ we have the inequality 
\begin{equation}
\label{eq.11}
\sum_{A\in \mathscr X : \pp(A)\in\mathscr X} u(A)^\tau \leq M.
\end{equation}
\end{defn}

We will show in this subsection that condition $\wI$ is equivalent to $\operatorname{BI}_{\mathrm{atoms}}$.
In order to do that we begin with the following lemma about the decomposition
of full chains into $\rho$-fat chains.

	\begin{lem}\label{lem:fat_chains}
		For every $\rho \in (0,1)$, every full chain $\mathscr X$ can be decomposed into 
		the union of finitely many 
		$\rho$-fat full chains $\mathscr X_1, \ldots,\mathscr X_k$ satisfying
		\begin{equation}\label{eq:chains}
			\min_{A\in\mathscr X_{s}}|A| < \rho\min_{A\in\mathscr X_{s+1}} |A|,\qquad
			s = 1,\ldots,k-1.	
		\end{equation}
	\end{lem}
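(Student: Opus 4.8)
The plan is to prove Lemma~\ref{lem:fat_chains} by a greedy, bottom-up grouping of the atoms along the full chain $\mathscr X = (X_i)_{i=1}^n$. Recall that along a full chain the measures are nonincreasing, $|X_1| \geq |X_2| \geq \cdots \geq |X_n| > 0$, since each $X_{i+1}$ is one of the two children of $X_i$. Fix $\rho \in (0,1)$. I would build the subchains $\mathscr X_s$ starting from the small end: let $\mathscr X_1$ consist of $X_n$ together with all consecutive predecessors $X_i$ (in decreasing order of index, i.e. increasing size) as long as $|X_i| < \tfrac{1}{\rho}|X_n|$; more precisely, take the smallest index $i_1$ such that $|X_{i_1}| < \tfrac{1}{\rho}|X_n|$ and set $\mathscr X_1 = (X_{i_1}, X_{i_1+1}, \ldots, X_n)$. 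This is a full chain (a contiguous subsegment of $\mathscr X$), and by construction $|X_n| \geq \rho |X_{i_1}|$, so $\mathscr X_1$ is a $\rho$-fat full chain. If $i_1 = 1$ we are done with $k=1$; otherwise repeat the same construction on the remaining initial segment $(X_1, \ldots, X_{i_1 - 1})$, producing $\mathscr X_2$, and so on. The process terminates after finitely many steps since $n$ is finite, yielding $\mathscr X_1, \ldots, \mathscr X_k$ whose union (as sets of atoms) is all of $\mathscr X$.

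Next I would verify the separation property \eqref{eq:chains}. For each $s$, the smallest atom of $\mathscr X_s$ is $X_{i_{s}}$-wait, I should be careful with the indexing direction: in my construction $\mathscr X_s$ occupies indices from some $i_s$ up to $i_{s-1}-1$ (with $i_0 := n+1$, so $\mathscr X_1$ runs from $i_1$ to $n$), and since measures decrease with the index, the minimal-measure atom of $\mathscr X_s$ is the one with the largest index in that block. For $\mathscr X_1$ that is $X_n$; for $\mathscr X_s$ with $s \geq 2$ it is $X_{i_{s-1}-1}$. By the stopping rule defining $\mathscr X_{s-1}$, the index $i_{s-1}$ was chosen minimal with $|X_{i_{s-1}}| < \tfrac{1}{\rho}\min_{A \in \mathscr X_{s-1}}|A|$, hence its predecessor in the chain satisfies $|X_{i_{s-1}-1}| \geq \tfrac{1}{\rho}\min_{A \in \mathscr X_{s-1}}|A|$, i.e. $\min_{A \in \mathscr X_s}|A| = |X_{i_{s-1}-1}| \geq \tfrac{1}{\rho}\min_{A \in \mathscr X_{s-1}}|A|$, which is exactly $\min_{A\in\mathscr X_{s-1}}|A| < \rho\min_{A\in\mathscr X_s}|A|$, as required (after relabelling so that indices increase with $s$; note the blocks naturally come out in decreasing size order as $s$ increases, matching the statement).

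Finally I would double-check that each $\mathscr X_s$ is genuinely a full chain in the sense of the definition: it is a nonempty decreasing sequence of atoms, and the full-chain condition $X_{j+1} \in \{X_j', X_j''\}$ is inherited from $\mathscr X$ because $\mathscr X_s$ is a contiguous block of consecutive atoms of $\mathscr X$; the $\rho$-fatness $\min_{A \in \mathscr X_s}|A| \geq \rho \max_{A \in \mathscr X_s}|A|$ holds because the maximal-measure atom of $\mathscr X_s$ is either $X_{i_{s-1}-1}$ absorbed into the block only if it still satisfies the $<\tfrac1\rho$ bound, so in fact the whole block was formed precisely to keep the ratio under control — here one should be slightly careful: the maximal atom of $\mathscr X_s$ is the one where the stopping condition first fired, so $|\max_{A\in\mathscr X_s}| < \tfrac1\rho |\min_{A\in\mathscr X_s}|$, giving $\rho$-fatness with room to spare. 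I do not anticipate a genuine obstacle here — the only mild subtlety is bookkeeping with the two opposite orderings (by index vs. by size, and whether the blocks are enumerated small-to-large or large-to-small), which must be pinned down so that \eqref{eq:chains} comes out with the inequality in the stated direction; everything else is an elementary finite induction.
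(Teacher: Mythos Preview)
Your approach is essentially identical to the paper's: a greedy bottom-up partition of the full chain into maximal contiguous $\rho$-fat blocks, starting from the small end. The paper defines $i_0 = n$ and inductively $i_{s+1} = \max\{ j : |X_{i_s}| < \rho |X_j|\}$, setting $\mathscr X_s = (X_i)_{i=i_s+1}^{i_{s-1}}$, which is exactly your construction up to the placement of one boundary inequality.

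There is one small slip worth fixing. With your stopping rule (``smallest index $i$ with $|X_i| < \tfrac{1}{\rho}\cdot(\text{minimum of the previous block})$''), the predecessor satisfies $|X_{i-1}| \geq \tfrac{1}{\rho}\cdot(\text{minimum of the previous block})$, which yields only the \emph{non-strict} inequality $\min_{A\in\mathscr X_{s-1}}|A| \leq \rho\min_{A\in\mathscr X_s}|A|$, whereas the lemma asserts a strict inequality. The paper's formulation avoids this: since $i_s$ is chosen so that $|X_{i_{s-1}}| < \rho|X_{i_s}|$ holds by definition, the strict separation $\min_{A\in\mathscr X_s}|A| = |X_{i_{s-1}}| < \rho|X_{i_s}| = \rho\min_{A\in\mathscr X_{s+1}}|A|$ is immediate. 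You can recover this by flipping your convention to ``smallest $i$ with $|X_i| \leq \tfrac{1}{\rho}\cdot(\min)$''; everything else in your argument goes through unchanged.
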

	\begin{proof}
		Let $\mathscr X = (X_i)_{i=1}^n$ be a full chain and let $\rho\in (0,1)$.
		Set $i_0 = n$. If $|X_n| \geq \rho|X_1|$, we stop the induction since $\mathscr X$ already 
		is $\rho$-fat. Otherwise, define
		\[
			i_1 := \max\{ j : |X_n| < \rho|X_j|\}.
		\]	
		Then it is clear that $\mathscr X_1 = (X_i)_{i=i_1+1}^n$ is $\rho$-fat. Inductively, we define
		\[
			i_{s+1} = \max\{ j : |X_{i_s}| < \rho|X_j|\},
		\]
		if it exists and we set $i_{s+1}= 0$ otherwise and stop the induction. 
		Since $\mathscr X$ contains only $n$ sets, 
		this induction must terminate at some point (say $i_{k}=0$) and it is clear that 
		the full chains $\mathscr X_s = (X_i)_{i=i_s+1}^{i_{s-1}}$ for $s=1,\ldots,k$ are
		$\rho$-fat by construction.
	 Moreover, by definition of $i_s$,  we also have 
		\begin{equation*}
			\min_{A\in\mathscr X_{s}}|A| = |X_{i_{s-1}}| < \rho |X_{i_{s}}| = 
			\rho\min_{A\in\mathscr X_{s+1}}|A|, \qquad  \text{$s=1,\ldots,k-1$},
		\end{equation*}
		which finishes the proof of the lemma.
	\end{proof}

\begin{co}\label{co:number_fat}
	Let $\mathscr X$ be a $\rho$-fat full chain and let $\rho_0>\rho$. 
	
	Then, $\mathscr X$ can be decomposed into the union of  $k$ $\rho_0$-fat full chains so
	that $k$ satisfies the upper bound $k \leq 1+\log\rho/\log\rho_0$.
\end{co}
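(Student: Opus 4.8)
The plan is to apply Lemma~\ref{lem:fat_chains} with parameter $\rho_0$ in place of $\rho$ to the given $\rho$-fat full chain $\mathscr X$, obtaining a decomposition into $\rho_0$-fat full chains $\mathscr X_1,\ldots,\mathscr X_k$ satisfying the separation property \eqref{eq:chains}, and then to use this separation property together with the $\rho$-fatness of $\mathscr X$ itself to bound $k$ from above. First I would note that since $\mathscr X$ is in particular a full chain, Lemma~\ref{lem:fat_chains} applies and produces $k$ consecutive sub-chains $\mathscr X_1,\ldots,\mathscr X_k$ with
\[
	\min_{A\in\mathscr X_s}|A| < \rho_0 \min_{A\in\mathscr X_{s+1}}|A|,\qquad s=1,\ldots,k-1.
\]

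Iterating this inequality gives $\min_{A\in\mathscr X_1}|A| < \rho_0^{\,k-1}\min_{A\in\mathscr X_k}|A|$. On the other hand, writing $\mathscr X = (X_i)_{i=1}^n$ and recalling that $\mathscr X_1$ contains the smallest atoms and $\mathscr X_k$ the largest, we have $\min_{A\in\mathscr X_1}|A| = |X_n| = \min_{A\in\mathscr X}|A|$ and $\min_{A\in\mathscr X_k}|A| \leq |X_1| = \max_{A\in\mathscr X}|A|$. Since $\mathscr X$ is $\rho$-fat, $|X_n|\geq \rho|X_1|$, hence $\min_{A\in\mathscr X_k}|A| \leq |X_1| \leq \rho^{-1}|X_n| = \rho^{-1}\min_{A\in\mathscr X_1}|A|$. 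Combining this with the iterated separation inequality yields
\[
	\min_{A\in\mathscr X_1}|A| < \rho_0^{\,k-1}\min_{A\in\mathscr X_k}|A| \leq \rho_0^{\,k-1}\rho^{-1}\min_{A\in\mathscr X_1}|A|,
\]
so that $\rho < \rho_0^{\,k-1}$, i.e. $\log\rho < (k-1)\log\rho_0$. Since $0<\rho_0<1$ we have $\log\rho_0<0$, and dividing reverses the inequality: $k-1 < \log\rho/\log\rho_0$, hence $k \leq 1 + \log\rho/\log\rho_0$ (using that $k$ is a positive integer, or simply $k < 1+\log\rho/\log\rho_0$).

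I do not expect any real obstacle here; the statement is a quantitative bookkeeping consequence of Lemma~\ref{lem:fat_chains}. The only point requiring a little care is the direction of the inequality when dividing by $\log\rho_0<0$, and keeping straight which of $\mathscr X_1,\ldots,\mathscr X_k$ holds the small atoms and which holds the large ones — this is fixed by the construction in the proof of Lemma~\ref{lem:fat_chains}, where $\mathscr X_1 = (X_i)_{i=i_1+1}^n$ sits at the bottom of the chain. A minor alternative, avoiding reliance on the precise indexing, is to observe directly that consecutive pieces $\mathscr X_s,\mathscr X_{s+1}$ have minimal-atom sizes differing by at least the factor $\rho_0$, and the total spread of atom sizes within $\mathscr X$ is at most $\rho^{-1}$ by $\rho$-fatness, so at most $1+\log_{\rho_0}\rho$ such pieces can fit.
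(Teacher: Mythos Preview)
Your proof is correct and follows essentially the same argument as the paper: apply Lemma~\ref{lem:fat_chains} with parameter $\rho_0$, iterate \eqref{eq:chains} to obtain $\min_{A\in\mathscr X_1}|A| < \rho_0^{k-1}\min_{A\in\mathscr X_k}|A|$, and combine with the $\rho$-fatness bound $\rho|X_1|\leq |X_n| = \min_{A\in\mathscr X_1}|A|$ and $\min_{A\in\mathscr X_k}|A|\leq |X_1|$ to conclude $\rho<\rho_0^{k-1}$. The paper compresses this into the single displayed chain $\rho |X_1|\leq \min_{A\in\mathscr X_1} |A| < \rho_0^{k-1} \min_{A\in\mathscr X_k} |A| \leq \rho_0^{k-1} |X_1|$, but the content is identical.
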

\begin{proof}
	Apply Lemma~\ref{lem:fat_chains} with $\rho_0$ so that \eqref{eq:chains}
	and the fact that $\mathscr X = (X_i)_{i=1}^n$ is a $\rho$-fat full chain
	imply
	\[
		\rho |X_1|\leq \min_{A\in\mathscr X_1} |A| < \rho_0^{k-1} \min_{A\in\mathscr X_k} |A| \leq \rho_0^{k-1} |X_1|.
	\]
	This yields the upper bound for $k$.
\end{proof}
\begin{co}\label{cl.1}
Condition w1 does not depend on $\rho$. That is, if there is some $\rho_0\in(0,1)$ 
and $M_0$ such that condition \eqref{eq.11} 
is satisfied with $\rho = \rho_0$ and $M=M_0$, then for each $0<\rho < 1$ there 
is a finite constant $M(\rho)$ such that condition  \eqref{eq.11} 
is satisfied with $\rho$ and $M= M(\rho)$.
\end{co}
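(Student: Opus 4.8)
The plan is to split according to whether $\rho\ge\rho_0$ or $\rho<\rho_0$. The first case is immediate: if $\rho\ge\rho_0$, then any $\rho$-fat full chain $\mathscr X=(X_i)_{i=1}^n$ satisfies $|X_n|\ge\rho|X_1|\ge\rho_0|X_1|$, hence is already $\rho_0$-fat, and so \eqref{eq.11} holds for it with the constant $M_0$. It therefore suffices to treat $0<\rho<\rho_0$.

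Fix such a $\rho$ and a $\rho$-fat full chain $\mathscr X=(X_i)_{i=1}^n$. I would apply Lemma~\ref{lem:fat_chains} with $\rho_0$ in place of $\rho$, together with the counting bound of Corollary~\ref{co:number_fat}, to decompose $\mathscr X$ into finitely many consecutive sub-chains $\mathscr X_1,\dots,\mathscr X_k$, each of which is a $\rho_0$-fat full chain, with $k\le 1+\log\rho/\log\rho_0$. The main (and really the only) thing to watch is the bookkeeping of the index set in \eqref{eq.11} under this subdivision. Since $\mathscr X$ is a full chain, $\pp(X_{i+1})=X_i$ for every $i$, so $\{A\in\mathscr X:\pp(A)\in\mathscr X\}=\{X_2,\dots,X_n\}$, i.e.\ all atoms of $\mathscr X$ except the largest one, and the analogous description holds for each $\mathscr X_s$. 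Comparing the two, every atom contributing to the left-hand side of \eqref{eq.11} for $\mathscr X$ is either counted in the corresponding sum for the unique $\mathscr X_s$ containing it, or is the largest atom of one of $\mathscr X_1,\dots,\mathscr X_{k-1}$ (the largest atom of the last piece being $X_1$, which is excluded everywhere). Hence
\[
  \sum_{A\in\mathscr X:\,\pp(A)\in\mathscr X}u(A)^\tau
  \;\le\;\sum_{s=1}^{k}\ \sum_{A\in\mathscr X_s:\,\pp(A)\in\mathscr X_s}u(A)^\tau\;+\;(k-1),
\]
where I have used that $u(A)\le 1$ for every $A\in\mathscr A^*$; this is immediate from \eqref{eq:def_u}, since $\varepsilon_{B,p}\le 1$ for every $B\in\mathscr A^*$ by \eqref{eq:eps_equiv} (as $B\subseteq\pp(B)$), so each of the at most $k-1$ extra terms is $\le 1$.

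To finish, each inner sum over $\mathscr X_s$ is $\le M_0$ because $\mathscr X_s$ is $\rho_0$-fat and $\wI$ holds with the pair $(\rho_0,M_0)$; consequently the left-hand side of \eqref{eq.11} is at most $kM_0+(k-1)\le k(M_0+1)\le(1+\log\rho/\log\rho_0)(M_0+1)=:M(\rho)$, which is finite and depends only on $\rho,\rho_0,M_0$. This is exactly \eqref{eq.11} for the parameters $\rho$ and $M(\rho)$, as required. I do not foresee a genuine obstacle; the only subtle point is making sure the ``seam'' atoms created by cutting $\mathscr X$ into $\rho_0$-fat pieces are counted only a bounded number of times and are uniformly controlled by $u\le 1$, so that the subdivision cannot make the sum diverge.
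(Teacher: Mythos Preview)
Your argument is correct and follows the same route as the paper: treat $\rho\ge\rho_0$ trivially and, for $\rho<\rho_0$, decompose a $\rho$-fat full chain into at most $1+\log\rho/\log\rho_0$ many $\rho_0$-fat full chains via Corollary~\ref{co:number_fat}. You are in fact more careful than the paper, which records the constant $M(\rho)=(1+\log\rho/\log\rho_0)M_0$ without explicitly accounting for the $k-1$ seam atoms; your use of $u(A)\le 1$ to absorb these into the extra additive $k-1$ is exactly the right bookkeeping.
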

\begin{proof}
If  $\rho \geq \rho_0$, then the implication is clear.
If $\rho< \rho_0$, we use Corollary~\ref{co:number_fat}
to get inequality \eqref{eq.11} for $\rho$-fat chains with $M=(1+\log\rho/\log\rho_0)M_0$.
\end{proof}

\begin{lem}\label{lem.2}
Assume that condition $\wI(\mathscr A,S,p,\tau)$ holds for some parameters $1<p<\infty$
and $0<\tau<p$.

Then, the following assertions are true.
\begin{itemize}
\item[(i)] 
For each $\varepsilon>0$ there exists a constant $C= C(\wI, \varepsilon)$ such that for each $X \in \caa^*$
$$
\sum_{A \in \chain(X) \cap \caa^*} u(A)^\tau |A| ^{-\varepsilon} \leq C |X| ^{-\varepsilon}.
$$
\item[(ii)] There exists a constant $C=C(\wI ,p,\tau)$ such that for each $f \in S$, $X \in \caa^*$ and  $\Gamma \subset X$,  
$$
\sum_{A \in \chain(X) \cap \caa^* } \| Q_{\pp(A)}( f \charfun_\Gamma) \|_p^\tau \leq C \| f \charfun_\Gamma \|_p^\tau.
$$
\end{itemize}
\end{lem}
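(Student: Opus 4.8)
The plan is to deduce both parts of Lemma~\ref{lem.2} from condition $\wI$ by a summation-by-chains argument, using the geometric decay that $\wI$ forces on the weights $u(A)$ along any chain. First I would prove part~(i). Fix $X\in\caa^*$ and consider the chain $\chain(X)\cap\caa^*$, which is a full chain from $X$ up to the largest proper atom containing it; enumerate its atoms as $X = Y_0 \subsetneq Y_1 \subsetneq \cdots$ with $Y_{j} = \pp(Y_{j-1})$. The key observation is that $\wI$ (which by Corollary~\ref{cl.1} holds for every $\rho\in(0,1)$ with a corresponding constant) says: on any block of the chain whose smallest and largest atoms have comparable measure (within a factor $\rho$), the sum of $u(A)^\tau$ over the $A$ in that block with $\pp(A)$ also in the block is bounded by $M(\rho)$. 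I would partition $\chain(X)\cap\caa^*$ into consecutive blocks $B_1, B_2,\dots$ where $B_m$ consists of atoms with measure in $[\rho^{m}|X|^{-1}$-type dyadic-in-$\rho$ bands$)$ — more precisely, group atoms $A$ according to the integer $m$ with $|A|\in[\rho^{-(m+1)}|X|, \rho^{-m}|X|)$ (so larger atoms get larger $m$). Wait: let me restate — set $B_m = \{A \in \chain(X)\cap\caa^* : \rho^{-m}|X| \le |A| < \rho^{-(m-1)}|X|\}$ for $m\ge 1$. Each $B_m$ is (a subset of) a $\rho$-fat full chain, so $\sum_{A\in B_m} u(A)^\tau \lesssim M(\rho)$, and for every $A\in B_m$ we have $|A|^{-\varepsilon} \le (\rho^{-m}|X|)^{-\varepsilon} = \rho^{m\varepsilon}|X|^{-\varepsilon}$. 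Summing over $m\ge 1$ gives $\sum_{A} u(A)^\tau|A|^{-\varepsilon} \lesssim M(\rho)|X|^{-\varepsilon}\sum_{m\ge1}\rho^{m\varepsilon} = C(\wI,\varepsilon)|X|^{-\varepsilon}$, since $\rho^\varepsilon < 1$. One small point: the condition in $\wI$ restricts to $A$ with $\pp(A)\in\mathscr X$, i.e. it may miss the single topmost atom of each block; but that contributes at most one extra term $u(A)^\tau\le 1$ per block, which can be absorbed (or one uses the block one size up), so the bound survives.

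Next, for part~(ii), I would combine part~(i) with the pointwise operator bound from Corollary~\ref{cor:upperQ}. For $A\in\chain(X)\cap\caa^*$ and $\Gamma\subset X\subset A$, inequality~\eqref{eq:Qbound} gives
\[
\|Q_{\pp(A)}(f\charfun_\Gamma)\|_p \lesssim u(A)\Big(\frac{|\Gamma|}{|A|}\Big)^{1/p'}\|f\charfun_\Gamma\|_p
\le u(A)\Big(\frac{|X|}{|A|}\Big)^{1/p'}\|f\charfun_\Gamma\|_p,
\]
using $\Gamma\subset X$ so $|\Gamma|\le|X|$. Raising to the power $\tau$ and summing over $A\in\chain(X)\cap\caa^*$,
\[
\sum_{A\in\chain(X)\cap\caa^*}\|Q_{\pp(A)}(f\charfun_\Gamma)\|_p^\tau
\lesssim \|f\charfun_\Gamma\|_p^\tau\, |X|^{\tau/p'} \sum_{A\in\chain(X)\cap\caa^*} u(A)^\tau |A|^{-\tau/p'}.
\]
Now apply part~(i) with $\varepsilon = \tau/p' > 0$ to bound the last sum by $C|X|^{-\tau/p'}$, and the powers of $|X|$ cancel, leaving $\sum_A \|Q_{\pp(A)}(f\charfun_\Gamma)\|_p^\tau \le C(\wI,p,\tau)\|f\charfun_\Gamma\|_p^\tau$, as claimed.

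The main obstacle I anticipate is purely bookkeeping in part~(i): handling the mismatch between the set $\{A\in\mathscr X : \pp(A)\in\mathscr X\}$ appearing in the definition of $\wI$ and the full set $\chain(X)\cap\caa^*$ over which I want to sum, and making the block decomposition interact cleanly with the "$\pp(A)\in\mathscr X$" restriction so that no atom is dropped or double-counted. A clean way around this is to note that $\chain(X)\cap\caa^*$ is itself a full chain, apply Lemma~\ref{lem:fat_chains} to split it into $\rho$-fat full chains $\mathscr X_1,\dots,\mathscr X_k$ (possibly infinitely many as $X$ ranges, but finitely many for fixed $X$) with the geometric gap~\eqref{eq:chains} between consecutive pieces; on each $\mathscr X_s$ the restriction "$\pp(A)\in\mathscr X_s$" loses only the top atom, whose weight is $\le 1$, and the gap condition~\eqref{eq:chains} guarantees $\min_{A\in\mathscr X_s}|A|$ grows at least geometrically in $s$, which furnishes the convergent geometric series in the $|A|^{-\varepsilon}$ factor. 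Everything else — the operator bound, Hölder exponents, the identification $\varepsilon = \tau/p'$ — is routine given the results already established in Sections~\ref{sec:pwQ} and~\ref{sec:w1}.
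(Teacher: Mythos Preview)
Your proposal is correct and follows essentially the same approach as the paper's proof. The paper proves (i) exactly via the clean route you describe at the end: apply Lemma~\ref{lem:fat_chains} to split $\chain(X)\cap\caa^*$ into $\rho$-fat full chains $\mathscr X_1,\dots,\mathscr X_k$, bound $\sum_{A\in\mathscr X_s} u(A)^\tau\le M+1$ (the ``$+1$'' absorbing the single top atom you worried about, since $u(A)\le 1$), use the geometric gap~\eqref{eq:chains} to control $\min_{A\in\mathscr X_s}|A|$, and sum the resulting geometric series; for (ii) the paper likewise invokes~\eqref{eq:Qbound} and applies (i) with $\varepsilon=\tau/p'$.
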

\begin{proof}
	We begin by proving (i) and first fix some number $\rho\in (0,1)$. According 
	to Lemma~\ref{lem:fat_chains}, we split $\chain(X)\cap\mathscr A^*$ into the union of 
	the $\rho$-fat full chains $\mathscr X_1,\ldots,\mathscr X_k$ satisfying
	\begin{equation}\label{eq:geom_chain}
		\min_{A\in\mathscr X_s} |A| < \rho\min_{A\in\mathscr X_{s+1}}|A|, \qquad s=1,\ldots,k-1,
	\end{equation}
	which implies
	\begin{equation}\label{eq:geom}
		 \min_{A\in\mathscr X_{s+1}} |A| \geq \rho^{-s} |X|,\qquad s=0,\ldots,k-1.
	\end{equation}
	Then, we decompose the sum in (i) into those $\rho$-fat full chains and write 
	\[
			\sum_{A \in \chain(X) \cap \caa^*} u(A)^\tau |A| ^{-\varepsilon}
				=\sum_{s=1}^k \sum_{A\in\mathscr X_s} u(A)^\tau |A|^{-\varepsilon},
	\]
	which we estimate
	\begin{align*}
		\sum_{A \in \chain(X) \cap \caa^*} u(A)^\tau |A| ^{-\varepsilon} 
		&\leq   \sum_{s=1}^k \big(\min_{A\in\mathscr X_s}|A|\big)^{-\varepsilon} \sum_{A\in\mathscr X_s} u(A)^\tau  \\
		&\leq  (M+1) |X|^{-\varepsilon}\sum_{s=0}^{k-1} \rho^{\varepsilon s},
	\end{align*}
	where we used \eqref{eq:geom} and condition $\wI$, respectively.
	Summing the geometric series yields (i).

	For (ii), we just use inequality~\eqref{eq:Qbound} and apply (i) with the parameter $\varepsilon = \tau/p'>0$.
\end{proof}
Now we are in the position to prove that $\wI$ is sufficient for the Bernstein inequality for 
atoms.

\begin{prop}
\label{p.1}
Suppose that condition $\wI(\mathscr A,S,p,\tau)$ is satisfied for some parameters. 

Then, the Bernstein inequality for atoms $\operatorname{BI}_{\mathrm{atoms}}(\mathscr A,S,p,\tau)$ is satisfied.
\end{prop}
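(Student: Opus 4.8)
The plan is to show that condition $\wI$ implies the Bernstein inequality for a single term $g \in \Sigma_1(S)$, i.e. $g = v\charfun_A$ for some $v \in S$ and $A \in \mathscr A$. We need to bound $\big(\sum_{j\geq 0}\|Q_j g\|_p^\tau\big)^{1/\tau}$ by $C\|g\|_p$. The key observation is that $Q_j(v\charfun_A)$ can only be nonzero when the atom corresponding to $Q_j$ (call it $B$, so $Q_j = Q_B = Q_{\pp(B')}$, equivalently we sum over $C \in \mathscr A^*$ with $Q_{\pp(C)}$) interacts with $A$; since $v\charfun_A$ is supported on $A$ and functions in $\operatorname{ran} Q_{\pp(C)}$ are supported on $\pp(C)$, we need $A \cap \pp(C) \neq \emptyset$. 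The relevant atoms split into two families: those $C$ with $\pp(C) \subsetneq A$ (so $\pp(C)$ lies strictly inside $A$), and those $C$ with $\pp(C) \supseteq A$, i.e. $\pp(C) \in \chain(A)$.

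First I would handle the family $\{C : \pp(C) \in \chain(A)\} = \chain(A)\cap\mathscr A^*$ (atoms whose predecessor contains $A$). Here I apply Lemma~\ref{lem.2}(ii) directly with $\Gamma = A$ and $f$ replaced by (an extension of) $v$: this gives $\sum_{C \in \chain(A)\cap\mathscr A^*}\|Q_{\pp(C)}(v\charfun_A)\|_p^\tau \lesssim \|v\charfun_A\|_p^\tau = \|g\|_p^\tau$, which is exactly what we want. For the second family, atoms $B \in \mathscr A$ with $B \subseteq A$: for such $B$ we have $P_B(v\charfun_A) = P_B v$ since $v\charfun_A$ agrees with $v$ on $B$, and since $v \in S \subset S(B)$ already, $P_B(v\charfun_A) = v$ on $B$. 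Hence the projections $P_{n}g$ stabilize to $v\charfun_A$ inside $A$ as soon as $\mathscr F_n$ resolves $A$, so $Q_j g = 0$ for all the "deeper" indices $j$ whose atom is contained in $A$ — more precisely, once $n \geq n_0(A)$ we have $Q_{n+k}g$ supported only where the splitting atom meets $A^c$ (for $k\geq 1$ the split atom is inside $A$ and there $g=v=P_\cdot v$, so $Q=0$). So this second family contributes nothing beyond the boundary terms already subsumed in the first.

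The remaining care is the "transitional" term(s): the operator $Q_{\pp(A)} = Q_{n_0(A)}$ itself (the split that creates $A$), and possibly $P_0 g$ if $A = \Omega$. But $Q_{\pp(A)}$ already appears in the sum $\sum_{C\in\chain(A)\cap\mathscr A^*}$ when $A \in \mathscr A^*$ (taking $C = A$, since $\pp(A) \supseteq A$), so it is covered; and if $A = \Omega$ then $g = v\charfun_\Omega \in S_0$, so $Q_j g = 0$ for all $j \geq 1$ and $\|P_0 g\|_p = \|g\|_p$, giving the inequality trivially with $C=1$. Assembling: for $g = v\charfun_A$, the sum $\sum_{j\geq 0}\|Q_j g\|_p^\tau$ equals (up to the trivial boundary contribution) $\sum_{C\in\chain(A)\cap\mathscr A^*}\|Q_{\pp(C)}(v\charfun_A)\|_p^\tau \lesssim \|g\|_p^\tau$, which is $\operatorname{BI}_{\mathrm{atoms}}$ with $n=1$ and $n^\beta = 1$.

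The main obstacle is the bookkeeping of exactly which operators $Q_j$ act nontrivially on $v\charfun_A$ and verifying cleanly that the "inside $A$" contributions vanish while the "chain above $A$" contributions are precisely those controlled by Lemma~\ref{lem.2}(ii); once that dictionary between $Q_j$'s and atoms $C \in \chain(A)\cap\mathscr A^*$ is set up, the estimate is immediate from Lemma~\ref{lem.2}. Care must also be taken that Lemma~\ref{lem.2}(ii) is stated for $f\in S$ and $\Gamma\subset X$ with $X = A$, which matches our situation after noting $v$ extends to a function in $S$ defined on all of $\Omega$.
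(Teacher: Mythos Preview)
Your proposal is correct and matches the paper's approach: observe that $Q_Z(v\charfun_A)\equiv 0$ unless $Z\supsetneq A$, so the nonzero terms are indexed by $\chain(A)\cap\mathscr A^*$, and then apply Lemma~\ref{lem.2}(ii) with $\Gamma = X = A$. The paper handles the $j=0$ term $\|P_0 g\|_p$ explicitly via Lemma~\ref{lem:proj_subset}, which makes your ``trivial boundary contribution'' precise.
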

\begin{proof}
	Let $f\in S$ and let $Y\in\mathscr A$.
	We have $\| P_0 (f \charfun_Y) \|_p \leq  C \| f \charfun_Y \|_p$
	in particular by Lemma~\ref{lem:proj_subset} with the choices $A=\Omega$, $\Gamma=Y$.
	Note that if $Z\in\mathscr A$ is such that either $Y\supseteq Z$ or 
	$Z\cap Y = \emptyset$, then $Q_{Z}(f\charfun_Y)\equiv 0$. By the nestedness
	of the atoms $\mathscr A$, this implies
	\[
		\{ Z\in \mathscr A : Q_{Z}(f\charfun_Y)\not\equiv 0\} \subseteq 
		\{Z\in\mathscr A : Y\subsetneq Z\}\subseteq \pp ( \chain(Y)\cap \mathscr A^*).
	\]	
Therefore, it remains to apply Lemma \ref{lem.2} (ii).
\end{proof}

Before we show that $\wI$ is also necessary for $\operatorname{BI}_{\mathrm{atoms}}$,
we need a few more lemmata.

	\begin{lem}\label{lem:Q_equiv}
		For all $p>1$, there exists a constant $d_1\in (0,1/2]$ so that for all 
		$A\in\mathscr A(d_1)$ and all $u\in S$ we have
		\[
			\| Q_{\pp(A)}(u\charfun_A)\|_p \simeq \|u\charfun_A\|_p.
		\]
	\end{lem}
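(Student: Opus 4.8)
The plan is to establish the two-sided estimate by handling each inequality separately, with the upper bound being the easy direction and the lower bound requiring the stability inequality \eqref{eq:L1Linfty}. For the upper bound $\| Q_{\pp(A)}(u\charfun_A)\|_p \lesssim \|u\charfun_A\|_p$, I would simply invoke Corollary~\ref{cor:upperQ} (or directly Lemma~\ref{lem.23}) with $\Gamma = A$: since $|\Gamma|/|A| = 1$, inequality \eqref{eq:Qbound} gives $\|Q_{\pp(A)}(u\charfun_A)\|_p \lesssim u(A)\|u\charfun_A\|_p \leq \|u\charfun_A\|_p$, valid for every $A\in\mathscr A^*$ with no smallness assumption needed. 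This part does not constrain $d_1$.

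For the lower bound, I want to show $\|u\charfun_A\|_p \lesssim \|Q_{\pp(A)}(u\charfun_A)\|_p$ when $A$ is small relative to $\pp(A)$. Write $B = \pp(A)$. The natural route is to compare with the projections $P_B$ and $P_{B'}$ (where I abbreviate $B' = \pp(A)$'s successor atoms): since $Q_{\pp(A)} = P_B - P_{B''}$ where $B'' $ ranges over the two children, one has on the atom $A$ the identity $Q_{\pp(A)} = \mathrm{Id} - (\text{something supported appropriately})$, but the cleaner approach is: the function $u\charfun_A$ decomposes as $P_B(u\charfun_A)$ plus its orthogonal complement in $S_B$-terms, and $Q_{\pp(A)}(u\charfun_A) = P_B(u\charfun_A) - P_{n_0(A)}(u\charfun_A)$ where $P_{n_0(A)}$ projects onto the finer space. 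The key point is that $P_{n_0(A)}(u\charfun_A)$, restricted to the sibling atom $\bb(A)$, can be controlled, while on $A$ itself one must show that $P_{n_0(A)}(u\charfun_A)$ does not recapture all of $u\charfun_A$. Concretely, I would use Lemma~\ref{lem:proj_subset} applied on the sibling: $\|P_{\bb(A)}(\cdot)\|$ picks up a factor $(|A|/|\bb(A)|)^{1/p'}$ which, since $A$ is small, forces $|\bb(A)|$ to be comparable to $|B|$ and hence this factor is small; combined with the stability inequality \eqref{eq:L1Linfty} that says $u\charfun_A$ has $L^p$-norm comparable to its $L^\infty$ norm times $|A|^{1/p}$, one concludes that $\|Q_{\pp(A)}(u\charfun_A)\|_p$ cannot be much smaller than $\|u\charfun_A\|_p$.

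The main obstacle is making the lower bound precise: one needs to show that the ``bad'' part $P_{n_0(A)}(u\charfun_A) - P_{B}(u\charfun_A)$ restricted to $A$ has $L^p$-norm bounded by a small constant (depending on $|A|/|B|$) times $\|u\charfun_A\|_p$, so that it can be absorbed. This is where the smallness threshold $d_1$ enters: one chooses $d_1$ so that the relevant ratio factors (powers of $|A|/|B| \leq d_1$) times the implied constants from Lemma~\ref{lem:proj_subset} and the pointwise bounds \eqref{eq:pw} are at most, say, $1/2$ times the lower constant coming from \eqref{eq:L1Linfty}. The argument is essentially a perturbation/absorption argument: $\|Q_{\pp(A)}(u\charfun_A)\|_p \geq \|u\charfun_A\charfun_{A}\|_p - \|(\text{error})\charfun_A\|_p - \|(\text{error})\charfun_{\bb(A)}\|_p$, and each error term carries a positive power of $|A|/|B|$. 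Since all the constants in play depend only on $c_1,c_2$ and $\dim S$ (uniformly in $A$), such a $d_1\in(0,1/2]$ can indeed be selected, completing the proof.
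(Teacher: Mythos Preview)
Your overall strategy---upper bound from the uniform $L^p$-boundedness of $Q_{\pp(A)}$, lower bound via an absorption argument driven by Lemma~\ref{lem:proj_subset}---is correct and is exactly what the paper does. However, you have obscured the argument considerably, and some of the intermediate statements are garbled.

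The simplification you are missing is the identity
\[
u\charfun_A \;=\; P_{\pp(A)}(u\charfun_A) + Q_{\pp(A)}(u\charfun_A),
\]
valid because $u\charfun_A\in S(A)$ already lies in the range of the \emph{finer} projection $P_n$ (with $n=n_0(\pp(A))$), so that $P_n(u\charfun_A)=u\charfun_A$ and hence $Q_{\pp(A)}(u\charfun_A)=u\charfun_A - P_{\pp(A)}(u\charfun_A)$. Once you have this, a single application of Lemma~\ref{lem:proj_subset} with atom $\pp(A)$ and $\Gamma=A$ gives
\[
\|P_{\pp(A)}(u\charfun_A)\|_p \lesssim \Big(\frac{|A|}{|\pp(A)|}\Big)^{1/p'}\|u\charfun_A\|_p,
\]
and since $p'<\infty$ you choose $d_1$ so that the right side is at most $\tfrac12\|u\charfun_A\|_p$. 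That is the entire lower bound.

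Your detours are unnecessary or incorrect as stated: there is no need to invoke \eqref{eq:L1Linfty} or the pointwise bounds \eqref{eq:pw}; your remark that ``$P_{n_0(A)}(u\charfun_A)$ does not recapture all of $u\charfun_A$'' is false (it recaptures it exactly, which is precisely the point); and applying Lemma~\ref{lem:proj_subset} ``on the sibling'' $\bb(A)$ does not make sense because $u\charfun_A$ vanishes on $\bb(A)$, so $P_{\bb(A)}(u\charfun_A)=0$. Splitting the estimate into pieces on $A$ and $\bb(A)$ is also superfluous---the global triangle inequality on the identity above suffices.
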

	\begin{proof}
		The upper bound is a consequence of \eqref{eq:Qbded}. To prove the lower bound,	
observe that for $u\in S$ we have
\begin{equation}\label{eq:decomp1}
	u\charfun_{A} = P_{\pp(A)}(u\charfun_{A}) + Q_{\pp(A)}(u\charfun_{A}).
\end{equation}
By Lemma \ref{lem:proj_subset}, we have the inequality
\[
	\|P_{\pp(A)}(u\charfun_{A})\|_p \lesssim \Big(\frac{|A|}{|\pp(A)|}\Big)^{1/p'} \| u\charfun_{A}\|_{p}.
\]
Since $p'<\infty$, we obtain from \eqref{eq:decomp1} that
 there exists a constant $d_1>0$ so that for  $|A|/|\pp(A)|\leq d_1$, 
	$\|Q_{\pp(A)}(u\charfun_{A})\|_p \gtrsim \|u\charfun_{A}\|_{p}$.
	\end{proof}

\begin{lem} \label{lem:QA_equiv} Let $p\in(1,\infty)$.

	Then, there exists a constant $d\in (0,1/2]$ so that 
	for all $A\in \mathscr A(d)$  
	and for all $B\subset \bb(A)$ with  $|B|\geq (1-d)\cdot|\bb(A)|$,
there exists a function $f_A\in S$ with $\|f_A\|_{\pp(A)} = 1$ so that 
\[
	\|Q_{\pp(A)}(f_A\charfun_B)\|_p \simeq \varepsilon_{A} |A|^{1/p}.
\]
\end{lem}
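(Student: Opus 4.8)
The plan is to produce a candidate $f_A\in S$ as an almost-extremal function for the ratio $\varepsilon_A = \sup_{u\in S}\|u\|_A/\|u\|_{\pp(A)}$, and then show that after multiplying by $\charfun_B$ with $B\subset\bb(A)$ nearly of full measure, the projection $Q_{\pp(A)}$ recovers its $L^p$-size on $\bb(A)$ up to constants. First I would pick $f_A\in S$ with $\|f_A\|_{\pp(A)}=1$ and $\|f_A\|_A \geq \varepsilon_A/2$ (or, if the supremum is attained, an exact maximizer); by the stability inequality \eqref{eq:L1Linfty} applied to the atom $A$, this forces the quantitative lower bound $\|f_A\charfun_A\|_p \gtrsim \varepsilon_A |A|^{1/p}$. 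Since $A\in\mathscr A(d)$ means $|A|\leq d|\pp(A)|$, the complementary piece $\bb(A) = \pp(A)\setminus A$ has measure $|\bb(A)| = |\pp(A)|-|A| \geq (1-d)|\pp(A)|\geq (1-d)|\pp(A)|$, so $\bb(A)$ is the "large" child $L$ and $A$ is the "small" child $T$ in the notation of \eqref{eq:pw}.

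The core of the argument is the decomposition $f_A\charfun_B = P_{\pp(A)}(f_A\charfun_B) + Q_{\pp(A)}(f_A\charfun_B)$ together with the observation that $Q_{\pp(A)}(f_A\charfun_B)$ and $Q_{\pp(A)}(f_A\charfun_A)$ differ in a controlled way, because $f_A\charfun_{\pp(A)} = f_A\charfun_A + f_A\charfun_{\bb(A)}$ and $B$ fills up most of $\bb(A)$. I would argue as follows: write $Q_{\pp(A)}(f_A\charfun_B) = Q_{\pp(A)}(f_A\charfun_{\pp(A)}) - Q_{\pp(A)}(f_A\charfun_A) - Q_{\pp(A)}(f_A\charfun_{\bb(A)\setminus B})$. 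The first term on the right is $0$ since $f_A\charfun_{\pp(A)} = f_A$ restricted to $\pp(A)$ lies in $S(\mathscr F_{n_0-1})$ locally, hence $Q_{\pp(A)}$ annihilates it. Wait --- more carefully, $f_A\charfun_{\pp(A)}\in S_{n_0-1}$ only if $f_A$ genuinely belongs to $S$ on all of $\pp(A)$, which it does by construction ($f_A\in S$), so indeed $P_{\pp(A)-1}$ applied to it is itself and $Q_{\pp(A)}(f_A\charfun_{\pp(A)})=0$. Thus $Q_{\pp(A)}(f_A\charfun_B) = -Q_{\pp(A)}(f_A\charfun_A) - Q_{\pp(A)}(f_A\charfun_{\bb(A)\setminus B})$.

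For the first summand, Lemma~\ref{lem:Q_equiv} gives $\|Q_{\pp(A)}(f_A\charfun_A)\|_p \simeq \|f_A\charfun_A\|_p \gtrsim \varepsilon_A|A|^{1/p}$, provided $d\leq d_1$. For the second (error) summand, I apply Lemma~\ref{lem.23}(ii) with the roles matched: $A\in\caa'$ or $\caa''$ according to which child $A$ is --- actually $A=T=\sm{\pp(A)}$ here, so $A\in\caa'$ and I use Lemma~\ref{lem.23}(i) applied to the set $\Gamma = \bb(A)\setminus B$; but $\Gamma\subset\bb(A)\not\subset A$, so I instead need the estimate with $\Gamma\subset\bb(A) = \la{\pp(A)}$, which is Corollary~\ref{cor:upperQ} or the $L$-case computation in Lemma~\ref{lem.23}(ii). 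This yields $\|Q_{\pp(A)}(f_A\charfun_{\bb(A)\setminus B})\|_p \lesssim (|\bb(A)\setminus B|/|\bb(A)|)^{1/p'}(|A|/|\bb(A)|)^{1/p}\|f_A\|_\infty|\bb(A)\setminus B|^{1/p}$, and since $|\bb(A)\setminus B|\leq d|\bb(A)|$ and $\|f_A\|_\infty \leq \|f_A\|_{\pp(A)}=1$ while $\|f_A\charfun_A\|_p\gtrsim\varepsilon_A|A|^{1/p}$, the error is bounded by $d^{1/p'}\cdot(\text{const})\cdot|A|^{1/p} \lesssim d^{1/p'}\varepsilon_A^{-1}\cdot\varepsilon_A|A|^{1/p}$... which is only small relative to the main term if $\varepsilon_A$ is not too small. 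The fix is to bound the error directly by $d^{1/p'}|A|^{1/p}$ and compare with the main term $\gtrsim\varepsilon_A|A|^{1/p}$: this is not automatically smaller.

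So the genuine subtlety --- and the step I expect to be the main obstacle --- is handling the regime where $\varepsilon_A$ is small: there the main term $\varepsilon_A|A|^{1/p}$ is small and the naive error bound $d^{1/p'}|A|^{1/p}$ does not beat it. To resolve this I would sharpen the error estimate by using the \emph{improved} bound \eqref{eq:improved}: since $|\bb(A)| = |L| \geq (1-d)|\pp(A)| \geq (1-c_2/2)|\pp(A)|$ once $d\leq c_2/2$, the pointwise bound on $\psi\in\operatorname{ran}Q_{\pp(A)}$ over $L=\bb(A)$ carries an extra factor $\varepsilon_T = \varepsilon_A$. Propagating this through the Hölder computation in Lemma~\ref{lem.23}(ii) (the improvement noted in its last sentence), the error becomes $\lesssim d^{1/p'}\varepsilon_A|A|^{1/p}$, which for $d$ small enough is at most half the main term $\gtrsim\varepsilon_A|A|^{1/p}$. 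Combining, $\|Q_{\pp(A)}(f_A\charfun_B)\|_p \geq \|Q_{\pp(A)}(f_A\charfun_A)\|_p - \|Q_{\pp(A)}(f_A\charfun_{\bb(A)\setminus B})\|_p \gtrsim \varepsilon_A|A|^{1/p}$, and the matching upper bound follows from Corollary~\ref{cor:upperQ} (or Lemma~\ref{lem.23}) applied with $\Gamma=B\subset\bb(A)$, giving $\|Q_{\pp(A)}(f_A\charfun_B)\|_p \lesssim \varepsilon_A(|B|/|\bb(A)|)^{1/p'}\|f_A\charfun_B\|_p \lesssim \varepsilon_A|A|^{1/p}$ after again using $\|f_A\|_\infty\leq 1$ and $|B|\leq|\bb(A)|$. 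Choosing $d = \min(d_1, c_2/2, d')$ for a suitable absolute $d'$ controlling the geometric-series/error constant finishes the proof.
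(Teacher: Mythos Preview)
Your proposal is correct and follows essentially the same route as the paper's proof: choose $f_A$ (almost) extremal for $\varepsilon_A$, use $Q_{\pp(A)}(f_A\charfun_{\pp(A)})=0$ to reduce to $\|Q_{\pp(A)}(f_A\charfun_A)\|_p$ plus an error on $\bb(A)\setminus B$, control the main term via Lemma~\ref{lem:Q_equiv}, and---crucially---invoke the improved bound \eqref{eq:improved} (the extra $\varepsilon_A$ factor in Lemma~\ref{lem.23}(ii)) to make the error absorb into the main term uniformly in $\varepsilon_A$. The only cosmetic difference is that the paper tracks the error as $\varepsilon_A\,(|\Gamma|/|\bb(A)|)\,|A|^{1/p}$ (a full power of $d$ rather than your $d^{1/p'}$), but your weaker exponent is still sufficient.
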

\begin{proof}
	By Lemma~\ref{lem:Q_equiv}, there exists $d_1\in(0,1/2]$ so that for all $A\in\mathscr A(d_1)$ 
	and all $u\in S$ 
	we have the equivalence
	\[
			\| Q_{\pp(A)}(u\charfun_A)\|_p \simeq \|u\charfun_A\|_p.
	\]	
	Without restriction, we assume that $d_1 \leq c_2/2\leq 1/2$.
Then, since $0 = Q_{\pp(A)}(u\charfun_{\pp(A)}) = Q_{\pp(A)}(u\charfun_{A} ) + Q_{\pp(A)}(u\charfun_{\bb(A)})$ we also have
\begin{equation}\label{eq:Q_lower}
	\|Q_{\pp(A)}(u\charfun_{\bb(A)})\|_p \gtrsim \|u\charfun_{A}\|_{p}\gtrsim \|u\|_{A} |A|^{1/p}. 
\end{equation}
By definition of $\varepsilon_A$, we can choose $u\in S$ such that \eqref{eq:Q_lower} implies
\begin{equation}\label{eq:QA_upper}
	\| Q_{\pp(A)}(u\charfun_{\bb(A)})\|_p \gtrsim \varepsilon_{A} \|u\|_{\pp(A)} |A|^{1/p}.
\end{equation}
Let $\Gamma\subset \bb(A)$. Then by Corollary \ref{cor:upperQ}, since 
$|A| \leq d_1|\pp(A)|\leq (c_2/2)\cdot|\pp(A)|$,
\begin{equation}\label{eq:QA_upper1}
\begin{aligned}
	\|Q_{\pp(A)}(u\charfun_\Gamma)\|_p &\lesssim  \varepsilon_{A,p} \Big( \frac{|\Gamma|}{|\bb(A)|}\Big)^{1/p'} \|u\charfun_\Gamma\|_p \\
	&\lesssim	\varepsilon_{A} \Big(\frac{|A|}{|\bb(A)|} \Big)^{1/p}
	\Big(\frac{|\Gamma|}{|\bb(A)|} \Big)^{1/p'}
	 \|u\|_{\pp(A)} |\Gamma|^{1/p}  \\
	 &= \varepsilon_{A} \frac{ |\Gamma|}{|\bb(A)|} \|u\|_{\pp(A)} |A|^{1/p}.
\end{aligned}
\end{equation}
Therefore, combining this with \eqref{eq:QA_upper},
there exists a constant $d_2>0$ so that for $|\Gamma|/|\bb(A)| \leq d_2$ we also have with $B = \bb(A)\setminus \Gamma$
\begin{equation}\label{eq:QA_concl}
 \|Q_{\pp(A)}(u\charfun_{B})	\|_p \gtrsim  \varepsilon_{A} \|u\|_{\pp(A)} |A|^{1/p}.
\end{equation}
We now choose $d = \min(d_1,d_2)\leq 1/2$ to obtain the lower bound.

The upper bound directly follows from Corollary~\ref{cor:upperQ} in the same 
manner as the upper bound in \eqref{eq:QA_upper1}.
\end{proof}	

\begin{lem}\label{lem:compact}
	Let $r>0$, $A\in\mathscr A$ and let $U$ be the unit sphere of the space $S(A)$ with
	respect to the norm $\|\cdot \|_A$.

	Then, there exists a constant $N$ that depends only on $r$, the dimension of $S$,
	and the constants $c_1,c_2$ from \eqref{eq:L1Linfty} so that $U$ can be covered by 
	$N$ sets with diameter $\leq r$.
	In particular, the constant $N$ does not depend on the atom $A$.
\end{lem}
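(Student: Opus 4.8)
The plan is to reduce the statement to the classical volumetric bound on the covering numbers of balls in finite-dimensional normed spaces, a bound which is independent of the particular norm and hence, in our situation, of the atom $A$. First I would observe that the restriction map $f\mapsto f\charfun_A$ is a linear surjection of $S$ onto $S(A)$, so that $k:=\dim S(A)\le \dim S=:m$, and that $\|\cdot\|_A$ is an honest norm on $S(A)$ (if $\|f\charfun_A\|_A=0$ then $f\charfun_A=0$ as an element of $S(A)$). Thus $U$ is contained in the closed unit ball $B_A$ of the $k$-dimensional normed space $(S(A),\|\cdot\|_A)$, and it suffices to cover $B_A$ by boundedly many sets of $\|\cdot\|_A$-diameter $\le r$; intersecting such a cover with $U$ then covers $U$ as well.

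For that I would run the standard packing argument. Pick a maximal subset $\{g_1,\dots,g_N\}\subset B_A$ that is $(r/2)$-separated with respect to $\|\cdot\|_A$. Maximality forces $B_A\subseteq\bigcup_{i=1}^N\big(g_i+\tfrac r2 B_A\big)$, and each set $g_i+\tfrac r2 B_A$ has $\|\cdot\|_A$-diameter $\le r$. To bound $N$, fix any linear isomorphism $S(A)\cong\rr^k$ and transport Lebesgue measure: the sets $g_i+\tfrac r4 B_A$ have pairwise disjoint interiors and are all contained in $(1+\tfrac r4)B_A$, so comparing volumes gives $N\,(r/4)^k\le(1+r/4)^k$, that is $N\le(1+4/r)^k\le(1+4/r)^m$. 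This bound depends only on $r$ and $\dim S$, which is what is claimed.

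The only point requiring care — and the reason uniformity in $A$ really holds — is that the volume comparison in the last step is carried out intrinsically inside $S(A)$, so that ratios of Lebesgue measures of the convex bodies involved do not depend on the choice of linear coordinates, hence not on $\|\cdot\|_A$ and not on $A$; there is no genuine obstacle beyond this bookkeeping. If one instead prefers to argue inside the fixed ambient space $S$ with a fixed basis, then inequality \eqref{eq:L1Linfty} is exactly what makes the transfer uniform: it yields $c_1\sqrt{c_2}\,|A|^{1/2}\,\|f\|_A\le\|f\charfun_A\|_2\le|A|^{1/2}\,\|f\|_A$ for $f\in S$, so after normalizing by $|A|^{1/2}$ the norm $\|\cdot\|_A$ is comparable, with constants depending only on $c_1,c_2$, to a fixed Euclidean norm on $S$, and the Euclidean covering bound transfers through this equivalence. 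This accounts for the dependence on $c_1,c_2$ recorded in the statement.
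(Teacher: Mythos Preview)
Your argument is correct and in fact cleaner than the paper's. The paper fixes an $L^2(A)$-orthonormal basis $(\psi_j)_{j=1}^k$ of $S(A)$, uses the stability inequality \eqref{eq:L1Linfty} to show that $I\big((a_j)\big)=\sum_j a_j\psi_j\,|A|^{1/2}$ is a $C$-isomorphism from $\ell_2^k$ onto $(S(A),\|\cdot\|_A)$ with $C=C(c_1,c_2)$, pulls $U$ back into the ball of radius $C$ in $\ell_2^k$, covers that ball by $N$ balls of diameter $r/C$, and pushes the cover forward through $I$. You instead run the standard volumetric packing bound intrinsically in the normed space $(S(A),\|\cdot\|_A)$, obtaining $N\le(1+4/r)^{\dim S}$ with no reference to \eqref{eq:L1Linfty} at all; this actually shows that the dependence on $c_1,c_2$ recorded in the statement is superfluous for this particular lemma. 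Your final paragraph then correctly identifies the paper's route as the place where $c_1,c_2$ would enter. One small imprecision there: the Euclidean norm $|A|^{-1/2}\|\,\cdot\,\charfun_A\|_2$ lives on $S(A)$ rather than on $S$ and still depends on $A$; what is genuinely fixed is only its isometry type (any $k$-dimensional Hilbert space is $\ell_2^k$), which is all one needs for covering numbers.
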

\begin{proof}
Let $m=\dim S(A)\leq \dim S$ and let
$(\psi_j)_{j=1}^m$ be an orthonormal basis of $S(A)$ in the inner product space $L^2(A)$. Then
$I\big( (a_j)_{j=1}^m\big)  := \sum_{j=1}^m a_j\psi_j |A|^{1/2}$ is an isomorphism from $\ell_2^m$ to
$S(A)$ with constants $C^{-1}$ and $C$,
where $C$ depends only on the constants $c_1,c_2$ from \eqref{eq:L1Linfty}. Therefore 
$U$ is contained in $I(B)$ where $B$ is the ball in $\ell_2^m$ with center $0$ 
and radius $C$. We cover the compact ball $B$ with balls $(B_n)_{n=1}^N$ each having diameter
$r/C$. Thus, $N$ depends only on $c_1,c_2,m,r$. 
Letting $B_n' = I(B_n)$, we know that the sets $B_n'$ have diameter $\leq r$ and
the sets $(B_n')_{n=1}^N$ cover $U$. 
\end{proof}
	\begin{theo}
		\label{thm:w1_nec}
		Assume that $\operatorname{BI}_{\mathrm{atoms}}(\mathscr A, S,p,\tau)$ 
		is satisfied for some parameters.
			
			Then, condition  $\wI(\mathscr A,S,p,\tau)$ is satisfied.
	\end{theo}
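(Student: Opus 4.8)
The plan is to prove the contrapositive: assuming that condition $\wI(\mathscr A,S,p,\tau)$ fails, we construct, for every constant $C$, a single atom $Y\in\mathscr A$ and a function $f\in S$ so that the Bernstein inequality for atoms \eqref{eq:BI} (with $n=1$) fails with that constant $C$. By Corollary~\ref{cl.1}, condition $\wI$ is independent of the parameter $\rho$, so failure of $\wI$ means: for \emph{every} $\rho\in(0,1)$ and every $M>0$ there is a $\rho$-fat full chain $\mathscr X$ with $\sum_{A\in\mathscr X:\pp(A)\in\mathscr X} u(A)^\tau > M$. The idea is to fix a convenient small $\rho$, take such a chain $\mathscr X = (X_i)_{i=1}^n$ with a huge value of $\sum u(X_i)^\tau$, and to produce a single function $f\in S$, supported essentially on a tiny piece near the bottom of the chain, whose $Q_{\pp(X_i)} f$ components are each comparable to $u(X_i)$ times an appropriate power of measure, so that $\sum_i \|Q_{\pp(X_i)} f\|_p^\tau$ is large while $\|f\|_p$ is controlled.

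Here are the key steps in order. First, split $\mathscr X$ according to whether $X_i\in\mathscr A(\lambda)$ (the ``fat drop'' atoms, where $u(X_i)=1$) or $X_i\notin\mathscr A(\lambda)$ (the ``thin drop'' atoms, where $u(X_i)=\varepsilon_{\bb(X_i),p}$). Since $\mathscr X$ is $\rho$-fat, the number of atoms where $|X_i|\leq\lambda|X_{i-1}|$ is bounded by $\log\rho/\log\lambda$, hence bounded; so if $\sum u(X_i)^\tau$ is large, the contribution must come from thin-drop atoms (where $u(X_i)=\varepsilon_{\bb(X_i),p}$) and from the bounded collection of fat-drop atoms contributing $1$ each. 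Treat these two sources separately. For the fat-drop atoms: since there are at most finitely many and each is an honest split with $|X_i'|\geq d_1|X_{i-1}|$-type control after possibly refining $\rho$, we can use Lemma~\ref{lem:QA_equiv} (or Lemma~\ref{lem:Q_equiv}) to find a function $f$, chosen via the compactness Lemma~\ref{lem:compact} to work simultaneously for all of them up to a fixed constant $N$, realizing $\|Q_{\pp(X_i)}f\|_p\gtrsim\|f\charfun_{X_0\setminus X_n}\|_p$ on each. For the thin-drop atoms: using Lemma~\ref{lem:QA_equiv}, each contributes a term comparable to $\varepsilon_{X_i}|X_i|^{1/p}\|f\|_{\pp(X_i)}$ provided $f$ is taken to be the extremal function $f_{X_i}$ for $\varepsilon_{X_i}$ and supported outside a small piece of $\bb(X_i)$.

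The main technical obstacle — and the heart of the argument — is that a \emph{single} function $f$ must serve all atoms $X_i$ in the chain simultaneously, whereas the extremal functions $f_{X_i}$ realizing $\varepsilon_{X_i}$ may differ from one atom to the next. The resolution is again compactness: by Lemma~\ref{lem:compact}, the unit sphere of $S(X_0)$ (equivalently of $S$ restricted to the top atom, using \eqref{eq:L1Linfty}) is covered by a fixed number $N$ of balls of small diameter, $N$ independent of the chain. Hence among the $X_i$ with large $u(X_i)^\tau$, by pigeonhole a $1/N$-fraction of the total weight is carried by atoms whose extremal functions $f_{X_i}$ all lie in one small ball; picking $f$ to be the center of that ball (suitably normalized on $X_0$), the estimate $\|Q_{\pp(X_i)}(f\charfun_{B_i})\|_p\gtrsim \varepsilon_{X_i}|X_i|^{1/p}$ holds for all of them with a uniform implied constant, where $B_i = \bb(X_i)\setminus\Gamma_i$ for a tiny exceptional set $\Gamma_i$. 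One then checks that these $B_i$ are pairwise disjoint (they live in disjoint ``rings'' $X_{i-1}\setminus X_i$), so that $\sum_i\|Q_{\pp(X_i)}f\|_p^\tau\gtrsim N^{-1}\sum_i u(X_i)^\tau\|f\|_{X_0}^\tau$, while $\|f\charfun_{X_0}\|_p\lesssim\|f\|_{X_0}|X_0|^{1/p}$. Since $\sum u(X_i)^\tau$ can be taken arbitrarily large (uniformly in measure after normalizing $|X_0|=1$ by rescaling, or just dividing through), this contradicts $\operatorname{BI}_{\mathrm{atoms}}$ with the given constant, completing the proof.
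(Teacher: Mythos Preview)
Your high-level strategy is essentially the paper's: use Lemma~\ref{lem:QA_equiv} to realize lower bounds on $\|Q_{\pp(X_i)}(\cdot)\|_p$ in terms of $\varepsilon_{\bb(X_i)}|\bb(X_i)|^{1/p}$, then invoke the compactness Lemma~\ref{lem:compact} and pigeonhole to find a single $f$ that works for a fixed fraction of the chain. The contrapositive framing versus the paper's direct argument is immaterial, and your fat-drop/thin-drop split is harmless (and in fact empty once $\rho>\lambda$, since then every $X_i$ in the chain has $|X_i|>\lambda|X_{i-1}|$).

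However, the execution contains a concrete confusion that would derail the argument as written. The Bernstein inequality for atoms is tested with a single function $g=f\charfun_Y$ for one atom $Y\in\mathscr A$; you never clearly commit to what $Y$ is, and the passage about pairwise disjoint sets $B_i\subset\bb(X_i)$ living in the rings $X_{i-1}\setminus X_i$ suggests you are applying $Q_{\pp(X_i)}$ to different pieces $f\charfun_{B_i}$ --- but that is not what $\operatorname{BI}_{\mathrm{atoms}}$ controls. The correct choice is $Y=X_n$, the \emph{smallest} atom of the chain, and one uses the \emph{same} set $B=X_n$ in every application of Lemma~\ref{lem:QA_equiv}. That lemma is applied with its ``$A$'' equal to $\bb(X_i)$ (not $X_i$): since $\rho\geq 1-d$ we have $|\bb(X_i)|\leq(1-\rho)|X_{i-1}|\leq d|X_{i-1}|$, so $\bb(X_i)\in\mathscr A(d)$; and $X_n\subset X_i=\bb(\bb(X_i))$ with $|X_n|\geq\rho|X_i|\geq(1-d)|X_i|$. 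This yields $\|Q_{\pp(X_i)}(f_A\charfun_{X_n})\|_p\simeq\varepsilon_{\bb(X_i)}|\bb(X_i)|^{1/p}$ (note the indices: $\bb(X_i)$, not $X_i$, throughout). No disjointness of any auxiliary sets is needed; the contributions are simply summed in $\ell^\tau$. With these corrections your outline becomes the paper's proof verbatim; without them the argument as stated does not go through, because testing on $X_0$ gives $Q_{\pp(X_i)}(f\charfun_{X_0})=0$, and testing on varying $B_i$ is not a single-atom Bernstein test.
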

	\begin{proof}
Let $\mathscr X=(X_i)_{i=1}^n$ be a $\rho$-fat full chain. Since condition $\wI$ does
not depend on $\rho<1$ by Corollary~\ref{cl.1}, we assume that $\rho \geq 1-d$ with $d\leq 1/2$ 
as in Lemma \ref{lem:QA_equiv}, and simultaneously assume that $\rho > 1-c_2$ (where $c_2$ is taken from \eqref{eq:L1Linfty}). The latter condition guarantees that by assumption \eqref{eq:L1Linfty}
\[
c_1 \|f\|_{X_1} \leq \|f\|_{X_n} \leq \|f\|_{X_1}
\]
for each $f\in S$.
Let $\mathscr X^*$ be the 
chain $(X_i)_{i=2}^{n}$.
We see that $\operatorname{BI}_{\mathrm{atoms}}$ implies the inequality
\begin{equation}
	\label{eq:bernstein1}
	\Big(\sum_{A\in \mathscr X^*} \| Q_{\pp(A)} (f \charfun_{X_n}) \|_p^\tau\Big)^{1/\tau} \lesssim |X_n|^{1/p}
\end{equation}
for every function $f\in S$ with $\|f\|_{X_n} \leq 1$.
We want to find a function $f_0\in S$ with $\|f_0\|_{X_n} = 1$ so that we have a  
lower bound for the left hand side of \eqref{eq:bernstein1} that implies $\wI$. 
For each $A\in \mathscr X^*$, according to Lemma \ref{lem:QA_equiv},  we first choose $f_A\in S$ with 
$\|f_A\|_{X_n} = 1$ so that 
$C_1 \varepsilon_{\bb(A)} |\bb(A)|^{1/p} \leq \|Q_{\pp(A)}(f_A\charfun_{X_n})\|_p$ 
where  $C_1$ is the implicit constant for the lower bound from Lemma~\ref{lem:QA_equiv}.
Corollary~\ref{cor:upperQ} and \eqref{eq:eps_equiv} imply for some constant $C_2$ and all $g\in L^p$ the upper estimate 
$\|Q_{\pp(A)}(g\charfun_{X_n})\|_p \leq C_2 \varepsilon_{\bb(A)} |\bb(A)|^{1/p} \|g\|_{X_n}$.
Thus, if $f\in S$ is chosen so that $\|f-f_A\|_{X_n} \leq C_1 / (2C_2)$ then, by Corollary~\ref{cor:upperQ}, for $A\in \mathscr X^*$,
\begin{equation}\label{eq:QA_lower}
	\begin{aligned}
	\|Q_{\pp(A)}(f\charfun_{X_n})\|_p &\geq 
	\|Q_{\pp(A)}(f_A\charfun_{X_n})\|_p - \| Q_{\pp(A)}\big( (f-f_A)\charfun_{X_n} \big)\|_p \\
	&\geq \frac{C_1}{2} \varepsilon_{\bb(A)} |\bb(A)|^{1/p}.
	\end{aligned}
\end{equation}
We apply Lemma~\ref{lem:compact} with the parameters $ r= C_1/(2C_2)$, $A = X_n$ to get a 
constant $N$ that does not depend on the choice of $X_n$ and so that $(B_i')_{i=1}^N$ is 
a collection of subsets of the unit sphere $U$ in $S(A)$ with respect to the norm $\|\cdot \|_{X_n}$
that cover $U$ and all sets $B_i'$ have diameter $\leq C_1/(2C_2)$.
 Then, we consider the following subsets of the chain $\mathscr X^*$ according to the sets $B_i'$:
 
\[
	E_i = \{ A\in \mathscr X^* : f_A \in B_i' \},\qquad i=1,\ldots,N.
 \]
 Note that $\cup_i E_i = \mathscr X^*$, but the sets $E_i$ are not necessarily disjoint.
 Then, choose $n_0\in \{1,\ldots,N\}$  so that
 \begin{equation}\label{eq:choice_n_0}
	 \sum_{A\in E_{n_0}} \big( \varepsilon_{\bb(A)} |\bb(A)|^{1/p}\big)^\tau \geq \frac{1}{N} 
	 \sum_{A\in \mathscr X^*} \big( \varepsilon_{\bb(A)} |\bb(A)|^{1/p}\big)^\tau.
 \end{equation}
 Let $f_0$ be an arbitrary function in $ U \cap B_{n_0}'$. Then we estimate
 \begin{align*}
	 \sum_{A\in \mathscr X^*}  \| Q_{\pp(A)}(f_0\charfun_{X_n}) \|_p^\tau 
	 &\geq \sum_{A\in E_{n_0}}  \| Q_{\pp(A)}(f_0\charfun_{X_n}) \|_p^\tau \\
	 &\geq \Big(\frac{C_1}{2}\Big)^\tau \sum_{A\in E_{n_0}} 
	 (\varepsilon_{\bb(A)} |\bb(A)|^{1/p})^\tau
 \end{align*}
 by inequality \eqref{eq:QA_lower}. Inequality \eqref{eq:choice_n_0} now yields 
 \begin{equation}\label{eq:QA_lower_final}
	 \Big( \sum_{A\in \mathscr X^*}  \| Q_{\pp(A)}(f_0\charfun_{X_n}) \|_p^\tau \Big)^{1/\tau} 
	 \gtrsim \Big(\sum_{A\in \mathscr X^*}\big( \varepsilon_{\bb(A)} |\bb(A)|^{1/p} \big)^\tau \Big)^{1/\tau}.
 \end{equation}
 Combining this estimate with \eqref{eq:bernstein1} for $f=f_0$,  
 observation \eqref{eq:eps_equiv} and the definition \eqref{eq:def_u} of $u(A)$
 immediately yields $\wI$. 
	\end{proof}

We summarize the results in this subsection in the following 
\begin{theo}\label{thm:w1equiv}
	For every choice of parameters $(\mathscr A,S,p,\tau)$, condition
	$\wI(\mathscr A,S,p,\tau)$ is equivalent to the Bernstein inequality for
	atoms $\operatorname{BI}_{\mathrm{atoms}}(\mathscr A, S,p,\tau)$.
\end{theo}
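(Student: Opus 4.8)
The plan is to read Theorem~\ref{thm:w1equiv} as the conjunction of the two implications
\[
\wI(\mathscr A,S,p,\tau) \Longrightarrow \operatorname{BI}_{\mathrm{atoms}}(\mathscr A,S,p,\tau)
\quad\text{and}\quad
\operatorname{BI}_{\mathrm{atoms}}(\mathscr A,S,p,\tau) \Longrightarrow \wI(\mathscr A,S,p,\tau),
\]
i.e.\ as the combination of Proposition~\ref{p.1} and Theorem~\ref{thm:w1_nec}; once those are in hand the theorem is immediate. The substance is in each of the two implications, and below I sketch how I would organize them.

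For the sufficiency direction I would reduce \eqref{eq:BI} with $n=1$, say for $g=f\charfun_Y$ with $f\in S$ and $Y\in\mathscr A$, to a single geometric-type sum along a chain. The point is that $Q_Z(f\charfun_Y)\equiv 0$ unless $Y\subsetneq Z$, so by nestedness of $\mathscr A$ the only contributions to the left side of \eqref{eq:BI} come from the finite chain $\pp(\chain(Y)\cap\mathscr A^*)$, together with the term $P_0(f\charfun_Y)$ which is controlled by Lemma~\ref{lem:proj_subset}. I would then insert the pointwise bound $\|Q_{\pp(A)}(f\charfun_Y)\|_p\lesssim u(A)(|Y|/|A|)^{1/p'}\|f\charfun_Y\|_p$ from Corollary~\ref{cor:upperQ} and be left with estimating $\sum_{A\in\chain(Y)\cap\mathscr A^*}u(A)^\tau|A|^{-\tau/p'}$ by a constant times $|Y|^{-\tau/p'}$. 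To do this I would split $\chain(Y)\cap\mathscr A^*$ into $\rho$-fat full chains via Lemma~\ref{lem:fat_chains}, use that consecutive pieces grow in measure by a factor at least $\rho^{-1}$, apply $\wI$ on each piece, and sum the resulting geometric series; since $\wI$ is independent of $\rho$ by Corollary~\ref{cl.1}, no attention to the particular value of $\rho$ is required.

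For the necessity direction I would, conversely, test $\operatorname{BI}_{\mathrm{atoms}}$ on a $\rho$-fat full chain $\mathscr X=(X_i)_{i=1}^n$ with a function supported on $X_n$. After normalizing $\|f\|_{X_n}=1$ (legitimate because a fat chain has comparable $\|\cdot\|$-values on all its atoms by \eqref{eq:L1Linfty}), the Bernstein inequality becomes $\sum_{A\in\mathscr X^*}\|Q_{\pp(A)}(f\charfun_{X_n})\|_p^\tau\lesssim|X_n|$, and I want to produce a single $f=f_0$ for which the left side is $\gtrsim\sum_{A\in\mathscr X^*}(\varepsilon_{\bb(A)}|\bb(A)|^{1/p})^\tau$; together with \eqref{eq:eps_equiv} and the definition \eqref{eq:def_u} of $u(A)$ this is exactly $\wI$. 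The main obstacle is that the near-extremizer $f_A$ with $\|Q_{\pp(A)}(f_A\charfun_{X_n})\|_p\gtrsim\varepsilon_{\bb(A)}|\bb(A)|^{1/p}$ furnished by Lemma~\ref{lem:QA_equiv} depends on $A$, whereas a single $f_0$ must work simultaneously along the whole chain. I would overcome this with a compactness/pigeonhole step: using Lemma~\ref{lem:compact}, cover the unit sphere of $S(X_n)$ by a bounded number $N$ of sets of small diameter, with $N$ independent of $X_n$, group the atoms of $\mathscr X^*$ according to which covering set contains $f_A$, select the group carrying at least a $1/N$-fraction of $\sum_{A\in\mathscr X^*}(\varepsilon_{\bb(A)}|\bb(A)|^{1/p})^\tau$, and take any $f_0$ in the corresponding cell. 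The continuity estimate in Corollary~\ref{cor:upperQ} (smallness of $\|Q_{\pp(A)}((f_0-f_A)\charfun_{X_n})\|_p$ when $\|f_0-f_A\|_{X_n}$ is small) then gives a uniform lower bound $\|Q_{\pp(A)}(f_0\charfun_{X_n})\|_p\geq\tfrac12\|Q_{\pp(A)}(f_A\charfun_{X_n})\|_p$ over that group, and the lost factor $N$ is harmless since it is an absolute constant. Pinning down the sharp one-sided bound in Lemma~\ref{lem:QA_equiv} (via Lemmas~\ref{lem:Q_equiv} and~\ref{lem:QA_equiv}) and carrying out this covering argument is where I expect most of the work to lie. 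Combining Proposition~\ref{p.1} with Theorem~\ref{thm:w1_nec} then finishes the proof.
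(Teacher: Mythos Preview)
Your proposal is correct and follows essentially the same approach as the paper: Theorem~\ref{thm:w1equiv} is indeed just the combination of Proposition~\ref{p.1} and Theorem~\ref{thm:w1_nec}, and your sketches of both directions (the geometric-series estimate via Lemma~\ref{lem.2} for sufficiency, and the compactness/pigeonhole argument via Lemmas~\ref{lem:QA_equiv} and~\ref{lem:compact} for necessity) match the paper's arguments line by line. One small slip: after normalizing $\|f\|_{X_n}=1$, the right-hand side of the Bernstein bound should be $|X_n|^{\tau/p}$ rather than $|X_n|$, but this does not affect the argument.
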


\subsection{More geometric conditions related to the Bernstein inequality}\label{sec:w2}

If $\mathscr X = (X_i)_{i=1}^n$ is a $\rho$-fat full chain, we denote by $R(\mathscr X)$
the ring corresponding to $\mathscr X$  defined by $X_1 \setminus X_n$.

Let us formulate the following two  definitions:
\begin{defn}
\label{def.2}
Fix a family of atoms $\caa$, a space $S \subset L^\infty$, $1<p<\infty $
 and $0 < \tau < p$.
We say that condition $\wII=\wII(\caa,S,p,\tau)$ is satisfied if there are 
$\rho\in(0,1)$ and $M>0$ 
such that for each $ f \in S$ and  all $\rho$-fat full chains $\mathscr X$ we have
\begin{equation}
\label{eq.21}
\Big( \sum_{X\in\mathscr X : \pp(X)\in\mathscr X} \| Q_{\pp(X)} (f \charfun_{R(\mathscr X)} )
 \|_p^\tau \Big)^{1/\tau} \leq M \| f \charfun_{R(\mathscr X)} \|_p.
\end{equation}
\end{defn}

\begin{defn}
\label{def.3}
Fix a family of atoms $\caa$, a space $S  \subset L^\infty$, $1<p<\infty $ and $0 < \tau < p$.
We say that condition $\wIIs=\wIIs(\caa,S,p,\tau)$ is satisfied if there are $\rho\in(0,1)$
 and $M>0$ such that for each $f \in S$ and all $\rho$-fat full chains $\mathscr X$ we have
\begin{equation}
\label{eq.23}
\Big( \sum_{X\in\mathscr X : \pp(X)\in \mathscr X} \| f \charfun_{\bb(X)} ) \|_p^\tau \Big)^{1/\tau} 
\leq M \| f \charfun_{R(\mathscr X)} \|_p.
\end{equation}
\end{defn}

Observe that $R(\mathscr X) = \bigcup_{X\in \mathscr X : \pp(X)\in \mathscr X} \bb(X)$, 
and the sets $\bb(X)$ appearing in this union 
are pairwise disjoint, 
so $\| f \charfun_{R(\mathscr X)}\|_p^p =  \sum_{X\in \mathscr X: \pp(X)\in\mathscr X}  \| f \charfun_{\bb(X)}\|_p^p$.
Since $\tau<p$, this implies that in general, $\wIIs$ is a non-empty condition.

\begin{lem}\label{cl.2}
Assume that condition $\wI$ is satisfied. Then
condition $\wII$ does not depend on $\rho$. That is, if there are some $0<\rho_0<1$ and $M_0$ such that condition \eqref{eq.21} of Definition \ref{def.2}
is satisfied with $\rho = \rho_0$ and $M=M_0$, then for each $0<\rho < 1$ there is $M(\rho)$ such that condition  \eqref{eq.21} of Definition \ref{def.2}
is satisfied with $\rho$ and $M= M(\rho)$.
\end{lem}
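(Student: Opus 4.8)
The plan is as follows. If $\rho\ge\rho_0$ then every $\rho$-fat full chain is in particular $\rho_0$-fat, so the statement holds with $M(\rho)=M_0$; hence I may assume $\rho<\rho_0$ and set $K:=1+\log\rho/\log\rho_0$, which depends only on $\rho$ and $\rho_0$. Fix $f\in S$ and a $\rho$-fat full chain $\mathscr X=(X_i)_{i=1}^n$. By Corollary~\ref{co:number_fat}, realised through the construction in the proof of Lemma~\ref{lem:fat_chains}, I write $\mathscr X$ as a union of $k\le K$ consecutive $\rho_0$-fat full sub-chains $\mathscr X_1,\dots,\mathscr X_k$, where $\mathscr X_s=(X_i)_{i=i_s+1}^{i_{s-1}}$ and $n=i_0>i_1>\dots>i_k=0$. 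In a full chain $\pp(X_i)=X_{i-1}$ for $2\le i\le n$, so the left-hand side of \eqref{eq.21} for $\mathscr X$ equals $\big(\sum_{i=2}^n\|Q_{X_{i-1}}(f\charfun_{R(\mathscr X)})\|_p^\tau\big)^{1/\tau}$; moreover $Q_{X_{i-1}}$ is supported on $X_{i-1}$ and annihilates functions supported off $X_{i-1}$, and $R(\mathscr X)\cap X_{i-1}=X_{i-1}\setminus X_n$, so I may replace $f\charfun_{R(\mathscr X)}$ by $f\charfun_{X_{i-1}\setminus X_n}$ in each summand.

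Next I would split $\{2,\dots,n\}$ into the $k-1$ \emph{boundary} indices $i=i_s+1$ ($s=1,\dots,k-1$), for which $X_{i-1}$ and $X_i$ lie in distinct consecutive sub-chains, and the remaining \emph{interior} indices, for which they lie in the same sub-chain. Each boundary term satisfies $\|Q_{X_{i-1}}(f\charfun_{X_{i-1}\setminus X_n})\|_p\lesssim\|f\charfun_{X_{i-1}\setminus X_n}\|_p\le\|f\charfun_{R(\mathscr X)}\|_p$ by the uniform $L^p$-boundedness \eqref{eq:Qbded}, and there are at most $K-1$ of them, so they contribute at most a constant multiple of $(K-1)\|f\charfun_{R(\mathscr X)}\|_p^\tau$. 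For the interior indices I group them by the sub-chain $\mathscr X_s$ containing $X_{i-1}$ (equivalently, containing $X_i$): these are the indices $i\in\{i_s+2,\dots,i_{s-1}\}$, i.e.\ the atoms $X_i$ running over $\{X\in\mathscr X_s:\pp(X)\in\mathscr X_s\}$. For such $i$ one has the disjoint decomposition $X_{i-1}\setminus X_n=\big(R(\mathscr X_s)\cap X_{i-1}\big)\sqcup\Gamma_s$ with the \emph{fixed} set $\Gamma_s:=X_{i_{s-1}}\setminus X_n\subseteq R(\mathscr X)$, and $\Gamma_s\subseteq X_{i-1}$ for all these $i$; so by the quasi-triangle inequality for $|\cdot|^\tau$ it remains to bound, for each $s$, the sums $\sum_i\|Q_{X_{i-1}}(f\charfun_{R(\mathscr X_s)\cap X_{i-1}})\|_p^\tau$ and $\sum_i\|Q_{X_{i-1}}(f\charfun_{\Gamma_s})\|_p^\tau$ over $i\in\{i_s+2,\dots,i_{s-1}\}$.

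Using the support property once more, the first sum equals $\sum_{X\in\mathscr X_s:\pp(X)\in\mathscr X_s}\|Q_{\pp(X)}(f\charfun_{R(\mathscr X_s)})\|_p^\tau$, which is precisely the $\tau$-th power of the left-hand side of \eqref{eq.21} for the $\rho_0$-fat full chain $\mathscr X_s$; by hypothesis this is $\le M_0^\tau\|f\charfun_{R(\mathscr X_s)}\|_p^\tau\le M_0^\tau\|f\charfun_{R(\mathscr X)}\|_p^\tau$. For the second sum, reindexing by $X_i$ shows it to be a sub-sum of $\sum_{A\in\chain(X_{i_{s-1}})\cap\mathscr A^*}\|Q_{\pp(A)}(f\charfun_{\Gamma_s})\|_p^\tau$ (where $X_{i_{s-1}}\in\mathscr A^*$ whenever the range $\{i_s+2,\dots,i_{s-1}\}$ is non-empty, since then $X_{i_{s-1}}\subsetneq X_{i_s+1}\subseteq\Omega$), so by Lemma~\ref{lem.2}(ii)---the only place where $\wI$ enters---it is $\le C\|f\charfun_{\Gamma_s}\|_p^\tau\le C\|f\charfun_{R(\mathscr X)}\|_p^\tau$ with $C=C(\wI,p,\tau)$. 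Summing the boundary contribution and these two sums over $s=1,\dots,k\le K$ produces \eqref{eq.21} for $\mathscr X$ with a constant $M(\rho)$ depending only on $\rho,\rho_0,M_0$ and the absolute constants hidden in $\lesssim$.

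I expect the main obstacle to be the bookkeeping in the third paragraph: verifying that, after the support reduction, the first interior sum really is \emph{exactly} the left-hand side of \eqref{eq.21} for $\mathscr X_s$ --- that the splitting $X_{i-1}\setminus X_n=(R(\mathscr X_s)\cap X_{i-1})\sqcup\Gamma_s$ produces the ring $R(\mathscr X_s)$ together with a single fixed "tail" $\Gamma_s$ sitting below all of $\mathscr X_s$ (so that Lemma~\ref{lem.2}(ii) applies to the second sum) --- together with the clean identification and separate treatment of the $k-1$ cross-chain boundary terms, which genuinely fall outside the scope of both \eqref{eq.21} on the sub-chains and of Lemma~\ref{lem.2}(ii) and must be absorbed using the finiteness of $k$.
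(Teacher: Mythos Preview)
Your proof is correct and follows essentially the same route as the paper's: decompose the $\rho$-fat chain into at most $K=1+\log\rho/\log\rho_0$ consecutive $\rho_0$-fat sub-chains, separate off the $k-1$ boundary terms using the uniform $L^p$-bound \eqref{eq:Qbded}, and on each sub-chain split the relevant ring into the sub-chain's own ring $R(\mathscr X_s)$ (handled by the hypothesis \eqref{eq.21}) and a tail $\Gamma_s$ sitting inside the smallest atom of $\mathscr X_s$ (handled by Lemma~\ref{lem.2}(ii), which is where $\wI$ enters). The only cosmetic difference is that you first use the support property to reduce $f\charfun_{R(\mathscr X)}$ to $f\charfun_{X_{i-1}\setminus X_n}$ before decomposing, whereas the paper decomposes $R(\mathscr X)=(L_k\setminus L_i)\cup R(\mathscr X_i)\cup\Gamma$ directly and then observes $Q_{\pp(X)}(f\charfun_{L_k\setminus L_i})=0$; these are the same observation.
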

\begin{proof}
If $\rho \geq \rho_0$, the implication is clear.

Let $\rho < \rho_0$ and let $\mathscr X$ be a $\rho$-fat full chain. 
According to Corollary~\ref{co:number_fat}, we decompose $\mathscr X$ into $k\leq 1
+\log\rho/\log\rho_0$
full chains $\mathscr X_1,\ldots,\mathscr X_k$ that are all $\rho_0$-fat and 
that are increasingly ordered, which means in particular that $\mathscr X_1$ contains the smallest set
in $\mathscr X$ and $\mathscr X_k$ contains the largest set in $\mathscr X$.
With $h_X :=Q_{\pp(X)} (f\charfun_{R(\mathscr X)})$, write
\begin{align*}
	\sum_{X\in\mathscr X : \pp(X) \in\mathscr X} \| h_X \|_p^\tau	
	&= \sum_{i=1}^k \sum_{X\in\mathscr X_i : \pp(X)\in\mathscr X_i} \| h_X \|_p^\tau
	  +  \sum_{i=1}^{k-1} \|
	   h_{L_i} \|_p^\tau,
\end{align*}
denoting by $L_i$ the largest set in the chain $\mathscr X_i$.
Using inequality \eqref{eq:Qbded}, we estimate the latter sum to be at most $C_1 k 
\|f \charfun_{R(\mathscr X)} \|_p^\tau$ for some constant $C_1$. Therefore, we 
proceed by estimating the sum $\sum_{X\in\mathscr X_i : \pp(X)\in\mathscr X_i} \| h_X \|_p^\tau$
for fixed $i=1,\ldots,k$.  We decompose $R(\mathscr X)$ disjointly in the following way:
\[
	R(\mathscr X) = (L_k\setminus L_i) \cup R(\mathscr X_i) \cup \Gamma,
\]
for some
ring $\Gamma$ that satisfies $\Gamma \subset X$ for all $X\in \mathscr X_i$. 
Insert this decomposition into
the definition of $h_X$ for $X\in\mathscr X_i$ with $\pp(X)\in\mathscr X_i$ to get
\begin{equation}\label{eq:decomp}
	h_X = Q_{\pp(X)}(f\charfun_{L_k\setminus L_i}) + 
		Q_{\pp(X)}(f\charfun_{R(\mathscr X_i)}) + 
		Q_{\pp(X)}(f\charfun_{\Gamma}).
\end{equation}
Observe that since $\pp(X) \in \mathscr X_i$ (and thus $\pp(X)\subset L_i$), we obtain $Q_{\pp(X)}(f\charfun_{L_k\setminus L_i}) \equiv 0$.
Then, we use estimate \eqref{eq.21} to deduce
\begin{equation}\label{eq:part1}
	\sum_{X\in \mathscr X_i : \pp(X)\in\mathscr X_i} \| Q_{\pp(X)}(f\charfun_{R(\mathscr X_i)}) \|_p^\tau 
	\leq M_0^\tau \| f\charfun_{R(\mathscr X_i)} \|_p^\tau \leq M_0^\tau \|f\charfun_{R(\mathscr X)} \|_p^\tau. 
\end{equation}
Moreover, we use condition $\wI$ and Lemma~\ref{lem.2}, item (ii), to get
\begin{equation}\label{eq:part2}
\sum_{X\in \mathscr X_i : \pp(X)\in\mathscr X_i} \| Q_{\pp(X)}(f\charfun_{\Gamma}) \|_p^\tau \leq C_2 \|f\charfun_\Gamma\|_p^\tau
\leq  C_2 \|f\charfun_{R(\mathscr X)} \|_p^\tau
\end{equation}
for some constant $C_2$.
Combining decomposition \eqref{eq:decomp} and estimates \eqref{eq:part1} and \eqref{eq:part2},
we obtain, for each $i=1,\ldots,k$,
\[
	\sum_{X\in\mathscr X_i : \pp(X)\in\mathscr X_i} \| h_X \|_p^\tau  \leq 2^\tau(C_2+M_0^\tau) 
	\|f \charfun_{R(\mathscr X)}\|_p^\tau.
\]
Summarizing, we eventually have
\[
	\sum_{X\in\mathscr X : \pp(X) \in\mathscr X} \| h_X \|_p^\tau	\leq k2^\tau(C_1 + C_2 + M_0^\tau) \|f \charfun_{R(\mathscr X)}\|_p^\tau,
\]
concluding the proof with the constant $M=M(\rho)$ given by
$M^\tau= 2^\tau(C_1 + C_2 + M_0^\tau) (1+\log\rho/\log\rho_0)$.
\end{proof}

\begin{lem}\label{cl.3}
Condition $\wIIs$ does not depend on $\rho$. That is, if there are some $\rho_0\in(0,1)$ and $M_0$ such that condition \eqref{eq.23} of Definition \ref{def.3}
is satisfied with $\rho = \rho_0$ and $M=M_0$, then for each $\rho\in(0,1)$ there is $M(\rho)$ such that condition  \eqref{eq.23} of Definition \ref{def.3}
is satisfied with $\rho$ and $M= M(\rho)$.
\end{lem}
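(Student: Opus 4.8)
The plan is to show that $\wIIs$ is independent of $\rho$ by the same chain-splitting argument used for $\wI$ in Corollary~\ref{cl.1}, exploiting the fact that $\wIIs$ — unlike $\wII$ — does not reference the operators $Q_{\pp(X)}$ but only the raw restrictions $\|f\charfun_{\bb(X)}\|_p$, which behave additively across a disjoint decomposition of a chain. As in Lemma~\ref{cl.2}, the only non-trivial direction is $\rho<\rho_0$, so fix a $\rho$-fat full chain $\mathscr X$ and use Corollary~\ref{co:number_fat} to split it into $k\leq 1+\log\rho/\log\rho_0$ full subchains $\mathscr X_1,\ldots,\mathscr X_k$, each $\rho_0$-fat, ordered so that $\mathscr X_1$ contains the smallest set of $\mathscr X$ and $\mathscr X_k$ the largest.

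Next I would write the left-hand side of \eqref{eq.23} for $\mathscr X$ as a sum over the subchains. Writing $L_i$ for the largest set of $\mathscr X_i$, the index set $\{X\in\mathscr X:\pp(X)\in\mathscr X\}$ decomposes as the union of the sets $\{X\in\mathscr X_i:\pp(X)\in\mathscr X_i\}$ together with the $k-1$ ``junction'' atoms $L_1,\ldots,L_{k-1}$ (each $L_i$ for $i<k$ has its predecessor lying in $\mathscr X_{i+1}$, not in $\mathscr X_i$). So
\[
	\sum_{X\in\mathscr X:\pp(X)\in\mathscr X} \|f\charfun_{\bb(X)}\|_p^\tau
	= \sum_{i=1}^k \sum_{X\in\mathscr X_i:\pp(X)\in\mathscr X_i} \|f\charfun_{\bb(X)}\|_p^\tau
	+ \sum_{i=1}^{k-1} \|f\charfun_{\bb(L_i)}\|_p^\tau.
\]
For each inner sum over $\mathscr X_i$, apply $\wIIs(\rho_0,M_0)$ to the $\rho_0$-fat chain $\mathscr X_i$ to get the bound $M_0^\tau\|f\charfun_{R(\mathscr X_i)}\|_p^\tau$; since $R(\mathscr X_i)\subset R(\mathscr X)$ (as $\mathscr X_i$ is a subchain of $\mathscr X$), this is at most $M_0^\tau\|f\charfun_{R(\mathscr X)}\|_p^\tau$. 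For each junction term, simply use $\bb(L_i)\subset L_{i-1}\subset X_1$ — more precisely $\bb(L_i)\subset R(\mathscr X)$, since $\bb(L_i)$ is the sibling of $L_i$ and $L_i$ strictly contains $X_n$ while being strictly contained in $X_1$ — so $\|f\charfun_{\bb(L_i)}\|_p\leq\|f\charfun_{R(\mathscr X)}\|_p$. Combining, the whole sum is at most $(k M_0^\tau + (k-1))\|f\charfun_{R(\mathscr X)}\|_p^\tau \leq k(M_0^\tau+1)\|f\charfun_{R(\mathscr X)}\|_p^\tau$, which gives \eqref{eq.23} for $\mathscr X$ with $M(\rho)^\tau = (M_0^\tau+1)(1+\log\rho/\log\rho_0)$.

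This argument is considerably softer than the proof of Lemma~\ref{cl.2}: there, the terms $\|Q_{\pp(X)}(f\charfun_{R(\mathscr X)})\|_p$ mix contributions from the whole ring $R(\mathscr X)$, so one must carefully decompose $R(\mathscr X) = (L_k\setminus L_i)\cup R(\mathscr X_i)\cup\Gamma$ and invoke $\wI$ via Lemma~\ref{lem.2}(ii) to control the tail piece $\Gamma$; here, because $\bb(X)\subset R(\mathscr X)$ is a genuine subset independent of the chain decomposition and the quantity $\|f\charfun_{\bb(X)}\|_p$ only sees that subset, no such localization lemma is needed and no hypothesis like $\wI$ is required. The only point that needs a moment's care — and which I expect to be the sole (minor) obstacle — is the bookkeeping of the junction atoms $L_i$: verifying that $\{X\in\mathscr X:\pp(X)\in\mathscr X\}$ is exactly the disjoint union of the $\{X\in\mathscr X_i:\pp(X)\in\mathscr X_i\}$ and $\{L_1,\ldots,L_{k-1}\}$, and that each $\bb(L_i)$ is contained in $R(\mathscr X)$. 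Both follow directly from the definition of a full chain and the ordering of the subchains.
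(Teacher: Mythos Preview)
Your proof is correct and follows essentially the same approach as the paper: split a $\rho$-fat chain into at most $1+\log\rho/\log\rho_0$ many $\rho_0$-fat subchains, apply the assumed inequality on each, and control the junction terms via $\bb(L_i)\subset R(\mathscr X)$. The only cosmetic differences are that the paper cites Lemma~\ref{lem:fat_chains} rather than its corollary and is slightly looser with the exponent $\tau$ on $M_0$ in the final constant.
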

\begin{proof}
If $\rho \geq \rho_0$, the implication is clear.

Let $\rho < \rho_0$ and let $\mathscr X$ be a $\rho$-fat full chain. According to 
Lemma~\ref{lem:fat_chains}, we decompose $\mathscr X$ into $k\leq 1+ \log \rho/\log \rho_0$
full chains $\mathscr X_1,\ldots,\mathscr X_k$ that are all $\rho_0$-fat
and that are increasingly ordered, which means in particular that $\mathscr X_1$ contains the smallest set
in $\mathscr X$ and $\mathscr X_k$ contains the largest set in $\mathscr X$.
Then we first write
\begin{align*}
	\sum_{X\in\mathscr X: \pp(X)\in \mathscr X}	 \|f \charfun_{\bb(X)}\|_p^\tau
	  = \sum_{i=1}^k \sum_{X\in\mathscr X_i : \pp(X)\in\mathscr X_i} \|f\charfun_{\bb(X)}\|_p^\tau
	   + \sum_{i=1}^{k-1} \|f\charfun_{\bb(L_i)}\|_p^\tau,
\end{align*}
where we denote by $L_i$ the largest set contained in the chain $\mathscr X_i$.
Using now \eqref{eq.23} for the $\rho_0$-fat chains $\mathscr X_1,\ldots,\mathscr X_k$, we obtain
\begin{align*}
	\sum_{X\in\mathscr X: \pp(X)\in \mathscr X}	 \|f \charfun_{\bb(X)}\|_p^\tau
	  \leq  M_0 \sum_{i=1}^k \|f\charfun_{R(\mathscr X_i)}\|_p^\tau
	   + \sum_{i=1}^{k-1} \|f\charfun_{\bb(L_i)}\|_p^\tau.
\end{align*}
Since $R(\mathscr X_i) \subset R(\mathscr X)$ for all $i=1,\ldots,k$ and 
$\bb(L_i) \subset R(\mathscr X)$ for all $i=1,\ldots,k-1$, we further get
\[
	\sum_{X\in\mathscr X: \pp(X)\in \mathscr X}	 \|f \charfun_{\bb(X)}\|_p^\tau 
	\leq (M_0+1)k \cdot \| f\charfun_{R(\mathscr X)} \|_p^\tau,
\]
which implies \eqref{eq.23} for the $\rho$-fat chain $\mathscr X$
 with the constant $M=M(\rho) = (M_0 + 1)(1+\log\rho/\log\rho_0)$.
\end{proof}

\begin{theo}\label{thm:w2s_implies_w1}
	Let $\wIIs(\mathscr A,S,p,\tau)$ be satisfied for some parameters.

	Then, $\wI(\mathscr A,S,p,\tau)$ is satisfied.
\end{theo}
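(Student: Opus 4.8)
The plan is to prove the implication directly, deducing the counting estimate~\eqref{eq.11} defining $\wI$ from $\wIIs$. By Lemma~\ref{cl.3} I may assume that $\wIIs(\mathscr A,S,p,\tau)$ holds, with some constant $M$, for a parameter $\rho$ so close to $1$ that $\rho>1-c_2$; I will then verify $\wI$ for this same $\rho$, which by Corollary~\ref{cl.1} is enough. So fix a $\rho$-fat full chain $\mathscr X=(X_i)_{i=1}^n$, write $R=R(\mathscr X)=X_1\setminus X_n$ and, for $i=2,\dots,n$, $\bb_i=\bb(X_i)=X_{i-1}\setminus X_i$; recall that the $\bb_i$ are pairwise disjoint with union $R$. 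Split $\{2,\dots,n\}$ into $I_1=\{i:X_i\in\mathscr A(\lambda)\}$ and $I_2=\{i:X_i\notin\mathscr A(\lambda)\}$ with $\lambda=1-c_2/2$, so that $u(X_i)=1$ for $i\in I_1$ and $u(X_i)=\varepsilon_{\bb_i,p}$ for $i\in I_2$ by~\eqref{eq:def_u}. Each $i\in I_1$ corresponds to a splitting step with $|X_i|\le\lambda|X_{i-1}|$, and since $|X_n|\ge\rho|X_1|$ this forces $\#I_1\le\log\rho/\log\lambda$, hence $\sum_{i\in I_1}u(X_i)^\tau\le\log\rho/\log\lambda$. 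It therefore remains to bound $\sum_{i\in I_2}\varepsilon_{\bb_i,p}^\tau$, and for this I will in fact treat all indices $i=2,\dots,n$ uniformly.

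The key step is to reinterpret $\varepsilon_{\bb_i,p}$ as the operator norm of a \emph{seminorm} on the \emph{single} fixed finite-dimensional space $S(X_1)$. Since $\bb_i\subset X_{i-1}\subset X_1$ and $\pp(\bb_i)=X_{i-1}$, the supremum defining $\varepsilon_{\bb_i,p}$ in~\eqref{eq:eps_equiv} may be taken over $f\in S(X_1)$. Because $\rho>1-c_2$, the stability inequality~\eqref{eq:L1Linfty} gives $\|f\|_{X_{i-1}}\simeq\|f\|_{X_1}$ for all such $f$ (exactly as in the proof of Theorem~\ref{thm:w1_nec}), and then, again by~\eqref{eq:L1Linfty} together with $|X_{i-1}|\simeq|X_1|$, we obtain $\|f\charfun_{X_{i-1}}\|_p\simeq\|f\|_{X_1}|X_1|^{1/p}$, with constants independent of $i$ and of $\mathscr X$. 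Setting $\Phi_i(f):=\|f\charfun_{\bb_i}\|_p$ and $N_i:=\sup\{\Phi_i(f):f\in S(X_1),\ \|f\|_{X_1}=1\}$, this yields
\[
\varepsilon_{\bb_i,p}\ \lesssim\ |X_1|^{-1/p}N_i,\qquad i=2,\dots,n,
\]
so it suffices to prove $\sum_{i=2}^n N_i^\tau\lesssim|X_1|^{\tau/p}$ with a constant depending only on $\dim S$, $c_1$, $c_2$, $\tau$ and $M$.

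For that I would run the compactness argument already used in Theorem~\ref{thm:w1_nec}: apply Lemma~\ref{lem:compact} with $A=X_1$ and $r=1/2$ to cover the unit sphere $U$ of $(S(X_1),\|\cdot\|_{X_1})$ by sets $B_1',\dots,B_N'$ of diameter $\le1/2$, where $N$ depends only on $\dim S$, $c_1$, $c_2$. For each $i$ pick $f_i\in U$ with $\Phi_i(f_i)=N_i$ and put $E_j=\{i:f_i\in B_j'\}$, so $\bigcup_j E_j=\{2,\dots,n\}$. Fix $j$, choose $\hat f_j\in B_j'\cap U$ and $g_j\in S$ with $g_j\charfun_{X_1}=\hat f_j$. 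Since $\Phi_i$ is a seminorm with $\Phi_i(g)\le N_i\|g\|_{X_1}$ for all $g$, and $\|f_i-\hat f_j\|_{X_1}\le1/2$, we get $\Phi_i(\hat f_j)\ge\Phi_i(f_i)-\Phi_i(f_i-\hat f_j)\ge N_i/2$ for every $i\in E_j$. Hence, using $\wIIs$ for $g_j$ and $\|\hat f_j\charfun_R\|_p\le|X_1|^{1/p}$,
\[
\sum_{i\in E_j}N_i^\tau\ \le\ 2^\tau\sum_{i\in E_j}\Phi_i(\hat f_j)^\tau\ \le\ 2^\tau\sum_{i=2}^n\|g_j\charfun_{\bb_i}\|_p^\tau\ \le\ 2^\tau M^\tau\|g_j\charfun_R\|_p^\tau\ \le\ 2^\tau M^\tau|X_1|^{\tau/p}.
\]
Summing over $j=1,\dots,N$ gives $\sum_{i=2}^n N_i^\tau\le N2^\tau M^\tau|X_1|^{\tau/p}$, and combining this with the two displays above and with the estimate for $\sum_{i\in I_1}u(X_i)^\tau$ produces~\eqref{eq.11}, i.e.\ $\wI(\mathscr A,S,p,\tau)$.

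The step I expect to be the crux is the second paragraph: one must recognise that the $i$-dependent denominator $\|f\charfun_{X_{i-1}}\|_p$ in the definition of $\varepsilon_{\bb_i,p}$ can be replaced, uniformly in $i$, by the fixed quantity $\|f\|_{X_1}|X_1|^{1/p}$. This is precisely where the fatness $\rho>1-c_2$ and the stability inequality~\eqref{eq:L1Linfty} enter, and it is what turns the numbers $\varepsilon_{\bb_i,p}$ into operator norms of seminorms $\Phi_i$ relative to \emph{one} reference norm on $S(X_1)$, which is what lets a single application of Lemma~\ref{lem:compact} serve the whole family. It is tempting to fear that a fixed-scale covering of $U$ is too coarse when the $N_i$ are very small, but this is not so: the error $\Phi_i(f_i-\hat f_j)$ is automatically bounded by $N_i\cdot\mathrm{diam}(B_j')$, i.e.\ it scales with $N_i$, so a covering of fixed radius loses nothing. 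The remaining points — that the supremum defining $\varepsilon_{\bb_i,p}$ is genuinely attained over $S(X_1)$ and is never of the form $0/0$ for $f\ne0$ (again via~\eqref{eq:L1Linfty} with $\rho>1-c_2$), and the bound $\#I_1\le\log\rho/\log\lambda$ — are routine.
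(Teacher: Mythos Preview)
Your proof is correct and, like the paper's, hinges on the compactness Lemma~\ref{lem:compact}; the route is genuinely different, however. The paper passes through the projectors $Q_{\pp(A)}$: from $\wIIs$ and the identity $Q_{\pp(A)}(f\charfun_A)=-Q_{\pp(A)}(f\charfun_{\bb(A)})$ together with~\eqref{eq:Qbded} it derives $\big(\sum_{A\in\mathscr X^*}\|Q_{\pp(A)}(f\charfun_A)\|_p^\tau\big)^{1/\tau}\lesssim|X_0|^{1/p}$ for $\|f\|_{X_0}=1$, and then literally reruns the proof of Theorem~\ref{thm:w1_nec} (invoking Lemma~\ref{lem:QA_equiv} for the lower bound and Corollary~\ref{cor:upperQ} for the perturbation step) with $X_0$ in place of $X_n$. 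You bypass the $Q$'s entirely: the count $\#I_1\le\log\rho/\log\lambda$ handles the indices with $u(X_i)=1$, and for the remaining ones you observe that $\varepsilon_{\bb_i,p}\lesssim|X_1|^{-1/p}N_i$ where $N_i$ is the operator norm of the seminorm $\Phi_i(f)=\|f\charfun_{\bb_i}\|_p$ on the \emph{fixed} space $(S(X_1),\|\cdot\|_{X_1})$; a single covering from Lemma~\ref{lem:compact} and a direct application of $\wIIs$ to the centres $\hat f_j$ then yields $\sum_i N_i^\tau\lesssim|X_1|^{\tau/p}$. Your perturbation estimate $\Phi_i(f_i-\hat f_j)\le N_i\,\|f_i-\hat f_j\|_{X_1}$ is tautological (it is the very definition of $N_i$), so you need neither Lemma~\ref{lem:QA_equiv} nor Corollary~\ref{cor:upperQ}, and only the mild fatness assumption $\rho>1-c_2$. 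The paper's route, in exchange, makes the parallel with Theorem~\ref{thm:w1_nec} explicit and reuses its machinery verbatim.
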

\begin{proof}
Fix $\rho$ with  $\max(1-d,1-c_2/2) \leq \rho < 1$, where $d$ is the constant from
	Lemma \ref{lem:QA_equiv} and $c_2$ is the constant from \eqref{eq:L1Linfty}.
	and let $\mathscr X = (X_i)_{i=0}^n$ be a $\rho$-fat full chain. Denote 
	$R = R(\mathscr X) = X_0\setminus X_n$ and $\mathscr X^* = (X_i)_{i=1}^n$.

	By condition $\wIIs(\mathscr A,S,p,\tau)$, we have for each $f\in S$ the inequality
	\begin{equation}\label{eq:specialw2}
		\Big(\sum_{A\in\mathscr X^*} \|f\charfun_{\bb(A)}\|_p^\tau\Big)^{1/\tau}
		 \leq M \|f\charfun_R\|_p 
		\leq M \|f\charfun_{X_0}\|_p
	\end{equation}
	for some constant $M$. Note that for $A\in\mathscr X^*$, we have 
	$Q_{\pp(A)}(f\charfun_A) = -Q_{\pp(A)}(f\charfun_{\bb(A)})$ and therefore
	\[
		\| Q_{\pp(A)}(f\charfun_A) \|_p = \| Q_{\pp(A)}(f\charfun_{\bb(A)})	\|_p 
		\lesssim \|f\charfun_{\bb(A)}\|_p,
	\]
	where the latter inequality follows from \eqref{eq:Qbded}.  Thus, we use \eqref{eq:specialw2}
	to deduce, for $f\in S$ with $\|f\|_{X_0} = 1$,
	\begin{equation}\label{eq:w1_Q}
		\Big(\sum_{A\in\mathscr X^*} \| Q_{\pp(A)}(f\charfun_A)	\|_p^\tau \Big)^{1/\tau}
		\lesssim M \| f\charfun_{X_0}\|_p \lesssim |X_0|^{1/p}
	\end{equation}
	We now use the same arguments as in the proof of Theorem~\ref{thm:w1_nec}
	with the starting point \eqref{eq:w1_Q} instead of equation \eqref{eq:bernstein1}
	and with $X_n$ replaced by $X_0$.
	This then implies condition $\wI(\mathscr A,S,p,\tau)$.
\end{proof}

\begin{theo}\label{cl.4}
	Let $(\mathscr A,S,p,\tau)$ be some parameters taken by the conditions $\wI, \wII, \wIIs$.

	Then, the following statements are equivalent:
	\begin{enumerate}
		\item $\wI(\mathscr A,S,p,\tau)$ and $\wII(\mathscr A,S,p,\tau)$. 
		\item $\wIIs(\mathscr A,S,p,\tau)$,
	\end{enumerate}
\end{theo}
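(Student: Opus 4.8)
\textbf{Proof plan for Theorem~\ref{cl.4}.}

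The strategy is to prove the two implications separately, using the pointwise operator bounds from Section~\ref{sec:pwQ} as the main tool. For the direction (2)$\Rightarrow$(1): Theorem~\ref{thm:w2s_implies_w1} already gives $\wIIs\Rightarrow\wI$, so it remains only to derive $\wII$ from $\wIIs$. Given a $\rho$-fat full chain $\mathscr X$ and $f\in S$, I would estimate each term $\|Q_{\pp(X)}(f\charfun_{R(\mathscr X)})\|_p$. Since $Q_{\pp(X)}$ annihilates functions supported outside $\pp(X)$, only the part of $R(\mathscr X)$ inside $\pp(X)$ matters; splitting this part into $\bb(X)$ (on which, by \eqref{eq:Qbded}, $\|Q_{\pp(X)}(f\charfun_{\bb(X)})\|_p\lesssim\|f\charfun_{\bb(X)}\|_p$) and the remaining ring $\Gamma_X\subset X$, and summing over the chain, I expect to bound $\sum_X\|Q_{\pp(X)}(f\charfun_{R(\mathscr X)})\|_p^\tau$ by $\sum_X\|f\charfun_{\bb(X)}\|_p^\tau$ plus a ``diagonal'' contribution $\sum_X\|Q_{\pp(X)}(f\charfun_{\Gamma_X})\|_p^\tau$. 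The first sum is controlled by $\wIIs$; the diagonal sum I would handle using $\wI$ together with Lemma~\ref{lem.2}(ii), after observing that $\Gamma_X\subset X\subset R(\mathscr X)\cup X_n$ so that $\|f\charfun_{\Gamma_X}\|_p\lesssim\|f\charfun_{R(\mathscr X)}\|_p$ (possibly after absorbing the $X_n$ piece, using that $\rho$ is close to $1$ and invoking \eqref{eq:L1Linfty} to compare $\|f\|_{X_n}$ with $\|f\|_{X_1}$, as in the proof of Theorem~\ref{thm:w1_nec}). This yields $\wII$, and $\rho$-independence is guaranteed by Lemmata~\ref{cl.2} and~\ref{cl.3}.

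For the harder direction (1)$\Rightarrow$(2), i.e. $\wI\wedge\wII\Rightarrow\wIIs$: fix a $\rho$-fat full chain $\mathscr X=(X_i)_{i=0}^n$ with $\rho$ close to $1$, write $R=R(\mathscr X)$, and take $f\in S$. For each relevant $X\in\mathscr X$ I want to bound $\|f\charfun_{\bb(X)}\|_p$ in terms of $\|Q_{\pp(X)}(f\charfun_R)\|_p$ and then invoke $\wII$. The key algebraic identity is the decomposition $f\charfun_{\pp(X)}=P_{\pp(X)}(f\charfun_{\pp(X)})+Q_{\pp(X)}(f\charfun_{\pp(X)})$; since $f\in S$, $P_{\pp(X)}(f\charfun_{\pp(X)})=f\charfun_{\pp(X)}$, hence $Q_{\pp(X)}(f\charfun_{\pp(X)})=0$, which gives the crucial relation $Q_{\pp(X)}(f\charfun_X)=-Q_{\pp(X)}(f\charfun_{\bb(X)})$. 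Restricting to $\bb(X)$ and using that on the complement of $\pp(X)$ these functions vanish, I can write $f\charfun_{\bb(X)}$ as $-$(the $P_{\pp(X)}$ part of $f\charfun_{\bb(X)}$)$+Q_{\pp(X)}(f\charfun_{\bb(X)})$ up to the splitting $f\charfun_{\pp(X)}=f\charfun_X+f\charfun_{\bb(X)}$; more precisely $f\charfun_{\bb(X)}=P_{\pp(X)}(f\charfun_{\bb(X)})-Q_{\pp(X)}(f\charfun_X)$ would not be literally correct, so instead I would use $\|f\charfun_{\bb(X)}\|_p\leq\|P_{\pp(X)}(f\charfun_{\bb(X)})\|_p+\|Q_{\pp(X)}(f\charfun_{\bb(X)})\|_p$. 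The second term is exactly $\|Q_{\pp(X)}(f\charfun_X)\|_p$ up to sign, and $Q_{\pp(X)}(f\charfun_X)$ I would further decompose against $R$: $Q_{\pp(X)}(f\charfun_X)=Q_{\pp(X)}(f\charfun_R)-Q_{\pp(X)}(f\charfun_{X\cap R})-(\text{lower terms})$, reducing to $\wII$ plus a residual ring term controlled by $\wI$ as above. The first term $\|P_{\pp(X)}(f\charfun_{\bb(X)})\|_p$ I would bound via Lemma~\ref{lem:proj_subset} by $(|\bb(X)|/|\pp(X)|)^{1/p'}\|f\charfun_{\bb(X)}\|_p$; when $X\notin\mathscr A(\lambda)$, i.e. $|\bb(X)|/|\pp(X)|$ is small, this factor is $<1$ and can be absorbed, while for $X\in\mathscr A(\lambda)$ a different argument (using Lemma~\ref{lem:Q_equiv}, which gives $\|Q_{\pp(X)}(f\charfun_X)\|_p\simeq\|f\charfun_X\|_p$ and hence comparability with $\|f\charfun_{\bb(X)}\|_p$ when both $X$ and $\bb(X)$ are comparable to $\pp(X)$) takes over.

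The main obstacle will be the $\bb(X)$-term estimate: unlike in Theorem~\ref{thm:w2s_implies_w1}, here I need a \emph{lower} bound recovering $\|f\charfun_{\bb(X)}\|_p$ from the projection data, and the naive identity has a ``bad'' remainder $P_{\pp(X)}(f\charfun_{\bb(X)})$ that is not automatically small. Resolving this cleanly requires a case split on whether $X$ lies in $\mathscr A(\lambda)$, handling the ``thin'' case by absorption of the $(|\bb(X)|/|\pp(X)|)^{1/p'}$ factor and the ``fat'' case by the norm-equivalence of Lemma~\ref{lem:Q_equiv}/Lemma~\ref{lem:QA_equiv}, and then carefully summing the resulting estimates over the chain using $\wI$ (via Lemma~\ref{lem.2}(ii)) to mop up all the ring residuals and $\wII$ for the main terms; the bookkeeping of which ring pieces are subsets of $R(\mathscr X)$ versus contained in some $X\in\mathscr X$, together with the $\rho$-independence reductions (Lemmata~\ref{cl.2}, \ref{cl.3}), is where the real work lies. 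Finally I would note that both implications are symmetric enough that the constants depend only on the data through $c_1,c_2$ and $\dim S$, consistent with the $\lesssim$ convention.
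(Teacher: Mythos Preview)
Your proposal is essentially correct and uses the same ingredients as the paper, but it is considerably more convoluted than necessary. The key simplification you are missing is this: since $\wIIs$ (and, under $\wI$, also $\wII$) is $\rho$-independent, you may take $\rho\geq 1-d_1$ with $d_1$ from Lemma~\ref{lem:Q_equiv}. For any $X$ in a $\rho$-fat full chain $\mathscr X=(X_i)$ with $\pp(X)\in\mathscr X$, one has $|X|\geq|X_n|\geq\rho|\pp(X)|$, hence $|\bb(X)|\leq d_1|\pp(X)|$, so $\bb(X)\in\mathscr A(d_1)$ automatically and Lemma~\ref{lem:Q_equiv} gives
\[
\|Q_{\pp(X)}(f\charfun_{\bb(X)})\|_p\ \simeq\ \|f\charfun_{\bb(X)}\|_p
\]
for every such $X$. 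Your proposed case split on $X\in\mathscr A(\lambda)$ versus $X\notin\mathscr A(\lambda)$ and the absorption argument for $P_{\pp(X)}(f\charfun_{\bb(X)})$ are therefore unnecessary; in fact the absorption argument is precisely a re-derivation of Lemma~\ref{lem:Q_equiv} applied to $\bb(X)$.

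With this in hand, the paper's proof treats both directions simultaneously via the single identity
\[
Q_{\pp(X)}(f\charfun_R)\ =\ Q_{\pp(X)}(f\charfun_{\bb(X)})\ +\ Q_{\pp(X)}(f\charfun_{X\setminus X_n}),
\]
bounds the second term by $u(X)\|f\charfun_R\|_p$ via Corollary~\ref{cor:upperQ}, and sums $u(X)^\tau$ directly by condition $\wI$ (Definition~\ref{def.1}, not Lemma~\ref{lem.2}(ii), which concerns a fixed $\Gamma$). This shows that under $\wI$, condition $\wII$ is equivalent to $\sum_X\|Q_{\pp(X)}(f\charfun_{\bb(X)})\|_p^\tau\lesssim\|f\charfun_R\|_p^\tau$, which by the equivalence above is exactly $\wIIs$. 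Your two separate arguments collapse into this one computation.
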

\begin{proof}
	Since by Theorem~\ref{thm:w2s_implies_w1}, condition $\wIIs$ implies $\wI$, it suffices 
	to show that under condition $\wI$, the conditions $\wII$ and $\wIIs$ are equivalent. 
Since both conditions $\wII$ and $\wIIs$ do not depend on $\rho$, we choose $\rho = 1-d_1$ 
with $d_1$ from Lemma~\ref{lem:Q_equiv}.
Let $\mathscr X = (X_i)_{i=1}^n$ be a $\rho$-fat full chain and denote
$R = R(\mathscr X)$. Then we write, for $X\in \mathscr X$ with $\pp(X)\in\mathscr X$ and $f\in S$,
\begin{align*}
	Q_{\pp(X)}(f\charfun_R) &= Q_{\pp(X)} \big(f (\charfun_{X_1} - \charfun_{X_n})\big)	
	  = Q_{\pp(X)}\big( f(\charfun_{\pp(X)} - \charfun_{X_n}) \big) \\
	  &= Q_{\pp(X)} \big( f(\charfun_X + \charfun_{\bb(X)} - \charfun_{X_n}) \big)  \\
	 & = Q_{\pp(X)}(f\charfun_{\bb(X)}) + Q_{\pp(X)}(f\charfun_{X\setminus X_n}).
\end{align*}
We estimate the latter projection using inequality~\eqref{eq:Qbound} by
\[
	\| Q_{\pp(X)} (f\charfun_{X\setminus X_n})\|_p \leq C u(X) \Big( \frac{|X\setminus X_n|}{|X|}\Big)^{1/p'} \|f\charfun_{X\setminus X_n}\|_p
	\leq C u(X) \|f\charfun_{R}\|_p.
\]
Therefore, under condition $\wI$ (see Definition~\ref{def.1}), we get after summation
\begin{align*}
	\sum_{X\in\mathscr X : \pp(X)\in\mathscr X}&	 \|  Q_{\pp(X)}(f\charfun_{R}) - 
		Q_{\pp(X)}(f\charfun_{\bb(X)}) \|_p^\tau  \\ 
		&=\sum_{X\in\mathscr X : \pp(X)\in\mathscr X} \| Q_{\pp(X)} (f\charfun_{X\setminus X_n}) \|_p^\tau \\
		&\leq C \sum_{X\in\mathscr X : \pp(X)\in\mathscr X} u(X)^\tau \|f\charfun_R\|_p^\tau \leq CM \|f\charfun_R\|_p^\tau.
\end{align*}
Thus, condition $\wII$ is equivalent to the existence of a constant $C$ such that for all $f\in S$
\[
	\sum_{X\in\mathscr X : \pp(X)\in\mathscr X} \|Q_{\pp(X)}(f\charfun_{\bb(X)})\|_p^\tau \leq C \|f\charfun_R\|_p^\tau.
\]
But by Lemma~\ref{lem:Q_equiv}, $\|Q_{\pp(X)}(f\charfun_{\bb(X)})\|_p \simeq \|f\charfun_{\bb(X)}\|_p$ for
all $X\in\mathscr X$ with $\pp(X)\in\mathscr X$, which yields the equivalence of $\wII$ and $\wIIs$
under the assumption $\wI$.
\end{proof}

\subsection{The Bernstein inequality for rings.}\label{sec:rings}
\begin{defn}
		We say that the  \emph{Bernstein inequality $\operatorname{BI}_{\text{rings}} = 
		\operatorname{BI}_{\text{rings}}(\mathscr A,S,p,\tau)$ for rings} is 
		satisfied, if \eqref{eq:BI} holds for $n=1$ and all functions $g$ of 
		the form $g= f\charfun_R$ for some $f\in S$ and some ring $R\in\mathscr R$.
\end{defn}

\begin{prop}
\label{p.2}
For any choice of parameters $\mathscr A,S,p,\tau$, the following statements are
equivalent:
\begin{enumerate}
	\item $\wI(\mathscr A,S,p,\tau)$ and $\wII(\mathscr A,S,p,\tau)$.
	\item  $\operatorname{BI}_{\mathrm{atoms}}(\mathscr A,S,p,\tau)$  and
			$\operatorname{BI}_{\mathrm{rings}}(\mathscr A,S,p,\tau)$,
\end{enumerate}
\end{prop}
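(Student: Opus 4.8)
The plan is to reduce Proposition~\ref{p.2} to a statement purely about rings. By Theorem~\ref{thm:w1equiv} we already know $\wI(\mathscr A,S,p,\tau)\iff\operatorname{BI}_{\mathrm{atoms}}(\mathscr A,S,p,\tau)$, so it suffices to show that, under the assumption $\wI$ (equivalently $\operatorname{BI}_{\mathrm{atoms}}$), the conditions $\wII(\mathscr A,S,p,\tau)$ and $\operatorname{BI}_{\mathrm{rings}}(\mathscr A,S,p,\tau)$ are equivalent; then $(2)\Rightarrow(1)$ follows from this together with Theorem~\ref{thm:w1_nec}, and $(1)\Rightarrow(2)$ from it together with Proposition~\ref{p.1}.

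First I would treat the easy implication $\operatorname{BI}_{\mathrm{rings}}\Rightarrow\wII$. Given a $\rho$-fat full chain $\mathscr X=(X_i)_{i=1}^n$ and $f\in S$, the atoms $X\in\mathscr X$ with $\pp(X)\in\mathscr X$ are exactly $X_2,\dots,X_n$, and $\pp(X_i)=X_{i-1}$; since the indices $n_0(X_1),\dots,n_0(X_{n-1})$ are pairwise distinct, the $\tau$-th power of the left-hand side of \eqref{eq.21} is a sub-sum of $\sum_{j\ge0}\|Q_j(f\charfun_{R(\mathscr X)})\|_p^\tau$. The set $R(\mathscr X)=X_1\setminus X_n$ is either empty, or a single atom $\bb(X_2)\in\mathscr A$ (when $n=2$), or a genuine ring in $\mathscr R$; applying $\operatorname{BI}_{\mathrm{atoms}}$ in the first two cases and $\operatorname{BI}_{\mathrm{rings}}$ in the last, this sum is $\le C\|f\charfun_{R(\mathscr X)}\|_p^\tau$, which yields $\wII$ (with a constant not depending on $\rho$).

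The substantial implication is $\wI\wedge\wII\Rightarrow\operatorname{BI}_{\mathrm{rings}}$. Here I would fix a ring $R=A\setminus B$ with $B\subsetneq A$ in $\mathscr A$, the unique full chain $\mathscr X=(X_i)_{i=1}^n$ from $X_1=A$ down to $X_n=B$ (so $R=R(\mathscr X)$), and $f\in S$, and split $\sum_{j\ge0}\|Q_j(f\charfun_R)\|_p^\tau$ according to the atom $W$ being split, using that $Q_W(f\charfun_R)=Q_W(f\charfun_{R\cap W})$ since $Q_W$ depends only on the values on $W$: (a) the term $\|P_0(f\charfun_R)\|_p$ together with the atoms $W\supsetneq A$; (b) the atoms $W\in\{X_1,\dots,X_{n-1}\}$; (c) the atoms $W$ contained in some ring piece $\bb(X_{i+1})\subseteq R$. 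Part (c) vanishes entirely: for such $W$ one has $R\cap W=W$, and since $f\in S$ the function $f\charfun_W$ already belongs to $S_{n_0(W)-1}$, so $Q_W(f\charfun_W)=(P_{n_0(W)}-P_{n_0(W)-1})(f\charfun_W)=0$. Part (a) is bounded by $C\|f\charfun_R\|_p^\tau$ using Lemma~\ref{lem:proj_subset} (with $A=\Omega$) and Lemma~\ref{lem.2}(ii) (with $X=A$, $\Gamma=R$), both of which rely on $\wI$. Part (b) equals $\sum_{i=1}^{n-1}\|Q_{X_i}(f\charfun_R)\|_p^\tau$, which is precisely the $\tau$-th power of the left-hand side of \eqref{eq.21} for the chain $\mathscr X$.

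The main obstacle is that $\wII$ only gives a bound for $\rho$-fat chains, while $\operatorname{BI}_{\mathrm{rings}}$ must be uniform over all rings, including ``thin'' ones where $|B|$ is a tiny fraction of $|A|$ (so $\mathscr X$ is far from fat, and decomposing it into $\rho$-fat subchains costs an unbounded number of junction terms). To get around this I would introduce a fat/thin dichotomy with threshold $\rho_0:=\tfrac12 c_1^pc_2\in(0,1)$, noting that by Lemma~\ref{cl.2} condition $\wII$ holds with this particular $\rho_0$ and some constant $M$. If $|B|\ge\rho_0|A|$, then $\mathscr X$ is a $\rho_0$-fat full chain, so part (b) is $\le M^\tau\|f\charfun_R\|_p^\tau$ by $\wII$, and combining (a)$+$(b)$+$(c) finishes this case. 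If $|B|<\rho_0|A|$, then the stability inequality \eqref{eq:L1Linfty} gives $\|f\charfun_B\|_p^p\le|B|\,\|f\|_A^p\le\tfrac12\|f\charfun_A\|_p^p$, hence $\|f\charfun_A\|_p\le 2^{1/p}\|f\charfun_R\|_p$ and $\|f\charfun_B\|_p\le 2^{1/p}\|f\charfun_R\|_p$; writing $f\charfun_R=f\charfun_A-f\charfun_B$, using the quasi-triangle inequality for the $\ell^\tau$ quasi-norm, and applying $\operatorname{BI}_{\mathrm{atoms}}$ separately to $f\charfun_A$ and to $f\charfun_B$ then bounds the whole Bernstein sum by $\lesssim\|f\charfun_A\|_p^\tau+\|f\charfun_B\|_p^\tau\lesssim\|f\charfun_R\|_p^\tau$. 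With both cases in hand, Proposition~\ref{p.2} follows.
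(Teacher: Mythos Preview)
Your proof is correct and follows essentially the same route as the paper: reduce via Theorem~\ref{thm:w1equiv} to showing $\wII\Leftrightarrow\operatorname{BI}_{\mathrm{rings}}$ under $\wI$, handle the atoms above $A$ by Lemma~\ref{lem.2}(ii), and split the chain from $A$ to $B$ into a fat/thin dichotomy. The only cosmetic differences are the threshold (the paper uses $\rho=c_2/2$ and invokes \eqref{eq:L1Linfty} directly on $R$ to get $\|f\charfun_A\|_p\lesssim\|f\charfun_R\|_p$, you use $\rho_0=\tfrac12 c_1^pc_2$ and compare $\|f\charfun_B\|_p$ with $\|f\charfun_A\|_p$) and the thin-case bookkeeping: the paper observes $Q_{\pp(X)}(f\charfun_R)=-Q_{\pp(X)}(f\charfun_B)$ on the inner chain and applies $\operatorname{BI}_{\mathrm{atoms}}$ once, whereas you decompose $f\charfun_R=f\charfun_A-f\charfun_B$ and apply it twice to the full sum.
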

\begin{proof}
First we show that (2) implies (1). Indeed, by Theorem~\ref{thm:w1equiv}, 
$\operatorname{BI}_{\mathrm{atoms}}$ implies  $\wI$. Moreover, by comparing the conditions $\wII$ 
and $\operatorname{BI}_{\mathrm{rings}}$, it is apparent that $\operatorname{BI}_{\mathrm{rings}}$ 
implies $\wII$.

Now we show that (1) implies (2).  By Theorem~\ref{thm:w1equiv} again, 
it suffices to show that $\operatorname{BI}_{\mathrm{atoms}}$ and $\wII$ imply $\operatorname{BI}_{\mathrm{rings}}$.
Let $f\in S$ and $R \in \crr$, $R = A \setminus B$ with $B \subset A$, $\pp(B) \neq A$. 
We have $\| P_0 (f \charfun_R) \|_p \leq C \| f \charfun_R \|_p$ in particular
by Lemma \ref{lem:proj_subset} with the choices $A=\Omega$, $\Gamma = R$.
If $Z\in\mathscr A$  with $Z\cap A=\emptyset$ or $Z\subseteq B$ then $Q_{Z}(f\charfun_R) \equiv 0$.
By the nestedness of the atoms, this implies
\[
	\{ Z\in\mathscr A : Q_{Z}(f\charfun_R)\not\equiv 0\} \subseteq \{ Z\in\mathscr A : Z\supsetneq B\} = \pp \big( \{ X\in\mathscr A^* : X\supseteq B\}\big).
\]
We split the latter set into those atoms between $B$ and $A$ and those larger than $A$:
\begin{align*}
	\{ X\in\mathscr A^* : X\supseteq B\} &= 	\{ X\in\mathscr A^* : A\supsetneq X\supseteq B\}
		\cup \{ X\in\mathscr A^* : X\supseteq A\} \\
		&=: \mathscr X_1^*\cup \mathscr X_2
\end{align*}
with the chains $\mathscr X_1 = \{X\in\mathscr A^* : A\supseteq X\supseteq B\}$ and
$\mathscr X_2 = \chain(A)\cap\mathscr A^*$.
By item (ii) of Lemma~\ref{lem.2}, we obtain
\[
	\sum_{X\in\mathscr X_2} \| Q_{\pp(X)}(f\charfun_R)\|_p^\tau	\leq C\|f\charfun_R\|_p^\tau.
\]
It remains to estimate the part 
\begin{equation}
\label{eq.76}
\sum_{X\in\mathscr X_1^*} \| Q_{\pp(X)} (f\charfun_R) \|_p^\tau. 
\end{equation}
To treat this term, we define $\rho := c_2/2$ with the constant $c_2$
from \eqref{eq:L1Linfty} and distinguish two cases.

\textsc{Case 1: $|B| \geq \rho|A|$:} Here, $\mathscr X_1$ is a $\rho$-fat chain.
Since the condition $\wII$ is independent of the value of $\rho$ 
(by Lemma~\ref{cl.2}), 
we use condition $\wII$ to obtain 
\[
	\sum_{X\in\mathscr X_1^*} \| Q_{\pp(X)} (f\charfun_R) \|_p^\tau \leq  M^\tau \|f\charfun_{R}\|_p^\tau.
\]

\textsc{Case 2: $|B| < \rho |A|$:}
Note that $f \charfun_R = f\charfun_A - f \charfun_B$, and consequently
$$
Q_{\pp(X)} (f\charfun_R) = - Q_{\pp(X)}(f \charfun_B), \qquad X\in\mathscr X_1^*.
$$
It follows by $\operatorname{BI}_{\mathrm{atoms}}$ that
\begin{equation}
\label{eq.56}
\sum_{X\in\mathscr X_1^*} \| Q_{\pp(X)} (f\charfun_R) \|_p^\tau = 
\sum_{X\in\mathscr X_1^*} \| Q_{\pp(X)} (f\charfun_B) \|_p^\tau
\leq C^\tau \| f \charfun_B\|_p^\tau \leq  C^\tau \| f \charfun_A\|_p^\tau.
\end{equation}
Since $|R|\geq (1-\rho)|A| = (1-c_2/2)|A|$ and $f\in S$, we use condition \eqref{eq:L1Linfty}
to get $\|f\charfun_A\|_p \lesssim \|f\charfun_R\|_p$. Therefore, combining this with
\eqref{eq.56}, we also get $\operatorname{BI}_{\mathrm{rings}}$ in this case.
\end{proof}

In particular, considering Theorem~\ref{cl.4}, the above proposition shows that 
$\wIIs$ 
is equivalent to the validity of the conditions $\operatorname{BI}_{\mathrm{atoms}}$ and 
$\operatorname{BI}_{\mathrm{rings}}$.

\subsection{The Bernstein inequality in general} \label{sec:general}

To treat the Bernstein inequality in general setting, apart from  
$\Sigma_n(S)$ we also
need to introduce for all positive integers $n$ the spaces 
\[
\Sigma_{n}^{\mathrm{ring}}(S) = \Big\{\sum_{i=1}^n f_i \charfun_{G_i}: f_i \in S,
 G_i \in \caa \cup \crr, i=1,\ldots, n\Big\}, 
\]
with $\{G_i, i =1, \ldots, n\}$ being a family of pairwise disjoint atoms or rings.
Note that the disjointness is not required in the definition \eqref{eq:def_sigma} of $\Sigma_n(S)$

The following property is essential to our treatment of the Bernstein inequality:
\begin{fact}
\label{f.1}
There is a constant $\nu \in \nn $ such that for each $n \in \nn$ there is $\Sigma_n(S) \subset \Sigma_{\nu \cdot n}^{\rm ring}(S)$.
\end{fact}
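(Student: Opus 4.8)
We must show that any $g \in \Sigma_n(S)$, i.e. $g = \sum_{i=1}^n v_i \charfun_{A_i}$ with $v_i \in S$ and $A_i \in \caa$ (no disjointness!), can be rewritten as an element of $\Sigma_{\nu n}^{\mathrm{ring}}(S)$, i.e. as a sum of at most $\nu n$ functions of the form $f_j \charfun_{G_j}$ where the $G_j$ are \emph{pairwise disjoint} atoms or rings and $f_j \in S$. The natural idea is the following. The atoms $A_1,\dots,A_n$ are nested by the binary tree structure, so they generate a partition of $\bigcup_i A_i$ into finitely many pieces; on each such piece, $g$ agrees with a single element of $S$ (a finite sum of the $v_i$'s restricted there, which lies in $S$ on each atom by the structure of $S(\caa)$). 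The key point is to control the \emph{number} of pieces of a good shape (atoms or rings) needed, linearly in $n$.

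First I would pass to the finite rooted binary tree $\ct$ generated by $A_1,\dots,A_n$ together with $\Omega$: let $\ct$ be the set of atoms in $\caa$ that appear as some $A_i$ or as a predecessor-junction needed to connect them, i.e. take the smallest subset of $\caa$ containing all $A_i$ that is closed under "least common ancestor inside $\caa$" and add $\Omega$ as the root. Since there are $n$ marked atoms, the number of branching nodes of this tree is at most $n$, hence $|\ct| = O(n)$; more precisely one can bound the number of relevant nodes by $2n$ or so. The function $g$ is constant-in-$S$ on each "edge region" of this tree: if $C \in \ct$ has children (in $\ct$) $D_1,\dots,D_m$, then $C \setminus (D_1 \cup \dots \cup D_m)$ is a region on which $g$ equals a fixed element of $S$ (restricted there). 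The remaining task is to decompose each such region $C \setminus (D_1 \cup \dots \cup D_m)$ into a bounded number of rings and atoms.

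The main obstacle — and the heart of \textbf{Fact~\ref{f.1}} — is exactly this last decomposition: a set of the form $C \setminus (D_1 \cup \dots \cup D_m)$, where $D_1,\dots,D_m$ are atoms that are not necessarily an immediate dyadic pair but are scattered inside the subtree below $C$, must be written as a union of a \emph{bounded} (independent of $m$!) number of rings $\crr$ and atoms $\caa$. Here I would use the binary tree structure crucially: walk down from $C$ along the (unique) path in $\caa$ toward the $D_j$'s. At each split node $X$ with children $\sm{X}, \la{X}$, at most one child contains marked descendants, so the sibling is entirely "kept" (an atom to be output, or merged) or entirely "discarded". Thus $C \setminus (D_1 \cup \dots \cup D_m)$ decomposes naturally along such a path into: one ring $C \setminus X_{\text{last}}$ (the "trunk" swept out by the path, which is a ring in $\crr$ since consecutive atoms along a chain telescope), plus the "kept" sibling-subtrees hanging off the path, each of which is a whole atom in $\caa$. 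Since the tree $\ct$ has at most $O(n)$ nodes in total, the total number of such kept sibling-atoms and trunk-rings summed over all edge regions is $O(n)$. I expect the cleanest bound to come by induction on $|\ct|$ (or on $n$), splitting off one leaf of $\ct$ at a time and checking that removing a leaf increases the ring/atom count by a bounded amount.

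One should be slightly careful about two points: (a) the $A_i$ may repeat or be nested in degenerate ways, but this only decreases the count, so WLOG they are distinct; (b) when a "kept" region turns out to be a difference $A \setminus B$ with $B = \pp^{-1}$-adjacent, i.e. $\pp(B) = A$, that difference is an \emph{atom} (namely the sibling $\bb(B)$), not a ring — this matches the set $\caa \cup \crr$ allowed in $\Sigma^{\mathrm{ring}}_n(S)$. Collecting the bound, I expect a constant such as $\nu = 2$ or $\nu = 3$ to suffice; the precise value is immaterial for the applications, so I would not optimize it. The verification that $g$ restricted to each output piece is (the restriction of) an element of $S$ is routine: on any atom, finite sums of elements of $S(\caa)$ lie in $S$, and a ring $A \setminus B$ is contained in the single atom $A$ on which each $v_i \charfun_{A_i}$ contributing there is $S$-valued.
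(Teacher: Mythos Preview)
Your overall strategy is the right one and matches the DeVore--Popov combinatorial lemma that the paper cites: pass to the finite subtree $\ct \subset \caa$ generated by $A_1,\dots,A_n$ and their pairwise least common ancestors, observe $|\ct| = O(n)$, and note that on each ``edge region'' of $\ct$ the function $g$ equals a fixed element of $S$.

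There is, however, a genuine gap in your ``main obstacle'' paragraph. You propose to walk from $C$ along ``the (unique) path'' toward the children $D_j$, asserting that ``at most one child contains marked descendants'', and then to output one trunk-ring \emph{together with} the sibling atoms hanging off that path. Two issues. First, if $C$ has two children in $\ct$ there is no unique path and both $\caa$-children of $C$ contain marked descendants, so the assertion fails as written. Second---and this is the more serious point---even in the one-child case the siblings along the $\caa$-path from $C$ to $D_1$ \emph{are} precisely the pieces whose disjoint union is the ring $C\setminus D_1$; you must not output them individually, because that path can be arbitrarily long in $\caa$. Read literally, your claim that ``the total number of such kept sibling-atoms and trunk-rings \dots\ is $O(n)$'' is therefore false.

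The clean argument uses what you set up but did not exploit: because $\caa$ is a \emph{binary} tree and $\ct$ is LCA-closed, every $C\in\ct$ has at most two children in $\ct$. If $C$ has one child $D_1$ in $\ct$, the region $C\setminus D_1$ is already a single element of $\caa\cup\crr$. If $C$ has two children $D_1,D_2$ in $\ct$, then $\operatorname{LCA}(D_1,D_2)=C$ forces $D_1\subseteq\sm{C}$ and $D_2\subseteq\la{C}$ (up to relabelling), whence the region splits as $(\sm{C}\setminus D_1)\cup(\la{C}\setminus D_2)$, two disjoint elements of $\caa\cup\crr$. Summing over $\ct$ yields at most $2|\ct|\le 4n$ disjoint pieces, so any $\nu\ge 4$ works. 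This is exactly the content of the cited \cite[Lemma~4.1 and (4.4)]{dvp.1987}.
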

\begin{proof}
This is a consequence of the combinatorial result \cite[Lemma~4.1]{dvp.1987}.
This lemma is stated for dyadic cubes in $[0,1]^d$, but its proof  uses only 
combinatoric structure of family of dyadic cubes. 
The required representation is essentially contained in 
 \cite[Formula~(4.4)]{dvp.1987}.
 The constant $\nu$ depends only on combinatoric structure of $\caa$. 
 See also \cite[Lemma~3]{hky.2000}. 
\end{proof}

Now, we are ready to formulate the main result of these notes, which is Theorem~\ref{thm:main}.
Observe that with our terminology of Section~\ref{sec:bernstein}, we can rephrase 
the assertion of Theorem~\ref{thm:main} as follows:

\emph{For every choice of parameters $(\mathscr A,S,p,\tau)$,  
 condition $\wIIs(\mathscr A,S,p,\tau)$ is equivalent to 
 the Bernstein inequality 
$\operatorname{BI}(\mathscr A,S,p,\tau)$. Moreover  the Bernstein inequality 
$\operatorname{BI}(\mathscr A,S,p,\tau)$ holds if and only if it holds for $n=1$ and $n=2$.}

\begin{proof}[Proof of Theorem~\ref{thm:main}]\label{proof:main}
By Proposition~\ref{p.2} and Theorem~\ref{cl.4} we know that $\wIIs$ is
equivalent to $\wI$ and $\wII$, or to $\operatorname{BI}_{\mathrm{atoms}}$ and
$\operatorname{BI}_{\mathrm{rings}}$. In the course of the proof we are using
these equivalent versions.

First let us observe that  $\operatorname{BI}$ implies $\operatorname{BI}_{\mathrm{atoms}}$ and  
$\operatorname{BI}_{\mathrm{rings}}$. Indeed $\operatorname{BI}_{\mathrm{atoms}}$ is $\operatorname{BI}$
for $n=1$, while $\operatorname{BI}_{\mathrm{rings}}$ follows from $\operatorname{BI}$ for $n=2$, 
since for $R=A\setminus B$ and $f\in S$ there is
\[
f\chi_R = f\chi_A - f\chi_B \in \Sigma_2^\cc.
\]
To prove the converse implication,
because of Fact \ref{f.1}, it is enough to prove that for each $g \in \Sigma_{ n}^{\mathrm{ring}}(S)$
\begin{equation}
\label{eq.45a}
\Big( \|P_0 g\|_p^\tau +  \sum_{X\in \mathscr A^*} \| Q_{\pp(X)} g\|_p^{\tau}\Big)^{1/\tau}
\leq C n^\beta \| g \|_p.
\end{equation}
For this, it is enough to prove that if $\cgg \subset \caa \cup \crr$ is a  family of pairwise disjoint atoms or rings
with cardinality $n$, then for each choice 
of $f_G\in S, G \in  \cgg $ and $g = \sum_{G \in \cgg} f_G \charfun_G$, we have 
\begin{equation}
\label{eq.45b}
 \sum_{X \in \caa^*} \| Q_{\pp(X)} g \|_p^\tau   \leq C^\tau \sum_{G \in \cgg}  \|  f_G \charfun_G \|_p^\tau.
\end{equation}
Indeed, we have by H\"older's inequality with exponents $q = p/\tau>1$ and $1/q' = 1 - 1/q = 1- \tau/p = \tau \cdot \beta$
\begin{equation}
\label{eq.45c}
 \sum_{G \in \cgg}  \|  f_G \charfun_G \|_p^\tau 
 \leq n^{1/q'} \Big(   \sum_{G \in \cgg}  \|  f_G \charfun_G \|^{p}_p\Big)^{1/q} =
 n^{\tau \cdot \beta} \| g \|_p^\tau.
\end{equation}
Thus, \eqref{eq.45b} implies \eqref{eq.45a}.
The cases of $0< \tau \leq 1$ and $1< \tau <p$ in \eqref{eq.45b} are treated separately.

\textsc{Case 1: $0<\tau\leq 1$.} Clearly, for each $X \in \caa^*$, we have
$$
\| Q_{\pp(X)} g\|_p \leq  \sum_{G \in \cgg}  \| Q_{\pp(X)}( f_G \charfun_G) \|_p.
$$
Since $0<\tau \leq 1$, we get 
$$
\| Q_{\pp(X)} g\|_p^\tau \leq  \sum_{G \in \cgg}  \| Q_{\pp(X)}( f_G \charfun_G) \|_p^\tau.
$$
As $G$ is either an atom or a ring, we invoke Proposition \ref{p.2}
 to get 
$$
\sum_{X \in \caa^*} \| Q_{\pp(X)} g\|_p^\tau \leq  \sum_{G \in \cgg} \sum_{X \in \caa^*}
 \| Q_{\pp(X)}( f_G \charfun_G) \|_p^\tau
\leq C^\tau  \sum_{G \in \cgg} \|  f_G \charfun_G  \|_p^\tau.
$$
That is, we have \eqref{eq.45b} for $0<\tau\leq 1$.

\textsc{Case 2:  $1<\tau < p$. } 
Given $X\in\mathscr A^*$ and $Z = \pp(X)$, we observe that for $G\in\mathscr G$, we
only have $Q_{Z}(f_G\charfun_G)\not\equiv 0$ if $G$ is contained in one of the 
following three sets:
\begin{align*}
	\Lambda'(Z) = \{ G \in \mathscr G :\; & G\subseteq Z'\}, \qquad
	\Lambda''(Z) = \{ G \in \mathscr G : G\subseteq Z''\}, \\
	\tilde{\Lambda}(Z) &= \{ G = A\setminus B\in\mathscr G: B\subsetneq Z\subseteq A\},
\end{align*}
where $A,B\in\mathscr A$ are some atoms. 
Note also that due to the disjointness of the sets in $\mathscr G$, the cardinality of 
the collection $\tilde{\Lambda}(Z)$ is at most one.
According to those sets, we decompose
\begin{align*}
	Q_{Z} g &= 	 \sum_{G\in\Lambda'(Z)} Q_{Z}(f_G\charfun_G) + 
		\sum_{G\in\Lambda''(Z)} Q_{Z}(f_G\charfun_G) + 
		\sum_{G\in\tilde{\Lambda}(Z)} Q_{Z}(f_G\charfun_G) \\
		&=: A_Z(g) + B_Z(g) + C_Z(g).
\end{align*}
Clearly,
\begin{equation}\label{eq:split_A_B_C}
	\sum_{X\in\mathscr A^*} \|Q_{\pp(X)}(g)\|_p^\tau \lesssim
			\sum_{X\in\mathscr A^*} \|A_{\pp(X)}(g)\|_p^\tau + 
			\sum_{X\in\mathscr A^*} \|B_{\pp(X)}(g)\|_p^\tau + 
			\sum_{X\in\mathscr A^*} \|C_{\pp(X)}(g)\|_p^\tau.
\end{equation}
Let's treat the expression $C_{\pp(X)}(g)$ first. Since the cardinality of $\tilde{\Lambda}(\pp(X))$ 
is at most one for each fixed  $X\in\mathscr A^*$, we obtain
\begin{align*}
	\sum_{X\in\mathscr A^*} \|C_{\pp(X)}(g)\|_p^\tau	 &= \sum_{X\in\mathscr A^*} 
	\sum_{G\in\tilde{\Lambda}(\pp(X))} \| Q_{\pp(X)}(f_G\charfun_G)\|_p^\tau \\
	&=\sum_{G\in\mathscr G} 
	\sum_{X\in\mathscr A^* : G\in\tilde{\Lambda}(\pp(X))} \| Q_{\pp(X)}(f_G\charfun_G)\|_p^\tau.
\end{align*}
We use  the Bernstein inequality for rings $\operatorname{BI}_{\mathrm{rings}}$ to continue this estimate and get 
\begin{equation}\label{eq:CX}
	\sum_{X\in\mathscr A^*} \|C_{\pp(X)}(g)\|_p^\tau\lesssim 
	\sum_{G\in\mathscr G} \|f_G \charfun_G\|_p^\tau.
\end{equation}

Next, treat the expression $A_{\pp(X)}(g)$. By the triangle inequality, for fixed $X\in\mathscr A^*$,
\[
	\|A_{\pp(X)}(g)\|_p^\tau	\leq \Big(\sum_{G\in\Lambda'(\pp(X))} \| Q_{\pp(X)}(f_G\charfun_G) \|_p \Big)^\tau.
\]
Use inequality~\eqref{eq:Qbound} to further obtain, with $Y = Y(X) =\pp(X)'$,
\begin{align*}
	\|A_{\pp(X)}(g)\|_p^\tau	&\lesssim u(Y)^\tau \Big(  \sum_{G\in\Lambda'(\pp(X))}
		 \big(|G|/|Y|\big)^{1/p'}\|f_G\charfun_G\|_p \Big)^\tau \\
		 &=u(Y)^\tau \Big(  \sum_{G\in\Lambda'(\pp(X))} \big(|G|/|Y|\big)^{-1/p}
		 \|f_G\charfun_G\|_p\frac{|G|}{|Y|} \Big)^\tau.
\end{align*}
Since the sets in $\mathscr G$ are disjoint and since $G\subset Y$ for $G\in \Lambda'(\pp(X))$, 
we have the inequality  $\sum_{G\in\Lambda'(\pp(X))} |G| \leq |Y|$. Therefore, we use Jensen's inequality to obtain
\[
	\|A_{\pp(X)}(g)\|_p^\tau \lesssim 
		u(Y)^\tau  \sum_{G\in\Lambda'(\pp(X))} \big(|G|/|Y|\big)^{-\tau/p}
		 \|f_G\charfun_G\|_p^\tau\frac{|G|}{|Y|}.
\]
Since $ \varepsilon:=1-\tau/p >0$, this implies
\begin{align*}
		\sum_{X\in\mathscr A^*} \| A_{\pp(X)}(g)\|_p^\tau	&\lesssim
		\sum_{G\in\mathscr G}    |G|^\varepsilon \|f_G\charfun_G\|_p^\tau 
		\sum_{X\in\mathscr A^* : G\in\Lambda'(\pp(X))} |Y(X)|^{-\varepsilon} u(Y(X))^\tau \\
		&\lesssim \sum_{G\in\mathscr G}  \|f_G \charfun_G\|_p^\tau,
\end{align*}
where in the last inequality we used item (i) of Lemma~\ref{lem.2}.
The same line of argument yields the same estimate if we replace $A_{\pp(X)}(g)$ by $B_{\pp(X)}(g)$ by 
just setting $Y(X) = \pp(X)''$ instead.
Combining those estimates with \eqref{eq:CX} and \eqref{eq:split_A_B_C}, we obtain  \eqref{eq.45b}
in the case $1<\tau < p$, finishing the proof of the theorem.
\end{proof}

\subsection{Stability of the conditions \eqref{eq:L1Linfty} and $\wIIs$}\label{sec:stability}
In this section, we investigate the following question:
Let $U_i$, $i=1,\ldots,m$ be finite-dimensional spaces satisfying \eqref{eq:L1Linfty}
and $\wIIs$. 
Under which additional conditions does $S:=\Span\{ U_i : i = 1,\ldots,m\}$ also
satisfy \eqref{eq:L1Linfty} and $\wIIs$?
The answer to this question will be important for considering explicit examples in 
Section~\ref{sec:poly}.

Introduce the following condition ($*_1$): There exists a constant $C$ so that for 
each atom $A\in\mathscr A$ and each $u\in S$, there exists a decomposition 
$ u = u_1 +\cdots + u_m$ with $u_j\in U_j$ so that
\begin{equation}\label{eq:star1}
	\sum_{j=1}^m \| u_j \charfun_A\|_p \leq C\| u\charfun_A\|_p.
\end{equation}
Then we have the following result:
\begin{prop}
	Suppose that for each $j=1,\ldots,m$, 
	we have \eqref{eq:L1Linfty} for $U_j$ and condition $(*_1)$ is satisfied.

	Then, the space $S=\Span\{ U_j : j=1,\ldots,m\}$ satisfies \eqref{eq:L1Linfty} 
	for some constants $c_1,c_2\in (0,1]$.
\end{prop}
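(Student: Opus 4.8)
The plan is to verify the stability inequality \eqref{eq:L1Linfty} for $S$ directly, using condition $(*_1)$ together with the inequalities \eqref{eq:L1Linfty} that are already available for each $U_j$. Fix an atom $A\in\mathscr A$ and a function $u\in S$. By $(*_1)$ there is a decomposition $u = u_1+\cdots+u_m$ with $u_j\in U_j$ and $\sum_{j=1}^m \|u_j\charfun_A\|_p \leq C\|u\charfun_A\|_p$. The first step is to pass from $\|\cdot\charfun_A\|_p$ to the sup-norm $\|\cdot\|_A$: by inequality \eqref{eq:L1Linfty} applied to $U_j$ we have $\|u_j\charfun_A\|_p \gtrsim \|u_j\|_A |A|^{1/p}$, and trivially $\|u\charfun_A\|_p \leq \|u\|_A |A|^{1/p}$, so $(*_1)$ implies $\sum_{j=1}^m \|u_j\|_A \lesssim \|u\|_A$ with a constant depending only on $C$ and the constants in \eqref{eq:L1Linfty} for the $U_j$'s.

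Next, pick an index $j_0$ with $\|u_{j_0}\|_A \geq \|u\|_A/m$; without such a dominating term the quantity $\|u\|_A$ could not be controlled, and the bound $\|u\|_A \leq \sum_j \|u_j\|_A \leq m \max_j \|u_j\|_A$ guarantees it exists. Apply \eqref{eq:L1Linfty} for $U_{j_0}$: there are constants $a_1, a_2 \in (0,1]$ (depending on $j_0$, but one takes the worst over $j=1,\ldots,m$) so that the set $E := \{\omega\in A : |u_{j_0}(\omega)| \geq a_1 \|u_{j_0}\|_A\}$ has measure $|E|\geq a_2|A|$. On such a large subset $E$ we would like $|u|$ to be comparable to $\|u\|_A$; the obstruction is that the other terms $u_j$, $j\neq j_0$, may partially cancel $u_{j_0}$ on all of $E$.

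To handle this, I would intersect $E$ with a set on which the remaining terms are \emph{uniformly small}. For each $j\neq j_0$, by Chebyshev/Markov applied to $|u_j|$ on $A$ (using that $\|u_j\charfun_A\|_1 \leq \|u_j\charfun_A\|_p |A|^{1-1/p} \lesssim \|u_j\|_A |A| \lesssim \|u\|_A |A|$), the set where $|u_j| > K\|u\|_A$ has measure at most $(c/K)|A|$ for an absolute constant $c$; choosing $K$ large and summing over the $m-1$ indices $j\neq j_0$, the set $F$ where $\sum_{j\neq j_0}|u_j| > \tfrac{a_1}{2m}\|u\|_A$ has measure $\leq \tfrac{a_2}{2}|A|$, say, after also using $\|u_{j_0}\|_A \geq \|u\|_A/m$ to relate the thresholds. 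Then on $E\setminus F$, which has measure $\geq (a_2 - a_2/2)|A| = \tfrac{a_2}{2}|A|$, we have $|u| \geq |u_{j_0}| - \sum_{j\neq j_0}|u_j| \geq a_1\|u_{j_0}\|_A - \tfrac{a_1}{2m}\|u\|_A \geq \tfrac{a_1}{m}\|u\|_A - \tfrac{a_1}{2m}\|u\|_A = \tfrac{a_1}{2m}\|u\|_A$. This is exactly \eqref{eq:L1Linfty} for $S$ with $c_1 = a_1/(2m)$ and $c_2 = a_2/2$, after taking $a_1, a_2$ to be the minima of the corresponding constants over $j=1,\ldots,m$.

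The main obstacle is the cancellation issue in the third paragraph: one cannot expect $|u|$ to be large on the \emph{whole} set where the dominating piece $u_{j_0}$ is large, so the combinatorial bookkeeping of measures — ensuring that the bad set $F$ where the other pieces are not small has measure strictly smaller than $|E|$ — has to be done carefully, and this is where the precise values of the new constants $c_1,c_2$ (depending on $m$ and on the $U_j$-constants, but crucially not on $A$) get determined. Everything else is a routine chain of Hölder and Chebyshev estimates; the finiteness of the dimension of $S$, hence of $m$, is what keeps all constants uniform in $A$.
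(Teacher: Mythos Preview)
Your first paragraph is correct and in fact already contains the decisive estimate: combining the triangle inequality $\|u\|_A\le\sum_j\|u_j\|_A$ with the chain $\sum_j\|u_j\|_A|A|^{1/p}\lesssim\sum_j\|u_j\charfun_A\|_p\le C\|u\charfun_A\|_p$ yields the reverse H\"older inequality
\[
\|u\|_A\,|A|^{1/p}\ \lesssim\ \|u\charfun_A\|_p.
\]
This is exactly the paper's inequality (3.18), and from here \eqref{eq:L1Linfty} follows by a short contrapositive: if the set $\{|u|\ge c_1\|u\|_A\}\cap A$ had measure $<c_2|A|$, splitting the $L^p$-integral over this set and its complement would give $\|u\charfun_A\|_p^p\le(c_2+c_1^p)\|u\|_A^p|A|$, contradicting the display as soon as $c_1,c_2$ are small. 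You bypass this and instead weaken to $\sum_j\|u_j\|_A\lesssim\|u\|_A$, then attempt a constructive argument; that detour is where the gap appears.

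The Chebyshev step in your third paragraph does not close. You need the set
\[
F=\Big\{\omega\in A:\ \textstyle\sum_{j\ne j_0}|u_j(\omega)|>\tfrac{a_1}{2m}\|u\|_A\Big\}
\]
to have measure at most $\tfrac{a_2}{2}|A|$. Markov gives
\[
|F|\le \frac{2m}{a_1\|u\|_A}\sum_{j\ne j_0}\|u_j\charfun_A\|_1
\le \frac{2m}{a_1\|u\|_A}\cdot\kappa\|u\|_A\,|A|
=\frac{2m\kappa}{a_1}\,|A|,
\]
and $\tfrac{2m\kappa}{a_1}$ is a fixed constant, typically much larger than $1$; there is no parameter left to make it smaller than $a_2/2$. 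The phrase ``choosing $K$ large'' is the error: making $K$ large shrinks the superlevel set $\{|u_j|>K\|u\|_A\}$, but the set you actually need to control has the \emph{small} threshold $\tfrac{a_1}{2m}\|u\|_A$, and no Markov-type inequality will make that set small, since $|u_j|$ may be of order $\|u_j\|_A\sim\|u\|_A$ on essentially all of $A$. In short, your decomposition gives no pointwise smallness of the non-dominant pieces to exploit, so the cancellation argument cannot be rescued along these lines; the paper's contrapositive route, already implicit in your first paragraph, is the way to finish.
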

\begin{proof}
Fix $A\in \mathscr A$
and $u\in S$ and a decomposition $u=\sum_{j=1}^m u_j$ that satisfies \eqref{eq:star1}.
Then, by \eqref{eq:L1Linfty} for $U_j$ for each $j=1,\ldots,m$,
\[
	\|u_j\charfun_A\|_p \simeq \|u_j\|_A |A|^{1/p},\qquad j=1,\ldots,m.
\]
Therefore we have 
\begin{equation}\label{eq:stab1}
	\|u\|_A |A|^{1/p} \leq \sum_{j=1}^m \|u_j\|_A |A|^{1/p} \simeq \sum_{j=1}^m \|u_j\charfun_A\|_p
	\lesssim \|u\charfun_A\|_p,
\end{equation}
where the latter inequality follows from \eqref{eq:star1}.
This shows \eqref{eq:L1Linfty} for $S$. 
Indeed, assume that
for some $c_1,c_2$, inequality \eqref{eq:L1Linfty} for $S$ is not true, i.e. we have
\[
	| F | \leq c_2|A|
\]
with $F := \{ \omega\in A : |u(\omega)| \geq c_1 \|u\|_A\}$. Then we deduce
\begin{align*}
	\|u\charfun_A\|_p^p &= \int_F |u|^p\dif\mathbb P + \int_{A\setminus F} |u|^p\dif\mathbb P
	\leq |F| \|u\|_A^p + |A\setminus F| c_1^p \|u\|_A^p  \\
	&\leq (c_2 + c_1^p) |A|\|u\|_A^p.
\end{align*}
But this inequality contradicts \eqref{eq:stab1} provided that $c_1,c_2$ are sufficiently
small. Therefore, we have proved the existence of two positive numbers $c_1,c_2$ satisfying
\eqref{eq:L1Linfty} for $S$.
\end{proof}

Next, consider condition $(*_2)$ given by:
There exists a constant $C$ so that for each ring $R\in \mathscr R$ and each $u\in S$, there
exists a decomposition $u = u_1 + \cdots + u_m$ with $u_j\in U_j$ so that
\[
\sum_{j=1}^m \| u_j \charfun_R\|_p \leq C\| u\charfun_R\|_p.	
\]

\begin{prop}\label{prop:stability_weaker}
	Suppose that $\wIIs(\mathscr A,U_j,p,\tau)$ is satisfied for each $j=1,\ldots,m$ and that
	condition $(*_2)$ is satisfied.

	Then, $\wIIs(\mathscr A,S,p,\tau)$ is true.
\end{prop}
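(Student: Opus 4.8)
The plan is to deduce $\wIIs(\mathscr A,S,p,\tau)$ directly from the definition of $\wIIs$ by testing on an arbitrary $\rho$-fat full chain $\mathscr X$ and an arbitrary $f\in S$. Since by Lemma~\ref{cl.3} the condition $\wIIs$ does not depend on $\rho$, I would first fix $\rho$ to be a common admissible value for all the spaces $U_j$ (each of which satisfies $\wIIs(\mathscr A,U_j,p,\tau)$, so by Lemma~\ref{cl.3} each one works for every $\rho\in(0,1)$). Write $R = R(\mathscr X) = X_1\setminus X_n$, which is a ring in $\mathscr R$ (after possibly discarding the trivial case $R\in\mathscr A$, which cannot happen for a genuine $\rho$-fat chain of length $\geq 2$; for length $1$ the left side is empty). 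Now apply $(*_2)$ to this ring $R$ and the given $f\in S$: there is a decomposition $f = u_1 + \cdots + u_m$ with $u_j\in U_j$ and $\sum_{j=1}^m \|u_j\charfun_R\|_p \leq C \|f\charfun_R\|_p$.

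The key observation is that for each $X\in\mathscr X$ with $\pp(X)\in\mathscr X$, the set $\bb(X)$ is contained in $R$, so $f\charfun_{\bb(X)} = \sum_{j=1}^m u_j\charfun_{\bb(X)}$, and by the triangle inequality in $L^p$ (note $p>1$, so this is genuine)
\[
\| f\charfun_{\bb(X)}\|_p \leq \sum_{j=1}^m \|u_j\charfun_{\bb(X)}\|_p.
\]
Raising to the power $\tau$ and using that $\tau>0$ together with the elementary inequality $(a_1+\cdots+a_m)^\tau \leq m^{\max(\tau,1)}(a_1^\tau+\cdots+a_m^\tau)$ for nonnegative reals, summing over the relevant $X$ gives
\[
\sum_{X\in\mathscr X:\pp(X)\in\mathscr X} \|f\charfun_{\bb(X)}\|_p^\tau \lesssim \sum_{j=1}^m \sum_{X\in\mathscr X:\pp(X)\in\mathscr X} \|u_j\charfun_{\bb(X)}\|_p^\tau.
\]
Here the implied constant depends only on $m$ and $\tau$.

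For each fixed $j$, the inner sum is exactly the left-hand side of $\wIIs(\mathscr A,U_j,p,\tau)$ evaluated at the function $u_j\in U_j$ and the $\rho$-fat full chain $\mathscr X$. Since $u_j\in U_j$ and $\wIIs(\mathscr A,U_j,p,\tau)$ holds, each inner sum is bounded by $M_j^\tau \|u_j\charfun_R\|_p^\tau$ with $M_j$ the constant for $U_j$. Taking $M_* = \max_j M_j$ and combining with $(*_2)$,
\[
\sum_{X\in\mathscr X:\pp(X)\in\mathscr X} \|f\charfun_{\bb(X)}\|_p^\tau \lesssim M_*^\tau \sum_{j=1}^m \|u_j\charfun_R\|_p^\tau \leq M_*^\tau \Big(\sum_{j=1}^m \|u_j\charfun_R\|_p\Big)^\tau \leq (M_* C)^\tau \|f\charfun_R\|_p^\tau,
\]
using again $\tau>0$ and monotonicity of $x\mapsto x^\tau$ together with the reverse of the power-sum inequality ($\sum a_j^\tau \le (\sum a_j)^\tau$ holds for $\tau\ge 1$; for $0<\tau<1$ one instead absorbs the factor $m^{1-\tau}$). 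Taking $\tau$-th roots yields inequality \eqref{eq.23} for $S$ with a constant $M$ depending only on $m$, $\tau$, $C$, and the constants $M_j$, which is exactly $\wIIs(\mathscr A,S,p,\tau)$. I do not expect a genuine obstacle here: the only points requiring care are the bookkeeping of the elementary $\ell^\tau$ versus $\ell^1$ inequalities for the two regimes $0<\tau\le 1$ and $1<\tau<p$ (both of which cost only a dimensional constant depending on $m$), and the harmless degenerate cases where the chain has length one or $R(\mathscr X)$ happens to be an atom. The substance of the argument is entirely carried by $(*_2)$ and by the $\rho$-independence of $\wIIs$ from Lemma~\ref{cl.3}.
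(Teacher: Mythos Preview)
Your proof is correct and follows essentially the same route as the paper: decompose $f$ via $(*_2)$, push the $\ell^\tau$-sum through the decomposition at the cost of a constant depending on $m$ and $\tau$, apply $\wIIs$ for each $U_j$, and recombine using $(*_2)$ again. The paper packages this argument as a corollary of the slightly more general Proposition~\ref{prop:stability} (where the spaces $U_j$ are allowed to depend on the ring $R$), but the underlying computation is identical to yours. One small correction: your parenthetical claim that $R(\mathscr X)\in\mathscr A$ ``cannot happen for a genuine $\rho$-fat chain of length $\geq 2$'' is false for $n=2$ (then $R(\mathscr X)=\bb(X_2)$ is an atom), but the inequality is trivial in that case since both sides equal $\|f\charfun_{\bb(X_2)}\|_p^\tau$, so no harm is done.
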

This result is a direct consequence of the following, more general proposition.
Observe that condition (1) in Proposition~\ref{prop:stability} below are 
the conditions $\wIIs$ for the functions $u_j$, but only evaluated locally 
on the chain $\mathscr X$. Additionally, condition (2) in Proposition~\ref{prop:stability}
below is condition $(*_2)$, but with the spaces $U_j$ depending also on the ring $R$.
\begin{prop}\label{prop:stability}
	Let $S$ be a finite-dimensional space of scalar-valued functions on $\Omega$.
	Suppose that there exists $\rho\in(0,1)$ and a constant $M$ so that
	for all $\rho$-fat full chains $\mathscr X$ and for all $u\in S$, there exists a decomposition
	$u = u_1 + \cdots + u_m$ with $u_j\in U_j$
	for some subspaces $U_j = U_j(R)$ of $S$ that may depend on $R = R(\mathscr X)$ such that
	the following conditions are true:
	\begin{enumerate}
		\item For all $j=1,\ldots,m$, we have the inequality
					\[
						\sum_{X\in\mathscr X : \pp(X)\in \mathscr X} \|u_j\charfun_{\bb(X)}\|_p^\tau 
							\leq M^\tau \|u_j\charfun_R\|_p^\tau,
					\] 
		\item $\sum_{j=1}^m \|u_j\charfun_R\|_p \leq M \|u\charfun_R\|_p$.
	\end{enumerate}

Then, condition $\wIIs(\mathscr A,S,p,\tau)$ is satisfied.
\end{prop}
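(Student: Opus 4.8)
The plan is to reduce $\wIIs(\mathscr A,S,p,\tau)$ to the local hypotheses (1) and (2) by a direct estimate on a single $\rho$-fat full chain, exploiting that $\wIIs$ is independent of $\rho$ (Lemma~\ref{cl.3}) so we are free to work with the given value of $\rho$. Fix a $\rho$-fat full chain $\mathscr X$, put $R = R(\mathscr X)$, take any $u\in S$, and pick the decomposition $u = u_1+\cdots+u_m$ with $u_j\in U_j(R)$ guaranteed by the hypothesis, so that both (1) and (2) hold. The left-hand side of \eqref{eq.23} is
\[
	\Big(\sum_{X\in\mathscr X:\pp(X)\in\mathscr X} \|u\charfun_{\bb(X)}\|_p^\tau\Big)^{1/\tau},
\]
and the obvious move is to split $u\charfun_{\bb(X)} = \sum_{j=1}^m u_j\charfun_{\bb(X)}$ under the $\tau$-quasi-norm and then apply hypothesis (1) to each piece.

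\textbf{Key steps.} First, for every index $X$ with $\pp(X)\in\mathscr X$, the triangle inequality in $\ell^\tau$ (if $0<\tau\le 1$ we use $\|a+b\|^\tau\le\|a\|^\tau+\|b\|^\tau$; if $1<\tau<p$ we use Minkowski, possibly after first passing to $\|\cdot\|_p$ and summing) gives a bound of $\sum_{X}\|u\charfun_{\bb(X)}\|_p^\tau$ by $m^{(\tau-1)_+}\sum_{j=1}^m\sum_X\|u_j\charfun_{\bb(X)}\|_p^\tau$, with a constant depending only on $m$ and $\tau$. Second, apply condition (1) to each $j$ to replace the inner double sum by $M^\tau\|u_j\charfun_R\|_p^\tau$. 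Third, sum over $j$ and use condition (2): since $\ell^1$ dominates $\ell^\tau$ up to a factor $m^{(1-\tau)_+}$,
\[
	\sum_{j=1}^m \|u_j\charfun_R\|_p^\tau \le m^{(1-\tau)_+}\Big(\sum_{j=1}^m \|u_j\charfun_R\|_p\Big)^\tau \le m^{(1-\tau)_+} M^\tau \|u\charfun_R\|_p^\tau.
\]
Combining these three steps yields $\sum_X\|u\charfun_{\bb(X)}\|_p^\tau \lesssim_{m,\tau,M} \|u\charfun_R\|_p^\tau$, which is exactly \eqref{eq.23} for the chain $\mathscr X$ with a constant independent of $\mathscr X$ and $u$; hence $\wIIs(\mathscr A,S,p,\tau)$ holds.

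\textbf{Deducing Proposition~\ref{prop:stability_weaker}.} To obtain the weaker statement, one just checks that the hypotheses of Proposition~\ref{prop:stability} are met with the fixed spaces $U_j$ (not depending on $R$): condition (2) is literally $(*_2)$, and condition (1) for each $j$ follows because $\wIIs(\mathscr A,U_j,p,\tau)$ holds and the left-hand side of (1) is dominated by the full $\wIIs$-sum for $u_j\in U_j$ — here one must take $\rho$ at least as large as the (common) $\rho$-value for all the $\wIIs(\mathscr A,U_j,p,\tau)$, which is legitimate since $\wIIs$ is $\rho$-independent by Lemma~\ref{cl.3}.

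\textbf{Main obstacle.} There is no deep difficulty; the argument is essentially bookkeeping with quasi-norm triangle inequalities. The one point that needs care is the split at $\tau=1$: for $1<\tau<p$ the naive termwise $\ell^\tau$-triangle inequality fails, so one should either apply Minkowski's inequality in the right order (sum in $X$ first, treating $(\|u_j\charfun_{\bb(X)}\|_p)_{X}$ as a vector in $\ell^\tau$ and using that finite sums of $m$ such vectors have $\ell^\tau$-norm controlled by $m$ times the max) or simply absorb the constant $m^{\tau-1}$ coming from Hölder/power-mean inequalities. Keeping the $m$-dependence of constants explicit but harmless is the only thing to watch.
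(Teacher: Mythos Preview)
Your proof is correct and follows essentially the same route as the paper's: fix a $\rho$-fat full chain, decompose $u=\sum_j u_j$, use the quasi-triangle inequality to split the $\ell^\tau$-sum over $X$, apply (1) termwise, and then use (2) together with the $\ell^\tau$--$\ell^1$ comparison to finish. The paper hides the $m$- and $\tau$-dependent constants in $\lesssim$ rather than splitting into the cases $\tau\le 1$ and $\tau>1$, and it does not invoke Lemma~\ref{cl.3} (showing \eqref{eq.23} for the given $\rho$ already verifies the existential definition of $\wIIs$), but these are cosmetic differences.
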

\begin{proof}
	Let $\mathscr X$ be a $\rho$-fat full chain.
	Let $R = R(\mathscr X)$ and let $u\in S$. Then, according to
	condition (2), we choose a decomposition
	$u = u_1 + \cdots + u_m$ with $u_j\in U_j(R)$ and  $\sum_{j=1}^m \| u_j \charfun_R\|_p \leq C\| u\charfun_R\|_p$.
	Next we calculate
	\begin{align*}
		\sum_{X\in \mathscr X : \pp(X)\in\mathscr X} \| u\charfun_{\bb(X)} \|_p^\tau
		 \lesssim \sum_{j=1}^m \sum_{X\in\mathscr X: \pp(X)\in\mathscr X} \|u_j\charfun_{\bb(X)}\|_p^\tau.
	\end{align*}
	Now we use condition (1) for each $j$ to estimate further
	\begin{align*}
		\sum_{X\in \mathscr X : \pp(X)\in\mathscr X} \| u\charfun_{\bb(X)} \|_p^\tau
			\leq M^\tau \sum_{j=1}^m \| u_j \charfun_{R} \|_p^\tau	\lesssim M^\tau \Big(\sum_{j=1}^m \|u_j\charfun_R\|_p\Big)^\tau
	\end{align*}
	and finally, according to (2), 
	\[
		\sum_{X\in \mathscr X : \pp(X)\in\mathscr X} \| u\charfun_{\bb(X)} \|_p^\tau \lesssim M^{2\tau} \| u\charfun_R\|_p^\tau,
	\]
	which implies condition $\wIIs(\mathscr A,S,p,\tau)$ since $\mathscr X$ was an arbitrary 
	$\rho$-fat full chain and $u\in S$ was arbitrary.
\end{proof}

\section{Some special cases}\label{sec:special}
We have shown that condition $\wIIs$ (see Definition~\ref{def.3}) 
is equivalent to the Bernstein inequality in 
Theorem~\ref{thm:main} as our main result.
In this section we consider special cases and specific examples for spaces $S$, measure spaces
$\Omega$ and atoms $\mathscr A$ that allow us to even give simpler and more explicit
conditions than $\wIIs$ that are still equivalent to it.
Moreover, we investigate the relations among the conditions $\wIIs$ 
for different parameter choices.

For convenience we recall that condition $\wIIs$ is true if we have the following inequality
for every $\rho$-fat full chain $\mathscr X$ for some $\rho\in(0,1)$:
\begin{equation}\label{eq:wIIs_wh}
\Big( \sum_{X\in\mathscr X : \pp(X)\in \mathscr X} \| f \charfun_{\bb(X)} ) \|_p^\tau \Big)^{1/\tau} 
\leq M \| f \charfun_{R(\mathscr X)} \|_p,\qquad f\in S,
\end{equation}
with the parameters $\mathscr A,S,p,\tau$ satisfying $1<p<\infty$ and $0<\tau<p$ 
and some uniform constant $M$.

\subsection{Regular partitions}
Consider an abstract probability space $(\Omega,\mathscr F,\mathbb P)$,
a binary filtration $(\mathscr F_n)$ and an arbitrary
finite-dimensional space $S$ of scalar-valued functions on $\Omega$
satisfying inequality \eqref{eq:L1Linfty}. Assume that the splitting of $A_n$ into $A_n'$ and $A_n''$ is done in 
such a way that there exists some constant $c_3<1$
	such that for all $n$ we have $|A_n''|\leq c_3|A_n|$.
	Then, condition $\wIIs$ is satisfied for all possible parameter choices of  $p,\tau$, since 
	each $\rho$-fat chain contains only a uniformly bounded number of atoms.

\subsection{The case $\dim S=1$}

Observe that if we assume $\dim S = 1$, condition $\wIIs$ is equivalent to having for each
$\rho$-fat full chain $\mathscr X=(X_i)_{i=1}^n$ and for $f\in S$ with $\|f\|_{X_1} = 1$, 
the inequality
\begin{equation}\label{eq:simple_w2}
	\sum_{X\in\mathscr X : \pp(X)\in\mathscr X}	 \lambda_X^{\sigma} 
	\leq M \Big(\sum_{X\in \mathscr X : \pp(X)\in \mathscr X}\lambda_X\Big)^{\sigma}
\end{equation}
with the setting $\lambda_X = \|f\charfun_{\bb(X)}\|_p^p\simeq \|f\|_{\bb(X)}^p |\bb(X)|$ and $\sigma=\tau/p\in (0,1)$.

\begin{rem}\label{haar}
A more explicit example of this form is if 
$S = \operatorname{span}{\charfun_\Omega}$ and obviously we then also have 
\eqref{eq:L1Linfty} with the constants $c_1=c_2=1$.
This gives even more simplification of the condition $\wIIs(\mathscr A,S,p,\tau)$ which 
then reads that for every $\rho$-fat full chain $\mathscr X$, we have the inequality
\[
\Big( \sum_{X\in\mathscr X : \pp(X)\in \mathscr X} |\bb(X)|^{\tau/p}  \Big)^{1/\tau} 
\leq M \cdot |R(\mathscr X)|^{1/p}.
\]
The corresponding orthonormal functions $\Phi = (\Phi_n)$
are given by the formula  
\begin{equation}\label{eq:expl_haar}
\Phi_n = \frac{|A_n''|^{1/2}}{ (|A_n'|^2 + |A_n'| |A_n''|)^{1/2}} \charfun_{A_{n}'}
- \frac{|A_n'|}{|A_n''|^{1/2} (|A_n'|^2 + |A_n''||A_n'|)^{1/2}}\charfun_{A_n''},
\end{equation}
whose support equals $A_n$. Those functions are unique up to sign. 
In these formulas, we can see the general pointwise estimates \eqref{eq:pw} 
explicitly.
In the case of $|A_n''| = |A_n'|$ we get the familiar expression 
\[
\Phi_n = \frac{1}{|A_n|^{1/2}}(\charfun_{A_n'} - \charfun_{A_n''}).
\]

Observe that generalized Haar systems on the unit interval are of this category. 
\end{rem}

Returning to the general case of $\dim S=1$, we have the following theorem that shows that the conditions $\wIIs(\mathscr A,S,p,\tau)$ 
for the same space $S$ are not equivalent for different values of $\tau$.
\begin{theo}\label{thm:diff_para}
		Fix $\tau_0 < p$.
		If $\dim S=1$ and $(\Omega,\mathscr F,\mathbb P)$ is non-atomic, there exists 
		a filtration $(\mathscr F_n)$ on $(\Omega,\mathscr F,\mathbb P)$ satisfying inequality \eqref{eq:L1Linfty} 
		for all corresponding
		atoms $A\in\mathscr A$ so that 
		\begin{enumerate}[(i)]
			\item $\wIIs(\mathscr A,S,p,\tau)$ is satisfied for $\tau \in (\tau_0,p)$,
			\item $\wIIs(\mathscr A,S,p,\tau_0)$ is not satisfied.
		\end{enumerate} 
\end{theo}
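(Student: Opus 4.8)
The plan is to reduce the statement, via the special form of $\wIIs$ for $\dim S=1$ recalled around \eqref{eq:simple_w2} and in Remark~\ref{haar}, to a purely combinatorial question about a single sequence of ring measures, and then to build that sequence by hand. Write $S=\operatorname{span}g$ with $\|g\|_\infty>0$ and put $B=\{\,|g|\ge\tfrac12\|g\|_\infty\,\}$, which has positive measure. If we build the filtration so that at every split the atom is divided \emph{proportionally with respect to $B$} (i.e.\ the new atoms keep the same ratio $|A\cap B|/|A|=|B|$), then on every $A\in\mathscr A$ we get $\tfrac12\|g\|_\infty\le\|g\|_A\le\|g\|_\infty$, so \eqref{eq:L1Linfty} holds with $c_1=\tfrac12$, $c_2=|B|$, and in \eqref{eq:simple_w2} all the factors $\|g\|_{\bb(X)}$ are comparable to $\|g\|_\infty$ and cancel; hence $\wIIs(\mathscr A,S,p,\tau)$ becomes a condition on ring \emph{measures} only. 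On a non‑atomic space we can carry out such a proportional split while prescribing the measures of the pieces. We take the filtration to be a single ``spine'': $Y_0=\Omega$; at step $n$ split $Y_{n-1}$ into a small set $Z_n$ with $|Z_n|=b_n$ and the remainder $Y_n$, and never split the $Z_n$. If $\sum_n b_n=\tfrac12$ then $|Y_n|\in[\tfrac12,1]$. Fix $\rho=\tfrac34$: a $\rho$‑fat full chain of length $\ge2$ cannot use a step $Y_k\to Z_{k+1}$ (that more than halves the measure, and $Z_{k+1}$ is a leaf), so the only nontrivial $\rho$‑fat chains are spine segments; since $\wIIs$ is $\rho$‑independent (Lemma~\ref{cl.3}), the condition $\wIIs(\mathscr A,S,p,\tau)$ is equivalent to the existence of $C$ with $\sum_{k_0<j\le k_1}b_j^{\sigma}\le C\bigl(\sum_{k_0<j\le k_1}b_j\bigr)^{\sigma}$ whenever $\sum_{k_0<j\le k_1}b_j\le\tfrac14|Y_{k_0}|$, where $\sigma=\tau/p$; we abbreviate $\sigma_0=\tau_0/p\in(0,1)$.

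The sequence $(b_n)$ will be a concatenation of finite ``super‑blocks'' $\mathscr B_1,\mathscr B_2,\dots$, where $\mathscr B_M$ has total measure $2^{-M-1}$ (so $\sum_M|\mathscr B_M|=\tfrac12$) and, internally, for each level $m=1,\dots,M$ contains $N_m:=\lceil2^{m\sigma_0}\rceil$ rings of relative size $2^{-m}$, arranged in a \emph{well‑spread} order: a level‑$1$ ring first, and the $N_m$ level‑$m$ rings placed as equi‑spaced as possible among the $P_M\asymp2^{M\sigma_0}$ positions of $\mathscr B_M$ (such a balanced arrangement is elementary to produce). Two computations motivate the choice. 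First, $\mathscr B_M$ as a whole is a $\rho$‑fat spine segment (for $M\ge1$ its measure $2^{-M-1}$ is $\le\tfrac14|Y_{k_0}|$, because $|Y_{k_0}|=\tfrac12+2^{-M}$ at the start of $\mathscr B_M$), and
\[
\frac{\sum_{j\in\mathscr B_M}b_j^{\sigma_0}}{\bigl(\sum_{j\in\mathscr B_M}b_j\bigr)^{\sigma_0}}\ \gtrsim\ \sum_{m=1}^{M}N_m2^{-m\sigma_0}\ \asymp\ M\ \xrightarrow[M\to\infty]{}\ \infty ,
\]
so $\wIIs(\mathscr A,S,p,\tau_0)$ is \emph{not} satisfied, which is (ii). Second, for every $\sigma>\sigma_0$ we have $\sum_{m\ge1}N_m2^{-m\sigma}\le\sum_{m\ge1}\bigl(2^{-m(\sigma-\sigma_0)}+2^{-m\sigma}\bigr)<\infty$; this is exactly the geometric summability that will make every $\rho$‑fat chain harmless at level $\sigma$.

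It remains to prove (i), i.e.\ $\wIIs(\mathscr A,S,p,\tau)$ for all $\tau\in(\tau_0,p)$, which — since $\max_{j\in W}b_j\le\sum_{j\in W}b_j$ — reduces to: for every spine segment $W$, $\sum_{j\in W}b_j^{\sigma}\le C_\sigma\,(\max_{j\in W}b_j)^{\sigma}$. The heart of the matter, and the only delicate step, is the estimate inside a single super‑block: since the level‑$m$ rings of $\mathscr B_M$ are spaced $\asymp2^{(M-m)\sigma_0}$ apart, any window of $L$ consecutive positions of $\mathscr B_M$ meets $O\bigl(L\,2^{-(M-m)\sigma_0}+1\bigr)$ rings of each level $m$ and contains a ring of every level $m\ge m^{*}$ with $2^{(M-m^{*})\sigma_0}\lesssim L$; inserting these counts into $\sum(\text{ring size})^{\sigma}$ and summing the resulting geometric series in $m$ (this is where $\sigma>\sigma_0$ is used) yields $\sum_{j\in W}b_j^{\sigma}\lesssim_{\sigma}(\text{largest ring size in }W)^{\sigma}$, uniformly in $M$ and in the window — the precise statement that there are \emph{no long plateaus}. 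An arbitrary spine segment $W$ meets only finitely many consecutive super‑blocks $\mathscr B_{M_1},\dots,\mathscr B_{M_2}$ (full ones in the middle, partial at the ends); applying the per‑block estimate to each piece, using $\sum_{j\in\mathscr B_M}b_j^{\sigma}\asymp(2^{-M})^{\sigma}$ together with the geometric decay $|\mathscr B_{M+1}|=\tfrac12|\mathscr B_M|$, and noting that $W$ contains the (large, level‑$1$) first ring of whichever full inner block it meets, one obtains $\sum_{j\in W}b_j^{\sigma}\lesssim(2^{-M_1})^{\sigma}\lesssim(\max_{j\in W}b_j)^{\sigma}$. Chains of length $1$ are trivial, and there are no chains that avoid the spine entirely because the $Z_n$ are leaves. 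Thus the work concentrates entirely on the ``no long plateau'' within‑block estimate: the well‑spread arrangement must, for every $\sigma>\sigma_0$, keep the $\ell^{\sigma}$‑mass of each window comparable to its supremum, while at the critical exponent $\sigma_0$ the full blocks still carry $\ell^{\sigma_0}$‑mass $\asymp M$.
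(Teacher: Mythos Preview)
Your approach is correct and follows the same overall strategy as the paper: reduce via \eqref{eq:simple_w2} to a pure sequence inequality, then build a sequence of ring measures for which the $\ell^\sigma$-condition holds for all $\sigma>\sigma_0$ but fails at $\sigma_0$. The paper packages these two steps as Lemma~\ref{lem:filtr} (realizing prescribed split ratios while keeping \eqref{eq:L1Linfty}, which is your ``proportional splitting with respect to $B$'') and Lemma~\ref{lem:lambda} (the sequence construction).

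The genuine difference is in the sequence construction. The paper builds, for each $n$, a finite sequence $\Lambda_n$ by recursive dyadic midpoint insertion ($\lambda_{n,k}=\mu_{n-s}$ where $2^s\Vert k$), proves the key equivalence $\sum_{j=k}^\ell\lambda_{n,j}^\sigma\simeq(\max_{k\le j\le\ell}\lambda_{n,j})^\sigma$ for $\sigma>\gamma$ by an explicit combinatorial count, and then embeds rescaled copies of the $\Lambda_n$ into the filtration. You instead concatenate super-blocks $\mathscr B_M$ along a single spine, with $N_m\asymp2^{m\sigma_0}$ rings of relative size $2^{-m}$ placed equi-spaced among the $\asymp2^{M\sigma_0}$ positions. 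Your spine construction (leaves $Z_n$ never split, so the only nontrivial $\rho$-fat chains are spine segments) is a clean reduction that the paper leaves implicit. On the other hand, the phrase ``well-spread arrangement is elementary to produce'' and the window-counting sketch hide exactly the work that the paper carries out in detail in Lemma~\ref{lem:lambda}: one has to actually exhibit an ordering of the multiset $\{N_m\text{ copies of }2^{-m}:1\le m\le M\}$ and verify uniformly in $M$ that every window meets $O(L\,2^{-(M-m)\sigma_0}+1)$ rings of each level. Your geometric-series computation from those counts (using $\sigma>\sigma_0$) and the across-block summation are correct; what remains is to make the arrangement and the count precise, which is routine but not entirely trivial.
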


For the proof of this result we first need a couple of lemmata.

\begin{lem}\label{lem:filtr}
	Let $(\Omega,\mathscr F,\mathbb P)$ be a non-atomic  probability space and let $\dim S=1$
	 such that we have the inequality
	\[
			|\{\omega\in \Omega : |f(\omega)| \geq c_1 \| f\|_\Omega \}| \geq c_2 
	\]
	for all $f\in S$.

	Then, for each sequence $(t_n)\subset (0,1/2]$, there exists a binary filtration $(\mathscr F_n)$ 
	so that for each $n$, the following assertions are true:
	\begin{enumerate}[(i)]
		\item  $|A_n'|=t_n|A_n|$ and $|A_n''| = (1-t_n)|A_n|$,
		\item for $B\in \{A_n', A_n''\}$, we have the inequality
		\[
			|\{ \omega\in B : |f(\omega)|\geq c_1\|f\|_{\Omega}\}|\geq c_2|B|,\qquad f\in S.
		\]
	\end{enumerate}

	In particular, this filtration satisfies inequality \eqref{eq:L1Linfty} for all atoms $A\in\mathscr A$.
\end{lem}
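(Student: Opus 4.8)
The plan is to exploit that, since $\dim S=1$, the hypothesis and both conclusions concern a single fixed set. Pick $f_0\in S$; we may assume $\|f_0\|_\Omega>0$ (otherwise all conclusions hold trivially for any filtration), and normalize $\|f_0\|_\Omega=1$. Put $G:=\{\omega\in\Omega : |f_0(\omega)|\geq c_1\}$, so that the hypothesis reads exactly $|G|\geq c_2$, and for every nonzero $f=\lambda f_0\in S$ we have $\{\omega : |f(\omega)|\geq c_1\|f\|_\Omega\}=G$. Hence, for a measurable set $B$, assertion (ii) is equivalent to the single inequality $|B\cap G|\geq c_2|B|$; and the concluding ``in particular'' statement is immediate from $\|f\|_A\leq\|f\|_\Omega$, which gives $\{\omega\in A : |f(\omega)|\geq c_1\|f\|_A\}\supseteq A\cap G$ for every atom $A$ and every $f\in S$. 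So it suffices to construct a binary filtration realizing the ratios $(t_n)$ and such that every atom $A\in\mathscr A$ satisfies $|A\cap G|\geq c_2|A|$.

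The only non-formal ingredient is the following splitting step: if $A\in\mathscr F$ has $|A|>0$ and $|A\cap G|\geq c_2|A|$ and $t\in(0,1/2]$, then there is a partition $A=A'\cup A''$ into measurable sets with $|A'|=t|A|$, $|A''|=(1-t)|A|$, $|A'\cap G|=t|A\cap G|$ and $|A''\cap G|=(1-t)|A\cap G|$. Indeed, a non-atomic finite measure attains every intermediate value (Sierpi\'nski's theorem); using $|A\cap G|\geq c_2|A|>0$, choose a measurable subset of $A\cap G$ of measure $t|A\cap G|$ and a measurable subset of $A\setminus G$ of measure $t|A\setminus G|$, let $A'$ be their union and $A''=A\setminus A'$. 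Both children inherit the property, since $|A'\cap G|=t|A\cap G|\geq tc_2|A|=c_2|A'|$ and likewise for $A''$; and $t\leq 1/2$ forces $|A''|\geq|A'|>0$, which is the labelling convention for a binary filtration.

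Now build $(\mathscr F_n)$ by induction. Set $\mathscr F_0=\{\emptyset,\Omega\}$; since $\Omega\cap G=G$ and $|G|\geq c_2=c_2|\Omega|$, the base case holds. Given $\mathscr F_{n-1}$, whose finitely many atoms all have positive measure and satisfy $|A\cap G|\geq c_2|A|$, select one atom $A_n$ of $\mathscr F_{n-1}$ according to any fixed rule ensuring that every atom is eventually split (e.g.\ split atoms in breadth-first order; this is always possible because in a non-atomic space any positive-measure set can be subdivided into two positive-measure pieces). Apply the splitting step to $A_n$ with parameter $t_n$ to obtain $A_n',A_n''$, and let $\mathscr F_n$ be generated by $\mathscr F_{n-1}$ together with this subdivision. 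By construction $|A_n'|=t_n|A_n|$ and $|A_n''|=(1-t_n)|A_n|$, which is (i); and by the splitting step every atom of $\mathscr F_n$---in particular $A_n'$ and $A_n''$---still satisfies $|B\cap G|\geq c_2|B|$, which gives (ii). Moreover, this yields the stronger fact that every atom $A$ obeys $|\{\omega\in A : |f(\omega)|\geq c_1\|f\|_\Omega\}|\geq c_2|A|$ for all $f\in S$, whence \eqref{eq:L1Linfty} holds for every $A\in\mathscr A$ as noted above.

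I do not expect a genuine obstacle here; the proof is a routine inductive construction. The two points requiring mild care are the use of non-atomicity to slice $A\cap G$ and $A\setminus G$ into pieces of exactly prescribed measure, and making the selection rule for $A_n$ precise enough that $(\mathscr F_n)$ is well defined for every $n$ (any such rule works, since every atom has positive measure and can be split further).
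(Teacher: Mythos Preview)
Your proof is correct and follows essentially the same approach as the paper: reduce to a single normalized $f_0\in S$, set $G=\{|f_0|\geq c_1\}$, and at each step split the chosen atom $A_n$ by taking a $t_n$-fraction of $A_n\cap G$ together with a $t_n$-fraction of $A_n\setminus G$, using non-atomicity. You add a little extra detail (naming Sierpi\'nski's theorem, spelling out a selection rule for $A_n$, and explicitly deducing \eqref{eq:L1Linfty} from $\|f\|_A\leq\|f\|_\Omega$), but the core construction is identical to the paper's.
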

\begin{proof}
	Since the dimension of $S$ equals $1$ and conditions (i) and (ii) are
	invariant under rescaling the function $f$, it is enough to show the assertion
	for one function $f\in S$ that satisfies $\|f\|_\Omega  =1$.
	We use induction and assume the inequality
	\[
		|\{ \omega\in A : |f(\omega)|\geq c_1\}|\geq c_2|A|,\qquad A\in\mathscr A_{n-1}.
	\]
	Choose $A_n \in \mathscr A_{n-1}$ arbitrarily and
	set $L = \{ \omega\in A_n : |f(\omega)|\geq c_1\}$ and $R=A_n\setminus L$. 
	Then, choose sets $L'\subset L$ and $R'\subset R$ contained in $\mathscr F$ with the properties
	\[
		|L'| = t_n |L|, \qquad |R'| = t_n |R|, 	
	\]
	which is possible since $\mathscr F$ is non-atomic.	
	Then we define
	\[
		A_n' := L'\cup R',\qquad A_n'' := (L\setminus L') \cup (R\setminus R').	
	\]
	It is easy to see that this setting satisfies (i) and (ii).
\end{proof}

\begin{lem}\label{lem:lambda}
	Fix $\gamma\in (0,1)$.
	There exists a sequence $(\Lambda_n)$ of sequences
	$\Lambda_n=\{\lambda_{n,j}: j=0,\ldots,2^n\}$ of positive numbers and  length $2^n +1$ that 
	has  the following properties:
	\begin{enumerate}[(i)]
		\item  For each $\sigma\in (\gamma,1)$, there exists $C$ such that for all $n$ 
		and all $0\leq k\leq \ell\leq 2^i$, we have
		\[
			\sum_{j=k}^\ell \lambda_{n,j}^\sigma \leq C \Big(\sum_{j=k}^\ell \lambda_{n,j}\Big)^\sigma,
		\]
		\item  We have $\sup_n \sum_{j=0}^{2^n} \lambda_{n,j}^\gamma = \infty$ 
		while $\sum_{j=0}^{2^n} \lambda_{n,j}$ remains bounded as $n$ tends to $\infty$.
	\end{enumerate}
\end{lem}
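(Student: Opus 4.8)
The plan is to \emph{construct} the sequences $\Lambda_n$ explicitly by means of the $2$-adic valuation, which distributes large and small entries in a scale-invariant fashion. Write $\nu_2(i)$ for the exponent of $2$ in the prime factorisation of a positive integer $i$, put $\theta := 2^{-1/\gamma}$ (so $0<\theta<1/2$, the inequality $\theta<1/2$ being exactly equivalent to $\gamma<1$), and set
\[
	\lambda_{n,j} := \theta^{\,n-\nu_2(j+1)},\qquad j=0,1,\dots,2^n .
\]
Since $\nu_2(i)\le n$ for every $i\in\{1,\dots,2^n+1\}$, with equality only at $i=2^n$, all the exponents $n-\nu_2(j+1)$ lie in $\{0,1,\dots,n\}$; hence $\Lambda_n=\{\lambda_{n,j}:j=0,\dots,2^n\}$ is a sequence of positive numbers of length $2^n+1$, all of whose entries lie in $(0,1]$.

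For part (ii) the only input is an elementary count: for $0\le m\le n-1$ there are exactly $2^{n-m-1}$ indices $j\in\{0,\dots,2^n\}$ with $\nu_2(j+1)=m$ coming from $j+1\le 2^n$, there is one further index with $m=0$ (namely $j=2^n$), and exactly one index with $\nu_2(j+1)=n$. Grouping $\sum_{j}\lambda_{n,j}^{s}$ according to the value $m=\nu_2(j+1)$ turns it into a finite geometric-type sum in the ratio $2\theta^{s}$. For $s=\gamma$ one has $\theta^{\gamma}=1/2$, so $2\theta^{\gamma}=1$ and the sum is (up to bounded correction terms) proportional to the number $n$ of levels; a direct evaluation gives $\sum_{j=0}^{2^n}\lambda_{n,j}^{\gamma}=n/2+1+2^{-n}\to\infty$. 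For $s=1$ one has $2\theta<1$, so the geometric series converges and $\sum_{j=0}^{2^n}\lambda_{n,j}$ stays bounded as $n\to\infty$ (it converges to $\theta/(1-2\theta)+1$). The same computation with $s=\sigma\in(\gamma,1)$ shows, since then $2\theta^{\sigma}=2^{1-\sigma/\gamma}<1$, that $\sum_{j=0}^{2^n}\lambda_{n,j}^{\sigma}$ is bounded uniformly in $n$, the bound blowing up as $\sigma\downarrow\gamma$; this last remark is the mechanism that makes $\gamma$ the critical exponent.

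For part (i) fix $\sigma\in(\gamma,1)$ and a subinterval $I=\{k,\dots,\ell\}$ of $\{0,\dots,2^n\}$ of length $L=\ell-k+1$. It suffices to prove $\sum_{j\in I}\lambda_{n,j}^{\sigma}\lesssim_{\sigma}(v^{*})^{\sigma}$, where $v^{*}:=\max_{j\in I}\lambda_{n,j}$, because $(v^{*})^{\sigma}\le\bigl(\sum_{j\in I}\lambda_{n,j}\bigr)^{\sigma}$. Two features of this "ruler" arrangement are used. First, writing $v^{*}=\theta^{\,n-m^{*}}$ with $m^{*}:=\max_{a\in\{k+1,\dots,\ell+1\}}\nu_2(a)$, the $L$ consecutive integers $k+1,\dots,\ell+1$ contain a multiple of $2^{\lfloor\log_2 L\rfloor}$, so $m^{*}\ge\lfloor\log_2 L\rfloor$ and $v^{*}$ is not too small relative to $L$. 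Second, for each $m$ the indices $j$ with $\nu_2(j+1)=m$ form an arithmetic progression of step $2^{m+1}$, so $I$ contains at most $2L/2^{m}$ of them, and at most one of them when $2^{m+1}>L$. Splitting $\sum_{j\in I}\lambda_{n,j}^{\sigma}=\sum_{m\le m^*}(\#\{j\in I:\nu_2(j+1)=m\})\,\theta^{(n-m)\sigma}$ at $m=\lfloor\log_2 L\rfloor$: the low range is a geometric sum in the ratio $2^{\sigma/\gamma-1}>1$, dominated by its last term, which after using $2^{\lfloor\log_2 L\rfloor}\asymp L$ is $\lesssim_{\sigma}(v^{*})^{\sigma}$; the high range (each level contributing at most one term) is at most $\sum_{k'\ge n-m^{*}}\theta^{k'\sigma}=(v^{*})^{\sigma}/(1-2^{-\sigma/\gamma})\le 2(v^{*})^{\sigma}$. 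Adding the two yields (i) with a constant $C=C(\sigma)$ independent of $n,k,\ell$, finite for $\sigma>\gamma$ and blowing up as $\sigma\downarrow\gamma$.

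The conceptual steps are short; the part that needs care is the bookkeeping in (i) --- bounding the number of indices of each level inside $I$ correctly both above and below $\lfloor\log_2 L\rfloor$, summing the two geometric series, and tracking how the powers of $L$ and of $2^{m^{*}}$ cancel against $v^{*}$ --- together with the degenerate cases $L=1$ and $m^{*}=\lfloor\log_2 L\rfloor$. It is precisely here that the exact value $\theta=2^{-1/\gamma}$ enters: it is the unique choice for which the level sums are geometrically summable for every exponent $>\gamma$, bounded for exponent $1$, and divergent for exponent $\gamma$. I do not anticipate any obstacle beyond this arithmetic.
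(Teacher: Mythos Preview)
Your construction and argument are correct and essentially coincide with the paper's. The paper defines $\lambda_{n,k}=2^{-(n-\nu_2(k))/\gamma}$ for interior indices (with both endpoints set to $1$), which is the same ruler sequence as yours up to the index shift $k=j+1$ and one boundary entry; your $\theta=2^{-1/\gamma}$ is the paper's $\mu_1$. For part~(i), the paper also reduces to $\sum_{j\in I}\lambda_{n,j}^\sigma\lesssim(\max_{j\in I}\lambda_{n,j})^\sigma$ and carries out the same geometric-series bookkeeping by levels of the $2$-adic valuation; the only organisational difference is that the paper locates the interval $I$ between the two nearest indices of higher valuation and bounds its length from above, whereas you bound $m^*$ from below via the pigeonhole remark that $L$ consecutive integers contain a multiple of $2^{\lfloor\log_2 L\rfloor}$, which is a slightly more streamlined route to the same estimate.
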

\begin{proof}
		Let $\mu_n = 2^{-n/\gamma}$, $n\geq 0$, $M_n = 2^n$ and
		\[
		\lambda_{0,0} = \lambda_{0,1} = \mu_0 = 1.
		\]
		We define $\Lambda_n$ inductively and 
	start with $\Lambda_0 = ( \lambda_{0,0}, \lambda_{0,1})$. Having defined $\Lambda_n$ we set
		\[
		\Lambda_{n+1} = (\lambda_{n+1,0}, ...,\lambda_{n+1,M_{n+1}}) ,
		\]
		where $\lambda_{n+1,2k} = \lambda_{n,k}$ for $k=0,...,M_n$ and  $\lambda_{n+1,2k-1} = \mu_{n+1}$  for $k=1,...,M_n$.
		
		Generally we can think of this sequence as given by $\lambda_{n,k} = \mu_{n-s}\leq 1$ where $k=2^s(2r-1)$. We can see that for $s< n$ we have $\mu_{n-s} < 1$.
		
		Now we are going to show (i). 
		For this, fix   $\sigma\in(\gamma,1)$. Note that it is enough to show the following: for all 
		$n\in\nn$ and any $0\leq k\leq \ell\leq 2^n$
		\begin{equation}\label{eq:equiv_lambda}
		\sum_{j=k}^\ell \lambda_{n,j}^\sigma \simeq \max_{k\leq j\leq \ell} \lambda_{n,j}^\sigma,
		\end{equation}
		with constants depending on $\sigma$, but not on $n,k,\ell$. The inequality
		\[
		\max_{k\leq j\leq \ell} \lambda_{n,j}^\sigma \leq \sum_{j=k}^\ell \lambda_{n,j}^\sigma
		\]
		is obvious. Now we are going to work on the reverse inequality and take $0\leq s\leq 2^n$ such that
		\[
		A = \max_{k\leq j\leq \ell} \lambda_{n,j} = \lambda_{n,s}.
		\]
		
		In the case $s\in\{0,2^n\}$, i.e. $A=1$ we have
		\begin{align*}
		\sum_{j=k}^\ell \lambda_{n,j}^\sigma &\leq \sum_{j=0}^{2^n} \lambda_{n,j}^\sigma 
		= 2 + \sum_{\nu=1}^n \mu_\nu^\sigma 2^{\nu-1}  \\
		&=
		2+\frac{1}{2}\sum_{\nu=1}^n 2^{(1-\sigma/\gamma)\nu} \leq 2+\frac{1}{2}\sum_{\nu=1}^\infty 2^{(1-\sigma/\gamma)\nu} = C,
		\end{align*}	
		for some constant $C$ depending on $\sigma$.
		
		In the case $0<s<2^n$ we have $s = 2^\xi (2\eta-1)$ for $0\leq \xi < n$ and $A=\lambda_{n,s} = \mu_{n-\xi}$. Let us look at the indices $s_1<s$ and $s_2>s$ given by
		\[
		s_2 = 2^\xi\cdot 2\eta = 2^{\xi+1}\eta = 2^{\xi_2}(2\eta_1-1),
		\]
		\[
		s_1 = 2^\xi (2\eta-2) = 2^{\xi+1}(\eta-1) = 2^{\xi_1}(2\eta_1-1)
		\]
		the last equality valid if $s_1\neq 0$. Then we have $\xi_1,\xi_2\geq\xi+1$. 
		No matter if $s_1=0$ or $s_1 = 2^{\xi_1}(2\eta_1-1)$ we can see that 
		$0\leq s_1 < s < s_2\leq2^n$,$\lambda_{n,s_1}>\lambda_{n,s}$ and 
		$\lambda_{n,s_2}>\lambda_{n,s}$, what implies that $s_1<k\leq s\leq \ell<s_2$.
		
		Now we are going to estimate the difference $\ell-k<s_2-s_1$, 
		which  is the estimate on the number of summands in the sum 
		$\sum_{j=k}^\ell \lambda_{n,j}^\sigma$:
		\[
		\ell-k<s_2-s_1 = 2^\xi\cdot 2\eta - 2^\xi(2\eta-2) = 2^{\xi+1}.
		\]
		
		Let us remind that $\lambda_{n,j} = \mu_{n-\nu}$ where $2^\nu$ divides $j$ and $2^{\nu+1}$ does not divide $j$.
		This shows that between $k$ and $\ell$ there is exactly one value $j$ such that $2^\xi$ divides $j$ (namely $j=s$) 
		and none such that $2^{\xi+1}$ divides $j$ 
		(this would contradict that $s$ is the index maximizing $\lambda_{n,j}$).
		
		Let us now fix, for a while, $1\leq \nu<\xi$. Then
		\[
		\card\{ j: k\leq j\leq \ell \text{ and }  2^\nu |j \text{ and } 2^{\nu+1}\not|j \} \leq 
		\card\{ j : s_1 < j < s_2 \text{ and } 2^\nu | j \} \leq 2^{\xi-\nu}.
		\]
		Hence we have
		\begin{align*}
		\sum_{j=k}^\ell\lambda_{n,j}^\sigma \leq 
		\sum_{\nu=1}^\xi 2^{\xi-\nu}\mu_{n-\nu}^\sigma
		 &=  \sum_{\nu=1}^\xi 2^{\xi-\nu}2^{-\sigma(n-\nu)/\gamma} =
		2^{\xi-n\sigma/\gamma} \sum_{\nu=1}^\xi 2^{(\sigma/\gamma-1)\nu} \\
		&\simeq 2^{\xi-n\sigma/\gamma} 2^{\xi(\sigma / \gamma - 1)} 
		= 
		\left(2^{-(n-\xi)/\gamma}\right)^\sigma = 
		\mu_{n-\xi}^\sigma = \lambda_{n,s}^\sigma = A^\sigma.
		\end{align*}
		Therefore, we have proven \eqref{eq:equiv_lambda}. Note that in 
		the course of its proof, we only needed the fact that $\sigma > \gamma$.
		This gives that in particular, \eqref{eq:equiv_lambda} is also true 
		for $\sigma = 1$.
		Therefore, using \eqref{eq:equiv_lambda} 
		with the two parameters $\sigma\in (\gamma,1)$ and $1$, respectively, we obtain
		\[
		\sum_{j=k}^\ell \lambda_{n,j}^\sigma \simeq
		\big( \max_{k\leq j\leq \ell} \lambda_{n,j} \big)^\sigma
		 \simeq \Big( \sum_{j=k}^\ell \lambda_{n,j} \Big)^\sigma.
		\]
		It follows that (i) is satisfied.
		
		Next, we are going to show (ii).  We have
		\begin{align*}
		\sum_{j=0}^{2^n} \lambda_{n,j} 
		&= 2+\sum_{\ell=1}^{n} 2^{\ell-1}\mu_\ell  
		= 2 + \frac{1}{2}\sum_{\ell=1}^{n} 2^\ell 2^{-\ell/\gamma} \\
		 &= 2 + \frac{1}{2}  \sum_{\ell=1}^{n} 2^{\ell (1-1/\gamma)} 
		 \leq 2 + \frac{1}{2}  \sum_{\ell=0}^{\infty} \left( 2^{(1-1/\gamma)}\right)^\ell 
		 = 2 + \frac{1}{2}\frac{1}{1-2^{1-1/\gamma}},
		\end{align*}
		which is just some constant (depending only on  $\gamma$).	
		On the other hand 
		\[
		\sum_{j=0}^{n} \lambda_{n,j}^\gamma 
		= 2 + \sum_{\ell=1}^{n} 2^{\ell-1}\mu_\ell^\gamma
		= 2 + \frac{1}{2} \sum_{\ell=1}^{n} 2^\ell(2^{-\ell/\gamma})^\gamma \simeq n,
		\]
		which concludes the proof of the lemma.
\end{proof}

Now we are ready to prove Theorem~\ref{thm:diff_para}.
\begin{proof}[Proof of Theorem~\ref{thm:diff_para}]
	Invoke Lemma~\ref{lem:lambda} with the parameter $\gamma=\tau_0/p$ to get 
	corresponding sequences $\Lambda_i = \{ \lambda_{i,j} : j = 0,\ldots,2^i\}$ for each positive integer $i$. 
	Choose $\rho$ sufficiently large. 
	We choose the sequence $(t_n)$ such that if we apply Lemma~\ref{lem:filtr}, we 
	get a filtration $(\mathscr F_n)$ that, for each positive integer $i$,
	 contains $\rho$-fat full chains $\mathscr X = (X_j)_{j=0}^{2^i}$ so that
	 $|\bb(X_j)| = z_i \lambda_{i,j}$ for each $j$, where $z_i$ is some appropriate scaling factor.
	 Note also that by Lemma~\ref{lem:filtr}, on each atom $A$ of the filtration $(\mathscr F_n)$ we have $\|f\|_A \geq c_1$
	 for $f\in S$ with $\|f\|_\Omega =1$ and therefore, \eqref{eq:L1Linfty} is satisfied.
	 Comparing \eqref{eq:simple_w2} and the result of Lemma~\ref{lem:lambda} gives 
	 us the desired result.
\end{proof}

Thus, looking at Theorem~ \ref{thm:diff_para}, we have shown that 
condition $\wIIs(\mathscr A,S,p,\tau)$ in fact depends on the parameters $\tau$ and $p$.
We mention here that it also depends on the parameters $\mathscr A$ and $S$.
For  examples of polynomial spaces $S$ on rectangular atoms $\mathscr A$
showing this dependence, we refer to Example~\ref{ex:depS}.

\section{Polynomial spaces on intervals and rectangles}
\label{sec:poly}

Let
$\Omega=[0,1]^d$ be the unit cube  in $\mathbb R^d$
and the atoms $\mathscr A$ consist of subintervals (rectangles) of $[0,1]^d$ and $\mathbb P$ is the
$d$-dimensional  Lebesgue measure. 
In this section, we want to give equivalent conditions for $\wIIs$ for certain
spaces of polynomials $\mathscr P$ in the style of Remark \ref{haar}. More precisely, we want a version of condition \eqref{eq:introw2s} from Theorem \ref{thm:main} which uses only measures of sets $X_{i-1} \setminus X_i$. Our argument  specializes Propositions~\ref{prop:stability_weaker} and ~\ref{prop:stability} to polynomial spaces and rectangular partitions.
  Before we discuss explicit choices of $S$,
we investigate condition (2) of Proposition~\ref{prop:stability} and give
an easier-to-handle sufficient condition, provided that the space $S$ satisfies 
condition \eqref{eq:L1Linfty} not only for each atom in the filtration, but
for each rectangle $A$ in $[0,1]^d$.
In this case, a ring $R$, the difference of two rectangles,
is then the union of $2d$ intervals (rectangles) $(I_i)_{i=1}^{2d}$.
If $R = I \setminus J$ for two intervals $I = \prod_{j=1}^d [a_j,b_j]$, 
$J= \prod_{j=1}^d [c_j, d_j]$, the intervals $I_i$ are given by the
following formula for $i = 1,\ldots,d$:
\begin{equation}
	\label{eq:Is}
	\begin{aligned}
	I_{i} &= [a_1,b_1] \times \cdots \times [a_{i-1},b_{i-1}] \times [a_i,c_i]
				\times [a_{i+1},b_{i+1}]	\times \cdots \times [a_d, b_d], \\
	I_{d+i} &=		[a_1,b_1] \times \cdots \times [a_{i-1},b_{i-1}] \times [d_i,b_i]
				\times [a_{i+1},b_{i+1}]	\times \cdots \times [a_d, b_d].
	\end{aligned}
\end{equation}
For $d>1$, the intervals $(I_i)_{i=1}^{2d}$ are not
pairwise disjoint.

\begin{prop}\label{prop:almost_orthogonality}
	Let the atoms $\mathscr A$ be rectangles on the unit cube $\Omega=[0,1]^d$
	and $S$ be a finite-dimensional space of scalar-valued functions on $\Omega$ 
	satisfying \eqref{eq:L1Linfty} for each rectangle 
	such that for each ring $R$ with the above decomposition $R=\cup_{i=1}^{2d} I_i$,	
	there exists a positive integer $r$ and functions $u_1,\ldots,u_r\in S$ and 
	a constant $C$ such that for all $K\in \{I_1,\ldots,I_{2d}\}$
\begin{equation}\label{eq:non_parallel}
	\sum_{j=1}^r \|u_j\charfun_K\|_2^2 \leq C \Big\| \sum_{j=1}^r u_j\charfun_K\Big\|_2^2. 
\end{equation}

Then, this decomposition satisfies (2) of Proposition~\ref{prop:stability} for 
this ring $R$, i.e., for all $p$ there exists a constant $C'$ depending only on $C,d$ 
and $c_1,c_2$ from \eqref{eq:L1Linfty} such that
\begin{equation}\label{eq:prop2}
	\sum_{j=1}^r \|u_j\charfun_R\|_p \leq C' \big\|\sum_{j=1}^r u_j\charfun_R\big\|_p.
\end{equation}
\end{prop}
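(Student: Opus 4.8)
The plan is to transfer the $L^2$-information \eqref{eq:non_parallel} --- which is stated on each of the $2d$ rectangles $I_1,\dots,I_{2d}$ making up the ring $R = I\setminus J$ --- into an $L^p$-estimate on all of $R$. Two elementary facts drive this. First, for a rectangle $K$ and $f\in S$, inequality \eqref{eq:L1Linfty} gives $\|f\charfun_K\|_q \simeq |K|^{1/q}\|f\|_K$ for $q\in\{2,p\}$ (the upper bound is trivial; the lower bound follows by integrating $|f|^q$ over the set $\{|f|\ge c_1\|f\|_K\}$), whence $\|f\charfun_K\|_p \simeq |K|^{1/p-1/2}\,\|f\charfun_K\|_2$. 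Second, by the explicit formula \eqref{eq:Is}, each $I_i$ is a rectangle with $|I_i\cap J| = 0$, so $I_i\subseteq R$ up to a null set, and $R = \bigcup_{i=1}^{2d} I_i$ up to a null set.

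First I would set $u := \sum_{j=1}^r u_j \in S$ and prove, for each fixed $i$, the local bound $\sum_{j=1}^r \|u_j\charfun_{I_i}\|_p \lesssim \|u\charfun_R\|_p$. On the rectangle $I_i$ I would replace each $L^p$-norm by the corresponding $L^2$-norm using the equivalence above, apply Cauchy--Schwarz in $j$ to pass from $\sum_j \|u_j\charfun_{I_i}\|_2$ to $\sqrt r\,\big(\sum_j \|u_j\charfun_{I_i}\|_2^2\big)^{1/2}$, bound the latter square root by $\sqrt C\,\|u\charfun_{I_i}\|_2$ via \eqref{eq:non_parallel}, pass back to the $L^p$-norm of $u$ on $I_i$, and finally use $I_i\subseteq R$ to replace $\|u\charfun_{I_i}\|_p$ by $\|u\charfun_R\|_p$.

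Then I would globalise. Since $R = \bigcup_{i=1}^{2d} I_i$ up to a null set, one has the pointwise bound $|u_j|\charfun_R \le \sum_{i=1}^{2d} |u_j|\charfun_{I_i}$, hence $\|u_j\charfun_R\|_p \le \sum_{i=1}^{2d} \|u_j\charfun_{I_i}\|_p$ by the triangle inequality (recall $p\ge 1$). Summing over $j$, interchanging the two finite sums, and applying the local bound yields
\[
	\sum_{j=1}^r \|u_j\charfun_R\|_p \le \sum_{i=1}^{2d}\sum_{j=1}^r \|u_j\charfun_{I_i}\|_p \lesssim 2d\,\|u\charfun_R\|_p,
\]
which is exactly \eqref{eq:prop2}; tracking the constants, $C'$ depends only on $p$, $C$, $d$, $r$ and the data $c_1,c_2$ of \eqref{eq:L1Linfty}.

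There is no deep obstacle here; the argument is essentially bookkeeping around the rectangle-wise equivalence $L^p \simeq L^2$. The two points that require some care are (i) the \emph{direction} in which \eqref{eq:non_parallel} is applied --- it must be used as a lower bound for $\|u\charfun_{I_i}\|_2$ --- and (ii) the replacement of $\|u\charfun_{I_i}\|_p$ by $\|u\charfun_R\|_p$, which rests on $I_i\subseteq R$; in particular the overlap of the $I_i$ when $d>1$ is harmless, and one genuinely uses the hypothesis that $S$ satisfies \eqref{eq:L1Linfty} on arbitrary rectangles and not only on the atoms of the filtration.
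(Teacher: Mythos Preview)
Your argument is correct and follows essentially the same route as the paper: convert $L^p$ to $L^2$ on each rectangle $I_i$ via \eqref{eq:L1Linfty}, apply the hypothesis \eqref{eq:non_parallel} there, convert back, and then use the covering $R=\bigcup_i I_i$ to assemble the global estimate. The paper packages the rectangle-wise $L^p$--$L^2$ comparison and the passage between $R$ and the $I_i$ into a single equivalence $\|v\charfun_R\|_p \simeq \big(\sum_i \|v\charfun_{I_i}\|_2^2\,|I_i|^{2/p-1}\big)^{1/2}$ and computes in one chain, whereas you work locally on each $I_i$ and then sum; this is a purely organisational difference. Your remark that the constant also depends on $r$ is in fact accurate: both your Cauchy--Schwarz step and the paper's implicit $\ell^1$--$\ell^2$ comparison over $j$ introduce such a dependence, which the paper's statement omits.
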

\begin{proof}
Note that for every $v\in S$ and by \eqref{eq:L1Linfty}, we have
\begin{equation}\label{eq:equiv}
	\| v\charfun_R\|_p^p \simeq 
	\sum_{i=1}^{2d} \| v\charfun_{I_i}\|_p^p
	\simeq \Big(\sum_{i=1}^{2d} \|v\charfun_{I_i}\|_2^2\cdot |I_i|^{2/p-1}  \Big)^{p/2},
\end{equation}
where the implicit constants depend also on $d$.

Now we observe that the assumption \eqref{eq:non_parallel} and \eqref{eq:equiv} imply
\eqref{eq:prop2}. Indeed,
\begin{align*}
	\sum_{j=1}^r \|u_j\charfun_R\|_p &\simeq \Big(\sum_{j=1}^r \sum_{i=1}^{2d} \|u_j\charfun_{I_i}\|_2^2 
	\cdot |I_i|^{2/p-1}   \Big)^{1/2} \\
	&\leq C^{1/2} \Big( \sum_{i=1}^{2d}\|u\charfun_{I_i}\|_2^2\cdot |I_i|^{2/p-1}  \Big)^{1/2} 
	\lesssim C^{1/2} \|u\charfun_R\|_p
\end{align*}
with $u = u_1 + \cdots + u_m$.
\end{proof}

\subsection{Polynomial spaces of fixed coordinate degree}

We now consider examples of some polynomial spaces for the space $S$,
in particular, if $\vc{r} = (r_1,\ldots,r_d)$ is a $d$-tuple of non-negative integers, 
we consider the spaces  
$\mathscr P_{\vc{r}}$ of polynomials of coordinate degree at most 
$r_i$ in direction $i$ for each $i=1,\ldots,d$ on the unit cube $[0,1]^d$.
In particular, if $d=1$ and $r$ is a non-negative integer, 
$\mathscr P_r$ denotes the space of polynomials of degree at most $r$ on
the unit interval $[0,1]$.

For a ring $R = I\setminus J$, we use the decomposition $R = \cup_{i=1}^{2d} I_i$
 described in the 
beginning of Section~\ref{sec:poly}.
We now identify explicit functions $(u_i)$ satisfying  \eqref{eq:non_parallel}
that are tensor products of Bernstein polynomials rescaled to the rectangle $I$.

\begin{prop}\label{prop:uniform_parallelity}
	Let $\vc{r} = (r_1,\ldots, r_d)$ be a tuple of non-negative integers.
	Let $R = I\setminus J$ be some arbitrary ring for some rectangles 
	$I = I^1\times \cdots\times I^d$ and $J = J^1\times \cdots\times J^d$.
	 Moreover for $s=1,\ldots,d$ and an integer $0\leq i\leq r_s$, let 
	$B_{i}^s(x) = B_{i}^s(x;I^s) = (x-\inf I^s)^i (\sup I^s - x)^{r_s-i}$ be the Bernstein 
	polynomials of degree $r_s$ on $I^s$.

	Then, there exists a constant $C_{\vc{r}}$ not depending on $R$
	so that for each rectangle 
	$ K \in \{ I_1,\ldots,I_{2d}\}$,
	we have
	\begin{equation}\label{eq:better_than_CS}
		\sum_{\vc{0}\leq\vc{ m}\leq \vc{r}} |a_\vc{m}|^2\| B_\vc{m}\charfun_K\|_2^2 \leq C_{\vc{r}} \Big\| \sum_{\vc{0}\leq \vc{m}\leq \vc{r}}a_\vc{m }B_\vc{m}\charfun_K\Big\|_2^2
	\end{equation}
	for each choice of coefficients $(a_\vc{m})$, where we use the notation $B_\vc{m} = B_{m_1}^1\otimes \cdots \otimes B_{m_d}^d$
	and $\vc{0}\leq \vc{m} \leq \vc{r}$ if and only if $0\leq m_i\leq r_i$ for each $i=1,\ldots,d$.
\end{prop}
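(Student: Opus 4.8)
The plan is to reduce the multivariate statement to a one-dimensional one by exploiting the tensor-product structure of the Bernstein basis, and then to prove the one-dimensional estimate by a compactness argument. First I would observe that each rectangle $K\in\{I_1,\ldots,I_{2d}\}$ is, by the explicit formulas \eqref{eq:Is}, a product $K = K^1\times\cdots\times K^d$ where for exactly one index $s$ the factor $K^s$ is a subinterval of $I^s$ with one endpoint shared with $I^s$ (either $[\inf I^s, c_s]$ or $[d_s, \sup I^s]$), while for all other indices $K^j = I^j$. So it suffices to prove, for each fixed $s$ and each subinterval $\widetilde I \subset I^s$ sharing an endpoint with $I^s$, a one-dimensional inequality of the form
\[
	\sum_{i=0}^{r_s} |b_i|^2 \|B_i^s \charfun_{\widetilde I}\|_2^2 \leq C_{r_s} \Big\| \sum_{i=0}^{r_s} b_i B_i^s \charfun_{\widetilde I}\Big\|_2^2,\qquad (b_i)\in\mathbb R^{r_s+1},
\]
uniformly in $\widetilde I$; the multivariate inequality \eqref{eq:better_than_CS} then follows by integrating in the remaining variables over the full factors $I^j$ ($j\neq s$), using that the one-dimensional bound applied in the $s$-th variable is uniform and that in the other variables the Bernstein polynomials $B_{m_j}^j$ on $I^j$ form a fixed (finite-dimensional) basis, so $L^2$-equivalence of coefficient norm and function norm holds with a dimension-dependent constant. (One should be slightly careful that the coefficients $a_{\vc m}$ do not factor; but fixing all but the $s$-th index and viewing the inner sum over $m_s$ as a single univariate polynomial with coefficients depending on the frozen indices handles this, after which a standard finite-dimensional norm-equivalence in the remaining $\prod_{j\neq s} I^j$ variables closes the estimate.)

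For the one-dimensional inequality, the key normalization is to rescale: write $\widetilde I$ in terms of $I^s$. If $I^s = [\alpha,\beta]$ and, say, $\widetilde I = [\alpha, \alpha + \theta(\beta-\alpha)]$ with $\theta\in(0,1]$ (the other endpoint case being symmetric), then after the affine change of variables sending $I^s$ to $[0,1]$ the Bernstein polynomials $B_i^s$ become scalar multiples of the standard Bernstein basis $\binom{r_s}{i}^{-1}\cdot$ (up to the scaling factor $(\beta-\alpha)^{r_s}$ which is common to all of them and hence cancels in \eqref{eq:better_than_CS}), and $\widetilde I$ becomes $[0,\theta]$. So the claim is equivalent to the assertion that
\[
	\inf_{\theta\in(0,1]} \; \inf_{\|b\|_2 = 1} \; \int_0^\theta \Big|\sum_{i=0}^{r_s} b_i x^i(1-x)^{r_s - i}\Big|^2 \dif x \; > \; 0,
\]
after normalizing $\sum_i |b_i|^2 \|B_i^s\charfun_{[0,\theta]}\|_2^2$; actually, it is cleaner to phrase it as: the quantity
\[
	Q(\theta, b) := \frac{\big\|\sum_i b_i x^i(1-x)^{r_s-i}\charfun_{[0,\theta]}\big\|_2^2}{\sum_i |b_i|^2 \|x^i(1-x)^{r_s-i}\charfun_{[0,\theta]}\|_2^2}
\]
is bounded below by a positive constant uniformly over $\theta\in(0,1]$ and $b\neq 0$.

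The main obstacle — and the only genuinely non-routine point — is the behavior as $\theta\to 0^+$, because there the individual monomials $x^i(1-x)^{r_s-i}$ restricted to $[0,\theta]$ behave like $x^i$, whose $L^2([0,\theta])$ norms are of wildly different orders $\theta^{i+1/2}$, so one cannot simply invoke compactness of the unit sphere of a fixed finite-dimensional space with a fixed norm. The way I would handle this is to introduce, for each $\theta$, the diagonal rescaling $b_i \mapsto b_i \|x^i(1-x)^{r_s-i}\charfun_{[0,\theta]}\|_2 =: c_i$, so that the denominator of $Q$ becomes $\|c\|_2^2$ and the numerator becomes $\int_0^\theta |\sum_i c_i \widehat B_i^\theta(x)|^2\dif x$ where $\widehat B_i^\theta$ is the $L^2([0,\theta])$-normalized version of $x^i(1-x)^{r_s-i}$. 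Then one rescales the variable $x = \theta t$, $t\in[0,1]$: the functions $\widehat B_i^\theta(\theta t)$, suitably renormalized, converge as $\theta\to 0$ to the $L^2([0,1])$-normalized monomials $\widehat t^{\,i}$, and crucially the off-diagonal Gram entries $\langle \widehat B_i^\theta, \widehat B_j^\theta\rangle_{L^2([0,\theta])}$ converge to the (nondegenerate) Gram matrix of the normalized monomials on $[0,1]$, because the factors $(1-x)^{r_s - i} = (1-\theta t)^{r_s - i} \to 1$ uniformly. Since monomials are linearly independent, that limiting Gram matrix is positive definite, so its smallest eigenvalue is a positive constant $\lambda_0$; by continuity of the Gram matrix in $\theta$ on the compact interval $[0,1]$ (with the $\theta=0$ value defined as the limit), its smallest eigenvalue is bounded below by some $\lambda_0' > 0$ uniformly in $\theta$, which is exactly the desired lower bound for $Q$. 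This yields the one-dimensional inequality with $C_{r_s} = 1/\lambda_0'$, and assembling the tensor factors gives $C_{\vc r}$ as claimed.
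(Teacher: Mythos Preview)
Your proof is correct and follows essentially the same route as the paper's. Both arguments reduce to the one-dimensional case via the tensor-product structure, rescale to $I^s=[0,1]$ with $K^s=[0,\theta]$, renormalize the Bernstein functions so that the Gram matrix has a finite nondegenerate limit as $\theta\to 0$ (the paper uses the explicit factor $\theta^{-(i+1/2)}$ and identifies the limit as the Hilbert matrix $(1/(m+n+1))$, you use the $L^2([0,\theta])$-normalization), and then combine compactness on $\theta\in[\theta_0,1]$ with eigenvalue stability near $\theta=0$; your packaging of the latter as ``the normalized Gram matrix extends continuously to $[0,1]$ and is positive definite everywhere'' is just a slight reformulation of the paper's two-regime split.
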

\begin{proof}
	First, consider $d=1$ and without restriction, let $I=[0,1]$ and $ K = [0,\delta]$ for some $\delta>0$.	
	The case $K=[1-\delta,1]$ then follows by the symmetry of the Bernstein polynomials.
	We use a renormalization of the Bernstein polynomials given by
	$B_{n,\delta} = B_n / \delta^{n+1/2}$. Then we have to show
	\begin{equation}\label{eq:renormalized}
		\sum_{m=0}^r |b_m|^2 \|B_{m,\delta}\charfun_K\|_{2}^2 
		\leq C_r \Big\| \sum_{m=0}^r b_m B_{m,\delta}\charfun_K\Big\|_2^2
	\end{equation}
	for each choice of coefficients $(b_m)$. By homogeneity of both sides, it suffices 
	to consider $ b = (b_m)\in S^r = \{ x\in \mathbb R^{r+1} : |x| = 1\}$.
	Expanding the right hand side, we can equivalently show that 
	\[
		\langle\Lambda_\delta b,b\rangle \leq C_r \langle G_{\delta} b,b\rangle
	\]	
	with the matrices $\Lambda_\delta = \operatorname{diag}(\|B_{m,\delta}\charfun_K\|_2^2)_{m=0}^r$,
	$ G_\delta = (\langle B_{m,\delta}\charfun_K,B_{n,\delta}\charfun_K\rangle)_{n,m=0}^r$ and
	the notation $b = (b_0,\ldots,b_r)$.
	Observe that
	\[
		\lim_{\delta\to 0} \langle B_{m,\delta}\charfun_K, B_{n,\delta}\charfun_K\rangle		
		= \lim_{\delta\to 0} \frac{1}{\delta^{m+n+1}} \int_0^\delta x^{m+n}(1-x)^{2r-m-n}\dif x
		= \frac{1}{m+n+1}.
	\]
	Therefore, defining the matrices $\Lambda = \operatorname{diag}(1/(2m+1))$ and
	$G=(1/(m+n+1))_{m,n=0}^r$ we see that $\lim_{\delta\to 0} \Lambda_\delta = \Lambda$
	and $\lim_{\delta\to 0} G_\delta = G$.
	The matrices $G$ (which is a Hilbert matrix) and $\Lambda$ are symmetric, invertible and positive definite.
	Then, we choose $\delta_0$ such that for all $\delta\leq \delta_0$, we have the two estimates
	\begin{equation}\label{eq:approx}
		\|G_\delta - G \|_2 \leq \frac{\lambda_{\rm min}(G)}{2},\qquad  
		\|\Lambda_\delta - \Lambda\|_2 \leq \frac{\lambda_{\rm min}(\Lambda)}{2},
	\end{equation}
	where we write $\lambda_{\rm min}(A)$ for the smallest eigenvalue of 
	the matrix $A$.

	Denote by $f : S^r\times [\delta_0,1]\to [0,\infty)$ the continuous function 
	mapping $(b_0,\ldots,b_r,\delta)$ with $\sum_{m=0}^r |b_m|^2=1$ to 
	the quotient of the right hand side of \eqref{eq:renormalized} and the
	left hand side of \eqref{eq:renormalized} without the constant $C_r$, given by 
	\[
		f(b_0,\ldots,b_r,\delta) :=	 \frac{\big\| \sum_{m=0}^r b_m B_{m,\delta}\charfun_K\big\|_2^2}{\sum_{m=0}^r |b_m|^2 \| B_{m,\delta}\charfun_K\|_2^2}.
	\]
	Since for each fixed $\delta$, the Bernstein polynomials $(B_{n,\delta})_{n=0}^r$
	are linearly independent on the interval $[0,\delta]$, we have that for each choice of parameters,
	$f(b_0,\ldots,b_r,\delta) >0$. Therefore, 
	by compactness of the domain $S^r \times [\delta_0,1]$,
	$f$ admits a positive minimal value $c$.
	If now $\delta\leq \delta_0$, by the estimates \eqref{eq:approx}, we have for all $b$
	\[
		\frac{\langle \Lambda_\delta b,b\rangle }{\langle \Lambda b,b\rangle}, 	
		\frac{\langle G_\delta b,b\rangle}{\langle Gb,b\rangle}	 \in (1/2,3/2).
	\]	
	Therefore, instead of estimating the quotient $\langle G_\delta b,b\rangle /\langle \Lambda_\delta b,b\rangle$ 
	from below, it suffices to estimate the quotient $\langle Gb,b\rangle /\langle \Lambda b,b\rangle$ from below.
	But this is at least $\lambda_{\rm min}(G) / \lambda_{\rm max}(\Lambda)$, which is positive.
	This shows \eqref{eq:renormalized} in the case $d=1$ for some constant $C_r$.

	If $d>1$ and $I = [0,1]^d$, each rectangle $K = K^1\times \cdots\times K^d$ is 
	such that for each $s = 1,\ldots,d$ we either have
	$K^s = [0,\delta]$ or $[1-\delta,1]$ for some $\delta \in (0,1]$. Since the 
	Bernstein polynomials are tensor products of univariate Bernstein polynomials,
	we can use the derived univariate estimate \eqref{eq:renormalized} repeatedly
	to obtain \eqref{eq:better_than_CS} in general.
\end{proof}

Summarizing Propositions~\ref{prop:almost_orthogonality},~\ref{prop:uniform_parallelity}, we obtain
\begin{co}\label{cor:stab_bernstein}
	For every $1\leq p\leq \infty$ and every tuple  $\vc{r}=(r_1,\ldots,r_d)$ of non-negative integers there exists a constant $C$
	such that for every ring $R$ 
	the Bernstein polynomials $(B_\vc{m})_{\vc{0}\leq \vc{m}\leq \vc{r}}$ of degree $\vc{r}$ rescaled to
	the smallest rectangle $I$ containing $R$
	satisfy the inequalities
	\[
		\Big\| \sum_{\vc{0}\leq \vc{m}\leq \vc{r}}a_{\vc{m}} B_\vc{m } \charfun_R \Big\|_p \leq 
		\sum_{\vc{0}\leq \vc{m}\leq \vc{r}} |a_\vc{m}| \|B_\vc{m}\charfun_R\|_p \leq C 
		\Big\| \sum_{\vc{0}\leq \vc{m}\leq\vc{ r}}a_{\vc{m }}B_{\vc{m}} \charfun_R \Big\|_p.
	\]	
\end{co}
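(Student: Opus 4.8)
The plan is to combine Proposition~\ref{prop:almost_orthogonality} with Proposition~\ref{prop:uniform_parallelity} in the way indicated by the corollary's title, and then to upgrade the conclusion from a ring $R$ to the full two-sided estimate stated here. Fix $1\leq p\leq\infty$ and a tuple $\vc r=(r_1,\ldots,r_d)$, let $R$ be an arbitrary ring, and let $I$ be the smallest rectangle containing $R$, so that $R=I\setminus J$ with the decomposition $R=\bigcup_{i=1}^{2d} I_i$ from \eqref{eq:Is}. Take $(B_\vc{m})_{\vc0\leq\vc m\leq\vc r}$ to be the tensor-product Bernstein polynomials of degree $\vc r$ rescaled to $I$. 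The left inequality in the displayed chain is just the triangle inequality in $L^p$, so the entire content lies in the right inequality
\[
\sum_{\vc0\leq\vc m\leq\vc r} |a_\vc{m}| \,\|B_\vc{m}\charfun_R\|_p \leq C \Big\| \sum_{\vc0\leq\vc m\leq\vc r} a_\vc{m} B_\vc{m}\charfun_R\Big\|_p.
\]

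The first step is to record that Proposition~\ref{prop:uniform_parallelity} gives a constant $C_{\vc r}$, independent of $R$, such that for \emph{every} $K\in\{I_1,\ldots,I_{2d}\}$ we have \eqref{eq:better_than_CS}, i.e.\ $\sum_\vc{m} |a_\vc{m}|^2\|B_\vc{m}\charfun_K\|_2^2 \leq C_{\vc r}\big\|\sum_\vc{m} a_\vc{m} B_\vc{m}\charfun_K\big\|_2^2$; this is exactly the hypothesis \eqref{eq:non_parallel} of Proposition~\ref{prop:almost_orthogonality} with $r=\operatorname{card}\{\vc m:\vc0\leq\vc m\leq\vc r\}$ and $u_j$ running over the family $(a_\vc{m} B_\vc{m})_\vc{m}$. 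Since $\mathscr P_\vc{r}$ satisfies \eqref{eq:L1Linfty} on every rectangle (the standard polynomial stability fact recalled in the introduction, and used throughout Section~\ref{sec:poly}), Proposition~\ref{prop:almost_orthogonality} applies verbatim and yields, for every $p$, a constant $C'=C'(C_{\vc r},d,c_1,c_2)$ with
\[
\sum_{\vc0\leq\vc m\leq\vc r} \|a_\vc{m} B_\vc{m}\charfun_R\|_p \leq C' \Big\|\sum_{\vc0\leq\vc m\leq\vc r} a_\vc{m} B_\vc{m}\charfun_R\Big\|_p,
\]
which is precisely the right inequality, since $\|a_\vc{m} B_\vc{m}\charfun_R\|_p=|a_\vc{m}|\,\|B_\vc{m}\charfun_R\|_p$. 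The constant $C'$ depends only on $\vc r,d,p$ and $c_1,c_2$, hence is independent of the ring $R$, as claimed.

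The only genuine point requiring care — and what I would flag as the main obstacle — is the transfer in Proposition~\ref{prop:almost_orthogonality} from the $L^2$ estimate on the individual rectangles $I_i$ to the $L^p$ estimate on $R$ when $d>1$, because the $I_i$ overlap and the norm equivalences in \eqref{eq:equiv} must absorb both the overlaps and the weights $|I_i|^{2/p-1}$ arising from the $L^2$-to-$L^p$ passage on each $I_i$. This is already handled inside the proof of Proposition~\ref{prop:almost_orthogonality}, so at the level of this corollary it suffices to invoke that proposition; no new estimate is needed. One should only double-check that the hypothesis \eqref{eq:non_parallel} is demanded for \emph{each} $K$ among the $2d$ pieces $I_i$ (it is, by Proposition~\ref{prop:uniform_parallelity}, whose constant $C_{\vc r}$ is uniform over the pieces and over $R$), and that the endpoint cases $p=1$ and $p=\infty$ are covered — they are, since Propositions~\ref{prop:almost_orthogonality} and~\ref{prop:uniform_parallelity} are both stated for the full range $1\leq p\leq\infty$. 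This completes the proof.
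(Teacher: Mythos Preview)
Your proof is correct and follows exactly the paper's approach: the corollary is obtained by feeding the uniform $L^2$-stability of the Bernstein polynomials on each rectangular piece $I_i$ (Proposition~\ref{prop:uniform_parallelity}) into Proposition~\ref{prop:almost_orthogonality}, with the left inequality being the triangle inequality. The paper's own proof is just the one-line remark ``Summarizing Propositions~\ref{prop:almost_orthogonality} and~\ref{prop:uniform_parallelity}, we obtain\ldots'', which is precisely what you have spelled out.
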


Then, 
Corollary~\ref{cor:stab_bernstein} shows
(2) of Proposition~\ref{prop:stability}
	for the space $S=\mathscr P_\vc{r}$ for arbitrary $d$-tuples of non-negative integers $\vc{r}$,
if we choose 
$S_\vc{m}(R) := \operatorname{span}\{ B_\vc{m}(\cdot;I)\}$ and $u_\vc{m} \propto B_\vc{m}$ for $\vc{0}\leq\vc{ m}\leq \vc{r}$.
Therefore, in order to show condition $\wIIs(\mathscr A,\mathscr P_{\vc r},p,\tau)$, we only
need to investigate condition (1) of Proposition~\ref{prop:stability} for the Bernstein polynomials 
$B_\vc{m}(\cdot;I)$ rescaled to the smallest rectangle $I$ containing the ring $R$.

First, treat the univariate case. A ring $R$ is the disjoint union of two 
intervals $R_-$ and $R_+$ with $R_-$ lying to the left of $R_+$. 
Let $\mathscr X = (X_i)_{i=1}^n$ be a full chain with $R = R(\mathscr X) = X_1\setminus X_n$.
Observe that for $X\in \mathscr X$ with $\pp(X)\in \mathscr X$, we have 
$\bb(X)\subset R_-$ or $\bb(X)\subset R_+$.

\begin{prop}\label{prop:univariate_bernstein}
	Let $r$ be a non-negative integer and fix $\rho \in (0,1)$.

	Then, condition $\wIIs(\mathscr A,\mathscr P_r,p,\tau)$ is equivalent to the existence of a constant $C$
	 such that for each $\rho$-fat full chain $\mathscr X = (X_i)_{i=1}^n$ with $I=X_1$ and 
	each $T\in \{ R_-,R_+\}$ we have
	\begin{equation}\label{eq:bernstein_special}
		\sum_{X\in \mathscr X : \bb(X)\subset T} |\bb(X)|^{\tau/p} 
		\lesssim C^\tau \big(|T|^{\tau/p} + |R\setminus T|^{\tau r+\tau/ p} |I|^{-\tau r}\big),
	\end{equation}
	where we use the notation $R = R(\mathscr X)$ and $R_-,R_+$ for the left and right
	part of $R$, respectively.
\end{prop}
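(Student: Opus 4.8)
The plan is to deduce this characterization from the stability machinery of Section~\ref{sec:bernstein}. Writing $B_m = B_m(\cdot;I_R)$ for the univariate Bernstein polynomials of degree $r$ rescaled to the smallest interval $I_R$ containing $R=R(\mathscr X)$, the text after Corollary~\ref{cor:stab_bernstein} already verifies condition~(2) of Proposition~\ref{prop:stability} for $S=\mathscr P_r$ with this decomposition. Hence, by Proposition~\ref{prop:stability}, the condition $\wIIs(\mathscr A,\mathscr P_r,p,\tau)$ will follow as soon as I check its condition~(1): for each $m\in\{0,\dots,r\}$ and each $\rho$-fat full chain $\mathscr X$,
\[
\sum_{X\in\mathscr X:\pp(X)\in\mathscr X}\|B_m\charfun_{\bb(X)}\|_p^\tau\lesssim\|B_m\charfun_R\|_p^\tau .
\]
For the converse, since every $B_m(\cdot;X_1)$ lies in $\mathscr P_r$, the condition $\wIIs$ may simply be tested on two suitably chosen such polynomials to recover \eqref{eq:bernstein_special}. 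In both directions Lemma~\ref{cl.3} lets me work with the value of $\rho$ fixed in the statement.

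First I would record the geometry. Put $I=X_1$ and $R=R_-\cup R_+$; every tile $\bb(X)$ with $\pp(X)\in\mathscr X$ is a subinterval of $I$ disjoint from $X_n$, hence lies entirely in $R_-$ or entirely in $R_+$, and these tiles partition $R$. Since $\mathscr X$ is $\rho$-fat we have $|X_n|\ge\rho|I|$, which forces the factor $(x-\inf I)$ to be comparable to $|I|$ on all of $R_+$ and, symmetrically, $(\sup I-x)$ comparable to $|I|$ on all of $R_-$. Direct integration then gives $\|B_m\charfun_{R_+}\|_p^\tau\simeq|I|^{m\tau}|R_+|^{(r-m)\tau+\tau/p}$ and $\|B_m\charfun_{R_-}\|_p^\tau\simeq|I|^{(r-m)\tau}|R_-|^{m\tau+\tau/p}$, while $\sup_{R_+}|B_m|\le|I|^{m}|R_+|^{r-m}$ and $\sup_{R_-}|B_m|\le|I|^{r-m}|R_-|^{m}$ (all constants depending only on $\rho,r,p$). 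When $R_-$ or $R_+$ is empty one instead has $I_R=R$, together with the cruder bounds $\sup_R|B_m(\cdot;R)|\lesssim|R|^r$ and $\|B_m(\cdot;R)\charfun_R\|_p\simeq|R|^{r+1/p}$, the latter using \eqref{eq:L1Linfty} for the rectangle $R$; this degenerate case is carried along in parallel.

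For the implication \eqref{eq:bernstein_special}$\Rightarrow\wIIs$, on the side $T=R_+$ I would bound each tile crudely by $\|B_m\charfun_{\bb(X)}\|_p\le(\sup_{R_+}|B_m|)\,|\bb(X)|^{1/p}$, sum over $\bb(X)\subset R_+$, and apply \eqref{eq:bernstein_special} to obtain the bound $|I|^{m\tau}|R_+|^{(r-m)\tau}\bigl(|R_+|^{\tau/p}+|R_-|^{r\tau+\tau/p}|I|^{-r\tau}\bigr)$. The first summand is $\|B_m\charfun_{R_+}\|_p^\tau$ up to constants, and the second is $\lesssim\|B_m\charfun_{R_-}\|_p^\tau$ because the extra factor collapses to $\bigl(|R_-||R_+|/|I|^2\bigr)^{(r-m)\tau}\le1$. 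Adding the symmetric estimate over $T=R_-$ delivers condition~(1) of Proposition~\ref{prop:stability} in the generic case; the degenerate case follows the same way from the cruder bounds. For the converse I would test $\wIIs$ with $f=B_r(\cdot;X_1)$, whose restriction to $R_+$ is $(x-\inf X_1)^r\simeq|X_1|^r$, so that $\|f\charfun_{\bb(X)}\|_p\simeq|X_1|^r|\bb(X)|^{1/p}$ and $\|f\charfun_R\|_p^\tau\simeq|R_-|^{r\tau+\tau/p}+|X_1|^{r\tau}|R_+|^{\tau/p}$; dividing by $|X_1|^{r\tau}$ this is exactly \eqref{eq:bernstein_special} for $T=R_+$, and the choice $f=B_0(\cdot;X_1)$ gives it for $T=R_-$.

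I expect the main obstacle to be the exponent bookkeeping together with the generic/degenerate case split: one must check that the cross term $|R_-|^{r\tau+\tau/p}|I|^{-r\tau}$ coming out of \eqref{eq:bernstein_special} is genuinely absorbed by $\|B_m\charfun_{R_-}\|_p^\tau$ for \emph{every} $m\in\{0,\dots,r\}$ (the point being that the remaining power of $|R_-||R_+|/|I|^2$ is always $\le1$), and that the comparability of a single Bernstein factor to $|I|$ on each half of the ring — which is precisely what makes a crude per-tile sup-norm estimate strong enough — really does follow from $\rho$-fatness alone, uniformly in the chain.
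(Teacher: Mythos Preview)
Your proposal is correct and follows essentially the same route as the paper: necessity by testing $\wIIs$ on the extreme Bernstein polynomials $B_0(\cdot;X_1)$ and $B_r(\cdot;X_1)$, and sufficiency by verifying condition~(1) of Proposition~\ref{prop:stability} for each $B_m$ via the sup-norm bound on the relevant half of $R$ followed by \eqref{eq:bernstein_special}, with condition~(2) already supplied by Corollary~\ref{cor:stab_bernstein}. The only cosmetic differences are that the paper treats $\ell=0$ and $\ell\ge 1$ separately rather than uniformly, and does not single out the degenerate case $R_-=\emptyset$ or $R_+=\emptyset$ (it simply works with $I=X_1$ throughout, which is harmless since Proposition~\ref{prop:uniform_parallelity} applies to each one-sided rectangle $K$ anyway); your explicit handling of this case is fine and your cross-term check $(|R_-||R_+|/|I|^2)^{(r-m)\tau}\le 1$ is exactly the computation underlying the paper's absorption step.
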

	Before we begin the proof, we note that condition \eqref{eq:bernstein_special}
	is natural in the sense that $\wIIs(\mathscr A,\mathscr P_r,p,\tau)$ yields 
	\eqref{eq:bernstein_special} as a necessary condition after inserting for $f$ the 
	Bernstein polynomials
	 $B_0$ and $B_r$. As it turns out, \eqref{eq:bernstein_special} is also 
	 sufficient for $\wIIs(\mathscr A,\mathscr P_r,p,\tau)$.
\begin{proof}
	We fix $\rho<1$ and a $\rho$-fat full chain $\mathscr X= (X_i)_{i=1}^n$.
	Then, let  $I=X_1=[a,b]$ and we use the notation $B_n = B_n(\cdot; I)$
	for the Bernstein polynomials rescaled to the interval $I$.
	Before we begin the proof, 
	observe that since $\mathscr X$ is $\rho$-fat, we have 
	\[
		|R_-| + |R_+| \leq (1-\rho) |I|.	
	\]	
	Therefore, for each $\ell=0,\ldots,r$,
	\begin{equation}\label{eq:bernstein_equiv}
		\|B_\ell \charfun_R\|_p^p = \int_{R_-\cup R_+} (t-a)^{p\ell}(b-t)^{p(r-\ell)}\dif t
		\simeq |R_-|^{p\ell +1} |I|^{p(r-\ell)} + |R_+|^{p(r-\ell)+1} |I|^{p\ell}.
	\end{equation}	

	We first show the necessity of 
	\eqref{eq:bernstein_special} and first consider $T = R_-$.
	We insert $u_0(t) = B_0(t) = (b-t)^r$ into condition $\wIIs$ to get 
	\begin{equation}\label{eq:w2Bernstein}
		\sum_{X\in\mathscr X : \pp(X)\in\mathscr X}	 \|B_0 \charfun_{\bb(X)}\|_p^\tau 
		\leq M^\tau \|B_0\charfun_R\|_p^\tau
	\end{equation}
	for some constant $M$.
	Assuming  $\bb(X) \subset T = R_-$, 
	we obtain 
	\begin{equation}
		\label{eq:equiv_bernstein}
		\|B_0\charfun_{\bb(X)}\|_p \simeq |I|^r |\bb(X)|^{1/p}.		
	\end{equation}
	Inserting this equivalence and \eqref{eq:bernstein_equiv} for $\ell = 0$ 
	into \eqref{eq:w2Bernstein} we get
	\[
		\sum_{X\in \mathscr X : \bb(X)\subset T}	|I|^{\tau r} |\bb(X)|^{\tau/p} \lesssim
		M^\tau \big(|R_-|^{\tau/p} |I|^{\tau r} + |R_+|^{\tau r + \tau/p}\big).
	\]
	Dividing by $|I|^{\tau r}$ yields the conclusion \eqref{eq:bernstein_special}
	for $T = R_-$. The proof for $T=R_+$ is similar, but uses $u_r = B_r$ instead of $u_0 = B_0$.

	Now we proceed by showing the sufficiency of \eqref{eq:bernstein_special} 
	for $\wIIs$ by showing that \eqref{eq:bernstein_special} implies
	(1) of Proposition~\ref{prop:stability} for Bernstein polynomials. Then, as we already established
	(2) of Proposition~\ref{prop:stability} for Bernstein polynomials above in Corollary~\ref{cor:stab_bernstein}, we use Proposition~\ref{prop:stability}
	to get $\wIIs$.

	Since for each $X\in \mathscr X$ with $\pp(X) \in \mathscr X$ we either have $\bb(X)\subset R_-$
	or $\bb(X)\subset R_+$, it is enough to show, for each $\ell = 0,\ldots,r$ and each 
	$T\in\{R_-,R_+\}$ the inequality
	\begin{equation}\label{eq:Bl}
		\sum_{X\in\mathscr X : \bb(X)\subset T} \| B_\ell \charfun_{\bb(X)}\|_p^\tau	\leq 
		M^\tau \|B_\ell\charfun_R\|_p^\tau
	\end{equation}
	with some constant $M$. We consider again only the case $T=R_-$, since the proof for $T=R_+$ proceeds
	similarly. For $\ell = 0$, as shown above, \eqref{eq:Bl} is just assumption \eqref{eq:bernstein_special}.
	If $\ell = 1,\ldots,r$, observe that $B_\ell(t) \lesssim |R_-|^\ell |I|^{r-\ell}$ on $R_-$.
	Therefore,
	\begin{align*}
		\Big(\sum_{X\in\mathscr X : \bb(X)\subset R_-} \| B_\ell \charfun_{\bb(X)}\|_p^\tau\Big)^{1/\tau}
		&\lesssim |R_-|^{\ell} |I|^{r-\ell} \Big( \sum_{X\in\mathscr X: \bb(X)\subset R_-}|\bb(X)|^{\tau/p} \Big)^{1/\tau}.
	\end{align*}
	Estimate the latter expression by using \eqref{eq:bernstein_special} to get
	\begin{align*}
	\Big(\sum_{X\in\mathscr X : \bb(X)\subset R_-} \| B_\ell \charfun_{\bb(X)}\|_p^\tau\Big)^{1/\tau}
		&\lesssim C |R_-|^\ell |I|^{r-\ell}( |R_-|^{1/p}  + |R_+|^{r+1/p} |I|^{-r}) \\
		&\lesssim C (|R_-|^{\ell+1/p}|I|^{r-\ell} + |R_+|^{r-\ell+1/p} |I|^\ell) \\
		&\simeq C \|B_\ell\charfun_R\|_p,
	\end{align*} 
	where in the latter equivalence, we used \eqref{eq:bernstein_equiv}.
\end{proof}
\begin{example}\label{ex:5.5}
	Let $r$ be a non-negative integer.
	For fixed parameters $1<p<\infty$ and $0<\tau<p$ and 
	using condition~\eqref{eq:bernstein_special} of Proposition~\ref{prop:univariate_bernstein},
	we now give explicit examples of atoms $\mathscr A$ so that 
	$\wIIs(\mathscr A,\mathscr P_r,p,\tau)$ is true, but condition
	$\wIIs(\mathscr A,\mathscr P_{r+1},p,\tau)$ is not true. 
	
	Let $n$ be an arbitrary positive integer and let $\rho\in (0,1)$.
	Moreover, choose $1 < p <\infty$ and $0 < \tau <p$ and let $r\geq 0$ be an integer.
	We choose the constant $c>0$ such that 
	\[
	c \sum_{j=0}^\infty 2^{-j} \leq \frac{1-\rho}{2}
	\]
	and assume that $n$ is sufficiently large so that $n 2^{-\omega n} \leq (1-\rho)/2$ 
	with $\omega := pr+1>0$.
	Then, we define the $\rho$-fat full chain $\mathscr X = (X_j)_{j=-1}^{2n}$ by the formulas
	\begin{align*}
		X_{2\ell -1} = \Big[\ell 2^{-\omega n}, 1 - c \sum_{s=0}^{\ell-1} 2^{-s}\Big],\qquad
		X_{2\ell} = \Big[\ell 2^{-\omega n}, 1 - c \sum_{s=0}^\ell 2^{-s}  \Big],\qquad \ell = 0,\ldots,n.
	\end{align*}
	 Define $\delta := \tau r + \tau/p >0$ and observe that $\delta 
	= \omega\tau /p$. We consider the subchains $\mathscr X' = (X_j)_{j= j_0}^{j_1}$ with 
	$-1 \leq j_0 < j_1 \leq 2n$.
	We now show that the chain $\mathscr X'$ satisfies \eqref{eq:bernstein_special}
	with the parameters $p,\tau,r$ uniformly in $n,j_0,j_1$. On the other hand, we 
	will show that \eqref{eq:bernstein_special} is not satisfied for the parameters $p,\tau,r+1$
	uniformly in $n,j_0,j_1$.
	
	We begin with the first assertion. Let $R = R(\mathscr X')$ and decompose $R = R_-\cup R_+$
	into a left and right part $R_-$ and $R_+$, respectively. Choosing $T=R_+$, we see 
	that \eqref{eq:bernstein_special} is satisfied due to the fact that we have 
	a geometric series on the left hand side of \eqref{eq:bernstein_special}.
	On the other hand, for $T = R_-$, we note that for $X\in\mathscr X'$ with $\bb(X)\subset R_-$,
	we have $|\bb(X)| = 2^{-\omega n}$, which gives 
	\begin{equation}\label{eq:ex_bernstein_special}
	\sum_{X\in\mathscr X' : \bb(X)\subset R_-} | \bb(X)|^{\tau/p} \simeq \frac{j_1 - j_0}{2} 2^{(-\omega n)\tau/p} = 
	\frac{j_1 - j_0}{2} 2^{-\delta n}.
	\end{equation}
	Additionally,
	\[
		|R_+| \simeq c \sum_{s = j_0/2}^{j_1/2} 2^{-s} \simeq c 2^{-j_0/2}.
	\]
	This gives the inequality 
	\[
	\sum_{X\in\mathscr X' : \bb(X)\subset R_-} |\bb(X)|^{\tau/p} \lesssim |R_+|^\delta	
	\]
	with some implicit constant that is uniform in $n,j_0,j_1$, which 
	directly implies \eqref{eq:bernstein_special} for the parameters $p,\tau,r$.
	
	Next, consider the parameters $p,\tau,r+1$. Choose the special subchain $\mathscr X' 
	= (X_j)_{j=j_0}^{j_1}$ with the parameters $j_0 = 2(n-\log n)$ and $j_1 = 2n$.
	Formula \eqref{eq:ex_bernstein_special} for this setting yields
	\begin{equation}\label{eq:lhs_special}
		\sum_{X\in\mathscr X' : \bb(X)\subset R_-} | \bb(X)|^{\tau/p} \simeq (\log n)2^{-\delta n}.
	\end{equation}
	The right hand side of \eqref{eq:bernstein_special} becomes 
	\begin{equation}\label{eq:rhs_special}
	|R_-|^{\tau/p} + |R_+|^{\delta+\tau} \simeq (\log n)^{\tau/p} 2^{-\delta n} + (c 2^{- (n-\log n)})^{\delta + \tau}
	 \simeq  \big( (\log n)^{\tau/p}  + 2^{-n\tau} n^{\delta + \tau}\big) 2^{-\delta n}.
	\end{equation}
	Since $0 < \tau < p$, comparing \eqref{eq:lhs_special} and \eqref{eq:rhs_special} 
	yields that inequality \eqref{eq:bernstein_special} for the parameters $p,\tau,r+1$ 
	is impossible uniformly in $j_0,j_1,n$. This 
	finishes the proofs of the above claims.

	We construct atoms $\mathscr A$ that contain, for 
	each positive integer $n$, a rescaling of the $\rho$-fat full chain 
	given above. We assume that $\mathscr A$ does not contain more $\rho$-fat full chains.
	Since \eqref{eq:bernstein_special} is invariant under rescaling, we 
	infer from Proposition~\ref{prop:univariate_bernstein} that $\wIIs(\mathscr A,\mathscr P_r,p,\tau)$ is satisfied but 
	$\wIIs(\mathscr A,\mathscr P_{r+1},p,\tau)$ is not satisfied.
	\end{example}

Next, we describe a multivariate analog of the Proposition~\ref{prop:univariate_bernstein}.
Let $\mathscr X = (X_i)_{i=1}^n$ be a $\rho$-fat full chain  and denote $I= X_1 = I^1\times \cdots\times I^d$,
$J=X_n = J^1\times \cdots \times J^d$. Moreover, write $R^s = I^s\setminus J^s$ and
decompose $R^s$ into the union of two disjoint subintervals $R^s_-$ and $R^s_+$ with 
the former lying to the left of the latter. Let
$K_s^{\pm} = I^1\times \cdots\times I^{s-1}\times R^s_{\pm}\times I^{s+1}\times \cdots \times I^d$
and $K_s = K_s^- \cup K_s^+$.
 Then, the multivariate analog of \eqref{eq:bernstein_equiv}
is, for each $\vc{j}=(j_1,\ldots,j_d)$ with $0\leq j_i\leq r_i$,
\[
	\|B_\vc{j} \charfun_{K_s}\|_p^p \simeq 	\Big( \prod_{i\neq s} |I^i|^{pr_i + 1} \Big)
	\big( |R^s_-|^{pj_s + 1} |I^s|^{p(r_s - j_s)} + |R_+^s|^{p(r_s - j_s) + 1} |I^s|^{pj_s} \big).
\]
Moreover, the analog of \eqref{eq:equiv_bernstein} is that if $\bb(X) \subset K_s^-$ and
$\rho$ is sufficiently large, we have for all $\vc{j}=(j_1,\ldots,j_d)$ with $j_s = 0$
\[
	\| B_\vc{j} \charfun_{\bb(X)}\|_p^p\simeq 	 \Big( \prod_{i\neq s} |I^i|^{pr_i + 1} \Big)
	|I^s|^{pr_s} |\bb(X)^s|.
\]
Using those equivalences and the observation $\|B_\vc{j}\charfun_R\|_p \simeq \sum_{s=1}^d \|B_\vc{j}\charfun_{K_s}\|_p$,
we obtain, similarly to the univariate result (Proposition~\ref{prop:univariate_bernstein}), the following multivariate version.
\begin{prop}\label{prop:multivariate_bernstein}
	Let $\vc{r}=(r_1,\ldots,r_d)$ be a $d$-tuple of non-negative integers and fix $\rho \in (0,1)$.

	Then, condition $\wIIs(\mathscr A,\mathscr P_{\vc r},p,\tau)$ is equivalent to the existence of 
	a constant $C$  such that for each $\rho$-fat full chain $\mathscr X=(X_i)_{i=1}^n$
	with $I=X_1$ and $R = X_1\setminus X_n$
	is equivalent to the condition that for each $s=1,\ldots,d$ and each $T\in \{R^s_-, R^s_+\}$ we have,
	denoting $K_s^T = I^1\times \cdots \times I^{s-1}\times T\times I^{s+1}\times
	\cdots \times I^d$,
	\begin{equation}\label{eq:bernstein_special_multi}
		\sum_{X\in \mathscr X : \bb(X)\subset K_s^T} |\bb(X)^s|^{\tau/p} 
		\lesssim C^\tau \big(|T|^{\tau/p} + |R^s\setminus T|^{\tau r_s+\tau/ p} |I^s|^{-\tau r_s} + \sum_{\ell\neq s} |I^s|^{\tau/p}\gamma_{\ell}\big),
	\end{equation}
	where $\gamma_{\ell}$ is given by 
	\[
		\gamma_{\ell}^{1/\tau} = \min_{0\leq j_\ell\leq r_\ell} |I^\ell|^{-1/p}
		\big( |R^\ell_-|^{j_\ell+1/p} |I^\ell|^{-j_\ell} + |R^\ell_+|^{r_\ell-j_\ell + 1/p} |I^\ell|^{j_\ell-r_\ell}\big).
	\]		
\end{prop}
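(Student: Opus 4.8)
The plan is to carry over the two directions of the proof of Proposition~\ref{prop:univariate_bernstein}, replacing the univariate Bernstein polynomials $B_\ell$ by their tensor products $B_{\vc{m}}(\cdot;I)$ of degree $\vc{r}$ rescaled to the smallest rectangle $I=X_1^1\times\cdots\times X_1^d$ containing $R=R(\mathscr X)$, and the two pieces $R_\pm$ by the $2d$ faces $K_s^{\pm}=I^1\times\cdots\times I^{s-1}\times R^s_{\pm}\times I^{s+1}\times\cdots\times I^d$. The only analytic inputs beyond Proposition~\ref{prop:stability}, Corollary~\ref{cor:stab_bernstein} and the elementary facts $(a+b)^{\tau/p}\simeq a^{\tau/p}+b^{\tau/p}$ and $\big(\sum_{s=1}^d a_s\big)^\tau\simeq\sum_{s=1}^d a_s^\tau$ are the three equivalences displayed immediately before the statement: the one for $\|B_{\vc{j}}\charfun_{K_s}\|_p^p$, the one for $\|B_{\vc{j}}\charfun_{\bb(X)}\|_p^p$ that holds when $\bb(X)\subset K_s^-$ and $j_s=0$ (and its mirror image for $K_s^+$, $j_s=r_s$), and $\|B_{\vc{j}}\charfun_R\|_p\simeq\sum_{s=1}^d\|B_{\vc{j}}\charfun_{K_s}\|_p$. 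I would first record that, since $\mathscr X$ is $\rho$-fat, every side length of $X_n$ --- hence of every atom in $\mathscr X$ and of every $\bb(X)$ in the coordinates $i\neq s$ --- is comparable to $|I^i|$ (this is what makes the second equivalence valid), and that the coordinate $s$ along which the split producing a sibling $\bb(X)$ took place is unique, so that $\{X\in\mathscr X:\pp(X)\in\mathscr X\}$ partitions into the $2d$ classes $\{X:\bb(X)\subset K_s^{\pm}\}$.

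For the necessity of \eqref{eq:bernstein_special_multi}, fix $s$ and $T$, say $T=R^s_-$. I would insert into $\wIIs(\mathscr A,\mathscr P_{\vc{r}},p,\tau)$ the polynomial $B_{\vc{j}}$ with $j_s=0$ and, for each $\ell\neq s$, with $j_\ell$ chosen as the minimizer in the definition of $\gamma_\ell$. Restricting the left-hand sum of $\wIIs$ to the class $\{X:\bb(X)\subset K_s^-\}$ and using the second equivalence, the left-hand side is $\gtrsim\big(\prod_{i\neq s}|I^i|^{pr_i+1}\big)^{\tau/p}|I^s|^{\tau r_s}\sum_{X:\bb(X)\subset K_s^-}|\bb(X)^s|^{\tau/p}$; expanding $\|B_{\vc{j}}\charfun_R\|_p^\tau\simeq\sum_{\ell}\|B_{\vc{j}}\charfun_{K_\ell}\|_p^\tau$ by the first and third equivalences and dividing both sides by the common factor $\big(\prod_{i\neq s}|I^i|^{pr_i+1}\big)^{\tau/p}|I^s|^{\tau r_s}$ leaves exactly $|R^s_-|^{\tau/p}+|R^s_+|^{\tau r_s+\tau/p}|I^s|^{-\tau r_s}+\sum_{\ell\neq s}|I^s|^{\tau/p}\gamma_\ell$ on the right --- the term indexed by $\ell$ collapsing to $|I^s|^{\tau/p}\gamma_\ell$ precisely because $j_\ell$ was the minimizer. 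The case $T=R^s_+$ is symmetric, using $j_s=r_s$.

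For the sufficiency I would, as in the univariate case, note that condition~(2) of Proposition~\ref{prop:stability} for $S=\mathscr P_{\vc{r}}$ with the decomposition $S_{\vc{m}}(R)=\operatorname{span}\{B_{\vc{m}}(\cdot;I)\}$ has already been proved (Corollary~\ref{cor:stab_bernstein}), so it suffices to verify condition~(1), namely
\[
\sum_{X\in\mathscr X:\pp(X)\in\mathscr X}\|B_{\vc{m}}\charfun_{\bb(X)}\|_p^\tau\lesssim\|B_{\vc{m}}\charfun_R\|_p^\tau
\]
for each fixed $\vc{0}\leq\vc{m}\leq\vc{r}$. Splitting this sum over the $2d$ classes and fixing, say, $\{X:\bb(X)\subset K_s^-\}$, I would use the crude pointwise bound $B_{\vc{m}}\lesssim|R^s_-|^{m_s}|I^s|^{r_s-m_s}\prod_{i\neq s}|I^i|^{r_i}$ on $K_s^-$ together with $|\bb(X)|\leq|\bb(X)^s|\prod_{i\neq s}|I^i|$ to get $\|B_{\vc{m}}\charfun_{\bb(X)}\|_p^\tau\lesssim\big(\prod_{i\neq s}|I^i|^{pr_i+1}\big)^{\tau/p}|R^s_-|^{\tau m_s}|I^s|^{\tau(r_s-m_s)}|\bb(X)^s|^{\tau/p}$; summing over the class and invoking \eqref{eq:bernstein_special_multi} with $T=R^s_-$ then replaces $\sum|\bb(X)^s|^{\tau/p}$ by its three-term bound. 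The first two contributions are dominated by $\|B_{\vc{m}}\charfun_{K_s}\|_p^\tau$ after using $|R^s_\pm|\leq|I^s|$, and the $\gamma_\ell$-contributions by $\sum_{\ell\neq s}\|B_{\vc{m}}\charfun_{K_\ell}\|_p^\tau$, using $\gamma_\ell\leq\big(|I^\ell|^{-1/p}(|R^\ell_-|^{m_\ell+1/p}|I^\ell|^{-m_\ell}+|R^\ell_+|^{r_\ell-m_\ell+1/p}|I^\ell|^{m_\ell-r_\ell})\big)^\tau$ and $a^\tau+b^\tau\simeq(a+b)^\tau$; all of these are then $\lesssim\|B_{\vc{m}}\charfun_R\|_p^\tau$. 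Summing over $s$ and the two signs yields condition~(1), and Proposition~\ref{prop:stability} gives $\wIIs(\mathscr A,\mathscr P_{\vc{r}},p,\tau)$.

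The step I expect to be the main obstacle is the last piece of bookkeeping in the sufficiency direction: after substituting \eqref{eq:bernstein_special_multi} into the sum over a single face $K_s^{\pm}$, one has to route each of the resulting terms --- in particular the $\gamma_\ell$-terms, which involve a coordinate $\ell\neq s$ and are controlled only through the minimizing index --- back into the correct summand $\|B_{\vc{m}}\charfun_{K_\ell}\|_p^\tau$ of $\|B_{\vc{m}}\charfun_R\|_p^\tau$, and to verify that every implied constant depends only on $\vc{r},p,\tau,d,\rho$ and the constants from \eqref{eq:L1Linfty}, not on $\mathscr X$ or $n$. Throughout, the $\rho$-fatness of $\mathscr X$ is used crucially to keep the side lengths of each $\bb(X)$ in the directions $i\neq s$ comparable to $|I^i|$, so that the norm equivalences above remain available.
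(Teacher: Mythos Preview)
Your proposal is correct and follows exactly the approach the paper indicates: the paper does not spell out a proof but says the result is obtained ``similarly to the univariate result (Proposition~\ref{prop:univariate_bernstein})'' using the three displayed equivalences, and your argument fills in precisely those details --- inserting the extremal Bernstein polynomials $B_{\vc j}$ with $j_s\in\{0,r_s\}$ and $j_\ell$ the minimizer for $\ell\neq s$ to get necessity, and verifying condition~(1) of Proposition~\ref{prop:stability} for each $B_{\vc m}$ via the crude pointwise bound on the faces $K_s^\pm$ to get sufficiency. The bookkeeping you flag as the main obstacle (routing the $\gamma_\ell$-terms into $\|B_{\vc m}\charfun_{K_\ell}\|_p^\tau$) checks out after using $|R_-^s|\le|I^s|$ and $\gamma_\ell\le\big(|I^\ell|^{-1/p}(|R^\ell_-|^{m_\ell+1/p}|I^\ell|^{-m_\ell}+|R^\ell_+|^{r_\ell-m_\ell+1/p}|I^\ell|^{m_\ell-r_\ell})\big)^\tau$.
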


\begin{rem}
(1) With the notation used in the formulation 
of Proposition~\ref{prop:multivariate_bernstein}, we remark that for each $X\in \mathscr X$,
the set $\bb(X)$ is contained in precisely one of the sets  $K_s^T$ for $s \in\{1,\ldots,d\}$ and $T\in\{R_-^s,R_+^s\}$.
This is due to the fact that for $X_n \subset X = [a_1,b_1]\times \cdots\times [a_d,b_d]$ 
there exists precisely one direction $s\in \{1,\ldots,d\}$ such that 
$\bb(X) =  [a_1,b_1]\times \cdots \times [a_{s-1},b_{s-1}] \times U \times [a_{s+1},b_{s+1}]\times \cdots \times [a_d,b_d]$
for some interval $U$ not intersecting $[a_s,b_s]$. This situation corresponds 
to the fact that $\bb(X)\subset K_s^T$ for this index $s$ and $T = R^s_-$ if 
$U$ is to the left of $[a_s,b_s]$ and $T=R^s_+$ if $T$ is to the right of $[a_s,b_s]$. \\
(2) As in the univariate case of Proposition~\ref{prop:univariate_bernstein}, condition 
\eqref{eq:bernstein_special_multi} is natural in the sense that 
$\wIIs(\mathscr A,\mathscr P_{\vc{r}},p,\tau)$ yields \eqref{eq:bernstein_special_multi} as 
a necessary condition after inserting Bernstein polynomials $B_\vc{i}$ for $\vc{i}=(i_1,\ldots,i_d)$ such that $i_j \in \{0, r_j\}$ for some $j=1,\ldots, d$.
Similarly, we then also see that \eqref{eq:bernstein_special_multi} is also 
sufficient for $\wIIs(\mathscr A,\mathscr P_\vc{r},p,\tau)$.
\end{rem}

Using Proposition~\ref{prop:multivariate_bernstein} and Example~\ref{ex:5.5}, we next  construct two examples of 
atoms $\mathscr A$ and $\mathscr A'$ and two polynomial spaces $S$ and $S'$ such that 
$\wIIs(\mathscr A,S,p,\tau)$ is satisfied but $\wIIs(\mathscr A,S',p,\tau)$ is not 
and  $\wIIs(\mathscr A',S',p,\tau)$ is satisfied but $\wIIs(\mathscr A',S,p,\tau)$ is not.
This shows that the condition $\wIIs$ indeed depends on the space $S$.

\begin{example}\label{ex:depS}
	Fix the parameters $1<p<\infty$ and $0<\tau<p$. Fix the non-negative integer 
	$\kappa$ and the coordinate $i \in \{1,\ldots,d\}$.
	We want to construct a partition $\mathscr A = \mathscr A(\kappa,i)$ of the unit cube $[0,1]^d$,
	 that depends on 
	the values of $\kappa$ and $i$ such that condition 
	$\wIIs(\mathscr A, \mathscr P_\vc{r}, p,\tau)$ is satisfied for $\vc{r}=(r_1,\ldots,r_d)$ 
	if and only if $r_i \leq \kappa$. 
	We construct a
	binary filtration $(\mathscr F_n)_{n=0}^\infty$ such that $\mathscr F_{n+1}$ 
	is obtained by dividing one atom of $\mathscr F_n$ (which is a rectangle) into 
	two rectangles. First, observe that an atom $A = A^1\times \cdots \times A^d$
	is split into two disjoint atoms $B = B^1\times \cdots \times B^d$ and 
	$C = C^1\times \cdots \times C^d$ such that there is exactly one direction $j$ 
	such that $B^j \cap C^j= \emptyset$ and  $A^j = B^j \cup C^j$ with $|B^j|, |C^j| > 0$ and, for every $\ell \neq j$
	we have $A^\ell = B^\ell = C^\ell$. In this case, we say that $A$ splits into 
	$B\cup C$ in direction $j$.
	We now assume $\mathscr A$ is such that if atoms $A$ are split into $B\cup C$ 
	in direction $j\neq i$, we have that $|B| = |C| = |A| / 2$ and the splitting 
	in direction $i$ contains chains as in Example~\ref{ex:5.5} of arbitrary 
	length $n$ for the parameters 
	$p,\tau$ and $r = \kappa$.
	In this construction, the only $\rho$-fat  chains ($\rho>1/2$) contained in $\mathscr A$ are 
	the ones from Example~\ref{ex:5.5} in direction $i$. This is the case
	since every chain $(X_0,X_1)$ of length $2$ with $|X_0| = 2|X_1|$ is already not 
	$\rho$-fat.
	We use the criterion for $\wIIs(\mathscr A, \mathscr P_\vc{r},p,\tau)$  from 
	Proposition~\ref{prop:multivariate_bernstein} given in \eqref{eq:bernstein_special_multi}.
	Here, we get that the 
	needed parameters $\gamma_\ell$ vanish for all $\ell\neq i$ since for those $\rho$-fat
	chains we have $R_-^\ell = R_+^\ell = \emptyset$. Therefore, condition 
	\eqref{eq:bernstein_special_multi} coincides with condition \eqref{eq:bernstein_special}
	of Proposition~\ref{prop:univariate_bernstein}.
	We have shown in Example~\ref{ex:5.5} that this condition is satisfied for 
	$\mathscr P_\vc{r}$ with $\vc{r} = (r_1,\ldots,r_d)$ in the case $r_i \leq  \kappa$, but 
	not satisfied in the case $r_i > \kappa$.
Therefore, we have constructed the desired partition $\mathscr A(\kappa,i)$.

	Now, let $\vc{r} = (r_1,\ldots,r_d)$ and $\vc{r}' = (r_1',\ldots,r_d')$ be $d$-tuples of 
	non-negative integers that are non-comparable, i.e., there exist two different
	indices $i_1,i_2$ such that $r_{i_1} < r_{i_1}'$ and $r_{i_2} > r_{i_2}'$.
	Define $\mathscr A = \mathscr A(r_{i_1},i_1)$ and $\mathscr A' = \mathscr A(r_{i_2},i_2)$.
	Then, $\wIIs(\mathscr A,\mathscr P_\vc{r},p,\tau)$ is satisfied but $\wIIs(\mathscr A,\mathscr P_{\vc{r}'},p,\tau)$
	is not. On the other hand, $\wIIs(\mathscr A',\mathscr P_{\vc{r}'},p,\tau)$ is satisfied 
	but $\wIIs(\mathscr A',\mathscr P_\vc {r},p,\tau)$ is not, showing the desired 
	dependence of $\wIIs$ on its first two parameters.
\end{example}

\subsection{Coordinate affinely invariant polynomial spaces} The aim of this section is to get an analogue of Propositions \ref{prop:univariate_bernstein} and  \ref{prop:multivariate_bernstein} for $\mathscr P_{r,d}$ -- the space of polynomials of degree $r$ in $d$ variables. Note that $\mathscr P_{r,d} = \Span \{ \mathscr P_{\vc{r}}: | \vc{r} | = r $\}. It follows that if 
$\wIIs(\mathscr A,  \mathscr P_{r,d}  ,p,\tau)$ is satisfied, then $\wIIs(\mathscr A,  \mathscr P_{\vc{r}}  ,p,\tau)$  is satisfied for each $\vc{r}$ with $| \vc{r}| =r$.  We are going to prove that the converse is also true. In fact, it is possible to  consider this question in a slightly more general setting.

Let $M = \{ \vc{m}^{(1)},\ldots, \vc{m}^{(s)}\}$ be a finite set consisting of 
$d$-tuples of non-negative integers $\vc{m}^{(i)}$. 
Denote $\mathscr P_{M} = \Span\{ \mathscr P_{\vc{m}^{(1)}}, \ldots, \mathscr P_{\vc{m}^{(s)}} \}$.
Note that the spaces $\mathscr P_M$ are precisely the linear spaces of 
polynomials that are invariant under coordinate affine transformations.
Let the atoms $\mathscr A$ consist of subintervals (rectangles) of $[0,1]^d$ and $\mathbb P$ be the
$d$-dimensional  Lebesgue measure. 

In this setting, we prove the following:

\begin{theo}
\label{theo.span}
	Let $M = \{ \vc{m}^{(1)},\ldots,\vc{m}^{(s)}\}$ 
and $\mathscr P_{M} = \Span\{ \mathscr P_{\vc{m}^{(1)}}, \ldots, \mathscr P_{\vc{m}^{(s)}} \}$. 
Fix $1 < p < \infty$ and $0<\tau <p$. 

	Then, condition $\wIIs(\mathscr A,\mathscr P_M,p,\tau)$ is satisfied 
	if and only if condition $\wIIs(\mathscr A,\mathscr P_\vc{m},p,\tau)$ is satisfied for each $ \vc{m} \in M $.
\end{theo}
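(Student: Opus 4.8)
The \emph{only if} implication is immediate: for every $\vc{m}\in M$ we have the inclusion $\mathscr P_{\vc{m}}\subseteq\mathscr P_M$, so if the inequality \eqref{eq:wIIs_wh} holds for all $f\in\mathscr P_M$ with some pair $(\rho,M)$ of constants, then in particular it holds for all $f\in\mathscr P_{\vc{m}}$, which is exactly $\wIIs(\mathscr A,\mathscr P_{\vc{m}},p,\tau)$. For the \emph{if} implication, assume $\wIIs(\mathscr A,\mathscr P_{\vc{m}^{(j)}},p,\tau)$ for each $j=1,\dots,s$. The plan is to apply Proposition~\ref{prop:stability_weaker} with $S:=\mathscr P_M$ and the subspaces $U_j:=\mathscr P_{\vc{m}^{(j)}}$, for which $\Span\{U_j:j=1,\dots,s\}=\mathscr P_M$; since by Lemma~\ref{cl.3} the condition $\wIIs$ is independent of $\rho$, one can pick a single $\rho$ serving all the finitely many conditions $\wIIs(\mathscr A,U_j,p,\tau)$, so the hypotheses of Proposition~\ref{prop:stability_weaker} on the $U_j$ are met. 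It therefore remains to verify condition $(*_2)$: that there is a constant $C$ such that for each ring $R\in\crr$ and each $u\in\mathscr P_M$ there is a decomposition $u=u_1+\dots+u_s$ with $u_j\in\mathscr P_{\vc{m}^{(j)}}$ and $\sum_{j=1}^s\|u_j\charfun_R\|_p\le C\|u\charfun_R\|_p$. Since the class $\mathscr P_M$ is invariant under coordinate affine transformations and such a transformation scales both sides of $(*_2)$ by the same factor, we may normalize so that the smallest rectangle containing $R$ is $[0,1]^d$, and then $R=[0,1]^d\setminus J$ for a rectangle $J=\prod_{s'=1}^d[c_{s'},d_{s'}]$.

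To prove $(*_2)$ I would produce the decomposition from the \emph{overcomplete} Bernstein system of $R$: let $I$ be the bounding rectangle, and for each $j$ let $\{B^{(j)}_{\vc{\alpha}}:\vc{0}\le\vc{\alpha}\le\vc{m}^{(j)}\}$ be the tensor-product Bernstein basis of $\mathscr P_{\vc{m}^{(j)}}$ rescaled to $I$ (as in Corollary~\ref{cor:stab_bernstein}). Their union spans $\mathscr P_M$, and if one shows that this union is an $L^p(R)$-stable spanning set \emph{uniformly in $R$}, i.e.\ that every $u\in\mathscr P_M$ admits a representation $u=\sum_{j,\vc{\alpha}}\lambda^{(j)}_{\vc{\alpha}}B^{(j)}_{\vc{\alpha}}$ with $\sum_{j,\vc{\alpha}}|\lambda^{(j)}_{\vc{\alpha}}|\,\|B^{(j)}_{\vc{\alpha}}\charfun_R\|_p\le C\|u\charfun_R\|_p$, then setting $u_j:=\sum_{\vc{\alpha}}\lambda^{(j)}_{\vc{\alpha}}B^{(j)}_{\vc{\alpha}}\in\mathscr P_{\vc{m}^{(j)}}$ and using the triangle inequality yields exactly condition $(*_2)$. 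To establish this uniform $L^p(R)$-stability, I would pass, via the corner-rectangle decomposition $R=\bigcup_{i=1}^{2d}I_i$ of \eqref{eq:Is} and the norm equivalence \eqref{eq:equiv} (there being only the fixed number $2d$ of corner rectangles), to the corresponding $L^2$-stability on each $I_i$; here Proposition~\ref{prop:almost_orthogonality} is the packaging that turns the $L^2$-on-corners estimates into the $L^p$-on-$R$ estimate. For a fixed ring geometry the estimates hold with some constant by finite-dimensionality; the whole content is uniformity in $(c_1,\dots,c_d,d_1,\dots,d_d)$. That I would obtain by a compactness argument in the spirit of Proposition~\ref{prop:uniform_parallelity}: on the region where every $c_{s'}$ is bounded away from $0$ and every $d_{s'}$ bounded away from $1$, the corner rectangles and the $B^{(j)}_{\vc{\alpha}}$ depend continuously on a compact parameter set and the associated quotient norm on $\mathscr P_M$ is finite and varies continuously, hence is uniformly comparable to $\|u\charfun_R\|_p$; the degenerate strata, where some corner rectangles become thin (some $c_{s'}\to0$ or $d_{s'}\to1$), are treated by rescaling those directions to unit length, exactly as the Bernstein polynomials degenerate into scaled monomials in the proof of Proposition~\ref{prop:uniform_parallelity}, with Corollary~\ref{cor:stab_bernstein} providing the single-component input.

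The step I expect to be the main obstacle is the control of the most degenerate stratum, where \emph{all} corner rectangles are simultaneously thin (that is, $J\to[0,1]^d$): there a naive single-scale limit of $\|\cdot\charfun_R\|_p$ is only a seminorm on $\mathscr P_M$ — it annihilates, for instance, polynomials divisible by $\prod_{s'}(x_{s'}-a_{s'})(b_{s'}-x_{s'})$ — so one must work with a multi-scale expansion (successive Taylor coefficients in the thin directions) and carry the stability statement through it, for instance by an induction on the number of degenerate directions. Here the overcompleteness of the Bernstein system is essential: the representation of a $u\in\mathscr P_M$ can be chosen compatibly with the vanishing order of $u$ along the faces of $I$, so that the summands $u_j$ inherit the same smallness on thin corner rectangles, and peeling off this "divisible" part reduces each degenerate stratum to one of lower codimension. (As a sanity check, when $d=1$ there are no multivariate degeneracies and $\mathscr P_M=\mathscr P_{\max M}$, so the assertion is already contained in the hypotheses.) Assembling the finitely many strata produces the uniform constant $C$ of $(*_2)$, and Proposition~\ref{prop:stability_weaker} then gives $\wIIs(\mathscr A,\mathscr P_M,p,\tau)$, completing the \emph{if} implication.
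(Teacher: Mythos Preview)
Your overall strategy coincides with the paper's: reduce to verifying condition $(*_2)$ and then invoke Proposition~\ref{prop:stability_weaker}. The divergence is entirely in how $(*_2)$ is established, and here your argument has a genuine gap that you yourself flag: the degenerate stratum where the inner rectangle $J$ approaches the outer rectangle $I$. Your proposed fix---a multi-scale expansion plus induction on the number of thin directions, exploiting the overcompleteness of the combined Bernstein system---is only sketched, and it is not clear how to carry it out. The difficulty is real: when the spanning system is linearly dependent, the quotient functional
\[
\inf\Big\{\sum_{j,\vc\alpha} |\lambda^{(j)}_{\vc\alpha}|\,\|B^{(j)}_{\vc\alpha}\charfun_R\|_p : u=\sum_{j,\vc\alpha}\lambda^{(j)}_{\vc\alpha}B^{(j)}_{\vc\alpha}\Big\}
\]
need not vary continuously as $R$ degenerates, so the compactness argument of Proposition~\ref{prop:uniform_parallelity} (which relied on the \emph{linear independence} of the single Bernstein basis to identify both sides as positive-definite quadratic forms) does not carry over.

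The paper sidesteps this entirely by constructing explicit, ring-dependent linear projections $U_{\vc m}:\mathscr P_{\vc r}\to\mathscr P_{\vc m}$ that are bounded on $L^p(R)$ uniformly in $R$ (Proposition~\ref{lem.hermite.multi}). These are tensor products of one-dimensional two-point Hermite interpolants $H_{m_0,m_1}$, where the multiplicities $m_0,m_1$ at the two endpoints are chosen \emph{adaptively} according to the relative sizes of $R_-$ and $R_+$ (Lemma~\ref{lem:hermite}); this adaptive choice is precisely what delivers the uniform bound without any limiting argument. The projections commute, $U_{\vc m}\circ U_{\vc k}=U_{\min(\vc m,\vc k)}$, so the inclusion--exclusion combination
\[
W_M=\sum_{\emptyset\neq B\subset\{1,\dots,s\}}(-1)^{\card B+1}U_{\vc m_B},\qquad \vc m_B=\min_{j\in B}\vc m^{(j)},
\]
equals the identity on $\mathscr P_M$ (Proposition~\ref{prop:decomp}). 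Writing $u=\sum_B(-1)^{\card B+1}U_{\vc m_B}u$, each summand lies in $\mathscr P_{\vc m_B}\subset\mathscr P_{\vc m^{(j)}}$ for any $j\in B$, and the uniform $L^p(R)$-bound on each $U_{\vc m_B}$ gives $(*_2)$ directly---no compactness, no degenerate strata.
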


 Combining Theorem \ref{theo.span} with Proposition \ref{prop:multivariate_bernstein} and Theorem \ref{thm:main} we get the geometric characterization of Bernstein inequality $\operatorname{BI}(\mathscr A, \mathscr P_M,p,\tau)$:
\begin{co}
\label{co.geometric}
Let $M = \{ \vc{m}^{(1)},\ldots,\vc{m}^{(s)}\}$ 
and $\mathscr P_{M} = \Span\{ \mathscr P_{\vc{m}^{(1)}}, \ldots, \mathscr P_{\vc{m}^{(s)}} \}$, $1 < p < \infty$ and $0<\tau <p$. 
Then Bernstein inequality $\operatorname{BI}(\mathscr A, \mathscr P_M,p,\tau)$ is satisfied if and only if condition \eqref{eq:bernstein_special_multi} of Proposition \ref{prop:multivariate_bernstein} is satisfied for each $\vc{m} \in M$.
\end{co}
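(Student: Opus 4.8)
The plan is to combine Theorem~\ref{thm:main}, Theorem~\ref{theo.span} and Proposition~\ref{prop:multivariate_bernstein}: by Theorem~\ref{thm:main} the Bernstein inequality $\operatorname{BI}(\mathscr A,\mathscr P_M,p,\tau)$ is equivalent to $\wIIs(\mathscr A,\mathscr P_M,p,\tau)$; by Theorem~\ref{theo.span} this is in turn equivalent to $\wIIs(\mathscr A,\mathscr P_{\vc{m}},p,\tau)$ holding for every $\vc{m}\in M$; and by Proposition~\ref{prop:multivariate_bernstein} each of these is equivalent to condition~\eqref{eq:bernstein_special_multi} for the corresponding $\vc{m}$. (Invoking Theorem~\ref{thm:main} and Proposition~\ref{prop:multivariate_bernstein} requires that $\mathscr P_M$ and each $\mathscr P_{\vc{m}}$ satisfy~\eqref{eq:L1Linfty} for every rectangle, which is the standard Remez-type estimate for finite-dimensional spaces of polynomials on a box.) Thus the real content is Theorem~\ref{theo.span}, which I would prove as follows. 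One implication is free: $\mathscr P_{\vc{m}}\subset\mathscr P_M$ for $\vc{m}\in M$, so the inequality defining $\wIIs(\mathscr A,\mathscr P_M,p,\tau)$ --- demanded for all $f\in\mathscr P_M$ --- in particular holds for all $f\in\mathscr P_{\vc{m}}$.

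For the converse I would apply the stability result Proposition~\ref{prop:stability_weaker} with $U_j=\mathscr P_{\vc{m}^{(j)}}$: since $\wIIs(\mathscr A,\mathscr P_{\vc{m}^{(j)}},p,\tau)$ holds for each $j$ by hypothesis, it only remains to verify condition $(*_2)$, namely that for every ring $R$ and every $u\in\mathscr P_M$ there is a decomposition $u=u_1+\dots+u_s$ with $u_j\in\mathscr P_{\vc{m}^{(j)}}$ and $\sum_{j=1}^{s}\|u_j\charfun_R\|_p\lesssim\|u\charfun_R\|_p$, with implicit constant independent of $R$. Using coordinate-affine invariance of $\mathscr P_M$ and of all the $\mathscr P_{\vc{m}^{(j)}}$, I would first reduce to the case where the smallest rectangle containing $R$ is the cube $[0,1]^d$, so that $R=\bigcup_{i=1}^{2d}I_i$ with each $I_i$ a slab as in~\eqref{eq:Is} (one coordinate interval a subinterval touching $0$ or $1$, the others equal to $[0,1]$). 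Then Proposition~\ref{prop:almost_orthogonality} --- applicable because $\mathscr P_M$ satisfies~\eqref{eq:L1Linfty} for rectangles --- further reduces $(*_2)$ to producing, for each such $R$ and $u$, a decomposition $u=\sum_j u_j$ with $u_j\in\mathscr P_{\vc{m}^{(j)}}$ such that the $L^2$ near-orthogonality $\sum_j\|u_j\charfun_K\|_2^2\lesssim\|u\charfun_K\|_2^2$ holds on every slab $K\in\{I_1,\dots,I_{2d}\}$, uniformly over directions, sides, and thicknesses.

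The decomposition I would build from the tensor-product Bernstein polynomials of multidegree $\vc{m}^{(j)}$ rescaled to $[0,1]^d$: these span $\mathscr P_{\vc{m}^{(j)}}$ and, by Corollary~\ref{cor:stab_bernstein}, are unconditional on $R$ --- hence, via~\eqref{eq:L1Linfty}, on every slab $K$ --- with uniform constants, and a given $u$ gets distributed among the summands by a choice of coefficients (allowed to depend on $u$ and on $R$) with respect to the linearly dependent combined Bernstein system. The step I expect to be the main obstacle is that this single decomposition must be $L^2$ near-orthogonal on all $2d$ slabs simultaneously, while the left- and right-hand slabs in a fixed coordinate direction impose competing constraints: already for $\mathscr P_M=\Span\{1,x_1,\dots,x_d\}$ one checks that the naive monomial splitting fails on slabs abutting the far face and that no $u$-independent splitting can work. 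Bernstein polynomials are precisely the balanced system that separates, with distinct vanishing orders, near both ends of each coordinate interval, and one must exploit this while choosing, for each $u$, a splitting adapted to the faces on which $u$ is nearly vanishing. To make the resulting estimate uniform as a slab's thickness shrinks to $0$, I would reuse the renormalization-and-compactness scheme from the proof of Proposition~\ref{prop:uniform_parallelity}, where on each slab the relevant Gram and diagonal matrices converge to fixed positive-definite limits and compactness handles the range of thicknesses bounded away from $0$. Once $(*_2)$ is in hand, Proposition~\ref{prop:stability_weaker} delivers $\wIIs(\mathscr A,\mathscr P_M,p,\tau)$, and Corollary~\ref{co.geometric} follows from the chain of equivalences above.
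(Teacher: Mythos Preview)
Your derivation of the corollary itself---chaining Theorem~\ref{thm:main}, Theorem~\ref{theo.span} and Proposition~\ref{prop:multivariate_bernstein}---is exactly what the paper does, and the remark about \eqref{eq:L1Linfty} for polynomial spaces on rectangles is appropriate.

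Where your proposal parts company with the paper is in the embedded sketch of Theorem~\ref{theo.span}, and there the gap is real. You correctly reduce to condition $(*_2)$ via Proposition~\ref{prop:stability_weaker}, but you never actually construct the decomposition $u=u_1+\cdots+u_s$: you describe the properties it must have, note the tension between opposite slabs, and then gesture at Bernstein polynomials and the compactness device of Proposition~\ref{prop:uniform_parallelity}. That compactness argument, however, is tailored to a \emph{fixed linearly independent} system (the Bernstein basis of a single $\mathscr P_{\vc r}$), where the Gram matrices on each slab converge to a positive-definite limit. In your situation the ``combined Bernstein system'' is linearly dependent, the splitting is to be chosen depending on $u$ and $R$, and there is no clear candidate for the limiting object as a slab thickness tends to $0$; without an explicit choice of coefficients there is nothing for compactness to act on.

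The paper resolves this by an entirely different mechanism. It builds, for each ring $R$, a commuting family of \emph{linear} projections $U_{\vc m}:\mathscr P_{\vc r}\to\mathscr P_{\vc m}$ (tensor products of two-point Hermite interpolants, with the split between interpolation orders at the two endpoints chosen from $R$; see Lemmas~\ref{lem:hermite}--\ref{lem:hermite_2} and Proposition~\ref{lem.hermite.multi}) that are uniformly bounded on $L^p(R)$. Then an inclusion--exclusion identity (Proposition~\ref{prop:decomp}) gives
\[
u \;=\; \sum_{\emptyset\neq B\subset\{1,\ldots,s\}} (-1)^{|B|+1}\, U_{\vc m_B} u,\qquad \vc m_B=\min_{j\in B}\vc m^{(j)},
\]
a decomposition into $2^s-1$ pieces, each lying in some $\mathscr P_{\vc m_B}\subset\mathscr P_{\vc m^{(j)}}$, and each bounded on $L^p(R)$ directly---no recourse to Proposition~\ref{prop:almost_orthogonality} or to compactness is needed. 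This Hermite-plus-inclusion--exclusion construction is the missing idea in your sketch.
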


Concerning the proof of Theorem \ref{theo.span}, it is clear that if condition $\wIIs(\mathscr A,\mathscr P_M,p,\tau)$
is satisfied, then   $\wIIs(\mathscr A,\mathscr P_\vc{m},p,\tau)$ is satisfied for each $ \vc{m} \in M $.
Our proof of the converse implication relies on  Proposition~\ref{prop:stability_weaker}.
Looking at Proposition~\ref{prop:stability_weaker}, we see that the only thing
missing here is some inequality similar to the one in Corollary~\ref{cor:stab_bernstein},
but for a certain decomposition of $u\in\mathscr P_M$ into $u_1 + \cdots + u_s$ with
$u_j\in \mathscr P_{\vc{m}^{(j)}}$. The construction of uch a decomposition
is the goal of this section, and it relies on the analytical Proposition \ref{lem.hermite.multi}, 
and the combinatorial  Proposition \ref{prop:decomp}.

We begin with the analytical part of the proof. First, we  give some univariate ingredients.
Fix $m_0,m_1 \geq 0$ and consider the interpolation operator $H_{m_0,m_1}$
that maps sufficiently regular functions $f$ on $[0,1]$ to the unique
polynomial $g$ of degree $m_0 + m_1 - 1$ on $[0,1]$ satisfying the conditions
\begin{align*}
	g^{(k_0)}(0) &= f^{(k_0)}(0),\qquad k_0 = 0,\ldots,m_0-1, \\
	g^{(k_1)}(1) &= f^{(k_1)}(1),\qquad k_1 = 0,\ldots,m_1-1.
\end{align*}
For $m\geq 1$, denote by 
\[
	\mathscr H_m = \{ H_{j,m-j} : j=0,\ldots,m\}	
\]
the set of the projections defined above whose  
range is  $\mathscr P_{m-1}$. In particular,
each operator $H_{j,m-j}$ can be viewed as a projection from $\mathscr P_m$ to 
$\mathscr P_{m-1}$.

\begin{lem}\label{lem:hermite}
	Fix $1\leq p\leq \infty$ and a non-negative integer $m$. Then, there exists a 
	constant $C$ such that for each ring $R = R_- \cup R_+$ we can choose $H\in \mathscr H_m$
	such that 
	\[
		\| H : (\mathscr P_m, L^p(R)) \to (\mathscr P_{m-1},L^p(R)) \| \leq C.
	\]	

	More precisely, this choice is  $H = H_{j,m-j}$ with $j$ satisfying
	\[
		|R_-|^{j+1/p} + |R_+|^{m-j+1/p} = \min_{0\leq i\leq m}	\big(|R_-|^{i+1/p} + |R_+|^{m-i+1/p}\big).
	\]
\end{lem}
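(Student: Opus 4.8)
The plan is to reduce the operator-norm bound to two size estimates for the model polynomials $p_j(x):=x^j(x-1)^{m-j}$ ($j=0,\dots,m$, each monic of degree $m$), and then to prove those estimates by a compactness argument. If $m=0$ there is nothing to prove, and if $R$ is an interval then every $H\in\mathscr H_m$ is a bounded — hence, by scale invariance, uniformly bounded — linear map between the finite-dimensional spaces $\mathscr P_m$ and $\mathscr P_{m-1}$; so I may assume $R$ is disconnected. Since $\|H:(\mathscr P_m,L^p(R))\to(\mathscr P_{m-1},L^p(R))\|$ is invariant under affine rescaling of $R$, I would rescale the smallest interval $I$ containing $R$ to $[0,1]$, so that $R_-=[0,\delta_-]$ and $R_+=[1-\delta_+,1]$ with $\delta_\pm\in(0,1)$ and $\delta_-+\delta_+<1$. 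A direct computation of the Hermite remainder gives $H_{j,m-j}f=f-c(f)\,p_j$ for $f\in\mathscr P_m$, where $c(f)$ is the leading coefficient of $f$: indeed $x^m-H_{j,m-j}(x^m)$ is a degree-$m$ polynomial vanishing to order $j$ at $0$ and to order $m-j$ at $1$ with leading coefficient $1$, hence equals $p_j$. Setting $D_R:=\operatorname{dist}_{L^p(R)}(x^m,\mathscr P_{m-1})$ — equivalently, the infimum of $\|h\|_{L^p(R)}$ over monic polynomials $h$ of degree $m$, which is strictly positive — and using $|c(f)|\le\|f\|_{L^p(R)}/D_R$, I obtain
\[
\big\|H_{j,m-j}:(\mathscr P_m,L^p(R))\to(\mathscr P_{m-1},L^p(R))\big\|\ \le\ 1+\frac{\|p_j\|_{L^p(R)}}{D_R}.
\]
So it suffices to find some $j$ with $\|p_j\|_{L^p(R)}\lesssim D_R$, and I will take $j=j^\ast$ to be a minimizer of $w_j:=\delta_-^{j+1/p}+\delta_+^{m-j+1/p}=|R_-|^{j+1/p}+|R_+|^{m-j+1/p}$ (with $1/p:=0$ when $p=\infty$).

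The two estimates I need are: (a) $\|p_j\|_{L^p(R)}\simeq w_j$; and (b) $D_R\gtrsim\min_{0\le j\le m}w_j$, both with constants depending only on $m$ and $p$. For (a) I restrict $p_j$ to $R_-$, substitute $x=\delta_- t$ on $[0,\delta_-]$, and factor out the power $\delta_-^j$ coming from $x^j$, leaving the renormalized polynomials $\widehat p_j(t;\delta):=t^j(\delta t-1)^{m-j}$, which are jointly continuous in $(t,\delta)\in[0,1]\times[0,1]$ and, for each fixed $\delta\in[0,1]$, linearly independent (distinct orders of vanishing at $t=0$). A compactness argument on $[0,1]\times\{\gamma\in\rr^{m+1}:|\gamma|=1\}$ then gives $\|\sum_j\gamma_j\widehat p_j(\cdot;\delta)\|_{L^p[0,1]}\simeq\max_j|\gamma_j|$ uniformly in $\delta$, whence $\|p_j\|_{L^p(R_-)}\simeq\delta_-^{j+1/p}$; the symmetric computation near $x=1$ (writing $s=1-x$) yields $\|p_j\|_{L^p(R_+)}\simeq\delta_+^{m-j+1/p}$, and adding the two disjoint pieces proves (a).

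For (b) I change basis. Each $p_j$ is monic of degree $m$ and $\{p_j\}_{j=0}^m$ is a basis of $\mathscr P_m$ (again by the distinct vanishing orders at $0$), so $\mathscr P_{m-1}=\{\sum_j\alpha_j p_j:\sum_j\alpha_j=0\}$ and $x^m=p_m$, which gives
\[
D_R=\inf\Big\{\Big\|\sum_{j=0}^m\beta_j p_j\Big\|_{L^p(R)}\ :\ \sum_{j=0}^m\beta_j=1\Big\}.
\]
Applying the same restrict–rescale–compactness reasoning to the full combination $\sum_j\beta_j p_j$ gives $\|\sum_j\beta_j p_j\|_{L^p(R)}\simeq\max_j|\beta_j|\,w_j$; and since $\sum_j\beta_j=1$ forces $|\beta_{j_0}|\ge1/(m+1)$ for some $j_0$, this is $\gtrsim\min_j w_j$, proving (b). Combining (a) (with $\beta=e_{j^\ast}$, so $\|p_{j^\ast}\|_{L^p(R)}\simeq w_{j^\ast}$) with (b), I get $\|p_{j^\ast}\|_{L^p(R)}\simeq\min_j w_j\lesssim D_R$, hence the displayed operator-norm bound is at most $1+C$, which is the assertion, with the explicit choice of $j=j^\ast$ as stated.

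I expect the main obstacle to be the uniform form of estimates (a) and (b): one must show that, after restricting $p_j$ (or a linear combination) to one side of the ring and renormalizing, the resulting polynomials behave like monomials with constants independent of the shape parameters $\delta_\pm$, including the degenerate regimes $\delta_\pm\to0$ and $\delta_\pm\to1$. The compactness argument handles this, the only point requiring a moment's attention being that $\{\widehat p_j(\cdot;\delta)\}_{j=0}^m$ stays linearly independent at the endpoints $\delta=0$ (where $\widehat p_j(t;0)=(-1)^{m-j}t^j$) and $\delta=1$; everything else is routine rescaling and change-of-basis bookkeeping.
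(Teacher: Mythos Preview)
Your proof is correct and takes a genuinely different route from the paper's.

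The paper expands $H_{m_0,m_1}w$ explicitly in the Hermite interpolation basis $u_{\ell_0,0},u_{\ell_1,1}$, estimates each basis function on $R$ using its factored form $x^{\ell_0}(1-x)^{m_1}v(x)$, bounds the derivative data $w^{(k)}(0),w^{(k)}(1)$ in terms of Bernstein coefficients, and then invokes the previously established Bernstein-basis stability on rings (Corollary~\ref{cor:stab_bernstein}) to compare with $\|w\|_{L^p(R)}$. Your approach bypasses the full expansion via the single-line remainder formula $H_{j,m-j}f=f-c(f)\,p_j$, which reduces the operator bound to the ratio $\|p_{j^\ast}\|_{L^p(R)}/D_R$; you then control both numerator and denominator by a direct, self-contained compactness argument on the rescaled family $\widehat p_j(t;\delta)=t^j(\delta t-1)^{m-j}$. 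Your route is shorter and more conceptual, and it does not rely on the Bernstein-basis machinery developed earlier in the paper; the paper's route, on the other hand, reuses that machinery and thereby fits more organically into the surrounding framework. Both ultimately hinge on a compactness argument guaranteeing uniform equivalence constants as the ring degenerates; yours makes this step explicit and local to the lemma, while the paper's is hidden in Proposition~\ref{prop:uniform_parallelity}.
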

\begin{proof}
	Let $m_0,m_1$ be non-negative integers with $m=m_0 + m_1$.
	In the course of the proof, we use the functions $u_{\ell_0,0}, u_{\ell_1,1}\in \mathscr P_{m-1}$
	for $\ell_0 \in \{0,\ldots, m_0 - 1\}$ and $\ell_1\in \{0,\ldots,m_1-1\}$	that
	are given by the conditions
	\begin{align*}
		u_{\ell_0,0}^{(\ell_0)}(0) &= 1, \\
		u_{\ell_0,0}^{(k_0)}(0) &= u_{\ell_0,0}^{(k_1)}(1)=0,\qquad \text{for }
		0\leq k_0\neq \ell_0\leq m_0 -1\text{ and } 0\leq k_1\leq m_1-1
	\end{align*}
	and 
	\begin{align*}
		u_{\ell_1,1}^{(\ell_1)}(1) &= 1, \\
		u_{\ell_1,1}^{(k_0)}(0) &= u_{\ell_1,1}^{(k_1)}(1)=0,\qquad \text{for }
		0\leq k_0\leq m_0 -1\text{ and } 0\leq k_1\neq \ell_1\leq m_1-1.
	\end{align*}
	Observe that 
	\begin{align*}
		u_{\ell_0,0}(x) = x^{\ell_0} (1-x)^{m_1} v_{\ell_0,0}(x),\qquad\text{and}\qquad
		u_{\ell_1,1}(x) =  x^{m_0} (1-x)^{\ell_1} v_{\ell_1,1}(x)
	\end{align*}
	for some polynomials $v_{\ell_0,0}\in \mathscr P_{m_0-1-\ell_0}$ and 
	$v_{\ell_1,1}\in \mathscr P_{m_1-1-\ell_1}$. Since the polynomials $u_{\ell_0,0}, u_{\ell_1,1}$
	as well as the polynomials $v_{\ell_0,0},v_{\ell_1,1}$ do not depend on the 
	choice of the ring $R = R_-\cup R_+$ with $R_- = [0,\alpha]$ and $R_+=[1-\beta,1]$ for some numbers $\alpha,\beta$,
	 we obtain for all possible values of $\ell_0,\ell_1$
	\begin{equation}\label{eq:hermite_u}
		\begin{aligned}
			\|u_{\ell_0,0}\|_{L^p(R)} &\lesssim |R_-|^{\ell_0+1/p} + |R_+|^{m_1 + 1/p} = \alpha^{\ell_0 + 1/p} + \beta^{m_1 + 1/p}, \\
			\|u_{\ell_1,1}\|_{L^p(R)} &\lesssim |R_-|^{m_0 + 1/p} + |R_+|^{\ell_1 + 1/p} = \alpha^{m_0 + 1/p} + \beta^{\ell_1 + 1/p}. 
		\end{aligned}
	\end{equation}

	Now, let $w\in \mathscr P_m$ and estimate the projection 
	$H_{m_0,m_1}w = \sum_{\ell_0=0}^{m_0-1} w^{(\ell_0)}(0) u_{\ell_0,0}
					+ \sum_{\ell_1=0}^{m_1-1} w^{(\ell_1)}(1) u_{\ell_1,1}$ 
	by \eqref{eq:hermite_u} as follows:
	\begin{equation}\label{eq:hermite}
		\begin{aligned}
				\| &H_{m_0,m_1} w\|_{L^p(R)}	\\
				&\lesssim 
				\sum_{\ell_0=0}^{m_0-1} |w^{(\ell_0)}(0)| (\alpha^{\ell_0+1/p} + \beta^{m_1+1/p})
				+ \sum_{\ell_1=0}^{m_1-1} |w^{(\ell_1)}(1)| (\alpha^{m_0+1/p} + \beta^{\ell_1 + 1/p}).
		\end{aligned}
	\end{equation}
	Next, we want to estimate $|w^{(\ell_0)}(0)|$ and $|w^{(\ell_1)}(1)|$ in 
	terms of the coefficients of $w$ with respect to the
	Bernstein polynomials $B_i(x) = x^i(1-x)^{m-i}$ of degree $m$. Observe that
	if $w = \sum_{i=0}^m a_i B_i$, we have 
	for $k = 0,\ldots,m$
	\begin{align*}
		w^{(k)}(0) &= \sum_{i=0}^m a_i B_i^{(k)}(0) =  	\sum_{i=0}^k a_i B_i^{(k)}(0), \\
		w^{(k)}(1) &= \sum_{i=0}^m a_i B_i^{(k)}(1) =  	\sum_{i=m-k}^m a_i B_i^{(k)}(1). \\
	\end{align*}
	Thus, there exists a constant $C$ depending only on $m$ such that for all $k = 0,\ldots,m$,
	\[
		|w^{(k)}(0)|\leq C \sum_{i=0}^k |a_i|,\qquad \text{and}\qquad 
		|w^{(k)}(1)| \leq C	\sum_{i=m-k}^m |a_i|.
	\]
	Inserting those estimates into \eqref{eq:hermite} and exchanging the sums gives
	\begin{align*}
		\|H_{m_0,m_1} w\|_{L^p(R)} \lesssim 
		&\sum_{i=0}^{m_0-1} |a_i|\sum_{\ell_0=i}^{m_0-1} (\alpha^{\ell_0+1/p} + \beta^{m_1+1/p}) \\
		&+ \sum_{i=m_0+1}^{m}|a_i| \sum_{\ell_1 = m - i}^{m_1-1} (\alpha^{m_0 + 1/p} + \beta^{\ell_1+1/p}).
	\end{align*}
	Since $\alpha,\beta\leq 1$, we further obtain
	\begin{equation}\label{eq:Hw}
		\|H_{m_0,m_1} w\|_{L^p(R)} \lesssim \sum_{i=0}^{m_0-1}|a_i|(\alpha^{i+1/p} + \beta^{m_1+1/p})	
		+ \sum_{i=m_0+1}^{m_0 + m_1} |a_i| ( \alpha^{m_0 + 1/p} + \beta^{m-i+1/p}).
	\end{equation}

	By Corollary~\ref{cor:stab_bernstein} for dimension $d=1$ and evaluating
	the $p$-norm on $R$ for the functions $B_i$, 
	\begin{equation}\label{eq:w}
		\|w\|_{L^p(R)} \simeq \sum_{i=0}^m |a_i| (\alpha^{i+1/p} + \beta^{m-i+1/p}).		
	\end{equation}
	We want to 
	choose $m_0,m_1$ with $m_0 + m_1 = m$ such that the 
	right hand side of \eqref{eq:Hw} can be estimated from above by 
	the right hand side of \eqref{eq:w}.
	This can be achieved if $m_0,m_1$ are such that
	\[
		\alpha^{m_0 + 1/p}	 + \beta^{m_1+1/p} = \min_{0\leq i\leq m} \big(\alpha^{i+1/p} + \beta^{m-i+1/p}\big).
	\]	
	Indeed, this implies
	\begin{align*}
		\beta^{m_1 + 1/p} &\leq \alpha^{i+1/p} + \beta^{m-i+1/p},\qquad 0\leq i\leq m_0-1, \\
		\alpha^{m_0 + 1/p} &\leq \alpha^{i+1/p} + \beta^{m-i+1/p},\qquad m_0 + 1\leq i\leq m.
	\end{align*}
	Comparing \eqref{eq:Hw} and \eqref{eq:w}, this gives the desired estimate.
\end{proof}

\begin{rem}\label{rem:Hm}
	Reading the above proof with $\alpha=\beta=1/2$, we also obtain that	
	the operator $H$ is also bounded by some constant from $L^p[0,1]$ to $L^p[0,1]$
	for $1\leq p\leq \infty$. All operators contained in $\mathscr H_m$ in fact have that
	property.
\end{rem}

\begin{lem}\label{lem:hermite_2}
	Fix $1\leq p\leq \infty$ and a positive integer $r$. 
	There exists a constant $C$ such that for each ring $R$, and each
	$m = 0,\ldots,r-1$, there is a linear projection $U_m : \mathscr P_r \to \mathscr P_m$, depending on $R$ and $p$, such that
	\begin{enumerate}
		\item \label{it:bound}$\| U_m : (\mathscr P_r, L^p(R)) \to (\mathscr P_m,L^p(R))\| \leq C$,
		\item \label{it:boundinterval}$\| U_m : (\mathscr P_r, L^p[0,1]) \to (\mathscr P_m,L^p[0,1])\| \leq C$,
		\item \label{it:commute}$ U_m \circ U_k = U_k\circ U_m = U_{\min(k,m)}$ for $0\leq k,m\leq r-1$,
		\item \label{it:id}$U_m = \operatorname{Id}$ on $\mathscr P_m$.
	\end{enumerate}
\end{lem}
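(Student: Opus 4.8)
The plan is to obtain all four properties by \emph{not} choosing the projections $U_m$ independently, but by building them as nested compositions of the one‑step Hermite projections furnished by Lemma~\ref{lem:hermite}; with that choice, property \eqref{it:commute} degenerates into a telescoping identity.

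Fix the ring $R = R_-\cup R_+$ and the exponent $p$. For every integer $k=1,\dots,r$, apply Lemma~\ref{lem:hermite} with $k$ in place of $m$ to obtain an operator $\tilde H_k\in\mathscr H_k$, i.e.\ $\tilde H_k = H_{j_k,\,k-j_k}$ for a suitable split $j_k=j_k(R,p)$, which is a linear projection of $\mathscr P_k$ onto $\mathscr P_{k-1}$ and which satisfies
\[
	\bigl\| \tilde H_k : (\mathscr P_k, L^p(R)) \to (\mathscr P_{k-1}, L^p(R)) \bigr\| \le C_k ,
\]
where $C_k$ depends only on $p$ and $k$ and \emph{not} on $R$; by Remark~\ref{rem:Hm} the same operator also satisfies $\|\tilde H_k : L^p[0,1]\to L^p[0,1]\|\le C_k'$ for a constant $C_k'$ depending only on $p$ and $k$. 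I would then set, for $m=0,\dots,r-1$,
\[
	U_m := \tilde H_{m+1}\circ \tilde H_{m+2}\circ\cdots\circ \tilde H_r \colon \mathscr P_r \to \mathscr P_m ,
\]
which is a well-defined linear map because the ranges telescope through $\mathscr P_{r-1},\mathscr P_{r-2},\dots,\mathscr P_m$, and which of course depends on $R$ and $p$ through the choices of the $\tilde H_k$.

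Properties \eqref{it:bound} and \eqref{it:boundinterval} are then immediate from submultiplicativity of operator norms: $\|U_m\| \le \prod_{k=m+1}^r C_k \le \prod_{k=1}^r C_k$ as an operator on $(\mathscr P_r, L^p(R))$, and likewise $\|U_m\|\le \prod_{k=1}^r C_k'$ on $(\mathscr P_r, L^p[0,1])$, so one may take the single constant $C := \prod_{k=1}^r(C_k + C_k')$, which depends only on $p$ and $r$. Property \eqref{it:id} holds because for every $j\ge m+1$ the operator $\tilde H_j$ acts as the identity on $\mathscr P_{j-1}\supseteq\mathscr P_m$, hence so does their composition $U_m$; in particular $\operatorname{ran}U_m=\mathscr P_m$. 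For \eqref{it:commute} it suffices, by symmetry, to treat $k\le m$: since $U_k$ maps $\mathscr P_r$ into $\mathscr P_k\subseteq\mathscr P_m$ and $U_m$ restricts to the identity on $\mathscr P_m$ by \eqref{it:id}, we get $U_m\circ U_k = U_k$; conversely, restricting the composition $U_k=\tilde H_{k+1}\circ\cdots\circ\tilde H_r$ to the range $\mathscr P_m$ of $U_m$ and using once more that $\tilde H_j$ is the identity on $\mathscr P_m$ for every $j\ge m+1$, the factors $\tilde H_{m+1},\dots,\tilde H_r$ all drop out, so $U_k\circ U_m = \tilde H_{k+1}\circ\cdots\circ\tilde H_r = U_k$. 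Hence $U_k\circ U_m = U_m\circ U_k = U_{\min(k,m)}$.

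I expect essentially no obstacle in this lemma once Lemma~\ref{lem:hermite} is in hand: the analytic difficulty — producing a Hermite interpolation operator whose $L^p(R)$ operator norm is bounded uniformly in the ring $R$ — is entirely absorbed into that lemma, and the rest is bookkeeping. The single design point worth flagging is the insistence on \emph{nested} compositions of fixed one-step projections: had one instead selected, for each $m$ separately, an optimal Hermite projection $\mathscr P_r\to\mathscr P_m$, the commutation relation \eqref{it:commute} would force an additional combinatorial argument showing that the optimal endpoint multiplicities can be chosen simultaneously non-decreasing in $m$, which the composition construction sidesteps altogether.
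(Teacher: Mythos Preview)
Your proposal is correct and follows exactly the same approach as the paper: define $U_m$ as the nested composition $H_{m+1}\circ\cdots\circ H_r$ of the one-step Hermite projections from Lemma~\ref{lem:hermite}, then read off properties \eqref{it:bound}--\eqref{it:id} from submultiplicativity and the telescoping structure. The only cosmetic difference is that the paper verifies \eqref{it:commute} via the factorization $U_k = H_{k+1}\circ\cdots\circ H_m\circ U_m$ together with $U_m\circ U_m = U_m$, whereas your final displayed equality ``$U_k\circ U_m = \tilde H_{k+1}\circ\cdots\circ\tilde H_r = U_k$'' is slightly awkwardly phrased (the middle term is $U_k$ by definition), though the reasoning preceding it is sound.
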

\begin{proof}
	For the fixed ring $R = [0,\alpha] \cup [1-\beta,1]$, let 
	$H_\ell : \mathscr P_\ell\to \mathscr P_{\ell-1}$ be the projection given by
	Lemma~\ref{lem:hermite} (for $m=\ell$).
	Define $U_m := H_{m+1}\circ H_{m+2} \circ \cdots\circ H_r$.
	The existence of the constant $C$ satisfying \eqref{it:bound} and 
	\eqref{it:boundinterval} follows then from
	Lemma~\ref{lem:hermite} and Remark~\ref{rem:Hm} as well as item \eqref{it:id}. It remains to check \eqref{it:commute}.
	Let $m\geq k$. Since $U_k f\in \mathscr P_k\subset \mathscr P_m$
	and $U_m = \operatorname{Id}$ on $\mathscr P_m$, we get $U_m ( U_k f) = U_kf$.
	On the other hand,
	\[
		U_k = H_{k+1}\circ \cdots \circ H_m \circ H_{m+1}\circ\cdots\circ H_r 
		= H_{k+1}\circ\cdots\circ H_m\circ U_m.		
	\]
	Therefore, 
	\[
		U_k (U_m f) = H_{k+1}\circ\cdots \circ H_m (U_m\circ U_m f)	
		= H_{k+1}\circ\cdots\circ H_m (U_m f) = U_k f,
	\]
	which finishes the proof of \eqref{it:commute} and thus the proof of the lemma as well.
\end{proof}

To formulate Proposition \ref{lem.hermite.multi}, we need the multivariate versions of the projections
from Lemma \ref{lem:hermite_2}.
For this, fix $1 \leq p \leq \infty$ and $\vc{r} = (r_1, \ldots, r_d)$.
Take a ring $R = [0,1]^d\setminus J$ for some rectangle $J= J^1 \times \ldots \times J^d$.
For each $i = 1, \ldots, d$, let $\{U_{m_i}^{(i)}: 0 \leq m_i \leq r_i-1\}$ be the sequence of projections from 
Lemma~\ref{lem:hermite_2}, corresponding to $r = r_i$,  $R^i = [0,1] \setminus J^i$ and $p$; in addition, let 
$U_{r_i}^{(i)} = \operatorname{Id}$. Now, put
\begin{equation}
\label{eq.projection}
U_\vc{m} = U_{m_1}^{(1)}\otimes\cdots \otimes U_{m_d}^{(d)},
\quad \vc{0} \leq \vc{m} \leq \vc{r}.
\end{equation}
Now, we are ready to formulate the analytical ingredient of the proof of Theorem \ref{theo.span}:
\begin{prop}
\label{lem.hermite.multi} 
Fix $1 \leq p \leq \infty$ and $\vc{r}$.
Let $R = [0,1]^d\setminus J$ for some rectangle $J= J^1 \times \ldots \times J^d$, and let $U_\vc{m}$ be defined by \eqref{eq.projection}.

Then, there is a constant $C= C(\vc{r},p,d)$ such that for each $\vc{0} \leq \vc{m} \leq \vc{r}$ 
\begin{enumerate}
		\item \label{it:bound.m}$\| U_\vc{m} : (\mathscr P_\vc{r}, L^p(R)) \to (\mathscr P_\vc{m},L^p(R))\| \leq C$,
		\item \label{it:boundinterval.m}$\| U_\vc{m} : (\mathscr P_\vc{r}, L^p[0,1]^d) \to (\mathscr P_\vc{m},L^p[0,1])\| \leq C$,
		\item \label{it:commute.m}$ U_\vc{m} \circ U_\vc{k} = U_\vc{k}\circ U_\vc{m} = U_{\min(\vc{k},\vc{m})}$ for $\vc{0} \leq \vc{k}, \vc{m}\leq \vc{r}$, where $\min(\vc{k},\vc{m})$ is the $d$-tuple defined by   taking the minimum coordinate-wise. 
		\item \label{it:id.m}$U_\vc{m} = \operatorname{Id}$ on $\mathscr P_\vc{m}$.
	\end{enumerate}
\end{prop}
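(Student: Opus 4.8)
The plan is to reduce all four assertions to the univariate Lemma~\ref{lem:hermite_2} via a tensorization argument, using that $\mathscr P_\vc{r} = \mathscr P_{r_1}\otimes\cdots\otimes\mathscr P_{r_d}$ as a space of polynomials of coordinate degree at most $\vc{r}$, and that by definition $U_\vc{m}$ acts on $\mathscr P_\vc{r}$ as the tensor product of the univariate projections $U_{m_i}^{(i)}$. Properties \eqref{it:commute.m} and \eqref{it:id.m} then follow immediately from their univariate counterparts: applying the identities $U_{m_i}^{(i)}\circ U_{k_i}^{(i)} = U_{k_i}^{(i)}\circ U_{m_i}^{(i)} = U_{\min(k_i,m_i)}^{(i)}$ and $U_{m_i}^{(i)} = \operatorname{Id}$ on $\mathscr P_{m_i}$ separately in each coordinate, and noting that $\min(\vc{k},\vc{m})$ is computed coordinate-wise and $\mathscr P_\vc{m} = \mathscr P_{m_1}\otimes\cdots\otimes\mathscr P_{m_d}$.

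For the norm estimates I would use the standard fact that a tensor product $T_1\otimes\cdots\otimes T_d$ of operators, each bounded on $L^p$ of the corresponding factor of a product measure space, is bounded on $L^p$ of the product with operator norm at most $\prod_i\|T_i\|$; this is proved by estimating one coordinate at a time and invoking Fubini, and there are no closure subtleties here because $U_\vc{m}$ is a finite-rank operator acting on the finite-dimensional space $\mathscr P_\vc{r}$. Applying this on $[0,1]^d = [0,1]\times\cdots\times[0,1]$ together with item~\eqref{it:boundinterval} of Lemma~\ref{lem:hermite_2} immediately yields \eqref{it:boundinterval.m}.

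The only place where the geometry of the ring enters is \eqref{it:bound.m}, since $R = [0,1]^d\setminus J$ is not a product set. Writing $J = J^1\times\cdots\times J^d$ and $R^i = [0,1]\setminus J^i$, I would cover $R$ by the $d$ product rectangles
\[
	K_s = [0,1]\times\cdots\times[0,1]\times R^s\times[0,1]\times\cdots\times[0,1],\qquad s = 1,\ldots,d,
\]
with $R^s$ in the $s$-th slot: each $K_s$ is a product of intervals whose $s$-th factor is the univariate ring $R^s$, one checks $K_s\subseteq R$ and $R = \bigcup_{s=1}^d K_s$ (since $x\in R$ iff $x_s\notin J^s$ for some $s$), so $\|v\charfun_R\|_p^p\leq\sum_{s=1}^d\|v\charfun_{K_s}\|_p^p$ for every $v$, with the obvious modification if $p=\infty$. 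On $K_s$ the tensorization fact applies with the $s$-th factor being $U_{m_s}^{(s)}$ acting on $L^p(R^s)$, bounded by item~\eqref{it:bound} of Lemma~\ref{lem:hermite_2}, and the remaining factors being $U_{m_\ell}^{(\ell)}$ acting on $L^p[0,1]$, bounded by item~\eqref{it:boundinterval}; hence $\|U_\vc{m} f\charfun_{K_s}\|_p\lesssim\|f\charfun_{K_s}\|_p\leq\|f\charfun_R\|_p$ uniformly in $R$. Summing over $s$ then gives $\|U_\vc{m} f\charfun_R\|_p\lesssim\|f\charfun_R\|_p$, which is \eqref{it:bound.m}. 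I expect this transfer from the univariate $L^p(R^s)$-bounds to the multivariate ring to be the main obstacle; once $R$ is decomposed into the product sets $K_s\subseteq R$ on which $U_\vc{m}$ splits as a tensor product of operators whose boundedness is already known from Lemma~\ref{lem:hermite_2}, everything else is routine Fubini-type bookkeeping.
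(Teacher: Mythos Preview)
Your proposal is correct and follows essentially the same approach as the paper: the sets $K_s$ you introduce are exactly the sets $\Gamma_i$ in the paper's proof, and the key step is the same---reduce the $L^p(R)$ bound to $L^p(K_s)$ bounds via $R=\bigcup_s K_s$, then apply tensorization of the univariate bounds from Lemma~\ref{lem:hermite_2} (the $L^p(R^s)$ bound in coordinate $s$ and the $L^p[0,1]$ bound in the remaining coordinates). The paper states this more tersely by invoking the two-sided equivalence $\|f\|_{L^p(R)}\simeq\sum_i\|f\|_{L^p(\Gamma_i)}$, but your one-sided argument using $K_s\subseteq R$ is equally valid.
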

\begin{proof} Denote $\Gamma_i = \Lambda_1^{(i)} \times \ldots \times \Lambda_d^{(i)}$, where 
$\Lambda^{(i)}_j = R^i$ for $j =i$ and $\Lambda^{(i)}_j = [0,1]$ for $j \neq i$.
Note that $R = \bigcup_{i=1}^d \Gamma_i$, and
$
\| f \|_{L^p(R)} \simeq \sum_{i=1}^d \| f \|_{L^p(\Gamma_i)},
$
with the implied constants depending on $d$ and $p$, but not on $R$.

With this equivalence at hand,
Proposition \ref{lem.hermite.multi} is a consequence of Lemma \ref{lem:hermite_2}. 
\end{proof}

Now, we turn to the combinatorial ingredient of the proof of Theorem \ref{theo.span}.
For this,  fix $M=\{ \vc{m}^{(1)},\ldots, \vc{m}^{(s)}\}$.  
Given an arbitrary, non-empty subset 
$B \subset\{1,\ldots,s\}$, we denote $\vc{m}_B = \min_{\ell\in B}\vc{ m}^{(\ell)}$,
where we take the minimum coordinate-wise. 
\begin{prop}\label{prop:decomp}
Fix $1 \leq p \leq \infty$ and a ring $R = [0,1]^d \setminus J$.
	Let $ M = \{\vc{ m}^{(1)},\ldots,\vc{m}^{(s)} \}$ be a collection $d$-tuples of non-negative integers,
	and let $\vc{r}$ be such that $ \vc{0} \leq \vc{m}^{(j)} \leq \vc{r}$ for all $j=1, \ldots, s$.
	Let the projections $U_{\vc{m}}$ be given by \eqref{eq.projection} for $\vc{0} \leq \vc{m} \leq \vc{r}$.
	
	Define
	\[
		W_M = \sum_{\emptyset \neq B \subset \{1,\ldots,s\}} (-1)^{\card B + 1} U_{\vc{m}_B},
	\]	
	where we write $\card B$ for the cardinality of the set $B$.

	Then, $W_M = \operatorname{Id}$ on $\mathscr P_M$.
\end{prop}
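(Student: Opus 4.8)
The plan is to reduce the multivariate statement to a purely combinatorial identity about the family of commuting idempotents $(U_\vc{m})$ indexed by the lattice of $d$-tuples $\vc{0}\le\vc{m}\le\vc{r}$, and then verify that identity on an arbitrary monomial belonging to $\mathscr P_M$. First I would record the two structural facts supplied by Proposition~\ref{lem.hermite.multi}: the operators $U_\vc{m}$ all commute, and $U_\vc{m}\circ U_\vc{k} = U_{\min(\vc{m},\vc{k})}$, so that $\{U_\vc{m}\}$ is a commutative semigroup of idempotents isomorphic (via $\vc{m}\mapsto U_\vc{m}$) to the meet-semilattice of $d$-tuples under coordinatewise minimum; moreover $U_\vc{m}$ restricts to the identity on $\mathscr P_\vc{m}$, and $\operatorname{ran} U_\vc{m} = \mathscr P_\vc{m}$ with $U_\vc{m}$ the identity exactly on polynomials of coordinate degree $\le\vc{m}$. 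Since $\mathscr P_M = \Span\{\mathscr P_{\vc{m}^{(j)}}: j=1,\ldots,s\}$ is spanned by monomials $x^{\vc{a}} = x_1^{a_1}\cdots x_d^{a_d}$ with $\vc{a}\le\vc{m}^{(j)}$ for at least one $j$, it suffices to prove $W_M(x^{\vc{a}}) = x^{\vc{a}}$ for every such monomial.

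Fix such a monomial $x^{\vc a}$ and let $J(\vc a) = \{ j \in\{1,\ldots,s\} : \vc a\le \vc m^{(j)}\}$, which is nonempty by assumption. The key observation is that for a nonempty $B\subset\{1,\ldots,s\}$ one has $\vc a\le \vc m_B = \min_{\ell\in B}\vc m^{(\ell)}$ if and only if $B\subset J(\vc a)$; in that case $U_{\vc m_B}(x^{\vc a}) = x^{\vc a}$ by item~\eqref{it:id.m}, while if $B\not\subset J(\vc a)$ then $U_{\vc m_B}$ is a projection onto $\mathscr P_{\vc m_B}$, a space not containing $x^{\vc a}$ — but it is not enough to know $U_{\vc m_B}(x^{\vc a})\ne x^{\vc a}$; I need the contributions of the ``bad'' sets $B$ to cancel against each other. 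The clean way to see this is to group the terms of $W_M$ by the value $\vc n = \vc m_B$: writing $W_M = \sum_{\vc n} c_{\vc n} U_{\vc n}$ where $c_{\vc n} = \sum_{\emptyset\ne B,\ \vc m_B = \vc n}(-1)^{\card B+1}$, I will show that $\sum_{\vc n} c_{\vc n} U_{\vc n}(x^{\vc a}) = x^{\vc a}$. Split the index sets $B$ according to whether $B\subset J(\vc a)$ or not. For $B\subset J(\vc a)$ we have $U_{\vc m_B}(x^{\vc a}) = x^{\vc a}$, and $\sum_{\emptyset\ne B\subset J(\vc a)}(-1)^{\card B+1} = 1$ by the standard inclusion–exclusion identity $\sum_{k=1}^{|J|}\binom{|J|}{k}(-1)^{k+1} = 1$. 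For $B\not\subset J(\vc a)$, I would argue that these terms sum to zero by a pairing/telescoping argument: fix an index $j_0\notin J(\vc a)$ that is minimal with this property in some chosen linear order; the involution $B\mapsto B\,\triangle\,\{j_0\}$ on the collection $\{B : B\not\subset J(\vc a)\}$ preserves $\vc m_B$ — here one uses that adjoining or removing $j_0$ does not change $\min_{\ell\in B}\vc m^{(\ell)}$ once $B$ already contains some index $\ell_0$ witnessing $B\not\subset J(\vc a)$, which requires a small case check — and flips the sign $(-1)^{\card B+1}$, hence the total contribution of the bad sets to each coefficient $c_{\vc n}$, and therefore to $W_M(x^{\vc a})$, vanishes.

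Putting these together gives $W_M(x^{\vc a}) = x^{\vc a}$ for all monomials $x^{\vc a}$ spanning $\mathscr P_M$, hence $W_M = \operatorname{Id}$ on $\mathscr P_M$ by linearity. The main obstacle I anticipate is making the sign-cancellation for the bad sets airtight: the naive involution $B\mapsto B\triangle\{j_0\}$ only manifestly preserves $\vc m_B$ when $\card B \ge 2$ or when removal of $j_0$ still leaves a nonempty set, so the argument must be organized so that the fixed index $j_0$ is chosen depending on $B$ (e.g.\ as the least element of $B\setminus J(\vc a)$, which is well-defined precisely for bad $B$) and so that $B\setminus\{j_0\}$ is never empty — which holds automatically since a bad $B$ always contains an index in $J(\vc a)$ or at least two bad indices? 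No: a singleton bad $B=\{j_0\}$ maps to $\emptyset$, which is excluded. The correct remedy, which I would implement, is instead to group by $\vc n$ and observe directly that for fixed $\vc n$ the alternating sum $c_{\vc n}$ over all $B$ with $\vc m_B = \vc n$ equals $\charfun_{[\vc a\le\vc n]}$ via Möbius inversion / nerve-of-a-cover reasoning (the set $\{B : \vc m_B\ge\vc n\}$ is an up-set closed under unions, so its alternating sum is $1$ if nonempty), and then sum $c_{\vc n} U_{\vc n}(x^{\vc a})$ over $\vc n$; this bypasses the delicate pairing entirely and is the route I would actually write up.
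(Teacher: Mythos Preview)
Your reduction to monomials $x^{\vc a}$ and the observation that $\sum_{\emptyset\ne B\subset J(\vc a)}(-1)^{\card B+1}=1$ are fine, but both of your proposed mechanisms for cancelling the contribution of the ``bad'' sets $B\not\subset J(\vc a)$ break down.

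The involution $B\mapsto B\triangle\{j_0\}$ with $j_0\notin J(\vc a)$ does \emph{not} preserve $\vc m_B$, nor $U_{\vc m_B}(x^{\vc a})$. For instance with $d=2$, $\vc m^{(1)}=(1,0)$, $\vc m^{(2)}=(0,1)$, $\vc a=(1,0)$, $j_0=2$: the singleton $B=\{2\}$ has $\vc m_B=(0,1)$ while $B\cup\{1\}$ has $\vc m_B=(0,0)$, and there is no reason for $U_{(0,1)}(x_1)$ to equal $U_{(0,0)}(x_1)$ in general just from $j_0\notin J(\vc a)$. Your fallback claim $c_{\vc n}=\charfun_{[\vc a\le\vc n]}$ is also false: in the same example $c_{(0,0)}=-1$, yet $\vc a=(0,0)$ satisfies $\vc a\le(0,0)$. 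What your parenthetical correctly computes is $\sum_{\vc m_B\ge\vc n}(-1)^{\card B+1}$, not $c_{\vc n}$; passing from one to the other is a genuine M\"obius inversion over the image lattice and does not yield the indicator you assert.

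The fix is to pick $j_0\in J(\vc a)$ instead (this set is nonempty by hypothesis). The involution $B\mapsto B\triangle\{j_0\}$ is then well defined on the bad sets: if $B\not\subset J(\vc a)$ then $B$ contains some $\ell\notin J(\vc a)$ with $\ell\ne j_0$, so $B\setminus\{j_0\}$ is still nonempty and still bad. It no longer preserves $\vc m_B$, but it does preserve $U_{\vc m_B}(x^{\vc a})$: using $U_{\vc m}\circ U_{\vc k}=U_{\min(\vc m,\vc k)}$ and $U_{\vc m^{(j_0)}}(x^{\vc a})=x^{\vc a}$ one gets
\[
U_{\vc m_{B\cup\{j_0\}}}(x^{\vc a})=U_{\vc m_B}\big(U_{\vc m^{(j_0)}}(x^{\vc a})\big)=U_{\vc m_B}(x^{\vc a}),
\]
and the sign flips, so the bad terms cancel in pairs. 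Equivalently, since $U_{\vc m_B}=\prod_{j\in B}U_{\vc m^{(j)}}$, inclusion--exclusion gives $W_M=\operatorname{Id}-\prod_{j=1}^s(\operatorname{Id}-U_{\vc m^{(j)}})$, and each factor $\operatorname{Id}-U_{\vc m^{(j)}}$ annihilates $\mathscr P_{\vc m^{(j)}}$. The paper's proof is the inductive unwinding of this same identity, written as $W_M=W_{M_j}+U_{\vc m^{(j)}}(\operatorname{Id}-W_{M_j})$ and restricted to $\mathscr P_{M_j}$; once the pairing is corrected your argument is a legitimate alternative.
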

\begin{proof}
	We prove this statement inductively on $s$. Clearly, it is true for $s=1$.
	If $s\geq 2$, and $M = \{ \vc{m}^{(1)},\ldots,\vc{m}^{(s)}\}$, 
	we define, for $j = 1,\ldots,s$, the sets
	$M_j = M\setminus \{ \vc{m}^{(j)}\}$.
	By the inductive assumption, we know that for each $j=1,\ldots,s$, the operators 
	\begin{equation*}
		W_{M_j}	= \sum_{\emptyset\neq C\subset \{1,\ldots,s\}\setminus \{j\}}	(-1)^{\card C+1} U_{\vc{m}_C}
	\end{equation*}
	satisfy $W_{M_j} = \operatorname{Id}$ on $\mathscr P_{M_j}$.

	In order to prove $W_M = \operatorname{Id}$ on $\mathscr P_M$, it is enough to 
	prove it on $\mathscr P_{M_1},\ldots,\mathscr P_{M_s}$ separately, since the operators $U_{\vc{m}_C}$ 
	are linear. Therefore, fix $j\in \{1,\ldots,s\}$ and write
	\begin{align*}
		W_M &= \sum_{\emptyset \neq C\subset \{1,\ldots,s\}\setminus \{j\} }	
			(-1)^{\card C + 1}	 U_{\vc{m}_C} + U_{\vc{m}^{(j)}} 
			\Big(\operatorname{Id} + \sum_{\emptyset \neq C \subset \{1,\ldots,s\}\setminus \{j\}}
				(-1)^{\card C} U_{\vc{m}_C} \Big) \\
				&= W_{M_j} + U_{\vc{m}^{(j)}}  ( \operatorname{Id} - W_{M_j}),
	\end{align*}
	where we used that the operators $U_{\vc{m}^{(j)}}$ commute among each other.
	By induction hypothesis $W_{M_j} = \operatorname{Id}$ on $\mathscr P_{M_j}$ and therefore,
	we also have $W_M = \operatorname{Id}$ on $\mathscr P_{M_j}$ .
\end{proof}

We are ready to prove Theorem \ref{theo.span}.
\begin{proof}[Proof of Theorem \ref{theo.span}] As observed above, if condition $\wIIs(\mathscr A,\mathscr P_M,p,\tau)$ is satisfied, then all conditions
$\wIIs(\mathscr A,\mathscr P_{\vc{m}^{(j)}},p,\tau)$,  $j = 1, \ldots, d$ are satisfied as well.

We need to check the converse.  Assume that  all conditions $\wIIs(\mathscr A,\mathscr P_{\vc{m}^{(j)}},p,\tau)$,  $j = 1, \ldots, d$
are satisfied.
 Note that this implies that $\wIIs(\mathscr A,\mathscr P_{\vc{m}},p,\tau)$ is satisfied for $\vc{m}$ such that $\vc{0} \leq \vc{m} \leq
\vc{m}^{(j)}$ for some $j = 1, \ldots, d$.
In particular,  this means that for each $\vc{m}_B$, $\emptyset \neq B \subset \{1, \ldots , s\}$,  as defined above,  condition 
$\wIIs(\mathscr A,\mathscr P_{\vc{m}_B},p,\tau)$ is satisfied. 
Therefore, according to Proposition~\ref{prop:stability_weaker}, it is enough to point out  
for each ring $R = I\setminus J$ and each $u \in \mathscr P_M$ a decomposition $u=\sum_{\emptyset \neq B \subset \{1, \dots, s\}} u_B$
with $u_B\in \mathscr P_{\vc{m}_B}$ satisfying
\begin{equation}\label{eq:w2toshow}
\sum_{\emptyset \neq B \subset \{1, \dots, s\}}	 \|u_B\charfun_R\|_p \leq C' \|u\charfun_R\|_p
\end{equation}
for some constant $C'$.

For this, fix $\vc{r}$ such that $ \vc{m}^{(j)} \leq \vc{r}$ for each $j = 1,\ldots , s$. 
Note that the spaces of polynomials which we are dealing with are invariant under coordinate-wise affine change of variables.
Therefore, by a rescaling argument, Propositions \ref{lem.hermite.multi} and \ref{prop:decomp} are extended to all  rings $R = I \setminus J$,  with $I$ replacing $[0,1]^d$  in Proposition \ref{lem.hermite.multi} \eqref{it:boundinterval.m}, with the same constants.

Now, given a ring $R = I\setminus J$, 
 let $U_\vc{m}$ be given by Proposition \ref{lem.hermite.multi} for this $\vc{r}$, $R$ and $p$.
 For $u \in \mathscr P_M$ and $\emptyset \neq B \subset \{1, \dots, s\}$, put $u_B = (-1)^{\card B + 1} U_{\vc{m}_B} u$.
 Clearly, $u_B \in \mathscr P_{\vc{m}_B}$.
 It follows by Propositions \ref{lem.hermite.multi} and \ref{prop:decomp} that $u=\sum_{\emptyset \neq B \subset \{1, \dots, s\}} u_B$ is a  decomposition of $u$ satisfying \eqref{eq:w2toshow}. Therefore, Proposition~\ref{prop:stability_weaker} implies the assertion.
\end{proof}

\subsection*{Acknowledgments}
	M. Passenbrunner is supported by the Austrian Science Fund FWF, project P32342.

\bibliographystyle{plain}
\bibliography{local}

\begin{thebibliography}{10}

\bibitem{greedy.2021}
F.~Albiac, J.~L. Ansorena, P.~M. Bern\'{a}, and P.~Wojtaszczyk.
\newblock Greedy approximation for biorthogonal systems in quasi-{B}anach
  spaces.
\newblock {\em Dissertationes Math.}, 560:88, 2021.

\bibitem{bv.2007}
P.~Bechler, R.~Devore, A.~Kamont, G.~Petrova, and P.~Wojtaszczyk.
\newblock Greedy wavelet projections are bounded on {BV}.
\newblock {\em Trans. Amer. Math. Soc.}, 359(2):619--635, 2007.

\bibitem{yb.2017}
Yu. Brudnyi.
\newblock Nonlinear piecewise polynomial approximation and multivariate {$BV$}
  spaces of a {W}iener--{L}. {Y}oung type. {I}.
\newblock {\em J. Approx. Theory}, 218:9--41, 2017.

\bibitem{bv.2003}
A.~Cohen, W.~Dahmen, I.~Daubechies, and R.~DeVore.
\newblock Harmonic analysis of the space {BV}.
\newblock {\em Rev. Mat. Iberoamericana}, 19(1):235--263, 2003.

\bibitem{cdpx.1999}
A.~Cohen, R.~DeVore, P.~Petrushev, and H.~Xu.
\newblock Nonlinear approximation and the space {${\rm BV}({\bf R}^2)$}.
\newblock {\em Amer. J. Math.}, 121(3):587--628, 1999.

\bibitem{restr.2000}
A.~Cohen, R.~A. DeVore, and R.~Hochmuth.
\newblock Restricted nonlinear approximation.
\newblock {\em Constr. Approx.}, 16(1):85--113, 2000.

\bibitem{cmo.1998}
A.~Cohen, Y.~Meyer, and F.~Oru.
\newblock Improved {S}obolev embedding theorem.
\newblock In {\em S\'{e}minaire sur les \'{E}quations aux {D}\'{e}riv\'{e}es
  {P}artielles, 1997--1998}, pages Exp. No. XVI, 16. \'{E}cole Polytech.,
  Palaiseau, 1998.

\bibitem{dv.1998}
R.~A. DeVore.
\newblock Nonlinear approximation.
\newblock In {\em Acta numerica, 1998}, volume~7 of {\em Acta Numer.}, pages
  51--150. Cambridge Univ. Press, Cambridge, 1998.

\bibitem{dvjp.1992}
R.~A. DeVore, B.~Jawerth, and V.~Popov.
\newblock Compression of wavelet decompositions.
\newblock {\em Amer. J. Math.}, 114(4):737--785, 1992.

\bibitem{constr.approx}
R.~A. DeVore and G.~G. Lorentz.
\newblock {\em Constructive approximation}, volume 303 of {\em Grundlehren der
  mathematischen Wissenschaften [Fundamental Principles of Mathematical
  Sciences]}.
\newblock Springer-Verlag, Berlin, 1993.

\bibitem{dvp.1987}
R.~A. DeVore and V.~A. Popov.
\newblock Free multivariate splines.
\newblock {\em Constr. Approx.}, 3(2):239--248, 1987.

\bibitem{part1}
J.~Gulgowski, A.~Kamont, and M.~Passenbrunner.
\newblock Properties of local orthonormal systems, {P}art {I}:
  {U}nconditionality in ${L}^p$, $1<p<\infty$.
\newblock {\em Math. {N}achr.}, 297(5):1838--1865, 2024.

\bibitem{part3}
J.~Gulgowski, A.~Kamont, and M.~Passenbrunner.
\newblock Properties of local orthonormal systems, {P}art {III}: {V}ariation
  spaces.
\newblock {\em arXiv:2310.17309v2}, 2024.

\bibitem{hky.2000}
Y.~K. Hu, K.~A. Kopotun, and X.~M. Yu.
\newblock On multivariate adaptive approximation.
\newblock {\em Constr. Approx.}, 16(3):449--474, 2000.

\bibitem{kon.tem}
S.~V. Konyagin and V.~N. Temlyakov.
\newblock A remark on greedy approximation in {B}anach spaces.
\newblock {\em East J. Approx.}, 5(3):365--379, 1999.

\bibitem{pp.2003.a}
P.~Petrushev.
\newblock Multivariate {$n$}-term rational and piecewise polynomial
  approximation.
\newblock {\em J. Approx. Theory}, 121(1):158--197, 2003.

\bibitem{greedy}
V.~Temlyakov.
\newblock {\em Greedy approximation}, volume~20 of {\em Cambridge Monographs on
  Applied and Computational Mathematics}.
\newblock Cambridge University Press, Cambridge, 2011.

\bibitem{vt.2015}
V.~Temlyakov.
\newblock {\em Sparse approximation with bases}.
\newblock Advanced Courses in Mathematics. CRM Barcelona.
  Birkh\"{a}user/Springer, Basel, 2015.
\newblock Edited by Sergey Tikhonov.

\bibitem{pw.2003}
P.~Wojtaszczyk.
\newblock Projections and non-linear approximation in the space {${\rm
  BV}(\mathbb R^d)$}.
\newblock {\em Proc. London Math. Soc. (3)}, 87(2):471--497, 2003.

\end{thebibliography}

\end{document}